\documentclass[11pt,a4paper]{article}
\usepackage{multicol}
\usepackage[margin=2cm]{geometry}
\usepackage{amssymb,latexsym, bm, dsfont,graphicx,subfigure}
\usepackage{amsthm,amsfonts, amsmath}
\usepackage{enumerate}
\usepackage{algorithm}
\usepackage{stmaryrd}
\usepackage{float}

\usepackage{stmaryrd}
\usepackage{mathtools}
\usepackage{hyperref}
\usepackage[noend]{algpseudocode}
\usepackage{subfigure,appendix}
\usepackage{thmtools}
\usepackage{thm-restate}
\usepackage{tabularx}
\usepackage{authblk}
\usepackage{xcolor}

\newtheorem{theorem}{Theorem}[section]
\newtheorem{corollary}[theorem]{Corollary}
\newtheorem{definition}[theorem]{Definition}
\newtheorem{proposition}[theorem]{Proposition}

\newtheorem{remark}[theorem]{Remark}
\newtheorem{lemma}[theorem]{Lemma}

\newtheorem{claim}[theorem]{Claim}

\usepackage{url}
\urldef{\mailsa}\path|dadush@cwi.nl,|
\urldef{\mailsb}\path|{l.vegh, g.zambelli}@lse.ac.uk,|

\newcommand{\bO}{\mathbf{0}}
\newcommand{\bb}{\mathbb}

\newcommand{\conv}{\mathrm{conv}}
\newcommand{\cone}{\mathrm{cone}}

\newcommand{\R}{\bb R}
\newcommand{\Rp}{\R_{\ge0}}
\newcommand{\Rpp}{\R_{>0}}

\newcommand{\Snpp}{\bb{S}^n_{>0}}
\newcommand{\Snp}{\bb{S}^n_{\ge0}}
\newcommand{\Q}{\bb Q}
\newcommand{\Z}{\bb Z}
\newcommand{\N}{\bb N}

\newcommand{\B}{\bb B}

\newcommand{\ceil}[1]{\lceil#1\rceil}

\newcommand{\rec}{\mathrm{rec}}
\newcommand{\sm}{\setminus}

\newcommand{\rk}{\mathrm{rk}}

\newcommand{\spn}{\mathrm{span}}
\newcommand{\supp}{\mathrm{supp}}

\newcommand{\km}{\ensuremath{m}}
\newcommand{\da}{\ensuremath{d}}
\newcommand{\ld}{\mu}
\newcommand{\thr}{\beta}
\newcommand{\Ta}{\mathcal{T}_a}
\newcommand{\To}{\mathcal{T}_o}
\newcommand{\ee}{\mathrm{e}}

\newcommand{\T}{\top}
\newcommand{\size}[1]{\left\langle #1\right\rangle}

\DeclareMathOperator{\width}{\mathrm{width}}

\DeclareMathOperator{\im}{im}

\def\tr{\mathrm{tr}}

\def\st{\,:\,}

\def\L{\mathcal L}
\newcommand{\eqdef}{\mathbin{\stackrel{\rm def}{=}}}
\def\centre{p}
\def\separ{a}

\floatstyle{plain}
\newfloat{myalgo}{tbhp}{mya}

\newcommand\ourpagewidth{.9\textwidth}

{\begin{myalgo}[#1]
\centering
\begin{minipage}{\ourpagewidth}
\begin{algorithm}[H]}%
{\end{algorithm}
\end{minipage}
\end{myalgo}}

\newcommand\blfootnote[1]{%
  \begingroup
  \renewcommand\thefootnote{}\footnote{#1}%
  \addtocounter{footnote}{-1}%
  \endgroup
}

\begin{document}

\title{On finding exact solutions of linear programs in the oracle model%
\blfootnote{This project has received funding from the European Research Council (ERC) under the European Union's Horizon 2020 research and innovation programme (grant agreements ScaleOpt--757481 and QIP--805241). A preliminary version of this paper has appeared in the proceedings of SODA 2022 \cite{SODA-version}.}}

\author[1]{Daniel Dadush}
\author[2,3]{L{\'{a}}szl{\'{o}} A. V{\'{e}}gh}
\author[3]{Giacomo Zambelli}
\affil[1]{Centrum Wiskunde \& Informatica, The Netherlands}
\affil[2]{Hertz Chair for Algorithms and Optimization, University of Bonn, Germany}
\affil[3]{Department of Mathematics, London School of Economics and Political Science, UK}

\date{\texttt{dadush@cwi.nl, lvegh@uni-bonn.de, g.zambelli@lse.ac.uk}}
\maketitle

\begin{abstract}
We consider linear programming in the oracle model: $\max\{c^\top x\st x\in P\}$, where  the polyhedron $P=\{x\in\R^n\st Ax\le b\}$ is given by a separation oracle. We present an algorithm that finds exact primal and dual solutions  using $O(n^2\log(n/\delta))$ oracle calls and $O(n^4\log(n/\delta)+n^5\log\log(1/\delta))$ arithmetic operations, where $\delta$ is a geometric condition number associated with the system $(A,b)$. These bounds do not depend on the cost vector $c$ and do not require  a priori knowledge of $\delta$. For rational data, $\log(1/\delta)$ is polynomially bounded in the encoding size of $(A,b)$, thus providing a polynomial-time algorithm.

The algorithm works in a black box manner,  requiring a subroutine for approximate primal and dual solutions; the above running times are achieved when using  the cutting plane method of Jiang, Lee, Song, and Wong (STOC 2020) for this subroutine. Whereas approximate solvers may return primal solutions only, we develop a general framework for extracting dual certificates  based on the work of Burrell and Todd (Math.~Oper.~Res.~1985).

Our algorithm strengthens
results by Gr\"otschel, Lov\'asz, and Schrijver (Prog.~Comb.~Opt.~1984), and by Frank and Tardos (Combinatorica 1987) that rely on bit-complexity arguments. Our 
 algorithm avoids rounding-based arguments such as
simultaneous Diophantine approximation and uses geometric arguments instead.
\end{abstract}

\section{Introduction}

We consider linear programming (LP) in the oracle model. Let $P=\{x\in\R^n\st Ax\le b\}$ be a polyhedron given by $A\in\R^{m\times n}$ and $b\in \R^m$; the $i$-th row of $A$ is denoted by $a_i^\top$. In the \emph{linear feasibility problem}, the goal is to either find  $x\in P$ or to conclude that $P=\emptyset$. In the \emph{linear optimization problem}, we are given an objective function $c\in\R^n$, and we want to find a solution $x\in P$ maximizing $c^\top x$, or the conclusion that the problem is infeasible or that it is unbounded. The focus of this paper is on  finding \emph{exact} rather than approximate solutions to these problems, along with exact dual certificates.

We say that the LP is \emph{explicitly given}, if the matrix $A$ and vector $b$ are given as part of the input. In the \emph{oracle model}, these are represented implicitly,
via a separation oracle. Our main example will be what we call a \emph{polyhedral separation oracle}: given $\bar x\in \R^n$, the oracle either confirms that $\bar x\in P$ or returns a violated constraint $b_i>a_i^\top \bar x$ from the system $Ax\le b$; see Section~\ref{sec:oracles} for a discussion of different oracle models. The number of constraints $m$ may not be polynomially bounded in $n$.

\paragraph{LP algorithms in the Turing model} For an explicit rational input $(A,b,c)$, the first  polynomial time LP algorithm in the Turing model was  given by
Khachiyan in 1979, using the ellipsoid method \cite{khachiyan}.
This method relies on a volumetric progress measure, assuming that the feasible region is either full-dimensional or empty. Thus, it is a particular challenge to tackle the case when the feasible region may be contained in a lower dimensional subspace.  Khachiyan used a perturbation $\tilde b$ of the right-hand-side, based on bit-complexity arguments, such that the polyhedron $\tilde P=\{x\st Ax\le \tilde b\}$  satisfies $\tilde P=\emptyset$ if and only if $P=\emptyset$, and $\tilde P$ is full-dimensional whenever nonempty.

Gr\"otschel, Lov\'asz, and Schrijver \cite{Grotschel1984,gls} used the ellipsoid method to tackle  LPs given implicitly by a strong separation oracle, and developed the theory of \emph{rational polyhedra}. They showed that for rational polyhedra with bounded `facet complexity', the ellipsoid method either finds a feasible solution in polynomial time, or  a lower dimensional subspace containing $P$ can be identified using \emph{simultaneous Diophantine approximation},  by an application of the  basis reduction algorithm by Lenstra, Lenstra, and Lov\'asz
\cite{lll}.

\paragraph{LP in the real model of computation}
In the context of LP it is natural to use a \emph{real model of computation}: we assume the input is given by real numbers, each requiring unit storage, and one can perform a set of arithmetic operations in unit time.
Arithmetics include basic operations ($+$, $-$, $\times$, $/$); certain models allow further operations such as $\sqrt{\;}$ and $\log$. In the context of LP, Traub and Wo{\'z}niakowski \cite{traub1982} advocated using such a model. A computational theory over reals was developed by Blum, Shub and Smale \cite{blum1989}, see also the book \cite{blum1998complexity}. The ultimate goal for an explicit LP in this model is to develop a \emph{strongly polynomial algorithm}: one where the number of arithmetic operations only depends on the number of variables $n$ and constraints $m$.\footnote{A strongly polynomial algorithm in the Turing model is further required to be in PSPACE, so that the bit-comp\-lexity of the numbers used in the algorithm remains bounded in terms of the input.} This was listed as the 9th question by Smale on his list of eighteen mathematical challenges for the 21st century \cite{smale1998mathematical}.
The existence of a strongly polynomial algorithm remains wide open; such algorithms are only known for special classes of LPs.

\paragraph{Explicitly given LPs}
For explicitly given LPs, interior point methods (IPMs) yield algorithms with excellent theoretical and practical performance; for recent developments, as well as for pointers to the literature, see \cite{Cohen2021,JSWZ,LeeS2019,vdb20,vdB2020-tall-dense}. IPMs naturally work in the real model;  most variants output  approximate solutions. In the Turing model, an approximate solution with sufficiently high accuracy can be converted to an exact optimal solution. Consider $\min c^\top x$, $Ax\le b$, $A\in\R^{m\times n}$, and let $L$ denote the total bit-complexity of $(A,b,c)$. The fastest deterministic algorithm for finding exact primal and dual optimal solutions is due to van den Brand \cite{vdb20}, with a running time of  $O(m^{\max\{\omega,2.5-\alpha/2, 2+1/6\}}L\log^{O(1)}(m))$, while Jiang et al.~\cite{JSWZ} give an $O(m^{\max\{\omega,2.5-\alpha/2, 2+1/18\}}L\log^{O(1)}(m))$ randomized algorithm and van den Brand et al. \cite{vdB2020-tall-dense} give an $O((mn+n^3)L\log^{O(1)}(m))$ randomized algorithm\footnote{Here $\omega$ and $\alpha$ denote the fast multiplication exponent and its dual. For the current best values of $\omega\approx 2.38$ and $\alpha\approx 0.31$, both algorithms in \cite{vdb20,JSWZ} run in time $O(m^{\omega}L\log^{O(1)}(m))$}.  

Tardos \cite{Tardos86} gave an algorithm in the Turing model with running time dependence only on the bit-complexity of $A$, but independent of $b$ and $c$. This was generalized to the real model of computation
by Vavasis and Ye \cite{Vavasis1996}, who gave a poly$(n,m,\log \bar\chi_A)$ algorithm for solving explicit LPs exactly, where $\bar\chi_A$ is a certain condition number associated with the constraint matrix. Allamigeon, Dadush, Loho, Natura, and V\'egh \cite{ADLNV2022} gave an interior point method whose running time matches the straight-line complexity of the central path up to a strongly polynomial factor. This generalizes \cite{Vavasis1996} and other previously known strongly polynomial classes, and played a key role in the recent work by Dadush, Koh, Natura, Olver, and V\'egh \cite{DKNOV_STOC} that gave a strongly polynomial algorithm for LPs with at most two nonzeros per column in the constraint matrix.

\paragraph{LP in the oracle model}
Several important problems in combinatorial optimization, including matching, network design, and submodular optimization problems, can be formulated by LPs with an exponential number of constraints. For such LPs the explicit description would be exponential; at the same time, one can find  violated constraints for infeasible points in polynomial time. This motivated the development of oracle algorithms by Gr\"otschel, Lov\'asz, and Schrijver \cite{gls}, based on the ellipsoid method.

Vaidya \cite{vaidya96} gave a more efficient cutting plane algorithm in the oracle setting; see \cite{Atkinson1995,Bertsimas2004,Jiang2020,Lee2015} for improvements and related algorithms. These algorithms  return approximate solutions.
Given a convex set $K\subseteq \R^n$  defined by a strong separation oracle and contained in a ball of radius $r$, the algorithm by Jiang, Lee, Song, and Wong \cite{Jiang2020} (henceforth referred to as the JLSW algorithm) either finds a feasible point in $K$, or concludes that $K$ does not contain a ball of radius $\varepsilon$. The algorithm makes  $O(n\log(nr/\varepsilon))$ oracle calls and uses $O(n^3\log(nr/\varepsilon))$  arithmetic operations. This oracle complexity is the same as for Vaidya's algorithm \cite{vaidya96} and is asymptotically optimal \cite{nemirovski1979}. Moreover, \cite{Jiang2020} presents evidence that the arithmetic complexity of $O(n^2)$ operations per oracle call may also be optimal.

While ellipsoid and other cutting plane methods deliver approximate solutions only, finding exact solutions is crucial for the applications in combinatorial optimization. Prior to our work,  all known results on finding exact LP solutions in the oracle model  were based on bit complexity assumptions. Strengthening the result of
Gr\"otschel, Lov\'asz, and Schrijver \cite{Grotschel1984,gls}, Frank and Tardos \cite{franktardos} showed that, assuming that the matrix $A$ and vector $b$ describing the system $Ax\le b$ are integral with the absolute values of the entries bounded by  $B$, then linear optimization in the oracle model
can be solved in time poly$(n,\log B)$. This is independent of the  encoding length of the cost function $c$.
The result is achieved by rounding $c$ using simultaneous Diophantine approximation. %

Strongly polynomial algorithms for solving LPs for many important problems such as submodular function minimization or minimum-cost matchings were given in \cite{gls} and \cite{franktardos}. Still,
 in contrast to explicitly given LPs,
 one cannot hope for strongly polynomial algorithms in the oracle model.
Indeed, according to the next claim,  there does not  exist a
deterministic  algorithm using $f(n)$ oracle calls for any function $f$;   this is proved in Appendix~\ref{sec:impossibility}.

\begin{restatable}{proposition}{nostrongly}\label{prop:no-strongly}
 There exists no function $f:\N\to \N$ and {deterministic} algorithm $\mathcal{A}$ that solves the optimization problem $\max c^\top x$, $x\in P$ using at most $f(n)$ oracle calls, where
$P\subseteq \R^n$ is a nonempty full-dimensional polyhedron and $c\in \R^n$, and $P$ is accessed via the following oracle: for each $\bar x\in \R^n$, either returns $\bar x\in P$, or a facet defining inequality violated by $\bar x$.
\end{restatable}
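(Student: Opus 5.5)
We would prove Proposition~\ref{prop:no-strongly} with an adversary argument, and it already suffices to take $n=1$ and the fixed objective $c=1$. Suppose for contradiction that $f$ and a deterministic algorithm $\mathcal{A}$ exist. We run $\mathcal{A}$ against an adversarial oracle that answers every query $\bar x$ with ``$\bar x\in P$''. Since $\mathcal{A}$ is deterministic and receives the same answers every time, this fixes a finite query sequence $\bar x_1,\dots,\bar x_k$ with $k\le f(1)$, together with a final output. Only after seeing this transcript do we choose the polyhedron $P$. We choose it so that the answers given were correct and the output is wrong.

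The polyhedra we use are intervals $P=[\ell,u]\subseteq\R$ with $\ell<u$. Each is nonempty and full-dimensional, and its facet-defining inequalities are exactly $x\le u$ and $-x\le -\ell$. Set $\ell=\min_j \bar x_j-1$, or $\ell=0$ if $k=0$. Then $P$ contains every queried point whenever $u$ is large enough, so all the oracle's answers are consistent with $P$. We then split according to the output of $\mathcal{A}$.
\begin{itemize}
\item If $\mathcal{A}$ declares the problem infeasible, any valid choice of $u$ gives a contradiction, because $P$ is nonempty.
\item If $\mathcal{A}$ declares the problem unbounded, again any finite $u$ gives a contradiction, because $P$ is bounded.
\item If $\mathcal{A}$ outputs a point $x^*$ as an optimal solution, we take $u>\max(\{x^*\}\cup\{\bar x_j\}_{j\le k}\cup\{\ell\})$. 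Then either $x^*\notin P$, or $x^*<u=\max\{x\st x\in P\}$, so $x^*$ is not optimal.
\end{itemize}
If $\mathcal{A}$ also outputs a dual certificate, the argument is unchanged, since the primal output is already wrong. In every case $\mathcal{A}$ fails on an instance whose oracle transcript is exactly the one it saw, which contradicts the assumption.

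For a general $n$ the same construction works with the box $[\ell,u]\times[0,1]^{n-1}$, letting the adversary answer ``in $P$'' only for queries that lie in this box. The case $n=1$ already refutes any fixed $f$, so this extension is not needed.

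The only delicate step is ensuring the adversary is legitimate. The answers must be realizable by an oracle that returns facet-defining inequalities, and $P$ must be nonempty and full-dimensional. Answering ``in $P$'' to every query sidesteps the first issue, since no violated inequality is ever returned. Choosing $\ell<\min_j\bar x_j$ and $u$ larger than every relevant quantity guarantees the second. Determinism is essential: it ensures the output is a fixed function of the transcript, so the adversary can choose $u$ after the output is known.
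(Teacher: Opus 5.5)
Your proof is correct, but it takes a different route from the paper's.

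The paper works in $\R^2$ with the polytopes of $\mathcal{P}_m$. Its adversary answers ``feasible'' only on a growing region $Q_m\subseteq P_m$. For points outside $P_m$ it returns an existing facet of $P_m$. Otherwise it reveals a \emph{new} facet $x_1/p_{m+1}+x_2/q_{m+1}\le 1$, which cuts off the current candidate optimum $(p_m,0)$ while keeping all earlier answers valid. Each query reveals at most one new facet, so an algorithm that stops before $m^*$ facets have been exposed can still be fooled. The obstruction is therefore the unbounded number of constraints that must be discovered. This is the phenomenon behind the paper's remark that strongly polynomial algorithms are impossible in the oracle model.

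Your adversary reveals nothing at all. It answers ``$\bar x\in P$'' throughout, and then places the upper facet $x\le u$ of an interval beyond everything the algorithm has seen. This buys several things:
\begin{itemize}
\item It is shorter and already works for $n=1$ with only two facets.
\item It shows that no bound in terms of $n$ together with the number of facets can exist.
\item Since the constraint matrix $\binom{1}{-1}$ never changes, the same instances also give Proposition~\ref{prop:need-b} directly.
\end{itemize}

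The step $k\le f(1)$ deserves one sentence of justification. The all-``in $P$'' transcript is exactly the one produced by the valid instance $P=\R$, or by any interval containing the first $f(1)+1$ queries, so the budget $f(1)$ applies to it.

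The price of your approach is that its obstruction is purely one of unknown scale. Suppose the algorithm were also told a radius $r$ with $P\subseteq\B^n(r)$, as cutting plane methods usually assume. Then a single query at $\bar x=r$ would solve your one-dimensional instances. The paper's adversary instead draws its lower bound from the facets that have to be uncovered.
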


\medskip

\subsection{Our contributions}\label{sec:our-contr}
Assume the polyhedron $P=\{x\st Ax\le b\}\subseteq \R^n$ is given by a polyhedral separation oracle, and consider the problem of maximizing $c^\top x$ for an objective function $c\in \R^n$. Our main result is an  algorithm such that the number of arithmetic operations and oracle calls is polynomial in $n$ and the logarithm of the inverse of a certain positive condition number $\delta$ dependent on $(A,b)$, but independent of $c$.

To the best of our knowledge, this is the first oracle-polynomial algorithm to find exact solutions to LP in the real model of computation. What we mean by ``polynomial algorithm in the real model of computation'' is that the algorithm provides exact solutions without any rationality assumptions and moreover,  if each inequality of $Ax\le b$ is rational with encoding size at most $\varphi$, then the running time is  polynomially bounded in $n$ and $\varphi$. Indeed, this holds as we will observe that  $\log(1/\delta)$ is  polynomially bounded in $n$ and $\varphi$.

We extend and strengthen  the results in \cite{franktardos,Grotschel1984,gls}. Further, our results imply simpler and more efficient algorithms for many applications also in the bit-complexity model (see discussion in Section~\ref{sec:rational}).
We now introduce the main condition number of interest.
\begin{definition}\label{def:delta}
Let $V\subseteq \R^n$ be a set of vectors. We define
    $\delta_V$ to be the largest value such that, for any set of linearly independent vectors $\{v_i\st i\in I\}\subseteq V$ and $\lambda\in \R^I$,
    \[
    \left\|\sum_{i\in I}\lambda_i v_i\right\|\ge \delta_V \max_{i \in I} |\lambda_i| \cdot\|v_i\|\, .
    \]
\end{definition}
We note that $\delta_V>0$ if and only if the set $\{v/\|v\|\st v\in V\}$ is finite (Lemma~\ref{lem:delta-lem}). For a matrix $M\subseteq \R^{m\times n}$, we let $\delta_M$ denote the value corresponding to the rows of $M$.

This condition number was previously studied in the context of the shadow simplex algorithm by Brunsch and R\"oglin \cite{brunsch2013}, by Eisenbrand and Vempala \cite{eisenbrand2017}, and by Dadush and H\"ahnle \cite{dadush2016shadow}. They used the following equivalent characterization (see Lemma~\ref{lem:delta-lem}): $\delta_V$ is the largest number such that,  for any set of linearly independent vectors $\{v_i\st i\in I\}$,  the sine of the angle between the vector  $v_i$ and the subspace spanned by the vectors $\{v_j\st j\in I\setminus\{i\}\}$ is at least $\delta_V$. Further, $\delta_V$ bounds the minimum singular value of a matrix with columns $v_j$ (see \cite{brunsch2013}). %
For an integer matrix $M\in \Z^{m\times n}$, let $\Delta_M$ denote the largest absolute value of any non-singular subdeterminant; then, $1/(n\Delta_M^2)\le \delta_M$  \cite{brunsch2013}. In particular, in the rational model,  $\log(1/\delta_M)$ is polynomially bounded by $n$ and the sizes of numbers.
The quantity $\delta_M$ was also studied in the context of lattice basis reduction by Seysen \cite{seysen1993}. A related condition number appears in the characterization of  Hoffman constants \cite{Guler1995,Klatte1995,Pena2020}.

In what follows, for vectors $v \in \R^k, w \in \R^l$, we use the shorthand
$(v \mid w)  \coloneqq \begin{pmatrix} v \\ w \end{pmatrix} \in \R^{k+l}$ to denote
corresponding column vector and $(v^\top,w^\top)$ to denote the corresponding
row vector interpreted as an element in $(\R^{k+l})^*$. For
$P=\{x\in\R^n\st Ax\le b\}$, let us consider the matrix
\begin{equation}\label{eq:M-matrix}
M=\begin{pmatrix}\mathbf{0}& 1\\
-A & b
\end{pmatrix}\, ,
\end{equation}
corresponding to the conic embedding of $P$ defined by the cone $K=\{(x \mid t)\in\R^{n+1}\st M(x \mid t)\ge \bO\}$.
 We observe that
 \begin{equation}\label{eq:deltaA>deltaM}
 \delta_A\ge \delta_{M}\, ,
 \end{equation}
see Proposition~\ref{prop:A-M} in the Appendix for the simple proof.

\paragraph{Dual certificates} We recall standard concepts from duality theory. For a feasibility problem,  a \emph{Farkas certificate} that $P=\emptyset$ is a vector of  nonnegative coefficients $\lambda\in \Rp^J$  for a subset $J\subseteq [m]$ 
such that $\sum_{j\in J} \lambda_j a_j= 0$ and  $\sum_{j\in J} \lambda_j b_j<0$. For $c\in \R^n$, the dual polyhedron corresponding to $\max \{c^\top x\st x\in P\}$ is $D_c=\{y\in \Rp^m\st A^\top y=c\}$. The LP has a finite optimum if and only if both primal and dual programs are feasible. In this case, a \emph{dual certificate of optimality} for the solution $x^*\in P$ is defined by a subset $J\subseteq [m]$ and a vector of nonnegative coefficients $\lambda\in \Rp^J$ such that  $\sum_{j\in J} \lambda_j a_j= c$ and  $a_j^\top x^*=b_j$ for all $j\in J$. By duality theory, the set $J$ and the coefficients can always be chosen such that $|J|\le n$.

\medskip

Our main result shows that the one can find an exact solution in time $O(n)$ times the running time of the current best approximate algorithm \cite{Jiang2020}, replacing $r/\varepsilon$ by $1/\delta_{M}$, and an additional $O(n^5 \log\log(1/\delta_{M}))$ term.

\begin{theorem}\label{thm:LP-main}
Consider the LP problem $\max \{c^\top x\st Ax\le b\}$ for  $A\in \R^{m\times n}$, $b\in \R^m$, $c\in \R^n$,  given by a polyhedral separation oracle. Let the matrix $M$ be defined as in \eqref{eq:M-matrix}. {For parts {\em (ii)} and {\em (iii)}}, assume that  a polyhedral separation oracle for the recession cone ${\rm rec}(P) \coloneqq  \{x\in \R^{n}\st Ax\le \bO\}$ is also provided.
\begin{enumerate}[(i)]
\item A primal feasible solution or a Farkas certificate of infeasibility can be found  using $O(n^2\log(n/\delta_M))$ oracle queries and $O(n^4\log(n/\delta_M))$ arithmetic operations.
\item A dual feasible solution or a Farkas certificate of dual infeasibility can be found in $O(n^2\log(n/\delta_{A}))$ oracle queries and $O(n^4\log(n/\delta_{A})$ $+n^5\log\log(1/\delta_A))$ arithmetic operations.
\item  If both primal and dual systems are feasible, then primal and dual optimal solutions can be found in $O(n^2\log(n/\delta_M))$ oracle queries and $O(n^4\log(n/\delta_M)$ $+n^5\log\log(1/\delta_M))$ arithmetic operations.
\end{enumerate}
\end{theorem}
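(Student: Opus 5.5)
The plan is to reduce all three parts to a single primitive: an oracle-based feasibility algorithm for homogeneous conic systems $\{z \st Mz \ge \bO\}$ that either finds an interior-type point or certifies, via a dual (Farkas-type) combination of rows, that the feasible cone lies in a hyperplane $\{z\st a^\top z=0\}$ for some row $a$. The algorithm then recurses on a lower-dimensional subspace.

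\textbf{Part (i).} I work with the conic embedding $K=\{(x\mid t)\st M(x\mid t)\ge\bO\}$ and look for a point with $t>0$.

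In each phase the current subspace $W$ has dimension $k\le n+1$. On $K\cap W\cap\{\|z\|\le 1\}$ I run the JLSW cutting plane method, with the separation oracle projected onto $W$. The method either returns a point in the relative interior, or it certifies that the set contains no ball of radius $\varepsilon=\mathrm{poly}(\delta_M/n)$.

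In the second case I use a Burrell--Todd style extraction of a dual certificate. The cutting planes returned by the oracle, together with the final small ellipsoid (or volumetric center), give a nonnegative combination $\sum_j\lambda_j m_j$ of returned rows with small norm relative to $\max_j\lambda_j\|m_j\|$. By Definition~\ref{def:delta}, after reducing to a linearly independent support via Carathéodory, a combination of norm less than $\delta_M\max_j\lambda_j\|m_j\|$ cannot be nonzero. It must therefore be exactly $\bO$ after projection to $W$. This identifies an implicit equality $m_j^\top z=0$ for all $z\in K\cap W$.

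I then restrict $W$ to the orthogonal complement of $m_j$ and repeat. There are at most $n+1$ phases, each costing $O(n\log(n/\delta_M))$ oracle calls and $O(n^3\log(n/\delta_M))$ arithmetic operations, which gives the stated bounds. The procedure ends when either a point with $t>0$ is found, giving a primal solution after scaling, or the row $(\bO,1)$ becomes implicit. In the latter case the exact combination yields a Farkas certificate of infeasibility.

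\textbf{Part (ii).} I apply the same scheme to the dual. Dual feasibility of $A^\top y=c$, $y\ge\bO$ is equivalent to $c\in\cone(a_i)$, i.e.\ to $\max\{c^\top x\st x\in\rec(P)\}$ being bounded. I therefore run the machinery on $\rec(P)$ intersected with $\{c^\top x\ge 0\}$, using the recession oracle, so only $\delta_A$ enters.

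Identifying the lineality space and the implicit equalities yields exact combinations of rows expressing $c$, after projection. The extra $n^5\log\log(1/\delta_A)$ term comes from computing the exact coefficients $\lambda$. This is done by a binary search over scales: the unknown $\delta$ is guessed by repeated squaring, costing $\log\log$ guesses, each requiring an $O(n^4)$ linear-algebra recomputation per phase.

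\textbf{Part (iii).} I combine the two. First I find a primal point. Then I optimize by restricting to the optimal face: I add the cut $c^\top x\ge$ (current value) and iteratively identify tight constraints, as in part (i), until $c$ lies in the span of the identified rows. At that point the dual solution is obtained as in part (ii) within that span.

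\textbf{Main obstacle.} I expect the main difficulty to be the exact dual certificate extraction. One must show that the approximate certificate from the cutting plane method has error below the $\delta_M$ threshold, so that rounding to the support yields an exact identity. One must also handle the fact that $\delta_M$ is not known a priori, which I would address with the doubling guess described in part (ii).
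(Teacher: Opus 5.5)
\textbf{Gap: the $c$-independent bounds in parts (ii) and (iii).} Nothing in your scheme makes the cost independent of $c$, so the claim ``only $\delta_A$ enters'' is unsupported. Suppose you run the part-(i) machinery (approximate certificate, Carath\'eodory, recurse on implicit equalities) on $\{x\st Ax\le\bO,\ c^\top x>0\}$. Then $c$ is one of the rows, and both the precision required and the failure test are governed by $\delta$ of the augmented matrix $\binom{c^\top}{A}$, which can be arbitrarily smaller than $\delta_A$. Take $A=I_2$ and $c=(1,-\epsilon)$, which is dual infeasible with $\delta_A=1$. The set $\{x\le\bO,\ c^\top x>0\}\cap\B^2(1)$ has width $O(\epsilon)$. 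The cone $\{x\le\bO,\ c^\top x\ge 0\}$ is full-dimensional, so there is no implicit equality to discover. Every approximate certificate, e.g.\ $(-a_1)+c=(0,-\epsilon)$, reduces to the independent support $\{-a_1,c\}$, so Carath\'eodory only ever reports a failure. You are forced down to $\varepsilon\lesssim\epsilon$, i.e.\ a $\log(1/\epsilon)$ dependence on $c$.

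The missing idea is in Lemmas~\ref{lem:conic-max-recurse} and~\ref{lem:strong-validity-correct-delta}, with $M=-A$ and $\bar c=-c$. When the multiplier of the objective row is large, one obtains $\lambda\ge\bO$ and $J\neq\emptyset$ with $\hat\delta\lambda_j\|m_j\|>\|M^\top\lambda-\bar c\|$ for $j\in J$. Lemma~\ref{lem:cone-angle-delta} then forces $m_j^\top v=0$ for every extreme ray $v$ of $K\cap\spn(M^\top)$ with $\bar c^\top v<0$. Hence one may restrict to $\ker(M_J)$ without losing a violating point, even though these rows are \emph{not} implicit equalities of $K$. In the example this sets $x_1=0$, after which the projected problem is well conditioned. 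This restriction is only justified when $\hat\delta\le\delta_M$, so the recursive certificate must be pulled back via Lemma~\ref{lem:combine-vectors}. That lemma yields either an exact nonnegative certificate or a certified failure. The pullback costs $O(n^4)$ per level, over $n$ levels and $O(\log\log(1/\delta))$ restarts; that is the true source of the $n^5\log\log$ term, not a search for coefficients.

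\textbf{Part (iii).} This part inherits the gap and has a further flaw. A cut $c^\top x\ge v$ at a non-optimal value creates no new implicit equalities, so ``identify tight constraints as in part (i)'' does not locate the optimal face. Moreover, the homogenized row $(c\mid -v)$ changes with $v$, and its conditioning depends on $c$ and $v$ rather than on $\delta_M$. The paper instead cuts off every queried $\bar z\in K$ with $t>0$ by $(\bar c-\bar\gamma d)^\top z<0$, where $\bar\gamma$ is the ratio at $\bar z$ itself. It merges all these cuts into one with $\gamma=\min_t\gamma_t$, using the given certificate $y^{(d)}$ of $d=(\bO\mid 1)$ (Lemma~\ref{lem:conic-max-ratio-recurse}). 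It then applies the same extreme-ray argument and pullback (Lemma~\ref{lem:max-correct-delta}).

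\textbf{Part (i).} This is essentially the paper's strong feasibility algorithm, but two repairs are needed. First, the approximate combination is not ``exactly $\bO$''. You need the weighted Carath\'eodory of Lemma~\ref{lem:conic-Caratheodory} with $w_j=\|m_j\|$. It returns either a \emph{different} support-minimal $\mu\ge\bO$ with $M^\top\mu=\bO$, whose support gives the implicit equalities, or an independent support with $\sum_j\bar\lambda_j\|m_j\|$ not decreased. The latter certifies $\hat\delta>\delta_M$ and is the verifiable failure signal your squaring scheme needs; plain Carath\'eodory may shrink the coefficients and certify nothing. Second, part (i) provides no oracle for $\rec(P)$, so you must run on $K_1=K\cap\{t>0\}$, separating points with $t\le 0$ by $t>0$.
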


A few remarks about the result are in order.
\begin{itemize}
\item We use a \emph{black box} approach. The algorithms work in the conic setting via the conic embedding in Section~\ref{sec:conic-reduce}, and require a subroutine that produces \emph{`approximate dual certificates'}.
The running time stated in Theorem~\ref{thm:LP-main} refers to the JLSW algorithm \cite{Jiang2020}. In Section~\ref{sec:dual certificates}, we present a general scheme that  allows to extract dual certificates from a broad range of methods, including the ellipsoid method and geometric rescaling methods
~\cite{DVZ,rothvoss}.
\item Assuming $P$ is given by a  polyhedral separation oracle, our result strengthens that by Frank and Tardos \cite{franktardos}: for $A\in \Z^{m\times n}$, $b\in \Z^m$ with all entries having absolute value $B$, for $M$ as in \eqref{eq:M-matrix}, the maximum subdeterminant $\Delta_M$ is bounded as $\Delta_M\le B^n n^{n/2}$ by the Hadamard--inequality, and we have $\delta_M\ge 1/(n \Delta_M^2)\ge 1/(B^{2n} n^{n+1})$. Thus, our algorithm makes $O(n^3 \log(n B))$ oracle calls to solve a linear optimization program with an arbitrary objective function max $c^\top x$.\footnote{We note that we do not generalize  \cite{franktardos} for arbitrary oracle settings. The main result in \cite{franktardos} is a preprocessing step replacing $c$ by an equivalent $\tilde c$ of small encoding length; but they do not need stronger assumptions on the oracle.}
\item The algorithms in \cite{franktardos,Grotschel1984,gls,Jiang2021} rely on bit-complexity arguments. In contrast, our algorithms are in the real model of computation and are entirely geometric. For the rational settings,  our running time bounds depend on the condition number $\delta_M$, which can be significantly better than the  lower bounds implied by the bit-complexity. Our algorithm does not require an explicit knowledge of  $\delta_M$.
\item Cutting planes methods require the feasible region to be enclosed in a ball of known radius.
In the rational setting, the enclosing radius is estimated based on the encoding size of the coefficients. Our method does not require any such assumptions.
\end{itemize}

In Section~\ref{sec:circuits}, we show the connection between  $\delta_A$, and the condition number $\bar\chi_A$ used by Vavasis and Ye \cite{Vavasis1996} and the circuit imbalance measure $\kappa_A$ used in subsequent papers \cite{DadushHNV20,DadushNV20,ENV22}. In particular, we show that our results imply similar condition number dependence for LP feasibility as these works. 

Note that solving the dual feasibility problem only depends on $\delta_A$ of the constraint matrix $A$, but not on $b$ or $c$. One may ask whether if also the optimization problem could be solved in time dependent only on $A$.
This would be the analogue for  the oracle model of the Vavasis--Ye \cite{Vavasis1996} result for explicitly given LPs, and would be the best one can hope for in the oracle model in light of Proposition~\ref{prop:no-strongly}.  However, the following proposition illustrates that this is not possible;   the proof is given in Appendix~\ref{sec:impossibility}.
\begin{restatable}{proposition}{needb}\label{prop:need-b}
Let $\theta_A$ be a condition number associated with a matrix $A$ that remains unchanged by creating a duplicate copy of any row.
There exists no function $f:\N\times \R\to \N$ and algorithm $\mathcal{A}$ that solves  max $c^\top x$ s.t. $Ax\le b$ for $A\in \R^{m\times n}$, $b\in \R^m$, $c\in \R^n$ in $f(n,\theta_A)$ oracle queries,  assuming the system is given by a polyhedral separation oracle.
\end{restatable}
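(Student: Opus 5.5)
The plan is an adversary argument exploiting duplicated rows. Since $\theta_A$ does not change when a row of $A$ is duplicated, it suffices to construct a fixed matrix $A_0$ with a value $\theta_{A_0}$, and to show that for every $N$ there are instances whose constraint matrices are $A_0$ with rows repeated, on which any algorithm needs more than $N$ oracle queries. The duplicated rows may carry different right-hand sides. The idea is to embed the construction behind Proposition~\ref{prop:no-strongly} into a setting where only $b$ varies. I will work with $n=1$, or $n=2$ if more room is needed.

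\textbf{Step 1: the fixed matrix.} For $n=1$, take the rows of $A_0$ to be $1$ and $-1$. Every instance then has the form $\{x\in\R\st x\le b_i\ (i\in I),\ -x\le b_j\ (j\in J)\}$, and every matrix of this form is obtained from $A_0$ by duplicating rows, so $\theta_A=\theta_{A_0}$ throughout. Take the objective $c=1$. The optimum is then $\min_{i\in I} b_i$, and a correct exact answer must output this value together with a dual certificate naming a tight row.

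\textbf{Step 2: the adversary.} Fix a (possibly randomized) algorithm and suppose it always uses at most $N=f(1,\theta_{A_0})$ queries. Run it against an oracle that answers adaptively. The adversary keeps an interval of candidate optimal values, starting as $(0,1)$. Also fix the constraint $-x\le 0$, which keeps the problem feasible. When the algorithm queries $\bar x$, the adversary responds as follows:
\begin{itemize}
\item if $\bar x<0$, it returns $-x\le 0$;
\item if $\bar x$ lies to the right of the current interval, it returns $x\le u$, where $u$ is the current right endpoint (or a point slightly inside the interval);
\item if $\bar x$ lies inside the interval, it either asserts $\bar x\in P$ or returns a new cut $x\le b'$ with $b'<\bar x$ inside the interval, and then shrinks the interval accordingly while keeping it of positive length.
\end{itemize}
After $N$ queries the interval $(\ell,u)$ still has positive length. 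At least two completions remain consistent with all answers: add the row $x\le u'$ for some $u'\in(\ell,u)$, or add it for a different value. These completions have different optimal values, so the algorithm's output is wrong on one of them.

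Each completion is a legitimate polyhedral separation oracle for some finite system with matrix formed from copies of $A_0$. One condition must be checked here. Every earlier response ``$\bar x\in P$'' must remain true, so $u'$ must exceed every $\bar x$ declared feasible, and every returned cut must belong to the final system. This is why the adversary maintains the invariant that the interval lies strictly above all points declared feasible.

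\textbf{Main obstacle.} The hardest part is consistency with exact answers. The algorithm is allowed to output the optimum together with a dual certificate, that is, a specific returned row that is tight. The adversary must therefore ensure that no row returned so far is the binding one in both completions. This works if the final binding row $x\le u'$ is always a fresh row that has never been returned. So I choose the completion value strictly below every right-hand side returned so far and strictly above every feasible query, and I include two possible values $u'_1\neq u'_2$. Then neither the value nor the certificate can be correct on both instances. If $\theta$ is required to depend on the normalized rows in a way that makes $n=1$ degenerate, I would instead use $n=2$ with the rows $\pm e_1$ and $\pm e_2$ and run the same argument in the first coordinate.
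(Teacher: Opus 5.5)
Your proposal is correct and follows the paper's argument: a one-dimensional adversary over repeated copies of fixed rows, where the optimum is the smallest right-hand side among the rows $x\le b_i$ and can always be pushed below everything the algorithm has seen. One small omission: your case list skips queries with $0\le\bar x\le\ell$, which should simply be declared feasible.

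The paper's version is simpler in two ways. It takes every row equal to $1$, so $\theta_A=\theta_{(1)}$ is immediate and $P=(-\infty,b_m]$ is never empty, so the row $-x\le 0$ is not needed. It also answers every query with a fresh violated cut below the query and below all earlier cuts, which removes the interval bookkeeping and the separate discussion of certificates.
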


In light of the negative results,  Theorem~\ref{thm:LP-main} is conceptually the best possible one can hope for in the oracle model for linear optimization. The only scope for improvement may be to find algorithms that depend on better condition numbers of $(A,b)$, or use fewer oracle calls or arithmetic operations.

Even though Theorem~\ref{thm:LP-main} uses a more restrictive oracle model than the standard strong separation oracle assumption, we show that it can reproduce many important results for rational polyhedra in \cite{gls}. In particular, simultaneous Diophantine approximation can be avoided in most applications, and dual optimal solutions can be found much more efficiently.  These results are discussed in Section~\ref{sec:rational}.

\subsubsection{Reduction to the conic setting}\label{sec:conic-reduce}

The algorithms in Theorem~\ref{thm:LP-main} are derived from conic
optimization problems using the conic embedding.

We recall that a cone $K \subseteq \R^n$ is a convex set that is closed under
positive scalings, that is, $\lambda K = K$ for any $\lambda > 0$. We define a
conic separation oracle for $K$, to be an oracle that on input $\bar{x} \in
\R^n$, either outputs that $\bar{x} \in K$ or, if $\bar{x} \not\in K$,
outputs a \emph{non-zero} vector $v \in \R^n \setminus \{0\}$ such that $v^\top \bar{x} \leq 0$ and $v^\top x
\geq 0$, $\forall x \in K$. We do not assume that $K$ is closed, non-empty or
even polyhedral in this definition.\footnote{The only difference from the standard notion of a separation oracle is
that the right hand side is required to be 0. This follows automatically for any separator if $K \neq
\emptyset$ (since $0$ must be in the closure of $K$), so it is only a
non-trivial requirement when $K = \emptyset$ (which will be an important case for
the computation of Farkas certificates). We further note that the separator
produced by the oracle is not required to be strict,
i.e., we do not require  $v^\top \bar x\le 0$ or that $v^\top x > 0$ $\forall x \in K$. This is because such a
separator may not exist if $K$ is not closed.} 
Our algorithms make use of the following subroutine:

\begin{center} \fbox{\begin{minipage}{0.8\textwidth} \noindent
{\sf Oracle } {\sc Approx-Conic-Dual} \\
{\bf Input:}  A cone $K$ given by a conic separation oracle, and $\varepsilon>0$.\\
{\bf Output:} Either a point $x\in K$, or an {\em $\varepsilon$-approximate conic Farkas certificate}, which is defined by a set $\{m_j\st j\in J\}$ of vectors returned by the separation oracle, along with
multipliers $\lambda\in \Rpp^J$ such that
\begin{equation}\label{eq:approx-conic}\left\|\sum_{j\in J}\lambda_j m_j\right\|<\varepsilon, \qquad \sum_{j\in J}\lambda_j \|m_j\|_1\ge 1\, .\end{equation}
\end{minipage}}\end{center}
We let $\To(n,\varepsilon)$ denote the number of oracle calls and $\Ta(n,\varepsilon)$ the number of arithmetic operations of this subroutine. We assume these are of the form  $\To(n,\varepsilon)=g_o(n)\log^{\nu_o}(n/\varepsilon)$ and  $\Ta(n,\varepsilon)=g_a(n)\log^{\nu_a}(n/\varepsilon)$
for some constant values $\nu_o,\nu_a\ge 1$.
We assume that the number $|J|$ of oracle separators involved in the $\varepsilon$-approximate conic Farkas certificate \eqref{eq:approx-conic} is bounded by a function $\tau(n)$.

In Section~\ref{sec:dual certificates} we show the following using the JLSW algorithm \cite{Jiang2020}.
\begin{theorem}\label{thm:approx-conic-oracle}
There exists an oracle-polynomial algorithm for \textsc{Approx-Conic-Dual} with
$\To(n,\varepsilon)=O(n\log(n/\varepsilon))$, $\Ta(n,\varepsilon)=O(n^3\log(n/\varepsilon))$, and $\tau(n)=O(n)$. 
\end{theorem}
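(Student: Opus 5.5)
We plan to run the JLSW cutting plane method \cite{Jiang2020} on a bounded, full-dimensional relaxation of the cone and read off a Farkas-type certificate from the cuts it collects. The first step is a normalization. Replace $K$ by the convex set $K' = K\cap B_\infty(\bO,1)$, or more conveniently by a slab $\{x\st \|x\|_\infty\le 1\}$ intersected with $K$. A separation oracle for $K'$ is the given conic oracle together with the box inequalities. Every separator $m_j$ returned by the conic oracle satisfies $m_j^\top x\ge 0$ on $K$. We rescale each such separator by $1/\|m_j\|_1$ so that $\|m_j\|_1=1$; this costs $O(n)$ arithmetic per call.

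Next we search for a point deep inside the relaxed set. We run JLSW to look for a point of $K'_\varepsilon=\{x\in K' \st m^\top x\ge \varepsilon' \text{ for returned } m\}$. Concretely, we use JLSW in the variant that, given a convex set in a ball of radius $r=\sqrt n$, either finds a feasible point or certifies that the set contains no ball of radius $\rho$. For the certificate we use the standard dual output of Vaidya/JLSW-type methods: a nonnegative combination of the collected cuts, of which there are at most $O(n)$ by the volumetric barrier bookkeeping. This combination shows that the polytope defined by the cuts has width less than $O(\rho n)$ in a suitable sense. We take $\rho$ of order $\varepsilon/\mathrm{poly}(n)$, so that $\log(nr/\rho)=O(\log(n/\varepsilon))$. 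This gives the claimed $\To$ and $\Ta$.

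If JLSW returns a point of $K$, we are done. Otherwise we convert its certificate into \eqref{eq:approx-conic} as follows. The method yields multipliers $\mu\ge 0$ on the conic cuts $m_j$ and $\mu'\ge0$ on the box cuts $\pm e_i$ with $\sum_j\mu_j m_j+\sum_i\mu'_i(\pm e_i)=g$, where $\|g\|$ is small, and with an associated right-hand-side inequality $\sum_i\mu'_i<\mathrm{small}$. The idea is that the box constraints have right-hand side $1$ while the conic cuts have right-hand side $0$. Emptiness of a ball of radius $\rho$ around the origin-containing region forces the box multipliers to be tiny relative to the total mass. Hence $\|\sum_j\mu_j m_j\|\le \|g\|+\sum\mu'_i$ is small, while normalizing gives $\sum_j\mu_j\|m_j\|_1=\sum_j\mu_j=1$. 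One delicate point is the origin, which always lies in the closure of $K$: the target set must exclude a neighborhood of $0$. To handle this we add the constraint $\mathbf 1^\top$-type normalizations via the cuts themselves, requiring $m_j^\top x\ge \varepsilon'\|m_j\|_1$, i.e.\ we search for points robustly inside $K$.

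The main obstacle is extracting valid nonnegative multipliers from JLSW, which by itself outputs only primal information. We would follow the Burrell--Todd style argument referenced in the abstract, which Section~\ref{sec:dual certificates} develops. The argument runs as follows. At termination the current polytope $\{x\st m_j^\top x\ge \varepsilon'\}\cap\text{box}$ has small width or volume. An LP over the at most $\tau(n)=O(n)$ retained cuts then certifies that the shifted system is infeasible within tolerance. Carathéodory reduces its support to $O(n)$ cuts, and solving this LP takes $O(n^3)$ arithmetic, within budget. Finally, the tolerance $\varepsilon'$ is set to $\varepsilon/\mathrm{poly}(n)$, so the duality gap bound translates into $\|\sum\lambda_j m_j\|<\varepsilon$.
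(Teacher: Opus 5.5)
Your proposal leaves open the one step that carries this theorem: turning a JLSW run into nonnegative multipliers on oracle cuts with $\|\sum_j\lambda_j m_j\|<\varepsilon$ and $\sum_j\lambda_j\|m_j\|_1\ge 1$. You first appeal to a ``standard dual output'' of Vaidya/JLSW-type methods. You then concede that JLSW outputs only primal information, and finally defer to the Burrell--Todd machinery of Section~\ref{sec:dual certificates} without carrying it out. The claim that the box multipliers are ``tiny relative to the total mass'' is exactly the estimate that needs proof. Renormalizing to $\sum_j\mu_j=1$ after the fact inflates $\|g\|$ and $\sum_i\mu'_i$ by $1/\sum_j\mu_j$, and nothing in your sketch bounds the conic mass from below. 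The fallback ``LP over the retained cuts'' is also unsupported. JLSW's termination is a volume statement about the polytope of retained, relaxed cuts, not infeasibility of your shifted system, so you have no argument that the LP optimum is below $\varepsilon$. Moreover, solving such an LP exactly in $O(n^3)$ arithmetic in the real model would amount to a strongly polynomial LP algorithm. The $\varepsilon'$-shift has no effect either: a strong separation oracle communicates only the normal vector, and Vaidya/JLSW choose the right-hand side $u_i>a_i^\top z$ of each added cut themselves.

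The paper fills this step in closed form, without solving any optimization problem. In the proof of Theorem~\ref{thm:Vaidya} (and its JLSW version, Theorem~\ref{thm:JLSW}), it builds at termination a certified concave quadratic form from the approximate volumetric center $z$ and the leverage scores $\gamma_i=\sigma_i(z)$ (Lemma~\ref{lemma:sandwich}). This comes with explicit multipliers $\mu^{(k)}$ certifying $a_k^\top x\ge\ell_k$, and an enclosing ellipsoid of large determinant. Lemma~\ref{lem:c-Q-bound} then yields a thin direction $v=a_k/\|a_k\|$ among the $O(n)$ retained normals. Lemma~\ref{lemma:gamma-optimality} certifies both $v^\top x\le v^\top p+\varepsilon/6$ and $v^\top x\ge v^\top p-\varepsilon/6$. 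Theorem~\ref{thm:approximate-farkas} adds the two certificates, and three things happen:
\begin{itemize}
\item $v^\top p$ cancels;
\item box multipliers are dropped, because their right-hand side $r\|\mu\|_1$ dominates their contribution to the norm;
\item $\|v\|\ge 1$ together with $\varepsilon<4r$ forces the conic mass to be at least $1/3$.
\end{itemize}
If you want to avoid Burrell--Todd, your idea can be repaired differently. Suppose no $(\varepsilon/2)$-certificate exists among the retained normalized cuts $\hat a_i=a_i/\|a_i\|_1$. Then the minimax theorem gives $x$ with $\|x\|\le 1$ and $\hat a_i^\top x\le-\varepsilon/2$ for all $i$. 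Hence a ball of radius $\Omega(\varepsilon)$ around $x/2$ lies in the retained polytope, contradicting its small volume. You would then find the certificate by approximately minimizing $\|\sum_i\lambda_i\hat a_i\|$ over the simplex with an interior point method. That costs $O(n^{\omega+1/2}\log(n/\varepsilon))$ arithmetic rather than $O(n^3)$, which fits the budget only because $\omega<5/2$.
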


The above corresponds to the requisite approximate problem we need for solving certain conic problems exactly. For the purpose of exact solutions, we will
require further assumptions on the possible outputs of the oracle as in the
preceding section.

Consider cones of the form $K = \{x \in \R^n: M_T
x \geq 0, M_S x > 0\}$, defined by a matrix $M \in \R^{m \times n}$ and a (possibly trivial) partition $(S,T)$ of  $[m]$, where $M_S \in \R^{S \times n}$, $M_T \in \R^{T
\times n}$ denote the corresponding row-submatrices of $M$. Slightly abusing notation,
we let $m_i \in \R^n$, $i \in [m]$, satisfy $m_i^\top = M_{\{i\}}$, i.e., the
column vector whose transpose is the $i^{th}$ row of $M$.

\begin{center} \fbox{\begin{minipage}{0.8\textwidth} \noindent
A \emph{polyhedral conic separation oracle} for $K= \{x \in \R^n: M_T
x \geq 0, M_S x > 0\}$ is an oracle that, given
$\bar x\in \R^n$, either returns that $\bar x \in K$, or a vector $v \in
\R^n$, such that $\exists i \in [m]$ satisfying $v = m_i$ and for which
$v^\top \bar{x} < 0$ if $i \in T$ or $v^\top \bar{x} \leq 0$ if $i \in S$.
\end{minipage}}\end{center}

From the perspective of implementation, the separator does not specify the
index $i$, it needs only reveal whether $v^\top$ is a row indexed by $S$ or by
$T$. For our purposes in this paper, the list of strict inequalities induced by $M_S x >
0$, will in fact be known  in advance and will satisfy $|S| \le 2$.

\medskip

We now formulate our three main conic problems. In each case,
the goal is to provide algorithms that are oracle-polynomial in $n$ and
$\log(1/\delta_M)$.  The problems are defined
over a closed polyhedral cone $K
\subseteq \R^n$ of the form $K = \{x \in \R^n \st Mx
\geq 0\}$; that is, $S=\emptyset$. The particular oracle assumptions will be detailed in Theorem~\ref{thm:conic-main}.
 In the first problem, the first row $m_1^\top$ of $M$ plays
a special role and is given to us.

\begin{itemize}
    \item {\bf Strong conic feasibility problem:} either find an $x\in K$ such that $m_1^\top x>0$, or find a vector $y\in \Rp^m$ with $y_1=1$ such that $M^\top y=0$ certifying that no such $x$ exists.

\item {\bf Conic validity problem:}  Given $c\in \R^n$, either return
 an
    $\bar x\in K$ with $c^\top \bar x<0$, or find 
	 a certificate $y\in \Rp^k$ such that $M^\top y=c$ showing that $c^\top x\ge 0$ holds for every $x\in K$. 
\item {\bf Conic minimum-ratio problem:} The input is given by $c,d\in \R^n$, along with a certificate that $d^\top x\ge 0$ is valid for $K$. This certificate is expressed by a set $\{m_i\st i\in I\}$ for some $I\subseteq [\km]$ with $|I|\le n$ and  $y^{(d)}\in \Rp^m$ with $M^\top y^{(d)}=d$ and $\supp(y^{(d)})=I$. The goal is to find an optimal solution to the problem
\begin{equation}\label{eq:min-ratio}
\gamma^*\coloneqq \max\left\{\gamma\in\R\st (c-\gamma d)^\top x\ge 0\, ,\,\forall x\in K\right\}=\min\left\{\frac{c^\top x}{d^\top x}\st x\in K,\, d^\top x>0\right\}.\end{equation}
It is easy to check the equivalence of the two definitions.
 Depending on whether the problem is feasible and bounded, we ask for the following output.
	\begin{itemize}
		\item {\em Optimality:} if $\gamma^*$ is finite, return $\gamma^*$ and $x^*\in K$ with $(c-\gamma^* d)^\top x^*=0$, $d^\top x^*>0$, along with a dual  certificate $y\in \Rp^m$ such that $M^\top y=c-\gamma^* d$.
		\item {\em Infeasibility:} if $d^\top x=0$ for all $x\in K$, then return $y\in \Rp^m$ such that $M^\top y=-d$.
		\item {\em Unboundedness:} if \eqref{eq:min-ratio} is unbounded, return $\bar x\in K$ with $d^\top \bar x>0$, and $x\in K$ with $c^\top x<0$ and $d^\top x=0$.
	\end{itemize}
\end{itemize}
Standard LP duality implies that exactly one of the above cases is applicable.

\begin{remark} Throughout the paper, as above, we often refer to vectors $y\in \R^\km$, and require the computation of $M^\top y$, even though $M$ is only implicitly defined by a separation oracle. Whenever we use such notation, what we mean is that $y$ is represented by a set of rows $\{m_i\st i\in I\}$ for some $I\subseteq [m]$, $|I|\leq \mathrm{poly}(n)$, and by a vector $\tilde y\in\R^I$ such that $y_i=\tilde y_i$ for $i\in I$, $y_i=0$ for $i\notin I$.
\end{remark}

The strong feasibility problem and the validity problem can be reduced to each other:  the validity problem is the strong feasibility problem over the matrix $M'=\binom{c^\top}{M}$, whereas the strong feasibility problem is the validity problem for $c=-m_1$. We differentiate them since for the  validity problem  our goal is to find an algorithm whose running time only depends on $n$ and $\delta_M$, but not on $c$.
The strong feasibility algorithm is also significantly simpler than the validity algorithm and has a better running time estimate.

\begin{theorem}\label{thm:conic-main}
 Assume that there exists an oracle-polynomial-time algorithm for \textsc{Approx-Conic-Dual} that, for any $n \in \N$ and $\varepsilon > 0$, performs at most
$\To(n,\varepsilon)$ oracle calls, $\Ta(n,\varepsilon)$ arithmetic
operations, and that $\tau(n)$ is the size of the $\varepsilon$-approximate
conic Farkas certificate returned. Letting $K=\{x\in \R^n\st M x\ge 0\}$, $M \in \R^{m \times n}$, the following holds:
\begin{enumerate}[(i)]
\item\label{part:feasibility}  Given a polyhedral conic separation oracle for
\[K_1 =\{x \in \R^n: Mx \geq 0, m_1^\top x > 0\}\, ,\]
the strong conic feasibility problem can be solved using
$O(n)\cdot\To(n,\delta_M/O(n))$ oracle calls and
$O(n)\cdot\Ta(n,\delta_M/O(n))+O(n^3)\cdot\To(n,\delta_M/O(n))+O(n^3\tau(n)\log\log(1/\delta_M))$ arithmetic operations.
\item\label{part:validity} Given $c \in \R^n$ and a polyhedral conic separation oracle for
\[
K_c = \{x \in \R^n: Mx \geq 0, -c^\top x > 0\}\, ,
\]
 the conic validity problem can be solved using $O(n)\cdot\To(n,\delta_M/O(n))$ oracle calls and
$O(n)\cdot\Ta(n,\delta_M/O(n))+O(n^3)\cdot\To(n,\delta_M/O(n))+O((n^5+n^3\tau(n))\log\log(1/\delta_M))$ arithmetic operations.
\item\label{part:ratio}  Given $c,d \in \R^n$, $y^{(d)}\in\Rp^m$ such that $d = M^\top y^{(d)}$, $I=\supp(y^{(d)})$, $|I|\le n$, and
 polyhedral conic separation
oracles for the two cones
\[
K_{-d} = \{x \in \R^n: Mx \geq 0, d^\top x > 0\}\quad\mbox{ and }\quad K_I^{=} =
\{x \in \R^n: Mx \geq 0, M_I x =0\}\, ,\]
 the conic minimum-ratio problem
can be solved using $O(n)\cdot\To(n,\delta_M/O(n))$ oracle calls and
$O(n)\cdot\Ta(n,\delta_M/O(n))+O(n^3)\cdot\To(n,\delta_M/O(n))+O((n^5+n^3\tau(n))\log\log(1/\delta_M))$
arithmetic operations.
\end{enumerate}
\end{theorem}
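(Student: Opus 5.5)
We prove part (i) first and reduce parts (ii) and (iii) to it through a recursive projection scheme.

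\textbf{Part (i): one call and its outcomes.} Call \textsc{Approx-Conic-Dual} on $K_1$ with $\varepsilon=\delta_M/O(n)$. There are two possible outcomes.
\begin{itemize}
\item If it returns a point $x\in K_1$, we are done.
\item Otherwise it returns an $\varepsilon$-approximate conic Farkas certificate $\sum_{j\in J}\lambda_j m_j$ with $|J|\le\tau(n)$.
\end{itemize}
In the second case we use the certificate as follows. Normalise the rows to unit length, which changes $\varepsilon$ by at most a $\sqrt n$ factor since $\|m_j\|_1\le\sqrt n\|m_j\|$. Then take a basis-style decomposition of the combination (a Carathéodory reduction costing $O(n^2\tau(n))$ arithmetic). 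Definition~\ref{def:delta} now gives the key estimate. Write the near-zero combination in terms of a maximal linearly independent subset. The coefficient mass on the rows that keep the vector small must then be $\ge\|\cdot\|/\delta_M$-controlled. Hence a row $m_k$ with large multiplier ($\lambda_k\|m_k\|\gtrsim 1/\tau(n)$) satisfies the following: $m_k^\top x=0$ for every $x\in K$ with $\|x\|$ bounded. Otherwise, pairing the certificate with $x$ would contradict $\|\sum\lambda_j m_j\|<\varepsilon$.

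More precisely, suppose $x\in K$. Then
\[
0\le \sum_j \lambda_j m_j^\top x \le \varepsilon\|x\|.
\]
So each term $\lambda_j m_j^\top x$ is at most $\varepsilon\|x\|$. Combined with $\delta_M$, this forces exact orthogonality to the lineality part. We conclude that the large-coefficient rows are \emph{implicit equalities} of $K$.

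\textbf{Projection and recursion.} Once such a row $m_k$ is identified, do one of two things.
\begin{itemize}
\item If $k=1$, the strong feasibility problem is infeasible. We will then reconstruct an exact $y$ at the end.
\item Otherwise, restrict to the hyperplane $m_k^\top x=0$. Do this by projecting all oracle answers onto $m_k^\perp$. The projected system has dimension $n-1$, and a lemma (to be proved) shows that $\delta$ of projected rows does not drop below $\delta_M$, up to a constant. Projection orthogonally to a row of $V$ preserves the sine-angle characterization from Lemma~\ref{lem:delta-lem}.
\end{itemize}
Recursing at most $n$ times gives the $O(n)\cdot\To(n,\delta_M/O(n))$ oracle bound. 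The $O(n^3)\cdot\To$ arithmetic term accounts for maintaining the projection (an $O(n^2)$ update per oracle answer, or $O(n^3)$ if the full Gram–Schmidt is rebuilt).

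\textbf{Exact dual certificate.} When infeasibility is concluded, the collected rows $m_1$ and the equality rows span a system where $m_1$ lies in the span of the equality-direction rows with nonnegativity recoverable. We solve a small linear system exactly to obtain $y$ with $y_1=1$ and $M^\top y=0$. Nonnegativity may require a rounding or refinement step. Here we would use a binary-search-on-precision (doubling $\log(1/\varepsilon)$ guess) so that no knowledge of $\delta_M$ is needed. Each of the $O(\log\log(1/\delta_M))$ doubling rounds costs $O(n^3\tau(n))$ arithmetic, which gives the last term.

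\textbf{Part (ii).} Run (i) for $K_c$ with $m_1=-c$. The difficulty is that $\delta$ of $\binom{c^\top}{M}$ may be tiny, and we want dependence on $\delta_M$ only. The plan is to first find the lineality space and implicit equalities of $K$ itself via (i)-style projections. After that, decompose $c$ into its component in the span of the found equalities plus a remainder. This is then tested by a single approximate call in which $c$'s coefficient is controlled. The $n^5\log\log$ term comes from repeated exact least-squares updates.

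\textbf{Part (iii).} Use (ii) as a subroutine inside a parametric search on $\gamma$. Maintain a dual certificate for $c-\gamma d$ and move to the breakpoint where a new row enters. The breakpoints correspond to bases, so only $O(n)$ such moves are needed, and $K_I^{=}$ is used to handle the infeasible case.

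\textbf{Main obstacle.} I expect the main obstacle to be the rigorous claim in part (i): that an $\varepsilon$-approximate certificate with $\varepsilon\approx\delta_M/n$ certifies an exact implicit equality. The same claim must also survive projection.
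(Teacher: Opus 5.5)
\textbf{There is a genuine gap in part (i), and parts (ii) and (iii) are not yet proofs.}

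Your implicit-equality claim in (i) can be salvaged. Assume $\varepsilon\sqrt n\,\tau(n)<\delta_M$, and apply Lemma~\ref{lem:cone-angle-delta} to the extreme rays of $K\cap\spn(M^\top)$. The lineality part plays no role, since every $m_k$ is orthogonal to $\ker M$. This shows that each row with $\lambda_k\|m_k\|\ge 1/(\sqrt n\,\tau(n))$ satisfies $m_k^\top x=0$ on all of $K$.

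The problem, however, asks for an exact $y\ge 0$ with $y_1=1$ and $M^\top y=0$. Knowing that certain rows are implicit equalities does not produce one. ``Solve a small linear system, then round or refine for nonnegativity'' is not a procedure: a basis or least-squares solve gives signed multipliers, and nothing you show guarantees an exact nonnegative combination on the rows you kept. You point at the right tools (Carath\'eodory plus Definition~\ref{def:delta}), but a plain Carath\'eodory reduction can destroy the mass $\sum_j\lambda_j\|m_j\|$, and then it proves nothing.

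The missing idea is the weighted dichotomy of Lemma~\ref{lem:conic-Caratheodory} with $w_j=\|m_j\|$. Either the certificate reduces to a linearly independent combination of at least the same mass and norm at most $\varepsilon$, which explicitly witnesses that the current guess $\hat\delta$ exceeds $\delta_M$. Or it yields an \emph{exact} support-minimal $\mu\ge 0$ with $M^\top\mu=0$ among the returned rows. The paper recurses only on such exact circuits, maintaining $\xi\ge0$ with $\supp(\xi)=F$ and $M^\top\xi=0$, and outputs $y=\mu+\alpha\xi$ once $m_1^F=0$.

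This dichotomy is also what handles the unknown $\delta_M$. You cannot call the subroutine with $\varepsilon=\delta_M/O(n)$, and in your scheme a too-large guess would silently project onto wrong hyperplanes with nothing to detect it. The $O(n^3\tau(n)\log\log(1/\delta_M))$ term counts one Carath\'eodory step per certified failure, across the $O(\log\log(1/\delta_M))$ squarings of $\hat\delta$.

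\textbf{Part (ii).} Only an oracle for $K_c$ is given, so you have no separation oracle for $K$ with which to find its implicit equalities. Even knowing $\spn(K)$, a single approximate call with a ``controlled'' coefficient on $c$ cannot give an exact $y\ge0$ with $M^\top y=c$ at a $c$-independent precision.

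The paper instead calls \textsc{Approx-Conic-Dual} on $K_c$ with $\varepsilon=\hat\delta^2/(8n^3)$. When the multiplier of $-c$ is large, the rows $j$ with $\lambda_j\|m_j\|\hat\delta>\|M^\top\lambda-c\|$ are in general \emph{not} implicit equalities of $K$. They can, however, be set to equality without losing a point of $K_c$ (Lemma~\ref{lem:strong-validity-correct-delta}). Because this restriction may be wrong when $\hat\delta>\delta_M$, the certificate found in $\ker(M_{F\cup J})$ must be pulled back by Lemma~\ref{lem:combine-vectors}. That lemma returns either a nonnegative certificate for the larger system or a failure witness. Its $O(n^4)$ cost per level is the source of the $n^5\log\log(1/\delta_M)$ term, not least-squares updates.

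\textbf{Part (iii).} The parametric walk in $\gamma$ with ``$O(n)$ breakpoints because breakpoints correspond to bases'' is unjustified. Bases can be exponentially many, nothing bounds the number of pivots of such a walk by $O(n)$, and pivoting needs explicit rows you do not have.

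The paper avoids any search. It first uses (ii) on $K_{-d}$ and $K_I^{=}$ to detect infeasibility and unboundedness. It then builds an oracle that cuts every $\bar x\in K_{-d}$ by $(c-\bar\gamma d)^\top x<0$, where $\bar\gamma=c^\top\bar x/d^\top\bar x\ge\gamma^*$. The approximate certificate then aggregates, via $y^{(d)}$, into one for $c-\gamma d$ with $\gamma=\min_t\gamma_t$. From there it runs the validity-style recursion (Lemma~\ref{lem:max-correct-delta}) and the same pullback, using $y^{(d)}$ to shift from $\gamma$ to $\gamma^*$.
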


\begin{remark}\em Note that if a polyhedral conic separation oracle for $K$ were available, one could implement the required oracles for $K_1$, $K_c$, $K_{-d}$ with $O(n)$ additional arithmetic operations, whereas the separation oracle for $K^=_I$ could be implemented with $O(n^2)$ additional arithmetic operations (assuming that a projection matrix to $\ker(M_I)$ is pre-computed). Unfortunately, for our application to Theorem~\ref{thm:LP-main}, we will not be able to assume that a polyhedral conic separation oracle for $K$ is available, whereas we will be able to implement the specific separation oracles for $K_1$, $K_c$, $K_{-d}$, and $K^=_I$. The issue is that the polyhedral separation oracles for $P=\{x\in\R^n\st Ax\le b\}$ and for $\rec(P)$ do not provide a conic polyhedral separation oracle for the standard conic embedding $K=\{(x\mid t)\in \R^{m+1}\st Mx\ge 0\}$, with $M$ as defined in \eqref{eq:M-matrix}; see Footnote~\ref{footnote:oracle} for an explanation. However,
we will show in the proof of  Theorem~\ref{thm:LP-main} in Section~\ref{sec:main-proof} that we can use the polyhedral separation oracles for $P$ and $\mathrm{rec}(P)$ to implement all required oracles in Theorem~\ref{thm:conic-main}. \end{remark}

\subsubsection{Application to rational polyhedra}
Let us now focus on \emph{rational polyhedra}, i.e. polyhedra where all facets can be described by rational inequalities of bit complexity at most $\varphi$, called the \emph{facet complexity}.
 Bounded facet complexity guarantees bounded \emph{vertex complexity}, i.e. all extreme point solutions are rational numbers of bounded encoding length.
The seminal work of Gr\"otschel, Lov\'asz, and Schrijver,
summarized in the book {\em Geometric Algorithms and Combinatorial
  Optimization} \cite{gls}, provided a polynomial-time algorithm for optimizing over rational polyhedra given by a strong separation oracles.

They use a black-box argument that requires a subroutine to find either a feasible point or a small-volume enclosing ellipsoid for the a convex set. Such a subroutine can be implemented using the ellipsoid method. Exploiting that a small volume ellipsoid must be sufficiently thin in a certain direction, they use \emph{simultaneous Diophantine approximation} to identify an affine subspace containing the feasible region.
This subspace can be computed using the lattice basis reduction algorithm by Lenstra, Lenstra, and Lov\'asz \cite{lll}. Recently, Jiang \cite{Jiang2021} found an algorithm with better oracle complexity by using a more direct reduction to \cite{lll}. This algorithm also crucially relies on lattice techniques and hence it does not extend to the real model of computation.

\medskip

For the sake of simplicity, let us discuss the problem of finding dual optimal solutions under the following assumption:
\begin{equation}\label{eq:bounded-facet-assumption}
\begin{aligned}
\parbox{0.8\textwidth}{\emph{The encoding sizes of the vectors returned by the strong separation oracle are polynomially bounded by the facet complexity $\varphi$.}}
\end{aligned}
\end{equation}
Under this assumption, one  can find an {\em optimal dual solutions with oracle inequalities} \cite[Lemma 6.5.15]{gls}.
This assumption is not without loss of generality; we discuss this and different concepts of dual solutions in Section~\ref{sec:rational}. In Section~\ref{sec:without-bounded}, we sketch how one can still recover the results of \cite{gls} from our approach also in the case that \eqref{eq:bounded-facet-assumption} does not hold.

For finding the optimal dual solutions, \cite{gls} needs several runs of the (primal) ellipsoid method, including the final one where they apply the ellipsoid method to explicitly solve the dual problem restricted to the dual variables corresponding to a large (albeit still polynomially bounded) set of inequalities. The running time  depends on a higher power of $\varphi$.

Under the same assumption \eqref{eq:bounded-facet-assumption}, Theorem~\ref{thm:conic-main} enables a much simpler and more efficient algorithm.
Even though Theorem~\ref{thm:conic-main} requires a polyhedral separation oracle, in Section~\ref{sec:continued} we show that one can convert a strong separation oracle to a polyhedral separation oracle by rounding the right hand sides of the inequalities using the continued fractions method.  Lemma~\ref{lem:delta-phi} shows that $\delta_M\ge 1/2^{O(n^3\varphi)}$ for the associated conic system.
Compared to the general framework in \cite{gls}, this method has the following advantages, under assumption \eqref{eq:bounded-facet-assumption}.

\begin{itemize}
	\item
	We can identify lower dimensional subspaces without simultaneous Diophantine approximation. The only `number theoretic' subroutine we use is the continued fractions method; otherwise, we rely on the purely geometric measure $\delta_M$.  Our algorithm can recurse by setting some inequalities returned by the oracle to equality.
	\item We recover dual certificates along with the primal solutions, without the need of solving a second, much larger linear program given by the restricted dual. The running time in  \cite{gls} depends on a higher degree polynomial of $\varphi$; our running time depends linearly on $\log(1/\delta_M)$.
    \item  The algorithms in \cite{gls} require accuracy depending on $\varphi$ from the approximate subroutines.
The running time of our algorithm depends on the condition number $\delta_M$, which can be drastically better than the lower bound implied by $\varphi$. 
\end{itemize}

\medskip

Dual optimal solutions for LPs in the oracle model can be important for applications in combinatorial optimization. For example, recent constant factor approximations to the asymmetric travelling salesman problem \cite{Svensson2020,traub2022} crucially use an optimal dual solution to the Held--Karp relaxation; prior to our work, this could only be obtained using the method in \cite{gls}. {For this relaxation, one can naturally obtain a polyhedral separation oracle that returns a violated degree constraint or blossom inequality. Therefore, we do not even need to round the right hand sides. Our algorithm proceeds directly by identifying tight inequalities in an optimal solution, and terminates with exact primal and dual optimal solutions in strongly polynomial time.}

\subsubsection{Implementing the approximate conic oracle}

Both \cite{gls} and Theorem~\ref{thm:conic-main} are black-box methods. However, \cite{gls} requires a seemingly weaker `primal-only' subroutine, whereas Theorem~\ref{thm:conic-main} requires an approximate dual certificate.
We next explain that this difference is illusory: a $\varepsilon$-approximate conic Farkas certificates can be naturally extracted from the ellipsoid method as well as other convex feasibility algorithms.

The algorithm of Theorem~\ref{thm:approx-conic-oracle} is based on the JLSW \cite{Jiang2020} cutting plane method.
In Section~\ref{sec:dual certificates}, we present a general technique to extract $\varepsilon$-approximate conic Farkas certificates from various methods to solve convex feasibility problems; we only list the oracle complexities here.

\begin{itemize}
\item The ellipsoid method~\cite{gls} can be modified to provide an algorithm for \textsc{Approx-Conic-Dual} with $\To(n,\varepsilon)=O(n^2\log(1/\varepsilon))$ oracle calls (Section~\ref{sec:ellipsoid}).
\item Geometric rescaling algorithms in~\cite{DVZ,rothvoss} can be modified to provide an algorithm for \textsc{Approx-Conic-Dual} with $\To(n,\varepsilon)=O(n^3\log(1/\varepsilon))$ oracle calls  (Section~\ref{sec:rescale}).
\item Volumetric cutting plane methods \cite{Jiang2020,Lee2015,vaidya96} can be used to provide an algorithm for \textsc{Approx-Conic-Dual} with $\To(n,\varepsilon)=O(n\log(1/\varepsilon))$ oracle calls (Section~\ref{sec:Vaidya}).  We note that \cite{Jiang2020} implies  the bounds $\To(n,\varepsilon)=O(n\log(n/\varepsilon))$ and $\Ta(n,\varepsilon)=O(n^3\log(n/\varepsilon))$, see Theorem~\ref{thm:JLSW}.
\end{itemize}
A common feature of the above methods is that, when applied to the intersection $K\cap \B^n(1)$ of the cone and the unit ball, whenever they do not determine feasible point,  an ``$\varepsilon$-thin direction'' is identified, that is, an oracle inequality $m_t$ such that $m_t^\top x\le \varepsilon\|m_t\|\cdot \|x\|$ for every $x\in K$. By convex duality, there must exist a dual certificate of this bound using inequalities returned by the oracle during the course of the algorithm; such certificate provides an $\varepsilon$-approximate Farkas certificate.

Duality  is ostensibly absent from the original ellipsoid method or from Vaidya's method: at first
sight, they  appear to be ``primal'' methods only, where infeasibility
is concluded by a volumetric argument, relying on the assumption that the feasible region has a sufficiently large volume, without returning a Farkas
certificate of infeasibility. Furthermore, the ellipsoid method does not  maintain a certificate of the fact the feasible region is contained within the current ellipsoid.

In a remarkable paper, Burrell and Todd \cite{burrell-todd} showed that, in the context of the ellipsoid method, both these
 shortcomings are illusory. They introduced a new view of the ellipsoid method in terms of what we will refer to in Section~\ref{sec:dual certificates} as \emph{`certified concave quadratic forms'}. The ellipsoid $E$ produced by the algorithm at any iteration is   maintained in the form $E=\{x\in\R^n:\: q(x)\ge 0\}$, where the
 strictly concave quadratic form $q(x)$ is built from the defining constraints of $P$ and the initial ball constraint $\|x\|\le r$  in a way that immediately
 verifies the containments $P\subseteq E$. Furthermore  Burrell and Todd  showed that, from such a representation, one can construct dual certificates for any bound that holds for a linear function over the ellipsoid $E$.

We extend Burrell and Todd's framework beyond the ellipsoid method.  A further illustration is given on geometric rescaling algorithms, by showing how certified quadratic forms can be maintained during the execution of such an algorithm.
For volumetric cutting plane methods, there is no additional overhead in maintaining the quadratic forms. We show that the final output of the algorithm can be converted to a certified concave quadratic form.
\medskip

Nemirovski, Onn, and Rothblum \cite{nemirovski2010} extended the work of Burrell and Todd by giving a very
general certification procedure for the oracle model. Consider any convex
minimization problem given by oracle access, returning separators for
infeasible points and subgradients of the objective function for
feasible points, and consider  an algorithm  (such as variants of
cutting plane methods),
that can find a feasible solution with objective value within
$\varepsilon>0$ from the optimum value. Under mild
assumptions, they show that it is possible to construct a dual certificate of
the approximate optimality of the solution as an appropriate conic
combination of the separators and subgradients obtained during the algorithm.
Any such certification procedure should be applicable to implement \textsc{Approx-Conic-Dual}.

\subsection{Overview of techniques}

\paragraph{Adaptive bound on $\delta_M$}
Even for an explicitly given matrix $M$,
 $\delta_M$ is computationally hard to approximate, see  Remark~\ref{rmk:remark-inapproximable}.  Nevertheless, our algorithms are all ``oblivious'' to the value of $\delta_M$: we do not need to know this parameter to terminate within the claimed number of oracle calls. We use a standard guessing procedure, starting with the optimistic estimate $\hat\delta=1/n$, and run the algorithm with this value. The precision $\varepsilon$ required from {\sc Approx-Conic-Dual} will depend on our adaptive estimate  of $\hat\delta$. The algorithm may succeed even if $\hat\delta>\delta_M$. In case the algorithm fails to deliver the required conclusions, it will be able ``certify'' such failure, by returning a set of linearly independent rows $\{m_i\st i\in J\}$ along with coefficients $\lambda_i\in \R^J$ such that $\sum_{i\in J}\lambda_i m_i<\hat\delta\max_{i\in J}\lambda_i \|m_i\|$, thus showing  $\hat\delta>\delta_M$. We can then update our guess to either the bound implied by these vectors or to $\hat\delta^2$, whichever is smaller, and simply resume the algorithm from the current iteration with the new estimate $\hat \delta$.

Hence, if our algorithm has not succeeded for the very first time, we will have the guarantee that $\hat\delta\ge \delta_M^2$ for all subsequent trials.
Assuming the running time of each trial is bounded by $\mathrm{poly}(n,\log(1/\hat\delta))$, the overall running time bound of all  trials will be dominated by the running time of the final, successful trial.
\medskip

\paragraph{Strong conic feasibility}
Consider the strong conic feasibility algorithm for a cone $K$ and constraint $m_1^\top x>0$. We  call the subroutine \textsc{Approx-Conic-Dual} for the cone $K_1=K\cap \{x\in\R^n \st m_1^\top x>0\}$ and $\varepsilon=\hat\delta/O(n)$. The algorithm terminates if a feasible solution is found. Otherwise, an $\varepsilon$-approximate conic Farkas certificate $\lambda\in\Rp^m$ is returned.

Assume first the certificate $\lambda$  satisfies $M^\top \lambda = 0$. The certificate shows that $K\subseteq \ker(M_J)$, where $J=\supp(\lambda)$. In particular, if $\lambda_1>0$, then this shows that $m_1^\top x=0$ for all $x\in K$, and the algorithm terminates. Otherwise, the algorithm recurses on the lower dimensional space $\ker(M_J)$. In case $M^\top \lambda \neq 0$, we use a  Carath\'eodory-style subroutine that either succeeds in finding another nonzero $\lambda'\in\Rp^m$,  with linearly independent support such that $M^\top\lambda' =0$, in which case we proceed as above, or fails in finding such a vector, in which case it returns a certificate that our adaptive estimate $\hat\delta$ was incorrect (that is, $\hat\delta>\delta_M$).

\paragraph{Conic validity and conic minimum-ratio} The algorithms for conic validity and conic minimum-ratio are more involved, due to the fact that the number of iterations should only depend on $n$ and $\delta_M$, but not on $c$ and $d$. In particular, the simple strategy adopted for strong conic validity does not work, as it would require a level of precision $\varepsilon$ dependent on $c$ and $d$.

We briefly outline the main idea for the conic validity algorithm; {the conic-minimum ratio algorithm is a further extension of this idea.} The conic validity algorithm for the cone $K$ and vector $c$, calls the subroutine \textsc{Approx-Conic-Dual} for the cone $K_c=K\cap \{x\in\R^n \st c^\top x>0\}$ and $\varepsilon=\hat\delta^2/O(n^2)$. We terminate in case a feasible solution is found. Otherwise, we consider the $\varepsilon$-approximate conic Farkas certificate $(\lambda,\tau)\in\Rp^m\times \Rp$ returned, where $\tau$ is the multiplier for the inequality $-c^\top x<0$.
If $\tau$ is sufficiently small, then we can remove $c$ from the certificate and recurse as in the strong feasibility algorithm. If $\tau$ is large, then---assuming  $\hat\delta\le \delta_M$---the inequalities of $Mx\ge 0$ corresponding to suitably large entries of $\lambda$ have to be satisfied at equality for every $x\in K$. We recurse on the lower dimensional space and continue. At any step we will therefore have a set $F\subseteq [m]$ with the guarantee that $K_c\cap \ker(M_F)\neq\emptyset$, assuming $\hat\delta\le \delta_M$. However, observe that, if our estimate $\hat\delta$ was actually not correct, it is possible that we recursed on the wrong subspace, that is, $K_c\cap \ker(M_F)=\emptyset$. (This is in contrast with
the feasibility algorithm described above, where $K_1\subseteq \ker(M_F)$ is always guaranteed when recursing.) The algorithm recurses until it either finds $x\in K\cap \ker(M_F)$ such that $c^\top x>0$, in which case we stop with the feasible solution $x$, or a dual certificate $\tilde \lambda$ of the fact that $c^\top x\le 0$ for all $x\in K\cap \ker(M_F)$. The main technical tool at this stage is an algorithm, described in Lemma~\ref{lem:combine-vectors}, is a \emph{pullback} subroutine. Starting from $\tilde \lambda$, it either produces a dual certificate $\lambda\in\Rp^m$ such that $M^\top \lambda=-c$, in which case we stop, or detects a failure for $\hat \delta$, in which case we update our bound $\hat \delta$ and continue.

\subsection{Further related work}
The Burrell--Todd representation \cite{burrell-todd} was also used by Lamperski, Freund, and Todd \cite{lamperski} to  developed an ``oblivious ellipsoid algorithm'' that terminates in finite time, assuming $P$ is explicitly given by inequalities, and that $P$ is either full-dimensional or empty. In contrast, our result is applicable also for non full-dimensional LPs defined by a separation oracle. We also note that whereas \cite{lamperski} uses a modification of the standard ellipsoid method, our approach uses the standard method in a black-box manner.

\emph{Geometric rescaling} is a more recent class of polynomial-time linear programming algorithms: the common theme of such algorithms is to boost simple iterative algorithms by adaptively changing the scalar
  product. The first such algorithms were given  by Betke \cite{betke} and by Dunagan and Vempala
  \cite{Dunagan-Vempala}, and a number of papers have since appear on the subject. We refer the reader to \cite{DVZ} for an overview of such results. Whereas most of these algorithms work only under the assumption that the constraints defining the cone are explicitly given as part of the input, some variants, including those described in \cite{DVZ,rothvoss}, can be naturally extended to the oracle setting.  We implement the  approximate conic oracle in Section~\ref{sec:rescale} for these variants.

Theorem~\ref{thm:conic-main} also gives an answer to a question raised in \cite{DVZ} on finding a ``primal-dual'' geometric rescaling algorithm for the conic maximum support problem that does not depend on a priori bounds on the condition numbers. Such an algorithm was also obtained for explicitly given systems by Pena and Soheili \cite{Pena2020}. Their algorithm runs the rescaling algorithms in parallel on the primal and dual problems, with increasing estimates on a certain condition number.

\paragraph{Organization of the paper}

 Section~\ref{sec:prelim} introduces basic notation and concepts, provides the proof of Theorem~\ref{thm:LP-main}, and proves some fundamental facts about the condition measure $\delta_M$.
Sections \ref{sec:conic-feas}, \ref{sec:conic-validity}, and \ref{sec:optimization} describe the algorithms for the strong conic feasibility, conic validity, and minimum conic ratio problems, along with their analyses.
Sections~\ref{sec:quadratic-certify} and \ref{sec:thin} describe our general approach to finding approximate Farkas certificates from certified quadratic forms. Sections~\ref{sec:ellipsoid}--\ref{sec:rescale} describe different implementations of the oracle {\sc Approx-Conic-Dual}, based on different methods (ellipsoid, volumetric cutting plane, and geometric rescaling). Section~\ref{sec:rational} relates the results presented to the classical framework of rational polyhedra. Finally, Section~\ref{sec:circuits} shows the connection between the condition number $\delta$ and the circuit imbalance measure.  Appendix~\ref{sec:impossibility} includes missing proofs, in particular for the the impossibility results of Propositions~\ref{prop:no-strongly} and \ref{prop:need-b}.

\section{Preliminaries}\label{sec:prelim}
We let $\Rp$ denote the set of nonnegative reals.
 For natural numbers $k<m$, let $[m]=\{1,2,\ldots,m\}$, $[k,m]=\{k,k+1,\ldots,m\}$.
For any number $\alpha\in\R$, we let $\alpha^+=\max\{\alpha,0\}$ and
$\alpha^-=\max\{-\alpha,0\}$.  For a vector $x\in \R^n$, $x^+$ and $x^-$ in $\R^n$ are defined by  $(x^+)_i=x_i^+$, $(x^-)_i=x_i^-$, $i\in[n]$. Thus, $x=x^+-x^-$.
Let $\vec{e}_j$ denote the $j$th unit vector in $\R^n$.
For a set of vectors $\{v_j\st j\in J\}\subseteq \R^n$, we let $\spn(v_j\st j\in J)\subseteq \R^n$ the linear subspace they span; for a matrix $B\subseteq \R^{n\times m}$, let $\spn(B) \subseteq\R^n$ denote the linear subspace spanned by the columns of $B$.

For any matrix $H\in\R^{k\times n}$ and every $J\subseteq [k]$, we denote by $H_J$ the submatrix of $H$ defined by the rows indexed by $J$, and similarly, for $v\in\R^k$, $v_J$ defined the restriction of $v$ to the entries indexed by $J$.

  $K \subseteq \R^n$ is a \emph{cone} if $K$ is convex and $K$
is closed under positive scalings, that is, for every $x\in K$ and $\lambda>0$ it follows that $\lambda x
\in K$. For a set of vectors $v_1,\dots,v_k \in \R^n$,
we let ${\rm cone}(v_1,\dots,v_k) \coloneqq  \left\{\sum_{i=1}^k \lambda_i v_i\,:\, 
\lambda_1,\dots,\lambda_k \geq 0\right\}$ denote the \emph{closed} cone generated by
$v_1,\dots,v_k$.

For a convex set $C \subseteq \R^n$, we say that $F \subseteq C$ is a \emph{face} of $C$ if $F$ is convex
and if for all $x,y \in C$, we have that $\lambda x + (1-\lambda) y \in F,
\lambda \in (0,1)$, implies that $x,y \in F$. The
\emph{lineality space} of $C$ is the largest linear subspace $W\subseteq \R^n$ such that $C+W =
C$. A closed convex set $C$ is said to be \emph{pointed} if its lineality space is $W =
\{0\}$. For a closed pointed cone $K$, the set of $1$-dimensional faces of
$K$ are called the \emph{extreme rays} of $K$. Slightly abusing notation, we will
also say that $v \in K \setminus \{0\}$ is an extreme ray of $K$ if 
$\spn(v)$
is a $1$-dimensional face of $K$. For two sets $X,Y\subseteq \R^n$, we let $X+Y\coloneqq\{x+y\st x\in X\, , \, y\in Y\}$ denote their Minkowski sum.

Given $p\in \R^n$ and $r>0$, we denote by $\bb{B}^n(p,r)$ the ball of radius $r$ in $\R^n$ centered at $p$. We use the notation $\bb{B}^n(r)$ for $\bb{B}^n(0,r)$.
We denote by $\Snpp$ and $\Snp$ the sets of symmetric $n\times n$ positive
definite and positive semi-definite matrices, respectively. For $P,Q\in \Snp$, we use $P\preceq Q$ if
$Q-P\in \Snp$.
For $Q\in \Snpp$ and a vector $v\in\R^n$, we let $\|v\|_Q
\eqdef\sqrt{v^\top Qv}$; this defines a norm over $\R^n$.
 We use
$\|\cdot\|_1$ for the $\ell_1$-norm and $\|\cdot\|_2$ for the Euclidean norm.
 When there is no risk of confusion we simply
write $\|\cdot\|$ for $\|\cdot\|_2$.

The following simple claim will be needed for running time estimations when using an adaptive bound on the condition number $\delta_M$.
\begin{lemma}\label{lem:delta-log}
Let $1/n=\delta_1> \delta_2>\ldots> \delta_t$ and $\delta>0$ be real numbers such that $\delta_{i+1}<\delta_i^2$ for $i\in [t-1]$, $\delta_t> \delta^2$, and $\nu\ge 1$. Then,
\[
\sum_{i=1}^t \log^\nu(n/\delta_i)=O(1)\cdot\log^\nu(n/\delta)\, .
\]
\end{lemma}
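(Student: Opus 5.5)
Set $L_i\coloneqq\log(n/\delta_i)$ and $L\coloneqq \log(n/\delta)$. The plan is to show that the $L_i$ grow at least geometrically (roughly doubling), so that $\sum_i L_i^\nu$ is a geometric series dominated by its last term, and then to bound that last term by $O(L)$.

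\emph{Growth of $L_i$.} For $i\in[t-1]$, the hypothesis $\delta_{i+1}<\delta_i^2$ gives
\[
L_{i+1}=\log n+\log(1/\delta_{i+1})>\log n+2\log(1/\delta_i)=2L_i-\log n .
\]
Since $\delta_1=1/n$, we have $L_1=2\log n$, and since the $\delta_i$ decrease, $L_i\ge 2\log n$ for every $i$, i.e.\ $\log n\le L_i/2$. Hence $L_{i+1}>2L_i-L_i/2=\tfrac32 L_i$. (This needs $n\ge2$ so that $\log n>0$; the case $n=1$ is degenerate and can be handled by the convention $\log n\ge 1$ or by replacing $\log n$ by $\max\{1,\log n\}$ throughout.)

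\emph{Summing.} By the previous step $L_i\le (2/3)^{t-i}L_t$, so
\[
\sum_{i=1}^t L_i^\nu\le L_t^\nu\sum_{k\ge0}(2/3)^{\nu k}\le L_t^\nu\sum_{k\ge0}(2/3)^{k}=3L_t^\nu,
\]
where we used $\nu\ge1$ to get a constant independent of $\nu$.

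\emph{Last term.} From $\delta_t>\delta^2$ we get $L_t<\log n+2\log(1/\delta)\le 2L$, hence $L_t^\nu< 2^\nu L^\nu$. Combining, the sum is at most $3\cdot 2^\nu L^\nu=O(1)\cdot\log^\nu(n/\delta)$, with the constant depending only on $\nu$, which the statement treats as fixed. The only step needing care is the growth estimate, where the additive $-\log n$ must be absorbed using $L_i\ge 2\log n$, which in turn comes from $\delta_1=1/n$ together with monotonicity of the $\delta_i$.
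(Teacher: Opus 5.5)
Your proof is correct. The paper gives no proof of this lemma; it calls it a ``simple claim'' and moves on. Your argument supplies one: absorb the additive $\log n$ using $L_i\ge L_1=2\log n$ to get $L_{i+1}>\tfrac32 L_i$, sum the resulting geometric series, and use $\delta_t>\delta^2$ to bound $L_t<2L$.

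Two small remarks.

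\begin{itemize}
\item Your caveat about $n\ge 2$ is unnecessary. When $n=1$, the inequality $\log n\le L_i/2$ reads $0\le L_i/2$, which holds, so $L_{i+1}>\tfrac32 L_i$ is valid for every $n\ge 1$ without any convention.
\item Your remark that the constant depends on $\nu$ is not an artifact of the proof; the lemma's $O(1)$ genuinely requires $\nu$ to be fixed. Take $t=2$ and $\delta_2$ tiny, and choose $\delta$ just below $\sqrt{\delta_2}$. Then $L_2/L\to 2$ as $\delta_2\to 0$, so a factor growing like $2^\nu$ is unavoidable. This is consistent with the paper, where $\nu_o,\nu_a$ are fixed constants.
\end{itemize}
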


\paragraph{Computational assumptions}
In  Sections~\ref{sec:conic-feas}--\ref{sec:conic-validity}, we use the real RAM model of computation, allowing basic arithmetic operations $+$, $-$, $\times$, $/$ and comparisons. The computations used in the black-box subroutines beyond the oracle calls can also be implemented in the Turing model without any significant modifications.

In Section~\ref{sec:dual certificates}, we construct dual certificates using the output of various algorithms. The algorithms used here, in particular the JLSW algorithm \cite{Jiang2020}, use the real model and also use squareroot computations. In this section, we also use squareroot computations. These might be circumvented, but we use this for simplicity, and given that it is already used in the algorithms we rely on.

\paragraph{Separation oracle variants}\label{sec:oracles}
 For a convex set $K\subseteq \R^n$, a \emph{strong separation oracle} takes as input a point $\bar x\in \R^n$, and either
 either returns the answer $\bar x\in K$, or a nonzero
 vector $\separ\in \R^n$, such that $\separ^\top x<\separ^\top \bar x$ for every $x\in K$.
This is the  standard  separation oracle model used for the ellipsoid and other cutting plane methods.
 The notion of conic separation oracle required for {\sc Approx-Conic-Dual} oracle is, as discussed in the Introduction, identical to the strong separation oracle if the cone $K$ it defines is non-empty.

Recall that our main results Theorem~\ref{thm:LP-main} and Theorem~\ref{thm:conic-main}, make stronger oracle assumptions. Namely, we assume that a polyhedron $P$, is defined by a \emph{polyhedral separation oracle}: if $\bar x\not\in P$, the oracle returns an  inequality $a^\top  x\le \beta$ violated by $\bar x$, where the set of all inequalities returned by the oracle for all possible choices of $\bar x\notin P$ is finite. We will often write that $P=\{x\st Ax\le b\}$ is defined by a polyhedral separation oracle to mean that $Ax\le b$ comprises all such possible inequalities, with the understanding that $Ax\le b$ is not explicitly given, but we have access to it via the oracle.

In Section~\ref{sec:rational} we show that for rational polyhedra satisfying assumption \eqref{eq:bounded-facet-assumption} on bounded bit-complexity, a strong separation oracle can be converted into a  polyhedral separation oracle.

\subsection{Reducing LP to the conic setting: proof of Theorem~\ref{thm:LP-main}}\label{sec:main-proof}

We now give the proof of Theorem~\ref{thm:LP-main} using Theorems~\ref{thm:approx-conic-oracle} and~\ref{thm:conic-main}.
Consider a polyhedron $P=\{x\in \R^n\st Ax\le b\}$ for some $A\in \R^{m\times
n}$, $b\in \R^m$ and its recession cone ${\rm rec}(P) = \{x \in \R^n \st -Ax
\geq 0\}$, both of which are given by a polyhedral separation oracle.

We  derive parts {\em (i), (ii), (iii)} of Theorem~\ref{thm:LP-main} from the corresponding part of Theorem~\ref{thm:conic-main} using
the standard homogenization of $P$ into $\R^{n+1}$. Namely, we examine
\begin{equation}
K = \{(x \mid t) \in \R^{n+1}: tb - Ax \geq \bO, t \geq 0\} = \{(x \mid t) \in \R^{n+1}: M(x \mid t) \geq \bO\} \label{eq:homogenize},
\end{equation}
where $M \in \R^{(m+1) \times (n+1)}$ is as in \eqref{eq:M-matrix}. Note that $(x
\mid 0) \in K \Leftrightarrow x \in {\rm rec}(P)$ and that $(x \mid t) \in K,
t > 0
\Leftrightarrow x/t \in P$.\footnote{\label{footnote:oracle}We cannot directly build a
polyhedral conic separation oracle for $K$ given our assumptions. In
particular, for an input $(x\mid 0)$ to the oracle, if $(x\mid 0) \notin K$, we would
need to return $(-a_i \mid b_i)$ such that $-a_i^\top x < 0$. Our polyhedral
separator for ${\rm rec}(P)$ would give us access to $a_i$ but not to $b_i$,
noting that the inequality $(-a_i \mid 0)^\top (x \mid t) \geq 0$ is not
necessarily valid for $K$. We will be able to circumvent this issue however,
as the problems we wish to solve will only require polyhedral conic
separation oracles for sub-cones of $K$, which we will be able to build
directly.}

\paragraph{\bf (i) Primal feasibility} We must compute a solution to $Ax \leq b$
or a Farkas certificate of infeasibility $A^\top \lambda = 0, b^\top \lambda <
0, \lambda \geq 0$. We reduce to solving strong conic feasibility using
Theorem~\ref{thm:conic-main} on $K$ as above with the constraint $t > 0$ corresponding to $m_1$. For
this purpose, we require a polyhedral separator for $K_1 \coloneqq  \{(x \mid t) \in K: t >
0\}$, which can be derived directly from the polyhedral separation oracle for
$P$. Namely, given $(x' \mid t')$, if $t' \leq 0$, we return the separator $t >
0$, and if $t > 0$, we call the separator for $P$ on $x'/t'$. If $x'/t'$
violates $a_i x \leq b_i$ for $P$, we return $(-a_i \mid b_i)$
for $K_1$. From here, if Theorem~\ref{thm:conic-main} returns $(x \mid t) \in K_1$,
we return $x/t \in P$, and if it returns $\lambda \in \Rp^m$ satisfying
$\lambda^\top(-A,b) + 1 \cdot (0,1) = 0$, we return $\lambda$ as the Farkas
certificate.

\paragraph{\bf (ii) Dual feasibility} We must either find a solution to $Ax \leq 0$, $c^\T x > 0$, or to $A^\top
\lambda = c$, $\lambda \geq 0$.
For this purpose, we reduce to the conic validity problem using
Theorem~\ref{thm:conic-main} on the cone $K = {\rm rec}(P) \coloneqq  \{x \in \R^n:
-Ax \geq 0\}$ and the vector $\bar{c} \coloneqq  -c$. This requires a polyhedral
conic separation oracle for $K_{\bar{c}} = \{x \in \R^n: -Ax \geq 0, c^\top x
> 0\}$. This is immediate to implement since we have access to a polyhedral
separation oracle for ${\rm rec}(P)$ and $c$ is known to us. Since the conic
validity problem is a direct restatement of the dual feasibility, the
correctness of the reduction is evident.

\paragraph{\bf (iii) Optimization} Assuming both $Ax \leq b$ and $A^\top
\lambda = c, \lambda \geq 0$ are feasible, we must find an optimal
primal-dual pair $x^*,\lambda^*$ satisfying complementary slackness, namely
$(\lambda^*)^\top (b-Ax^*) = 0$. We reduce this to the conic minimum-ratio
problem on $K$ given by $\min \{-c^\top x/t: (x \mid t) \in K, t > 0\}$ using
Theorem~\ref{thm:conic-main}. Note that this problem can be rewritten as
$\min \{\bar{c}^\top (x \mid t)/d^\top (x \mid t): (x \mid t) \in K,
d^\top (x \mid t) > 0\}$, where $\bar{c} = (-c \mid 0)$ and $d=(0
\mid 1)$.

For this purpose, we first require that $d = (0 \mid 1)$ be given as a
conic combination of the original constraints of $K$, which trivially holds
since $(0 \mid 1)$ induces an original constraint itself; hence $I=\{1\}$ and $y^{(d)}_1=1$. We also require
polyhedral separators for $K_{-d} = \{(x \mid t) \in K: d^\top x > 0\} = \{(x \mid t)
\in K: t > 0\}$ and for $K_I^= = \{(x \mid t) \in K: d^\top x = 0\} = \{(x \mid 0)
\in K\} \coloneqq  ({\rm rec}(P) \mid 0)$. As explained in the previous paragraphs, these
separators can be directly constructed from the corresponding polyhedral
separators for $P$ and ${\rm rec}(P)$. Furthermore, we recall that
feasibility of $Ax \leq b$ is equivalent to $K_{-d} \neq \emptyset$ (i.e.,
$d^\top(x \mid t) = 0$ is not valid for $K$) and feasibility of $A^\top
\lambda = c, \lambda \geq 0$ is equivalent to $-c^\top x = \bar{c}^\top(x \mid t)
\geq 0$ being a valid inequality for $K_I^=$.

Given the above, the conic minimum-ratio solve must output $\gamma^* \in \R$,
$(x^* \mid t^*) \in K_{-d}$, $\lambda^* \in \Rp^m, \beta^* \geq 0$ such that 
$(\bar{c} - \gamma^* d)^\top (x^* \mid t^*)=0$ and
$(\lambda^*)^\top (-A,b) + \beta^*(0,1) = \bar{c}^\top - \gamma^*
d^\top = (-c,-\gamma^*)$. We claim that $x^*/t^*, \lambda^*$ are the
desired optimal primal-dual pair. To begin, we note that the inclusion $x^*/t^* \in P$ is direct since $t^* > 0$. Furthermore, by the guarantees of the output we have that
\[
0 = (\bar{c} - \gamma^* d)^\top (x^* \mid t^*) = ((\lambda^*)^\top(-A,b) + \beta^* (0,1)) (x^* \mid t^*) = (\lambda^*)^\top(t^* b - Ax^*) + \beta^* t^* \geq 0 + 0 = 0.
\]
Since $t^* > 0$, the above implies that $\beta^* = 0$ and that
$(\lambda^*)^\top(t^* b - Ax^*) = 0$. Since $\beta^* = 0$, we see that
$\lambda^*$ is a valid dual solution. Finally, complementary slackness
follows from $(\lambda^*)^\top(t^* b - Ax^*)=0$ after dividing by $t^*$.

For all three problems above, the desired running times now follow directly
by combining Theorem~\ref{thm:approx-conic-oracle} with the corresponding
part of Theorem~\ref{thm:conic-main}.

\subsection[Properties of the delta-measure]{Properties of the $\delta$-measure}

We start by showing that the $\delta$-measure introduced in Definition~\ref{def:delta} is equivalent to the definition of the ``$\delta$-distance property'' studied in \cite{brunsch2013,dadush2016shadow,eisenbrand2017}, and that it is positive if and only if $V$ is finite.

\begin{lemma}\label{lem:delta-lem}
For a set of vectors $V\subseteq \R^n$, $\delta_V$ is the largest value such that, for every $W\subseteq V$ and every $v\in V\sm \spn( W)$, the Euclidean distance between $v$ and $\spn(W)$ is at least $\delta_V\|v\|$. Further, $\delta_V>0$ if and only if $\{v/\|v\|\st v\in V\setminus\{\bO\}\}$ is a finite set.
\end{lemma}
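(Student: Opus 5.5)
We first prove the equivalence of the two characterizations, and then the finiteness criterion. Let $\delta'$ be the largest value such that $\mathrm{dist}(v,\spn(W))\ge \delta'\|v\|$ for every $W\subseteq V$ and $v\in V\setminus\spn(W)$. The plan is to show $\delta_V\ge\delta'$ and $\delta'\ge\delta_V$.

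For $\delta_V\ge\delta'$, take linearly independent $\{v_i\st i\in I\}\subseteq V$ and $\lambda\in\R^I$. Choose $i$ maximizing $|\lambda_i|\|v_i\|$, and assume $\lambda_i\neq 0$ (otherwise there is nothing to show). Set $W=\{v_j\st j\in I\setminus\{i\}\}$; by linear independence, $v_i\notin\spn(W)$. Then
\[
\Bigl\|\sum_j\lambda_jv_j\Bigr\|=|\lambda_i|\Bigl\|v_i+\sum_{j\ne i}(\lambda_j/\lambda_i)v_j\Bigr\|\ge|\lambda_i|\,\mathrm{dist}(v_i,\spn(W))\ge\delta'|\lambda_i|\|v_i\|.
\]

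For $\delta'\ge\delta_V$, take $W\subseteq V$ and $v\in V\setminus\spn(W)$. Pick a basis $\{w_j\}\subseteq W$ of $\spn(W)$; then $\{v\}\cup\{w_j\}$ is linearly independent. Let $u=\sum_j\mu_jw_j$ be the orthogonal projection of $v$ onto $\spn(W)$. Applying the definition of $\delta_V$ with coefficient $1$ on $v$ and $-\mu_j$ on $w_j$ gives
\[
\mathrm{dist}(v,\spn(W))=\|v-u\|\ge\delta_V\max\Bigl(\|v\|,\max_j|\mu_j|\|w_j\|\Bigr)\ge\delta_V\|v\|.
\]
Together these give $\delta'=\delta_V$.

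For the finiteness claim, note first that both conditions depend only on the normalized set $\{v/\|v\|\}$. Zero vectors are irrelevant here: a family containing $\bO$ is never linearly independent, and $\bO\in\spn(W)$ always.

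If the normalized set is finite, there are only finitely many pairs $(W,v)$ with $W$ a linearly independent subset spanning the relevant subspaces and $v\notin\spn(W)$, up to scaling. Each of them gives a positive ratio $\mathrm{dist}(v,\spn(W))/\|v\|$. Moreover $\spn(W)$ ranges over finitely many subspaces, since each is spanned by a subset of the finite normalized set. The minimum of finitely many positive numbers is positive, so $\delta_V>0$.

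Conversely, suppose the normalized set is infinite. By compactness of the sphere it has an accumulation point, so there exist distinct normalized vectors $u,u'$, with $u'\neq -u$ (as $u$ and $-u$ are distinct points), at arbitrarily small distance. Then $u'\notin\spn(u)$, since the only unit vectors in $\spn(u)$ are $\pm u$. Hence
\[
\mathrm{dist}(u',\spn(u))\le\|u'-u\|\to 0,
\]
and the distance characterization forces $\delta_V=0$.

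The main delicate point is the second direction of the equivalence, where the sum of $\lambda_iv_i$ is bounded by the max over all terms. This is handled by applying the definition to the specific combination that realizes the orthogonal projection and keeping only the $v$ term.
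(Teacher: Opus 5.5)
Your proof is correct and follows essentially the same route as the paper. It proves the same two inequalities between the distance characterization and $\delta_V$; dividing by $\lambda_i$ is the paper's projection onto $\spn(v_j : j\neq i)^\perp$ in another form. It then takes a minimum over finitely many positive ratios in the finite case, and uses a pair of nearby normalized vectors in the infinite case.

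One small fix: $u'\neq -u$ holds because $\|u'-u\|<2=\|-u-u\|$, not merely because $u$ and $-u$ are distinct points.
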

\begin{proof} 
Let $\delta'_V$ be the quantity in the statement; we prove $\delta'_V=\delta_V$. We can  assume that $\bO\notin V$ and that $\|v_j\|=1$ for all $j\in V$, noting that  both quantities $\delta_V$ and $\delta'_V$ are invariant under rescaling vectors in $V$. In particular, we may assume that $V=\{v/\|v\|\st v\in V\}$. 

For the first part, we start by showing $\delta_V\ge \delta'_V$. Let $\{v_j\st j\in I\}\subseteq V$ be a linearly independent set of vectors; for any vector $v_i$, $i\in I$, let $z_i$ be the projection of $v_i$ to $\spn( v_j\st j\in I\sm\{i\})^\perp$. By definition, $\|z_i\|\ge \delta'_V \|v_i\|=\delta'_V$. By the definition of $z_i$,  for every $\lambda\in\R^I$  the projection of $\sum_{k\in I}\lambda_k v_k$ to $\spn( v_j\st j\in I\sm\{i\})^\perp$ is $\lambda_i z_i$, hence $\left\|\sum_{k\in I}\lambda_k v_k\right\|\ge |\lambda_i|\|z_i\|\ge  \delta'_V |\lambda_i|$. This shows $\delta_V\ge \delta'_V$.

For the direction $\delta'_V\ge \delta_V$, let $W\subseteq V$ and  $v\in V\sm \spn(W) $. W.l.o.g.,  we can assume that the vectors in $W$ are linearly independent, hence the vectors in $W\cup \{v\}$ are linearly independent. Denoting by $z$ be the projection of $v$ to $\spn(W)^\perp$, it follows that there exist a unique vector $\lambda\in\R^{|W|}$ such that $W\lambda = v-z$. Since the vectors in $W\cup \{v\}$ are linearly independent, it follows from the definition of $\delta_V$ that $\delta'_V=\|z\|=\|W\lambda-v\|\ge \delta_V \|v\|=\delta_V$, thus, $\delta'_V\ge \delta_V$. %

For the second part of the statement, if $|V|$ is finite, then $\delta'_V$ is defined as the minimum of a finite number of positive numbers, showing $\delta_V=\delta'_V>0$. If $|V|$ is infinite, then
there exists a convergent sequence in $V$ (since we assume that $V\subseteq \B^n(0,1)$); let $v_k$, $k\in \N$ be such a sequence. Thus, for any $\varepsilon>0$, there exists $v_j, v_k\in V$
 such that $\left\|v_j-v_k\right\|<\varepsilon$. Setting $I=\{j,k\}$, $\lambda_j=1$, $\lambda_k=-1$, we have $\|\sum_{i\in I}\lambda_i v_i\|=\left\|v_j-v_k\right\|<\varepsilon$; on the other hand, $|\lambda_j|=|\lambda_k|=1$. This shows that $\delta_V<\varepsilon$ for every $\varepsilon>0$.
\end{proof}

The following characterization can be shown with a similar argument.
\begin{lemma}[{\cite[{Lemma 5(i)}]{brunsch2013}}]\label{lem:inverse-norm}
Consider a matrix $M\in \R^{m\times n}$ such that all rows $m_i^\top$ have norm one. For a matrix $B\in \R^{m\times m}$, let $\gamma(B)$ denote the maximum column norm of $B$.
Then, \[
\frac{1}{\delta_M}=\max\left\{\gamma(N^{-1}):\, N\mbox{ is an $m\times m$ submatrix of }M\right\}
\]
\end{lemma}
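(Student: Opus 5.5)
We read the statement as concerning square nonsingular submatrices $N$ formed by $n$ linearly independent rows of $M$; this is the setting of \cite{brunsch2013}, where $M$ has full column rank $n$. The plan is to reduce both sides to the distance characterization of Lemma~\ref{lem:delta-lem}. By that lemma, $\delta_M$ is the minimum, over linearly independent sets $\{m_i\st i\in I\}$ and $i\in I$, of $\|z_i\|$. Here $z_i$ denotes the projection of $m_i$ onto $\spn(m_j\st j\in I\setminus\{i\})^\perp$, and we use that all rows have unit norm.

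\textbf{Step 1: the minimum is attained at bases.} If $I\subseteq I'$ are both linearly independent, then the span of the other vectors only grows when passing from $I$ to $I'$. Hence the distance from $m_i$ to that span can only decrease. Since $M$ has rank $n$, every independent set extends to a basis $B$ of $n$ rows. Therefore
\[
\delta_M=\min_{B}\min_{i\in B}\|z_i^B\|,
\]
where the outer minimum is over row bases $B$, i.e., over nonsingular $n\times n$ row-submatrices $N=M_B$.

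\textbf{Step 2: computing the columns of $N^{-1}$.} Fix $N=M_B$ and let $w_i$ be the column of $N^{-1}$ indexed by $i\in B$. Then $m_j^\top w_i=\delta_{ij}$ for all $j\in B$. The conditions for $j\ne i$ say that $w_i$ lies in $\spn(m_j\st j\in B\setminus\{i\})^\perp$, which is one-dimensional and spanned by $z_i$. So $w_i=\alpha z_i$ for some scalar $\alpha$. Since $m_i-z_i$ is orthogonal to $z_i$, we have $m_i^\top z_i=\|z_i\|^2$. The condition $m_i^\top w_i=1$ then gives $\alpha=1/\|z_i\|^2$, and hence $\|w_i\|=1/\|z_i\|$.

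\textbf{Step 3: combining.} By Step 2, $\gamma(N^{-1})=\max_{i\in B}\|w_i\|=1/\min_{i\in B}\|z_i\|$. Taking the maximum over all $N$ and using Step 1 yields $\max_N\gamma(N^{-1})=1/\delta_M$.

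The only delicate point is Step 1, i.e., making sure that sets smaller than $n$ are dominated by full bases. This is where the rank assumption on $M$ is needed. If $M$ were rank deficient, one would first restrict to the row space $\spn(M)$, which has dimension equal to $\rk(M)$, and take square submatrices of $\rk(M)$ rows with respect to a basis of that space; the same argument then goes through verbatim.
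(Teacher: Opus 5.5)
Your proof is correct, and it is essentially the argument the paper has in mind. The paper gives no separate proof, only noting that the characterization follows by an argument similar to that of Lemma~\ref{lem:delta-lem}. That argument is exactly your identification of the columns of $N^{-1}$ with $z_i/\|z_i\|^2$ for the projections $z_i$ used there, together with your reduction to row bases. Your reading of the ``$m\times m$'' submatrices as nonsingular square submatrices on $\rk(M)$ rows is also the one consistent with how the lemma is applied in Lemma~\ref{lem:delta-kappa}.
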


The following is the key property of $\delta_M$ in the conic setting. Namely,
it gives a lower bound  on the angles that extreme rays of
$Mx \geq 0$ can form with constraints that they are not incident to.

\begin{lemma}\label{lem:cone-angle-delta}
Let $K=\{x\in\R^n\st Mx\ge 0\}$ be a closed polyhedral cone with $M\in\R^{m\times n}$. We have that $K \cap \spn(M^\top)$ is a  closed pointed cone and for any of its  extreme rays $v$
and any $i \in [m]$, we have that either $m_i^\top v = 0$ or $m_i^\top v \geq \delta_M \|m_i\|\cdot\|v\|$, $\forall i \in [m]$.
\end{lemma}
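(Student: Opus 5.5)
First, pointedness and closedness. $K\cap\spn(M^\top)$ is closed as the intersection of two closed sets. The lineality space of $K$ is $\ker(M)$. The row space $\spn(M^\top)$ is the orthogonal complement of $\ker(M)$. If $x$ lies in the lineality space of $K\cap\spn(M^\top)$, then $Mx\ge 0$ and $-Mx\ge 0$, so $x\in\ker(M)\cap\spn(M^\top)=\{0\}$. Hence the cone is pointed.

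Next, a description of extreme rays. Let $W=\spn(M^\top)$ and $\bar K=K\cap W$, a pointed polyhedral cone inside $W$ of dimension $r=\rk(M)$. For an extreme ray $v$ of $\bar K$, let $J=\{j\st m_j^\top v=0\}$ be its set of tight rows. By the standard characterization of extreme rays of a pointed polyhedral cone (relative to $W$), the rows $\{m_j\st j\in J\}$ span a subspace of dimension $r-1$. Thus $\{x\in W\st M_Jx=0\}=\spn(v)$. Pick a linearly independent subset $\{m_j\st j\in J'\}$, $J'\subseteq J$, $|J'|=r-1$, with the same span.

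Now fix $i$ with $m_i^\top v\neq 0$. Since $v\in K$, we have $m_i^\top v>0$. Also $m_i\notin\spn(m_j\st j\in J')$, since otherwise $m_i^\top v=0$. So $\{m_j\st j\in J'\}\cup\{m_i\}$ is a linearly independent set of $r$ rows spanning $W$.

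For the quantitative bound I would use Lemma~\ref{lem:delta-lem}. Let $z$ be the projection of $m_i$ onto $\spn(m_j\st j\in J')^\perp$. By Lemma~\ref{lem:delta-lem}, $\|z\|\ge\delta_M\|m_i\|$. Both $z$ and $v$ lie in $W$ and are orthogonal to every $m_j$, $j\in J'$. That orthogonal complement inside $W$ is one-dimensional and equal to $\spn(v)$, so $z=\alpha v$ for some scalar $\alpha$. Since $m_i-z\in\spn(m_j\st j\in J')\perp v$, we get $m_i^\top v=z^\top v=\alpha\|v\|^2$. Because $m_i^\top v>0$, we have $\alpha>0$. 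Therefore
\[
m_i^\top v=\|z\|\,\|v\|\ge\delta_M\|m_i\|\,\|v\|.
\]

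The main care point is justifying that the tight rows of an extreme ray of $\bar K$ span an $(r-1)$-dimensional space. I would argue this directly. If their span had dimension less than $r-1$, then $\{x\in W\st M_Jx=0\}$ would contain some $u\notin\spn(v)$. For small $\epsilon>0$, both $v\pm\epsilon u$ lie in $\bar K$, since the non-tight rows satisfy $m_k^\top v>0$ strictly. Then $v=\tfrac12\bigl((v+\epsilon u)+(v-\epsilon u)\bigr)$ with neither summand in $\spn(v)$, which contradicts that $\spn(v)$ is a face. Hence the dimension is $r-1$ and the argument goes through.
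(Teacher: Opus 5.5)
Your proof is correct and follows essentially the same route as the paper: with $S$ the tight rows of the extreme ray $v$, use $\ker(M_S)\cap\spn(M^\top)=\spn(v)$ to see that the projection of $m_i$ onto $\ker(M_S)$ (your $z$) equals $(m_i^\top v)\,v/\|v\|^2$, and bound its norm from below by Lemma~\ref{lem:delta-lem}. The only differences are cosmetic: passing to an independent subset $J'$ is unnecessary, since Lemma~\ref{lem:delta-lem} applies to any subset of rows, and you explicitly prove the dimension count for the tight rows via the perturbation $v\pm\epsilon u$, which the paper simply asserts from extremality.
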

\begin{proof}
As the statement is invariant under rescaling $v$, we can assume without loss of generality that  $\|v\|=1$.
We have that $\ker(M)=\spn(M^\top)^\perp$ is  the lineality space, hence $K \cap \spn(M^\top)$ is pointed. Let $v$ be an extreme ray of $K \cap \spn(M^\top)$ and $i
\in [m]$, and let $S=\{j\in [m]: m_j^\top v=0\}$. Thus, $\ker(M_S)\cap \spn(M^\top)$ is a one-dimensional space spanned by $v$, because $v$ is an extreme ray. If $i\in S$ then
$m_i^\top v = 0$. Hence, assume $i\notin S$, and let $\bar m_i$ the orthogonal projection of $m_i$ onto $\ker(M_S)$. Since $\ker(M_S)\cap\spn(M^\top)$ is the 1-dimensional space spanned by $v$ and since $m_i \in \spn(M^\top)$, it follows that $\bar m_i=(m_i^\top v)v$, so  $m_i^\top v = \|\bar m_i\|$. Since by construction $\|\bar m_i\|$ is the Euclidean distance between $m_i$ and $\spn(M_S^\top)$, by Lemma~\ref{lem:delta-lem} we have $\|\bar m_i\|\ge \delta_M \|m_i\|$  and the statement thus follows. 
\end{proof}

\section{The strong conic feasibility algorithm}\label{sec:conic-feas}

In this section, we prove part (i) of Theorem~\ref{thm:conic-main}, pertaining the strong conic feasibility problem.
Recall that we assume that a polyhedral conic separation oracle is available for
\[K_1 =\{x \in \R^n: Mx \geq 0, m_1^\top x > 0\}\, ,\]
 and that the subroutine \textsc{Approx-Conic-Dual} for $K_1$ is provided as in Section~\ref{sec:our-contr}, requiring $\mathcal{T}_o(n,\varepsilon)$ oracle calls, $\mathcal{T}_a(n,\varepsilon)$ arithmetic operations, and returning an $\varepsilon$-approximate conic Farkas certificate of size at most $\tau(n)$.

The next lemma captures the key recursive step:
\begin{lemma}\label{lem:exact-feas-recurse}
Let $K=\{x\in \R^n\st M x\ge 0\}$ for $M\in\R^{\km \times n}$, given by a conic separation oracle, and let $m_1^\top$ be the first row.
There exists an oracle-polynomial-time algorithm using $O(\mathcal{T}_o(n,\delta_M/(2n\sqrt{n})))$
oracle calls and $O(\mathcal{T}_a(n,\delta_M/(2n\sqrt{n}))+n^2\tau(n)\log\log(1/\delta_M))$ arithmetic operations
that either finds an $x\in K$ with $m_1^\top x>0$, or a nonzero  $\lambda\in \Rp^\km$  that is a minimal support solution to $M^\top \lambda=0$.
\end{lemma}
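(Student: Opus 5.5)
The plan is to call \textsc{Approx-Conic-Dual} once on $K_1$ with precision $\varepsilon=\delta_M/(2n\sqrt n)$, and then convert the approximate certificate into an exact one by a Carath\'eodory-type elimination. Because $\delta_M$ is not known, I will run this with the adaptive estimate $\hat\delta$ described in the overview: start at $\hat\delta=1/n$, and whenever a run certifies $\hat\delta>\delta_M$, replace $\hat\delta$ by the smaller of the exhibited violating value and $\hat\delta^2$. Lemma~\ref{lem:delta-log} then makes the total cost of all runs $O(\mathcal T_o(n,\delta_M/(2n\sqrt n)))$, and the $O(\log\log(1/\delta_M))$ factor counts the number of guesses. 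If the subroutine returns $x\in K_1$, we output it. Otherwise we obtain $\lambda\in\Rpp^J$ with $|J|\le\tau(n)$, $\|\sum_{j\in J}\lambda_j m_j\|<\varepsilon$ and $\sum_{j\in J}\lambda_j\|m_j\|_1\ge 1$. Since $\|m\|_1\le\sqrt n\|m\|$, this gives the normalization $\Phi(\lambda):=\sum_j\lambda_j\|m_j\|\ge 1/\sqrt n$.

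The core step is an elimination loop that keeps $w=\sum_j\lambda_j m_j$ fixed and never decreases the linear functional $\Phi$. While $\{m_j\st j\in\supp(\lambda)\}$ is linearly dependent, compute a nonzero $\mu$ with $\sum_j\mu_j m_j=0$ and $\supp(\mu)\subseteq\supp(\lambda)$. There are two cases.
\begin{enumerate}[(a)]
\item If $\mu\ge 0$ or $\mu\le 0$, then (after possibly negating) $\mu$ is a nonzero nonnegative solution of $M^\top\mu=0$.
\item Otherwise $\mu$ has entries of both signs. Replace $\mu$ by $-\mu$ if needed so that $\sum_j\mu_j\|m_j\|\ge 0$. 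Move to $\lambda+t\mu$, increasing $t$ until the first coordinate of $\lambda$ reaches zero; some coordinate must reach zero because $\mu$ has a negative entry. This preserves $w$ and nonnegativity, does not decrease $\Phi$, and shrinks the support.
\end{enumerate}
Case (b) can occur at most $\tau(n)$ times. Using an incrementally maintained Gaussian elimination over the at most $\tau(n)$ vectors, each step costs $O(n^2)$ arithmetic operations, giving the $n^2\tau(n)$ term per guess.

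If the loop terminates in case (a), we still have to extract a minimal support solution. Let $\mu\ge0$ be the nonzero kernel vector found. A nonnegative kernel vector decomposes conformally into circuits, so there is a circuit with nonnegative sign pattern and support contained in $\supp(\mu)$. To find it, I would repeat the same elimination on $\mu$ alone: take another kernel vector $\nu$ supported in $\supp(\mu)$, not proportional to $\mu$, and move $\mu+t\nu$ until a coordinate vanishes. This keeps the vector nonzero, nonnegative and in the kernel, and strictly shrinks the support. When the support vectors span a one-dimensional kernel, the vector is a minimal support solution of $M^\top\lambda=0$, which we output.

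If instead the loop ends with a linearly independent support $I$, then
\[
\max_{i\in I}\lambda_i\|m_i\|\ \ge\ \Phi(\lambda)/n\ \ge\ \frac{1}{n\sqrt n},
\qquad\text{while}\qquad
\Bigl\|\sum_{i\in I}\lambda_i m_i\Bigr\|<\frac{\hat\delta}{2n\sqrt n}.
\]
By Definition~\ref{def:delta}, this shows that $\hat\delta>\delta_M$, and the set $\{m_i\st i\in I\}$ with coefficients $\lambda$ is exactly the failure certificate used to update $\hat\delta$. When $\hat\delta\le\delta_M$ this case cannot occur, so the final run always returns either a point of $K_1$ or the desired $\lambda$.

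The step I expect to be the main obstacle is keeping the normalization $\Phi\ge 1/\sqrt n$ alive through the Carath\'eodory reduction. A naive reduction could drive $\Phi$ to zero and destroy the contradiction with $\delta_M$. The sign-choice rule above is meant to handle this, since a sign-definite kernel vector is already the desired output. The remaining work is bookkeeping: checking that the arithmetic accounting per guess is $O(n^2\tau(n))$, and that summing over the guesses via Lemma~\ref{lem:delta-log} yields exactly the stated bounds.
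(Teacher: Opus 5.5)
Your proposal is correct and follows the paper's proof essentially step for step. It uses the adaptive estimate $\hat\delta$ with the same update rule and Lemma~\ref{lem:delta-log} for the accounting, a single \textsc{Approx-Conic-Dual} call at precision $\hat\delta/(2n\sqrt n)$, and the weighted Carath\'eodory reduction of Lemma~\ref{lem:conic-Caratheodory} with weights $w_j=\|m_j\|$. The two outcomes of that lemma are exactly your sign-definite kernel vector and your failure certificate.

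The only difference is cosmetic. The paper picks a support-minimal $\mu$ at every elimination step, so a sign-definite $\mu$ is already a circuit, whereas you take any kernel vector and extract a circuit afterwards. Your consistent use of Euclidean norms ($\Phi\ge 1/\sqrt n$) is actually slightly cleaner than the paper's mixing of $\|\cdot\|_1$ and $\|\cdot\|_2$ in \eqref{eq:feas-delta-bound}.
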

In Section~\ref{sec:recurse-feas}, we show how the strong conic feasibility algorithm can be obtained by at most $n$ calls to this subroutine.
The proof of Lemma~\ref{lem:exact-feas-recurse} relies on the decomposition stated in the next lemma. This is essentially a careful reading of the proof of Carath\'eodory's theorem. It is a consequence of \cite[Lemma 4.1]{DadushNV20}; we include the proof for completeness.

\begin{lemma}\label{lem:conic-Caratheodory}
There exists an $O(n^2 |J|)$ time algorithm that, given vectors $\{v_j\st j\in J\}$, $\lambda\in \Rp^J$,  and $w\in \R^J$ such that $w^\top \lambda\ge 0$,
 outputs one of the following.

\begin{enumerate}[(i)]
\item\label{eq:ct-indep}  A vector $\bar\lambda \in \Rp^J$
such that $\sum_{j\in J} \bar\lambda_j v_j=\sum_{j\in J}\lambda_j v_j$,  $w^\top \bar\lambda\ge w^\top \lambda$, and the vectors $\{v_j: \bar\lambda_j>0\}$ are linearly independent.
\item \label{eq:ct-circuit} A  vector $\mu \in \Rp^J$, 
which is a support-minimal solution to $\sum_{j\in J} \mu_j v_j=0$, and such that  $w^\top \mu> 0$.
\end{enumerate}
\end{lemma}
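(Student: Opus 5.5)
The plan follows the standard Carathéodory reduction, run iteratively on the support of the current coefficient vector. We maintain $\bar\lambda\in\Rp^J$ with $\sum_j\bar\lambda_j v_j=\sum_j\lambda_j v_j$ and $w^\top\bar\lambda\ge w^\top\lambda$, starting from $\bar\lambda=\lambda$. Let $S=\supp(\bar\lambda)$. If $\{v_j\st j\in S\}$ is linearly independent, we output (i).

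Otherwise there is a nonzero $\mu\in\R^J$ with $\supp(\mu)\subseteq S$, $\sum_j\mu_j v_j=0$, and support-minimal, meaning the vectors indexed by $\supp(\mu)$ form a circuit. We find it by Gaussian elimination: process the vectors of $S$ one at a time, maintaining a triangular basis of the span of those processed so far. The first vector that is dependent on its predecessors yields a circuit. We take its representation in terms of an independent set and restrict to the vectors that actually appear with nonzero coefficient. Such a dependency is unique up to scaling, hence support-minimal. Replacing $\mu$ by $-\mu$ if necessary, we may assume $w^\top\mu\ge 0$.

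We then split into two cases.
\begin{itemize}
\item If $w^\top\mu>0$ and $\mu\ge 0$, we output $\mu$ as in (ii).
\item If $w^\top\mu>0$ but $\mu$ has a negative entry, set $\bar\lambda\leftarrow\bar\lambda+\alpha\mu$ with $\alpha=\min\{\bar\lambda_j/(-\mu_j)\st \mu_j<0\}$. This keeps $\bar\lambda\ge 0$ and preserves the sum, increases $w^\top\bar\lambda$, and zeroes at least one coordinate of $S$.
\item If $w^\top\mu=0$, choose the sign of $\mu$ so that it has a negative entry (possible since $\mu\ne 0$) and perform the same step. The objective $w^\top\bar\lambda$ stays unchanged, and the support again shrinks.
\end{itemize}
In every case where we continue, $|S|$ decreases by at least one, so there are at most $|J|$ iterations.

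For the running time, a naive count gives $|J|$ eliminations of cost $O(n^2|J|)$ each, which is too slow. This accounting is the main obstacle. To avoid the extra factor of $|J|$, do not restart elimination from scratch. Scan $J$ incrementally, maintaining $\bar\lambda$ together with an independent set $B\subseteq$ (processed indices with positive $\bar\lambda$), with $|B|\le n$, and a factorization of $B$.

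When a new index $k$ with $\bar\lambda_k>0$ arrives, compute the representation of $v_k$ in terms of $B$ in $O(n^2)$. If $v_k\notin\spn(B)$, add $k$ to $B$ by a rank-one update. Otherwise we have a circuit inside $B\cup\{k\}$, and we apply the step above. It either returns (ii) or removes at least one index from $B\cup\{k\}$. Because the removed indices come from an independent set plus one vector, the remaining set is again independent, and the factorization can be updated in $O(n^2)$ per removal.

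Each index is inserted once and removed at most once. The total work is therefore $O(n^2)$ per index, i.e.\ $O(n^2|J|)$. At the end, $\supp(\bar\lambda)=B$ is independent, which gives output (i).
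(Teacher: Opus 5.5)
Your proof is correct. Its core is exactly the paper's exchange step: take a support-minimal dependency $\mu$ inside $\supp(\bar\lambda)$ and orient it so that $w^\top\mu\ge 0$. If $\mu\ge 0$ and $w^\top\mu>0$, output it; otherwise move along $\mu$ until a coordinate reaches zero.

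Where you depart from the paper is the running-time bookkeeping.
\begin{itemize}
\item The paper brings the whole system $\sum_j\mu_j v_j=0$ into echelon form once, at cost $O(n|J|\min\{n,|J|\})$. It then pays $O(n|J|)$ per removed vector to restore the echelon form, for a total of $O(n|J|^2)$. This matches the stated $O(n^2|J|)$ only when $|J|=O(n)$, which is the regime in which the lemma is actually applied ($|J|\le\tau(n)$).
\item Your incremental scan keeps the support among processed indices independent, hence of size at most $n$. Each representation query, insertion and deletion therefore costs $O(n^2)$, and since each index is inserted and removed at most once, the bound $O(n^2|J|)$ holds for arbitrary $|J|$. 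So your route actually delivers the claimed bound in full generality.
\end{itemize}

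Tighten one justification. ``Independent set plus one vector'' alone does not explain why a single step restores independence; removing an element off the circuit would not. The right reason has three parts. First, $B\cup\{k\}$ contains a unique circuit, namely $C=\supp(\mu)$. Second, the update $\bar\lambda+\alpha\mu$ changes only coordinates in $C$, so every zeroed index lies on $C$. Third, any subset of $B\cup\{k\}$ that misses some element of $C$ is independent.

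Also note that when $k$ survives the step, you should delete the zeroed indices from the factorization before inserting $k$. Before those deletions, $v_k$ still lies in the span of the current set.
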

\begin{proof}
We initialize  $\bar\lambda=\lambda$, and maintain  $\sum_{j\in J} \bar\lambda_j v_j=\sum_{j\in J}\lambda_j v_j$,  $w^\top \bar\lambda\ge w^\top \lambda\ge 0$ throughout. At every iteration, either $\supp(\bar\lambda)$ becomes strictly smaller, or we find a vector $\mu$ as in \eqref{eq:ct-circuit}. If we do not end with outcome \eqref{eq:ct-circuit} at some iteration, then we terminate once $\{v_j: \bar\lambda_j>0\}$ are linearly independent.

If the vectors in the support of $\bar\lambda$ are linearly dependent, then let $\mu\neq 0$ be a support-minimal vector such that $\sum_j \mu_j v_j=0$, $\supp(\mu)\subseteq \supp(\bar \lambda)$. If $\mu\ge 0$ and $w^\top \mu> 0$ or $\mu\le 0$ and $w^\top \mu< 0$, then we output $\mu$ or $-\mu$, respectively, and terminate with outcome \eqref{eq:ct-circuit}. Otherwise, possibly by replacing $\mu$ by $-\mu$, we can assume that $w^\top \mu\ge 0$ and $\mu$ has a negative component. Let $\alpha>0$ be the largest number such that $\bar\lambda+\alpha\mu\ge 0$, and update $\bar\lambda\coloneqq\bar\lambda+\alpha\mu$. Note that $|\supp(\bar\lambda)|$ decreases by at least one, and by the choice of $\mu$, we have $\sum_{j\in J} \bar\lambda_j v_j=\sum_{j\in J}\lambda_j v_j$,  $w^\top \bar\lambda\ge w^\top \lambda$.

The running time bound is standard; it takes $O(n |J|\min\{n,|J|\})$ arithmetic operations to bring the system $\sum_{j\in J} v_j \mu_j=0$ (in the variables $\mu_j$, $j\in J$) in normal echelon form via Gaussian elimination. At every iteration, if we reduce the support of $\bar \lambda$, we remove the vectors corresponding to zero components of $\bar \lambda$, and it takes $O(n|J|)$ operations per vector removed to bring the system back to echelon normal form, for a total of $O(n|J|^2)$ operations.
\end{proof}

For $w=\bO$, outcome~\eqref{eq:ct-circuit} cannot occur, and hence we get the algorithmic version of Carath\'eodory's theorem, formulated as follows:
\begin{corollary}[Carath\'eodory reduction]\label{cor:carath}
There exists an $O(n^2 |J|)$ time algorithm that, given vectors $\{v_j\st j\in J\}$, $\lambda\in \Rp^J$,  returns $\bar\lambda \in \Rp^J$
such that $\sum_{j\in J} \bar\lambda_j v_j=\sum_{j\in J}\lambda_j v_j$,  and the vectors $\{v_j: \bar\lambda_j>0\}$ are linearly independent.
\end{corollary}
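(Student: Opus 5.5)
The corollary follows by applying Lemma~\ref{lem:conic-Caratheodory} with $w=\bO\in\R^J$. The hypothesis $w^\top\lambda\ge 0$ then reads $0\ge 0$, so the lemma applies to any given vectors $\{v_j\st j\in J\}$ and $\lambda\in\Rp^J$. It returns either outcome~\eqref{eq:ct-indep} or outcome~\eqref{eq:ct-circuit} within $O(n^2|J|)$ arithmetic operations.

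The first step is to show that outcome~\eqref{eq:ct-circuit} cannot occur. That outcome requires a vector $\mu\in\Rp^J$ with $w^\top\mu>0$. With $w=\bO$ we have $w^\top\mu=0$ for every $\mu$, so no such vector exists.

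To be careful, I would also check this against the procedure in the proof of Lemma~\ref{lem:conic-Caratheodory}. There, the circuit is output only when $\mu\ge 0$ and $w^\top\mu>0$, or when $\mu\le 0$ and $w^\top\mu<0$. Both conditions fail when $w=\bO$. The algorithm therefore always takes the other branch: it replaces $\mu$ by $\pm\mu$ so that $\mu$ has a negative entry, and this is always possible because $\mu\neq 0$. It then moves $\bar\lambda$ along $\mu$ until some coordinate hits zero. Each such step strictly shrinks the support of $\bar\lambda$, so the loop terminates.

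It follows that the algorithm ends with outcome~\eqref{eq:ct-indep}. It outputs $\bar\lambda\in\Rp^J$ with $\sum_{j\in J}\bar\lambda_j v_j=\sum_{j\in J}\lambda_jv_j$ and with $\{v_j\st\bar\lambda_j>0\}$ linearly independent, which is exactly the claimed output. The additional guarantee $w^\top\bar\lambda\ge w^\top\lambda$ holds trivially and can be dropped. The running time is inherited unchanged from the lemma. I do not expect any step to be an obstacle; the only point needing care is the observation that the circuit branch is vacuous when $w=\bO$.
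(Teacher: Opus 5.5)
Your proposal is correct and matches the paper's argument: the paper obtains the corollary by invoking Lemma~\ref{lem:conic-Caratheodory} with $w=\bO$. It observes that outcome~\eqref{eq:ct-circuit} cannot occur because it would require $w^\top\mu>0$, so the lemma must return outcome~\eqref{eq:ct-indep} within the same $O(n^2|J|)$ time bound. Your extra check against the lemma's procedure is consistent with this but not needed.
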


\begin{proof}[Proof of Lemma~\ref{lem:exact-feas-recurse}]
We maintain an estimate $\hat\delta$ on $\delta_M$, initializing $\hat\delta\coloneqq1/n$. We say that a nonzero vector $\lambda\in\R^m$ is a {\em failure for $\hat \delta$} if $\{m_j\st j\in\supp(\lambda)\}$ are linearly independent and $\varphi\coloneqq\|M^\top \lambda\|/(\max_{j\in [m]}\lambda_j\|m_j\|)< \hat \delta $, proving $\delta_M\le \varphi<\hat \delta$. Whenever we detect a failure, we update $\hat\delta\coloneqq\min\{\hat\delta^2,\varphi\}$.

We call the subroutine \textsc{Approx-Conic-Dual} for $K_1$  for $\varepsilon\coloneqq\hat\delta/(2n\sqrt{n})$.  This requires $\mathcal{T}_o(n,\hat \delta/(2n\sqrt{n}))$ oracle calls and $\mathcal{T}_a(n,\hat \delta/(2n\sqrt{n}))$ operations. Either we obtain an $x\in K$ with $m_1^\top x>0$, or $\lambda\in \Rp^\km$ such that $\sum_{j=1}^\km \lambda_j\|m_j\|_1\ge 1$ and $\|M^\top \lambda\|\le \hat\delta/(2n)$; let $J=\supp(\lambda)$. If $M^\top\lambda=0$ then we can readily return the vector $\lambda$.

If $M^\top\lambda\neq 0$, then we apply Lemma~\ref{lem:conic-Caratheodory} to $\lambda$, the vectors $\{m_j: j\in J\}$, and to the vector $w\in\R^\km$ defined by $w_j=\|m_j\|$, $j\in[k]$. Recall that $|J|\le \tau(n)$, hence this step requires $O(n^2|J|)\subseteq O(n^2\tau(n))$ arithmetic operations.
If outcome {\em (i)} occurs, then we obtain $\bar\lambda\in \Rp^\km$ with $\supp(\bar\lambda)\subseteq J$, $\sum_{j=1}^\km \bar\lambda_j\|m_j\|\ge 1$ such that
$M^\top \bar\lambda=M^\top\lambda$, and the rows of $M$ in the support of $\bar\lambda$ are linearly independent, which implies $|\supp(\bar\lambda)|\le n$.
Then, $\bar \lambda$ is a failure for $\hat \delta$, since
\begin{equation}\label{eq:feas-delta-bound}
\begin{aligned}
{\hat\delta}&\ge 2n\sqrt{n} \left\| M^\top\lambda\right\|=2n\sqrt{n}\left\|M^\top \bar\lambda\right\|\ge 2n\sqrt{n} \delta_M\max_{i\in \supp(\bar\lambda)} \bar\lambda_i \|m_i\|\\
&\ge 2n \delta_M\max_{i\in \supp(\bar\lambda)} \bar\lambda_i \|m_i\|_1\ge 2{\delta_M}\, ,
\end{aligned}
\end{equation}
where the last inequality follows from $\sum_{j\in J}\bar\lambda_j\|m_j\|_1\ge 1$ and $|\supp(\bar\lambda)|\le n$. In this case we update $\hat\delta$ and $\varepsilon$ accordingly, and call $\varepsilon=\hat\delta/(2n\sqrt{n})$ for the new value of $\hat \delta$.
If outcome {\em (ii)} occurs, we obtain a nonzero $\mu\in \Rp^\km$, $\supp(\mu)\subseteq J$ such that $M^\top \mu=0$ and $\mu$ is support minimal; we can return $\mu$ as the output.
The running time bound follows using Lemma~\ref{lem:delta-log}, using the choice of the estimates $\hat\delta$.
\end{proof}

\subsection{The recursive algorithm}\label{sec:recurse-feas}

We now describe the overall strong conic feasibility algorithm, with the running time bound  stated in Theorem~\ref{thm:conic-main}.
This can be achieved by making at most $n$ calls to the
algorithm in Lemma~\ref{lem:exact-feas-recurse}. We gradually identify a subset $F\subseteq [\km]$ and find coefficients $\xi\in \Rp^\km$,  such that $\supp(\xi)=F$, $M^\top  \xi=0$, $|F|\le 2\rk(M_F)$.
This certifies that $M_F x=0$ for all $x\in K$.

This set is initialized to $F=\emptyset$. After the first call to Lemma~\ref{lem:exact-feas-recurse}, if the output is a support minimal solution $\lambda \ge 0$ to $M^\top \lambda= 0$, then we select $F=\supp(\lambda)$. $F$ will be extended at every iteration; thus, the algorithm terminates by making at most $n$ calls.

\medskip

The following notation and subsequent Lemmas~\ref{lem:projected-oracle} and~\ref{lem:projected-condition-numbers} will also be used in later sections, and applied for any $F\subseteq [\km]$ (that is, we do not require $K\subseteq \ker(M_F)$).
Given an index set $F\subseteq [\km]$, let $\Pi^{F}\in\R^{n\times n}$ be the orthogonal projection matrix onto $\ker(M_F)$. Computing $\Pi^F$ requires $O(n^\omega)$ operations.
For every vector $v\in\R^n$, let
\[v^F\coloneqq\Pi^F v.\]
Let
$T_F=\{i\in [\km]\st \Pi^F m_i\neq 0\}$ and let $M^F\in \R^{T_F\times n}$ be the matrix with rows $(m^F_i)^\top$, $i \in T^F$.  Let
\begin{equation}\label{eq:projected-cone}
K^F=\{x\in \R^n\st  M^F x\ge 0\}\quad\mbox{and}\quad K^F_1=\{x\in \R^n\st  M^F x\ge 0, (m^F_1)^\top x>0\}\, .
\end{equation}

\begin{lemma}\label{lem:projected-oracle}
For any index set $F\subseteq [\km]$ and $\bar x \in \R^n$, $\bar x\in K^F$ if and only if $\Pi^F\bar x \in K$. In particular, $K^F=(K \cap \ker(M_F))+\spn(M_F^\top)$. Given a conic separation oracle for $K_1$, we can implement a conic separation oracle for $K^F_1$, requiring $O(n^2)$ additional arithmetic operations for call to the $K_1$ oracle, assuming $\Pi^F$ is pre-computed.
\end{lemma}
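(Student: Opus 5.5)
The plan is to prove the equivalence first, from the symmetry of the orthogonal projection, and then read off both the set identity and the oracle construction.

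\emph{Equivalence.} For every $i\in[\km]$ and $\bar x\in\R^n$, since $\Pi^F$ is symmetric we have
\[
(m_i^F)^\top \bar x=(\Pi^F m_i)^\top \bar x=m_i^\top \Pi^F\bar x .
\]
For $i\notin T_F$ we have $\Pi^F m_i=0$, so $m_i^\top \Pi^F\bar x=0\ge 0$ holds trivially. For $i\in T_F$, the constraint $(m_i^F)^\top\bar x\ge0$ is exactly $m_i^\top\Pi^F\bar x\ge 0$. Hence $M^F\bar x\ge 0$ if and only if $M\Pi^F\bar x\ge 0$, that is, $\bar x\in K^F$ if and only if $\Pi^F\bar x\in K$. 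The same computation with $i=1$ shows that $(m_1^F)^\top\bar x>0$ if and only if $m_1^\top\Pi^F\bar x>0$. Therefore $\bar x\in K_1^F$ if and only if $\Pi^F\bar x\in K_1$. If $1\notin T_F$, both $K_1^F$ and $K_1\cap\ker(M_F)$ are empty, and the argument is consistent with this.

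\emph{Set identity.} Decompose $\bar x=\Pi^F\bar x+(I-\Pi^F)\bar x$, where $(I-\Pi^F)\bar x\in\ker(M_F)^\perp=\spn(M_F^\top)$. If $\bar x\in K^F$, then $\Pi^F\bar x\in K\cap\ker(M_F)$, so $\bar x\in(K\cap\ker(M_F))+\spn(M_F^\top)$. Conversely, if $\bar x=y+z$ with $y\in K\cap\ker(M_F)$ and $z\in\spn(M_F^\top)$, then $\Pi^F\bar x=y\in K$, so $\bar x\in K^F$.

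\emph{Oracle.} Given $\bar x$, compute $\Pi^F\bar x$ with $O(n^2)$ operations and query the $K_1$ oracle at this point. If the oracle reports $\Pi^F\bar x\in K_1$, then $\bar x\in K_1^F$ by the equivalence. Otherwise it returns a nonzero $v$ with $v^\top\Pi^F\bar x\le0$ and $v^\top x\ge 0$ for all $x\in K_1$. We output $v^F=\Pi^Fv$, at a cost of another $O(n^2)$ operations. This vector separates:
\begin{itemize}
\item by symmetry of $\Pi^F$, $(v^F)^\top\bar x=v^\top\Pi^F\bar x\le 0$;
\item for every $x\in K_1^F$ we have $\Pi^Fx\in K_1$, so $(v^F)^\top x=v^\top\Pi^Fx\ge0$.
\end{itemize}
When the oracle is polyhedral and returns $v=m_i$, the output is $m_i^F$, a row of $M^F$, and the strict or non-strict violation carries over by the same identity. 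Thus the polyhedral structure is preserved as well.

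\emph{Main obstacle.} A conic separation oracle must output a \emph{nonzero} vector, and $\Pi^Fv$ may vanish.
\begin{itemize}
\item If $v^F=0$ and $K_1^F\neq\emptyset$, this is impossible. Take any $x\in K_1^F$. Then $\Pi^F x\in K_1$, and since $m_1^\top \Pi^F x>0$, the vector $\Pi^F x$ is nonzero. Replacing $\bar x$ by $x$ in the query is not useful here. Instead, the point is that the oracle's guarantee $v^\top x\ge0$ over all of $K_1$ is not automatically strict. So for $v^F=0$ I would fall back on the polyhedral case, where $v=m_i$ with $i\notin T_F$. In that case $m_i^\top\Pi^F\bar x=0$, which contradicts the strict violation required for $i\in T$.
\item The remaining possibility is $i\in S=\{1\}$, so $v=m_1$ with $m_1^F=0$. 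Then $K_1^F=\emptyset$, and we may return any nonzero vector, for instance $m_1$ itself, since the separation condition is vacuous.
\end{itemize}

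This nonzero-output check is the delicate part of the argument; the remaining steps are straightforward linear algebra.
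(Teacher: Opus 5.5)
Your argument is correct in substance and follows the paper's proof: you query the $K_1$ oracle at $\Pi^F\bar x$ and return the projected separator $\Pi^F m_i$, and a strict violation $m_i^\top\Pi^F\bar x<0$ forces $i\in T_F$, so the output is a nonzero row of $M^F$. There are two slips, though. First, the claim in your first bullet that $v^F=0$ is impossible when $K_1^F\neq\emptyset$ is false for a general conic oracle, since $v=m_j$ with $j\in F$ is a legitimate weak separator at any point of $\ker(M_F)\setminus K_1$; the polyhedral assumption you fall back on is therefore genuinely needed, exactly as in the paper. Second, when $m_1^F=0$ the returned vector must still satisfy $v^\top\bar x\le 0$, which $m_1$ need not (only $m_1^\top\Pi^F\bar x\le 0$ is known); so either return something like $-\bar x$ (any nonzero vector if $\bar x=0$) or, as the paper does, detect $m_1^F=0$ in advance and declare $K_1^F=\emptyset$.
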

\begin{proof}
For the first part, note that $\bar x \in K^F \Leftrightarrow M (\Pi^F \bar x )\geq 0 \Leftrightarrow \Pi^F \bar x \in K$. The second statement follows directly from the fact that $\Pi^F \bar x \in \{x \in \R^n: m_i^\top x = 0, i \in F\}$.
For the separation oracle for $K^F_1$, if $m^F_1=0$ then we have $K^F_1=\emptyset$; for the rest of the proof, let us assume $m^F_1\neq 0$. Take any $\bar x\in \R^n$, and run the conic separation oracle for $K_1$ for the orthogonal projection $\Pi^F \bar x$. If $\Pi^F \bar x\in K_1$, we can return $\bar x\in K^F_1$.
If not, then the oracle either returns   $m_1^\top (\Pi^F \bar x)\le 0$, that is,  $(m_1^F)^\top \bar x\le 0$, or $m_i^\top  (\Pi^F \bar x)< 0$, that is, $(m_i^F)^\top \bar x< 0$ where $i\in T_F$.
 Computing $\Pi^F \bar x$ and $\Pi^F m_i$ requires time $O(n^2)$, assuming $\Pi^F$ is available.
\end{proof}

\begin{lemma}\label{lem:projected-condition-numbers}
For any $F\subseteq [m]$, we have $\delta_{M^F}\ge \delta_M$.
\end{lemma}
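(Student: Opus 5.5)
We use the distance characterization of Lemma~\ref{lem:delta-lem}. It then suffices to show the following: for every subset $W\subseteq T_F$ and every $i\in T_F$ with $m^F_i\notin\spn(m^F_j\st j\in W)$, we have
\[
\mathrm{dist}\bigl(m^F_i,\spn(m^F_j\st j\in W)\bigr)\ge \delta_M\|m^F_i\|.
\]
The plan is to show that this distance equals a distance in the original system, namely $\mathrm{dist}(m_i,\spn(m_j\st j\in W\cup F))$. We then lower bound that quantity by $\delta_M\|m_i\|$ via Lemma~\ref{lem:delta-lem} applied to $M$. Finally we use $\|m_i\|\ge\|\Pi^F m_i\|=\|m^F_i\|$, since $\Pi^F$ is an orthogonal projection.

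\textbf{Key identity.} Let $U=\spn(M_F^\top)$, so that $\Pi^F$ is the orthogonal projection onto $U^\perp=\ker(M_F)$. Every vector $m_j^F=m_j-(I-\Pi^F)m_j$ differs from $m_j$ by an element of $U$. Hence
\[
\spn(m^F_j\st j\in W)+U=\spn(m_j\st j\in W\cup F)\eqqcolon S.
\]
Moreover $\spn(m^F_j\st j\in W)\subseteq U^\perp$, so $S$ is the orthogonal direct sum of $\spn(m^F_j\st j\in W)$ and $U$. The orthogonal projection onto $S$ is therefore the sum of the two projections. Consequently the residual $m_i-\mathrm{proj}_S(m_i)$ equals $m_i^F-\mathrm{proj}_{\spn(m^F_W)}(m_i^F)$: the $U$-component of $m_i$ is fully absorbed, and what remains is $m_i^F$ minus its projection onto $\spn(m^F_W)$. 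This yields
\[
\mathrm{dist}(m^F_i,\spn(m^F_W))=\mathrm{dist}(m_i,S).
\]
The same identity shows that $m^F_i\notin\spn(m^F_W)$ holds if and only if $m_i\notin S$. Therefore Lemma~\ref{lem:delta-lem}, applied to $V$ equal to the rows of $M$, with $W\cup F$ as the subset and $m_i$ as the vector, gives $\mathrm{dist}(m_i,S)\ge\delta_M\|m_i\|\ge\delta_M\|m^F_i\|$.

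\textbf{Remaining points.} Two details need attention. First, if several indices give parallel or equal projected rows, this causes no trouble, because Lemma~\ref{lem:delta-lem} quantifies over all subsets and vectors anyway. Second, only indices in $T_F$ are used, so all the $m^F_i$ are nonzero, matching the convention in the lemma. The main point requiring care is the orthogonal-decomposition step justifying the distance identity; once that holds, the rest is immediate. Taking the infimum over all choices then gives $\delta_{M^F}\ge\delta_M$.
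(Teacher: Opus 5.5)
Your proof is correct and uses the same mechanism as the paper's. A projected row differs from the original row by a vector in $\spn(M_F^\top)$, so the projected system behaves like the original one with the rows of $M_F$ adjoined, and $\|m_i^F\|\le\|m_i\|$ finishes the argument.

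The difference is which characterization of $\delta$ you use. The paper works with the coefficient form of Definition~\ref{def:delta}. It writes $\sum_{j\in J}\lambda_j\Pi^F m_j=\sum_{j\in J}\lambda_j m_j+M_F^\top\theta$, with $\theta$ supported on linearly independent rows of $M_F$. It then applies $\delta_M$ to the enlarged family $\{m_j\st j\in J\}\cup\{m_i\st i\in\supp(\theta)\}$, which has to be independent, and drops the $\theta$-terms from the maximum.

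You go through the distance characterization of Lemma~\ref{lem:delta-lem}. This gives the exact identity $\mathrm{dist}(m_i^F,\spn(m^F_j\st j\in W))=\mathrm{dist}(m_i,\spn(m_j\st j\in W\cup F))$. It also spares you from choosing $\theta$ and from checking independence of the enlarged family. The cost is that you depend on the equivalence proved in Lemma~\ref{lem:delta-lem}, whereas the paper's argument works directly from the definition.
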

\begin{proof}
Consider a set $J\subseteq T^F$ such that $\{m_j^F\st j\in J\}$ are linearly independent. Consider $\lambda\in \R^J$. There exists $\theta\in \R^F$ such that $\{m_i\st i\in \supp(\theta)\}$ is linearly independent, and
\[
\left\|\sum_{j\in J}\lambda_j \Pi^F m_j\right\|=\left\|\sum_{j\in J}\lambda_j m_j+M_F^\top \theta\right\|\ge \delta_M\max\left\{\max_{j\in J}\lambda_j \|m_j\|, \max_{i\in F}\theta_i \|m_i\|\right\}\ge \delta_M \max_{j\in J}\lambda_j \|\Pi^F m_j\|\, ,\]
where the first inequality follows from the definition of $\delta_M$, since the vectors in $\{\Pi^F m_j\st j\in J\}\cup \{m_i\st i\in \supp(\theta)\}$ are linearly independent,
and the second inequality follows from $\|m_j\|\ge \|\Pi^F m_j\|$ for all $j\in J$.
\end{proof}

Equipped with the above notation, the description of the algorithm follows. We initialize $F=\emptyset$. If at any iteration $m^F_1=0$ then $m_1^\top x=0$ must hold for all $x\in K$; thus, no strong feasible solution exists. In this case, we can obtain an infeasibility certificate as follows. If $1\in F$, then $\lambda'=\xi_1^{-1} \xi$ is a nonnegative vector with $M^\top \lambda'=0$ such that $\lambda'_1=1$. Else, if $1\notin F$, then $m^F_1=0$ is equivalent to the existence of  $\mu\in \R^\km$ with $M^\top \mu=0$ such that $\supp(\mu)\subseteq F\cup\{1\}$ and $\mu_1=1$. Then, for sufficiently large $\alpha>0$, $\lambda'=\mu+\alpha\xi$ is a nonnegative vector with $M^\top \lambda'=0$ such that $\lambda'_1=1$.

Each iteration calls the algorithm described in Lemma~\ref{lem:exact-feas-recurse} for $M^F$ and $K^F$ with the
projected separation oracle as in Lemma~\ref{lem:projected-oracle}.
If the output is a  point $x\in K^F$ with $(m_1^F)^\top x>0$, then we return the point $\Pi^F x\in K$ with $m_1^\top (\Pi^F x)>0$, that is a solution to the strong feasibility problem.

The other possible output is a nonzero vector $\hat \lambda\in \Rp^{T^F}$ that is a support minimal solution to $(M^F)^\top \hat \lambda=0$. It follows that $M^\top \hat\lambda$ is orthogonal to $\ker(M_F)$ (where, with an abuse of notation, we regard $\hat\lambda$ as the vector in $\R^m$ obtained by setting to zero the components outside of $T^F$). It follows that there exists a $\theta\in \R^F$ such that $M^\top \hat\lambda + M_F^\top \theta=0$. Such vector $\theta$ can be computed in time $O( n^3)$ by Gaussian elimination, recalling that $|F|\le 2n$. Regarding $\theta$ as a vector in $\R^m$ (like we did for $\hat\lambda$), we choose $\alpha\ge 0$  such that $\theta_i+\alpha\xi_i> 0$ for all $i\in F$. Let $J=\supp(\hat\lambda)$, and define $F'=F\cup J$ and $\xi'=\hat\lambda+\theta+\alpha\xi$. We have that $M^\top \xi'=0$, $\supp(\xi')=F'$, $\xi'\ge 0$. Furthermore, $\rk(M_{F'})=\rk(M_F)+\rk(M_J)$ and $|J|\le \rk(M_J)+1$ since $\hat \lambda$ is support minimal. By induction, $|F'|\le 2\rk(M_{F'})$, so we can update $F\coloneqq F'$, $\xi\coloneqq\xi'$.

If none of the recursive calls finds a strongly feasible solution, within at most $n$ iterations we reach $m^F_1=0$ and obtain an infeasibility certificate as above.

\paragraph{Running time analysis}

Each call to the algorithm in Lemma~\ref{lem:exact-feas-recurse} needs $O(\mathcal{T}_o(n,\delta_M/(2n)))$
oracle calls and $O(\mathcal{T}_a(n,\delta_M/(2n))+n^2\tau(n)\log\log(1/\delta_M))$ arithmetic operations, and we call this algorithm at most $n$ times. By Lemma~\ref{lem:projected-oracle}, each oracle call for $K^F$ requires $O(n^2)$ arithmetic operations. This gives a total number of operations of $O(n^3\mathcal{T}_o(n,\delta_M/(2n)))+ O(n\mathcal{T}_a(n,\delta_M/(2n))+n^3\tau(n)\log\log(1/\delta_M))$.
Whenever we call such an algorithm, we need to update $F$ and $\xi$, which requires $O(n^3)$ arithmetic operations, as well as $\Pi^F$, in $O(n^\omega)$ arithmetic operations, for a total of $O(n^4)$ operations.

\section{The  conic validity algorithm}\label{sec:conic-validity}

Next, we prove Theorem~\ref{thm:conic-main}\eqref{part:validity} on conic validity. Recall that in the conic validity problem, the input is a cone  $K\subseteq \R^n$ of the form $K=\{x\in \R^n\st Mx\ge 0\}$ with $M\in \R^{m\times n}$, given by a conic separation oracle, and an objective vector $c\in \R^n$, $c\neq 0$.
The goal is to either find $y\in \Rp^m$ with $M^\top y=c$, or an $x\in K$ with $c^\top x<0$.
Here, we assume a polyhedral conic separation oracle is available for
\begin{equation}\label{eq:K-c}
K_c=K\cap \{x\in \R^n\st -c^\top x>0\}\, ,
\end{equation}
and that a subroutine \textsc{Approx-Conic-Dual} for $K_c$ is provided  with running time $\mathcal{T}_o(n,\varepsilon)$ oracle calls and $\mathcal{T}_a(n,\varepsilon)$ arithmetic operations, and returns an $\varepsilon$-approximate conic Farkas certificate comprised of at most $\tau(n)$ oracle separators.
The next lemma formulates the main recursive step, analogously to Lemma~\ref{lem:exact-feas-recurse}. Note that here we use an arbitrary estimate $\hat \delta\in(0,1)$ as opposed to the true value $\delta_M$. Outcome \eqref{outcome:fail} provides a certificate of $\hat\delta>\delta_M$.

\begin{lemma}\label{lem:conic-max-recurse}
Let $K=\{x\in \R^n\st M x\ge 0\}$, $c\in \R^n$, $c\neq 0$,
and let $K_c$ be defined as in \eqref{eq:K-c}. Assume a polyhedral conic separation oracle is given for $K_c$.
Let $\hat \delta\in(0,1)$.
There exists an oracle-polynomial-time algorithm using $\mathcal{T}_o(n,\hat \delta^2/(8n^3))$ oracle calls and $\mathcal{T}_a(n,\hat \delta^2/(8n^3))+O(n^2\tau(n))$ arithmetic operations that returns one of the following: 
\begin{enumerate}[(i)]
\item\label{outcome:pos-c} a vector $x\in K$ with $c^\top x<0$;
\item\label{outcome:m-i} a nonzero vector $\lambda\in \Rp^\km$ that is a support minimal solution to $M^\top \lambda=0$;
\item\label{outcome:solid} a vector $\lambda\in \Rp^{\km}$ such that $\{m_j\st j\in\supp(\lambda)\}$ are linearly independent, along with a nonempty subset $J\subseteq [\km]$ such that for every $j\in J$, $\lambda_j \|m_j\|\hat\delta> {\|M^\top \lambda -c\|}$.
\item\label{outcome:fail} a vector $\lambda\in \Rp^{\km}$ such that $\{m_j\st j\in\supp(\lambda)\}$ are linearly independent and $\|M\lambda\|<\hat \delta \max_{j\in [m]}\lambda_j\|m_j\|$.
\end{enumerate}
\end{lemma}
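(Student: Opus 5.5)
We call \textsc{Approx-Conic-Dual} once on $K_c$ with $\varepsilon\coloneqq\hat\delta^2/(8n^3)$. If it returns a point $x\in K_c$, this is outcome \eqref{outcome:pos-c}. Otherwise we get multipliers $\lambda\in\Rp^{\km}$ on rows of $M$ and $\tau\ge 0$ on the separator $-c$ with
\[
\|M^\top\lambda-\tau c\|<\varepsilon,\qquad \sum_j\lambda_j\|m_j\|_1+\tau\|c\|_1\ge 1 .
\]
We then split into cases according to the size of $\tau\|c\|$ relative to $\sum_j\lambda_j\|m_j\|$. The choice of threshold is driven by the analysis below; we take $\tau\|c\|\le \hat\delta/(4n^{3/2})$ as ``small''.

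\emph{Case $\tau$ small.} We drop $c$. Then $\|M^\top\lambda\|\le\varepsilon+\tau\|c\|$ is small, and $\sum_j\lambda_j\|m_j\|_1$ is bounded below by roughly $1/2$. We apply Lemma~\ref{lem:conic-Caratheodory} with $w_j=\|m_j\|$, exactly as in the proof of Lemma~\ref{lem:exact-feas-recurse}. Outcome (ii) of that lemma gives a support-minimal $\mu\ge0$ with $M^\top\mu=0$, which is outcome \eqref{outcome:m-i}. Outcome (i) gives a linearly independent $\bar\lambda$ with the same $M^\top\bar\lambda$ and $\sum\bar\lambda_j\|m_j\|$ not smaller. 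The chain of inequalities \eqref{eq:feas-delta-bound} then shows $\|M^\top\bar\lambda\|<\hat\delta\max_j\bar\lambda_j\|m_j\|$, which is outcome \eqref{outcome:fail}, provided the threshold makes $\varepsilon+\tau\|c\|\le \hat\delta/(2n\sqrt n)$ times the lower bound on the $\ell_1$ mass.

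\emph{Case $\tau$ large.} We rescale by $1/\tau$, so $\|M^\top\lambda-c\|<\varepsilon/\tau$. We apply Corollary~\ref{cor:carath} to make the support linearly independent without changing $M^\top\lambda$; this costs $O(n^2\tau(n))$ operations. Let $J=\{j: \lambda_j\|m_j\|\hat\delta>\|M^\top\lambda-c\|\}$. If $J\neq\emptyset$ we return outcome \eqref{outcome:solid}.

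It remains to rule out $J=\emptyset$, or to turn it into one of the other outcomes. If $J=\emptyset$, every $\lambda_j\|m_j\|\le \varepsilon/(\tau\hat\delta)$, so $\|M^\top\lambda\|\le n\varepsilon/(\tau\hat\delta)$ and hence $\|c\|\le (n+1)\varepsilon/(\tau\hat\delta)$. With $\varepsilon=\hat\delta^2/(8n^3)$ this contradicts $\tau\|c\|>\hat\delta/(4n^{3/2})$, since $(n+1)\hat\delta/(8n^3)<\hat\delta/(4n^{3/2})$. The calibration of the threshold is chosen precisely so that this contradiction works and the small-$\tau$ case still yields a failure; the factor $\hat\delta^2$ in $\varepsilon$ is what makes both cases go through.

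The main obstacle is the small-$\tau$ case when the $\ell_1$ mass is carried by $c$ rather than by $M$. One has to check that $\tau\|c\|_1\le\sqrt n\,\tau\|c\|$ is at most about $1/2$, so that $\sum_j\lambda_j\|m_j\|_1\ge 1/2$. Also, the contribution $\tau\|c\|$ to $\|M^\top\lambda\|$ must stay below $\hat\delta/(4n\sqrt n)$, as needed for the failure certificate via $\delta_M$-type reasoning. Both requirements fix the threshold, and the rest is arithmetic. The running time is one call to \textsc{Approx-Conic-Dual} plus one Carath\'eodory reduction.
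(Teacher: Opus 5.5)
Your proof is correct and follows the paper's argument: one call to \textsc{Approx-Conic-Dual} with $\varepsilon=\hat\delta^2/(8n^3)$, then a split on the size of $\tau\|c\|$. When $\tau\|c\|$ is small you drop $c$ and rerun the Carath\'eodory/failure argument from Lemma~\ref{lem:exact-feas-recurse}; when it is large you rescale by $1/\tau$ and apply Corollary~\ref{cor:carath} to obtain a nonempty $J$. The differences are cosmetic: your threshold is $\hat\delta/(4n^{3/2})$ rather than the paper's $\hat\delta/(4n^2)$ (both calibrations work), and you prove $J\neq\emptyset$ by contradiction rather than through the paper's direct bound $\hat\delta\sum_j\lambda_j\|m_j\|>n\|M^\top\lambda-c\|$.
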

\begin{proof}
Observe that outcome \eqref{outcome:pos-c} corresponds to finding a point in $K_c$.
Let us  call the subroutine
\textsc{Approx-Conic-Dual} for $K_c$ with $\varepsilon=\hat\delta^2/(8n^3)$; this takes $\mathcal{T}_o(n,\hat \delta^2/(8n^3))$ oracle calls and $\mathcal{T}_a(n,\hat \delta^2/(8n^3))$ operations.
Either we obtain an $x\in K$ with $c^\top x<0$, or an $\varepsilon$-certificate consisting of oracle inequalities. If these only include original separating inequalities $m_i$, then we have
 $\lambda\in \Rp^\km$, such that  $\sum_{j=1}^\km \lambda\|m_j\|_1\ge 1$ and $\|M^\top \lambda\|\le \varepsilon$. As in the proof of Lemma~\ref{lem:exact-feas-recurse}, we can obtain outcomes \eqref{outcome:m-i} or \eqref{outcome:fail} in $O(n^3+n\tau(n)^2)$ time, using Lemma~\ref{lem:conic-Caratheodory}.

Assume next the combination also includes $-c$: we get $\|M^\top \bar \lambda-\tau c\|\le \varepsilon$ for  $(\bar \lambda,\tau)\in \Rp^\km\times \Rp$ with $\sum_{j=1}^\km \lambda\|m_j\|_1+\tau\|c\|_1\ge 1$.
First, assume that $\tau\|c\|\le \hat\delta/(4n^2)$. Then, $\|M^\top \bar\lambda\|\le \varepsilon+\tau\|c\|\le \varepsilon+\hat\delta/(4n^2)<3\hat\delta/(8n^2)$.
At the same time, $\sum_{j=1}^\km \lambda\|m_j\|_1\ge 1-\tau\|c\|_1\ge 1-\sqrt{n}\tau\|c\|>3/4$. For $ \lambda\coloneqq 4\bar\lambda/3$, we have  $\|M^\top \lambda\|\le \hat\delta/(2n)$ and $\sum_{j=1}^\km \bar\lambda\|m_j\|_1\ge 1$.
As in the previous case, we can obtain either of outcomes \eqref{outcome:m-i} and \eqref{outcome:fail}.

For the rest, assume that $\tau\|c\|>\hat\delta/(4n^2)$. Using a Carath\'eodory reduction (Corollary~\ref{cor:carath}), in $O(n^2\tau(n))$ time we can find  a vector $\lambda\in\Rp^\km$ such that $M^\top \lambda=M^\top \bar \lambda/\tau$ and such that $\{m_j\st j\in \supp(\lambda)\}$ are linearly independent. %

We derive outcome \eqref{outcome:solid} by showing
that the set $J\coloneqq\left\{j\in [\km]\st \lambda_j\|m_j\|{\hat\delta}> {\|M^\top \lambda- c\|}\right\}$ is nonempty.
Note that
{\begin{equation}\label{eq:M-lambda-c}
\|M^\top \lambda- c\|=\frac{1}{\tau}\|M^\top \bar\lambda- \tau c\|
\le \frac{4n^2\|c\|}{\hat\delta} \cdot\varepsilon=\frac{\|c\|\hat\delta}{2n}\, .
\end{equation}}
From the triangle inequality, we get
\[
\sum_{j=1}^m\lambda_j	\|m_j\|+\|M^\top \lambda -c\|\ge \|c\|\, .
\]
Combining this with
\eqref{eq:M-lambda-c}, and the assumption $\hat \delta<1$, we obtain
\[
\sum_{j=1}^m\lambda_j \|m_j\|\hat\delta\ge (\| c\|-\|M^\top \lambda-  c\|)\hat\delta\ge (2n-\hat\delta){\|M^\top \lambda- c\|}>n\|M^\top \lambda- c\|\, .
\]
By the linear independence assumption, $|\supp(\lambda)|\le n$ and hence $\arg\max_{j\in [m]}\lambda_j \|m_j\|\in J$.
\end{proof}

\begin{lemma}\label{lem:strong-validity-correct-delta} If outcome \eqref{outcome:solid} of Lemma~\ref{lem:conic-max-recurse} occurs and $\hat \delta\le \delta_M$, then  \[K_c \neq \emptyset \Rightarrow K_c \cap \{x\in\R^n\st m_j^\top x= 0,\, j\in J\} \neq \emptyset .\]
\end{lemma}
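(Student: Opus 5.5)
We argue via extreme rays, using Lemma~\ref{lem:cone-angle-delta}. Suppose $K_c\neq\emptyset$, i.e.\ some $x\in K$ has $c^\top x<0$. Write $x=x_0+x_1$ with $x_0\in\ker(M)$ and $x_1\in K\cap\spn(M^\top)$. Since $K\cap\spn(M^\top)$ is a closed pointed cone by Lemma~\ref{lem:cone-angle-delta}, $x_1$ is a nonnegative combination of its extreme rays.

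First we dispose of the lineality part. If $c^\top x_0\neq 0$ for some $x_0\in\ker(M)$, then scaling $\pm x_0$ gives a point of $K\cap\ker(M)$ with $c^\top x_0<0$. Such a point satisfies $m_j^\top x_0=0$ for all $j$, in particular for $j\in J$, and we are done. Otherwise $c^\top x_0=0$ for all $x_0\in\ker(M)$, so $c^\top x_1=c^\top x<0$. Hence some extreme ray $v$ of $K\cap\spn(M^\top)$ has $c^\top v<0$; normalize $\|v\|=1$.

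The key claim is that $m_j^\top v=0$ for every $j\in J$; then $v$ is the required point of $K_c\cap\{m_j^\top x=0,\ j\in J\}$. Suppose instead $m_j^\top v>0$ for some $j\in J$. By Lemma~\ref{lem:cone-angle-delta} and $\hat\delta\le\delta_M$,
\[
m_j^\top v\ge\delta_M\|m_j\|\ge\hat\delta\|m_j\|.
\]
Since $\lambda\ge 0$ and $Mv\ge 0$,
\[
\lambda^\top Mv\ge\lambda_j m_j^\top v\ge\lambda_j\|m_j\|\hat\delta>\|M^\top\lambda-c\|,
\]
where the last inequality is the defining property of $J$ in outcome~\eqref{outcome:solid}. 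On the other hand,
\[
c^\top v=(M^\top\lambda)^\top v-(M^\top\lambda-c)^\top v\ge\lambda^\top Mv-\|M^\top\lambda-c\|>0,
\]
using Cauchy--Schwarz and $\|v\|=1$. This contradicts $c^\top v<0$, so $m_j^\top v=0$ for all $j\in J$.

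The step needing care is the reduction to an extreme ray. It requires splitting off the lineality space $\ker(M)$ and handling the degenerate case where $c$ is not orthogonal to $\ker(M)$ separately, as done above. Once an extreme ray with $c^\top v<0$ is in hand, the inequality chain above is short.
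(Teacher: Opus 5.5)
Your proof is correct and follows essentially the same route as the paper. You dispose of the case $c\notin\spn(M^\top)$ using $\ker(M)$. Otherwise you pick an extreme ray $v$ of $K\cap\spn(M^\top)$ with $c^\top v<0$, and combine Cauchy--Schwarz with Lemma~\ref{lem:cone-angle-delta} to force $m_j^\top v=0$ for all $j\in J$. The only difference is cosmetic: you phrase the last step as a contradiction, whereas the paper writes the same inequality chain directly.
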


\begin{proof}
We first consider the case that $c \notin \spn(M^\top)$. In this case, $\ker(M)\cap \{x\in\R^n\st c^\top x<0\}\neq \emptyset$, and clearly $\ker(M)\cap \{x\in\R^n\st c^\top x<0\}\subseteq K_c \cap \{x\in\R^n\st m_j^\top x= 0,\, j\in J\}$.

We may now examine the case $c \in \spn(M^\top)$, and assume
$K_c \neq \emptyset$.  By the Minkowski--Weyl theorem, $K = \ker(M) + {\rm cone}(v_1,\dots,v_k)$,
where $v_1,\dots,v_k$ are the extreme rays of $K \cap \spn(M^\top)$. Since $c\in \spn(M^\top)$, it follows that $c^\top x=0$ for all $x\in\ker(M)$, hence there must exist  $\ell \in [k]$ such that $c^\top v_\ell < 0$, hence $v_\ell\in K_c$. It suffices to show that $m_j^\top v_\ell = 0$ for all $j \in J$, since then $v_\ell \in K_c \cap \{x \in \R^n: m_j^\top x = 0, j \in J\}$. Since $v_\ell \in K_c$, applying the Cauchy--Schwartz inequality we get that
\begin{equation}
0 \leq -c^\top v_\ell + \sum_{k\in [\km]}\lambda_k m_k^\top v_\ell \le \|M^\top \lambda- c\|\cdot\|v_\ell\|\, . \label{eq:kc-ip-ub}
\end{equation}
Using that all terms on the left hand side are nonnegative, for every $j\in J$ this yields
\[
0\le  m_j^\top v_\ell\le \frac{1}{\lambda_j}\|M^\top \lambda- c\|\cdot\|v_\ell\|< \hat\delta \|m_j\|\cdot \|v_\ell\|\le \delta_M \|m_j\|\cdot \|v_\ell\|\, .
\]
Since $v_\ell$ is an extreme ray of
$K \cap \spn(M^\top)$, Lemma~\ref{lem:cone-angle-delta} gives
$ m_j^\top v_\ell=0$, completing the proof.
\end{proof}

\subsection{The recursive algorithm}\label{sec:recurse-max}
Similarly to  Section~\ref{sec:recurse-feas}, the recursive calls restrict the problem to a subspace $\ker(M_F)$ for  an index set $F\subseteq [\km]$ such that $|F|\le 2\rk(M_F)$.
We use the same notation $\Pi^F$, $v^F$, $T_F$, $M^F$, and $K^F$.
Similarly to Lemma~\ref{lem:projected-oracle}, we can implement the required oracle for $(K^F)_{c^F}$. Assuming $\Pi^F$ is pre-computed, this takes $O(n^2)$ additional arithmetic operations for every oracle call.
We recall from Lemma~\ref{lem:projected-condition-numbers} that $\delta_{M^{F}}\ge\delta_M$. Hence, if we find a certificate  $\hat\delta>\delta_{M^{F}}$ in a recursive call then this also implies that $\hat\delta$ was a wrong estimate on $\delta_M$.

We initialize $F=\emptyset$. If at any iteration we find some $x\in K^F$ with $(c^F)^\top x<0$, then  $\Pi^F  x\in K$ and  $c^\top (\Pi^F  x)=(c^F)^\top x<0$, thus providing a solution in $K_c$.

As in the proof of Lemma~\ref{lem:exact-feas-recurse}, we maintain an estimate $\hat\delta$ of $\delta_M$, updated whenever we detect a failure.
We define a subroutine \textsc{Recursive-Conic-Validity}$(K,c,F,\hat \delta)$, which takes as arguments $F\subseteq [\km]$, $c\in\R^n$, and $\hat\delta\in (0,1)$. The output of this algorithm is one of the following:
\begin{enumerate}[(a)]
\item\label{outcome:farkas-c} A vector $y\in \Rp^{T_F}$ with $|\supp(y)| \leq 2 \rk(M^F)$, such that $(M^F)^\top y=c^F$, certifying that $(c^F)^\top x\ge 0$ for all $x\in  K^F$.
\item\label{outcome:feasible-c}  A point $\bar x\in K$ with $c^\top \bar x<0$.
\item\label{outcome:fail-c} A certificate of $\hat\delta>\delta_{M_F}$, namely, a vector $\lambda\in\R^{T_F}$ with $\{m^F_j\st j\in\supp(\lambda)\}$ linearly independent and
$\varphi=\|(M^F)^\top\lambda\|/(\max_{j\in T^F}\lambda_j\|m^F_j\|)< \hat \delta $.
\end{enumerate}

Before presenting the algorithm, we need two more  lemmas. The key technical challenge is to `pull back' infeasibility certificates in case \eqref{outcome:farkas-c} from the recursive call to the original instance.

\begin{lemma}\label{lem:easy-extension-y} Let $F\subset F'\subseteq[m]$
such that $J\coloneqq F'\sm F$ satisfies $J\subseteq T_F$ and $|J| \le 2\rk(M^F_J)$. Let
$v \in \R^n$ and $y' \in \R^{T_{F'}}$ such that $(M^{F'})^\top y' = v^{F'}$
and $\supp(y') \leq 2\rk(M^{F'})$. Assume the projection matrix $\Pi^F$ is given.  Then, in time $O(n^\omega)$ we can
compute $y \in \R^{T_F}$ such that $(M^{F})^\top y = v^{F}$,
$\supp(y)\subseteq T_{F'}\cup J$, $|\supp(y)| \leq 2\rk(M^F)$
and $y_i=y'_i$ for $i\in T_{F'}$.
\end{lemma}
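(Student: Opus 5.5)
The plan is a direct lift-and-correct construction. First I would verify that $\Pi^{F'}\Pi^F=\Pi^{F'}$, because $\ker(M_{F'})\subseteq\ker(M_F)$. Projecting with $\Pi^F$ is therefore compatible with further projecting to $\ker(M_{F'})$, and $\ker(M_{F'})=\ker(M^F_J)\cap\ker(M_F)$. From this, $m_i^{F'}=\Pi^{F'}m_i^F$ and $v^{F'}=\Pi^{F'}v^F$, and $T_{F'}\subseteq T_F$.

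Next, regard $y'$ as a vector on $T_F$ by padding with zeros, and consider the residual $r\coloneqq v^F-(M^F)^\top y'\in\ker(M_F)$. Applying $\Pi^{F'}$ gives $\Pi^{F'}r=v^{F'}-(M^{F'})^\top y'=0$. Hence $r$ lies in $\ker(M_F)\cap\ker(M_{F'})^\perp$. Since $\ker(M_{F'})$ is the orthogonal complement of $\spn(M^F_J{}^\top)$ inside $\ker(M_F)$, this intersection equals $\spn(m_j^F\st j\in J)$. So there is $\theta\in\R^J$ with $(M^F_J)^\top\theta=r$. I would compute $\theta$ by Gaussian elimination on the $n\times|J|$ system, with $|J|\le 2n$. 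Forming $r$ and the vectors $m_j^F=\Pi^Fm_j$ costs $O(n^2)$ per vector, given $\Pi^F$, so the total cost is within $O(n^\omega)$, or $O(n^3)$ at worst. Then set $y_i=y'_i+\theta_i$ with $\theta$ extended by zero. By construction $(M^F)^\top y=v^F$ and $\supp(y)\subseteq T_{F'}\cup J$.

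The subtle point is that $y_i=y'_i$ must hold for $i\in T_{F'}$, but $J$ and $T_{F'}$ may intersect if $m_j^{F'}\neq0$ for some $j\in J$. This cannot happen: for $j\in J\subseteq F'$ we have $m_j\in\spn(M_{F'}^\top)$, so $\Pi^{F'}m_j=0$ and $j\notin T_{F'}$. Therefore $J\cap T_{F'}=\emptyset$, and the two supports are disjoint. This gives $y_i=y'_i$ on $T_{F'}$ and $y_j=\theta_j$ on $J$.

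Finally, the support bound is $|\supp(y)|\le|\supp(y')|+|J|\le 2\rk(M^{F'})+2\rk(M^F_J)$. I expect this rank bookkeeping to be the main, though mild, obstacle. I would show $\rk(M^F)=\rk(M^{F'})+\rk(M^F_J)$ as follows. The row space of $M^F$ is $\spn(M^\top)$ projected onto $\ker(M_F)$. Decomposing $\ker(M_F)=\ker(M_{F'})\oplus\spn(M^F_J{}^\top)$ orthogonally, the projection onto the first summand is the row space of $M^{F'}$, and the second summand is contained in the row space of $M^F$. Hence the row space of $M^F$ is the orthogonal direct sum of the two, and the dimensions add. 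This yields $|\supp(y)|\le 2\rk(M^F)$. Nonnegativity is not claimed in the statement, so no sign correction is needed.
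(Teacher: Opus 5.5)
Your proposal is correct and follows essentially the paper's route. Both arguments show that the correction system $(M^F_J)^\top\mu = v^F-(M^F_{T_{F'}})^\top y'$ is solvable and then append $\mu$ to $y'$: you do this by placing the residual in $\ker(M_F)\cap\ker(M_{F'})^\perp=\spn(m^F_j\st j\in J)$, while the paper writes $M^{F'}=M^F_{T_{F'}}+QM^F_J$ and $v^{F'}=v^F+(M^F_J)^\top\theta$. You also make explicit two facts the paper uses without proof, namely $J\cap T_{F'}=\emptyset$ and $\rk(M^F)=\rk(M^{F'})+\rk(M^F_J)$.
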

\begin{proof}
Note that each row $(m^{F'}_i)^\top $ of $M^{F'}$, $i\in T_{F'}$, $m^{F'}_i$ is the orthogonal projection of $m_i^F$ onto $\ker(M^F_J)$, and similarly $v^{F'}$ is the orthogonal projection of $v^F$ onto $\ker(M^F_J)$. It follows that there exists a matrix $Q\in\R^{T_{F'}\times J}$ such that $M^{F'}=M^F_{T_{F'}}+Q M^F_J$ and a vector $\theta\in \R^J $ such that $v^{F'}=v^F+(M^F_J)^\top \theta $. Substituting into the equation $(M^{F'})^\top y' = v^{F'}$ we get $(M^F_{T_{F'}})^\top y' +(M^F_J)^\top (Q^\top y'-\theta)=v^{F}$.
This shows that the system
\begin{equation}\label{eq:easy-extention-y}
(M^F_J)^\top \mu=v^{F}-(M^F_{T_{F'}})^\top y'
\end{equation}
has some solution $\mu\in\R^J$. Note that, given any such $\mu$, the vector $y_i = y'_i, i \in T_{F'}$, $y_i = \mu_i$, $i \in J$, satisfies the requirements of the lemma, observing that $2(\rk(M_J^F)+\rk(M^{F'})) = 2\rk(M^F)$. To compute a solution $\mu$ to \eqref{eq:easy-extention-y}, we must first compute the right hand side $v^F - (M^F_{T_{F'}})^\top y' = \Pi^F v - \Pi^F M_{T_{F'}}^\top y'$, which requires $O(n^2)$ time since $|\supp(y')| \leq 2n$, using that $\Pi^F$ is provided.  From here, the matrix $(M^F_J)^\top = \Pi^F M_J^\top$ requires $O(n^{\omega})$ to compute (noting that $|J| \leq 2n$), and finally solving the system \eqref{eq:easy-extention-y} to obtain  $\mu$ also requires $O(n^\omega)$ time.
\end{proof}

The above lemma captures the easy part of the pullback. However, it may not return a proper certificate, since the coefficients of $m_j$ for $j\in F\setminus F'$ may be negative. The next lemma will allow us to take such a `pre-certificate' and to either turn it to a valid outcome as in case \eqref{outcome:farkas-c} for the original system, or recover a certificate of $\hat\delta>\delta_{M_F}$ as in outcome \eqref{outcome:fail-c}. The proof is deferred to Section~\ref{sec:prox}.

\begin{lemma}\label{lem:combine-vectors}
Let $ H \in \R^{k\times n}$, let $[k]=\L_1 \dot\cup \L_2$, and let $\hat \delta\in(0,1)$. Consider $y,y'\in\R^k$ such that $y\ge 0$, $y'_{\L_2}\ge 0$ and
\[y_i\|h_i\|\hat\delta\ge \|H^\top (y'-y)\|, \qquad \forall i \in \L_1.\]
In time $O(k^3 n)$ we can find one of the following:
\begin{enumerate}[(i)]
\item A nonnegative vector $q \in\Rp^k$ such that $H^\top q=H^\top y'$ and $|\supp(q)|\le \rk(H)$.
\item $\lambda\in\R^k$, such that $\{h_i\st i\in \supp(\lambda)\}$ are linearly independent and $\left\|H^\top  \lambda\right\|<\hat \delta\max_{i\in [k]} |\lambda_i|\cdot \|h_i\| $.
\end{enumerate}
\end{lemma}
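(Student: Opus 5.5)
The plan is to use a conformal circuit decomposition of $y'-y$, taken with respect to $H$ augmented by the residual $w\coloneqq H^\top(y'-y)$. The circuits that involve $w$ carry exactly the correction needed to move from $H^\top y$ to $H^\top y'$. Circuits of $H$ itself are simply discarded. Each $w$-circuit is either small, by the $\hat\delta$-property, or it directly exhibits outcome (ii).

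\emph{Setup.} If $w=0$, we take $q=y$ and apply Corollary~\ref{cor:carath}; this gives (i). Otherwise, append a column: the vector $(y'-y,\,1)\in\R^{k+1}$ lies in the kernel of the map $(z,t)\mapsto H^\top z - t w$. By the standard conformal decomposition, which is constructive by repeatedly extracting a support-minimal kernel element sign-compatible with the remainder, we write
\[
(y'-y,1)=\sum_{\ell} (g^\ell,\alpha_\ell).
\]
This uses at most $k+1$ circuits $(g^\ell,\alpha_\ell)$ that are conformal to $(y'-y,1)$. Conformality gives $\alpha_\ell\ge 0$ and $\sum_\ell\alpha_\ell=1$, and for each $i$ every $g^\ell_i$ has the sign of $(y'-y)_i$ with $\sum_\ell |g^\ell_i|=|y'_i-y_i|$. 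Each extraction is a Gaussian elimination of cost $O(k^2n)$, so the total cost is $O(k^3n)$.

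\emph{Construction of $q$.} Let $L=\{\ell:\alpha_\ell>0\}$ and set $q\coloneqq y+\sum_{\ell\in L} g^\ell$. Then
\[
H^\top q=H^\top y+\sum_{\ell\in L}\alpha_\ell w=H^\top y+w=H^\top y'.
\]
We check nonnegativity on each part of $[k]$.
\begin{itemize}
\item On $\L_2$: by conformality, $q_i$ lies between $y_i$ and $y'_i$, and both are nonnegative.
\item On $\L_1$: for $\ell\in L$, minimality of the circuit means that $\{h_i: i\in\supp(g^\ell)\}$ is linearly independent. Otherwise a sub-dependency among these $h_i$ would be a kernel vector with last coordinate $0$ and strictly smaller support. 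We test whether $\|H^\top g^\ell\|=\alpha_\ell\|w\|<\hat\delta\max_i|g^\ell_i|\,\|h_i\|$. If this holds for some $\ell$, we return $\lambda=g^\ell$ as outcome (ii). Otherwise $|g^\ell_i|\,\|h_i\|\le \alpha_\ell\|w\|/\hat\delta$ for all $i$. Summing over $\ell\in L$ gives $|q_i-y_i|\,\|h_i\|\le\|w\|/\hat\delta\le y_i\|h_i\|$ for $i\in\L_1$ (indices with $h_i=0$ can be dropped at the outset). Hence $q_i\ge0$.
\end{itemize}
Finally we apply Corollary~\ref{cor:carath} to $q$, which makes the support linearly independent and hence of size at most $\rk(H)$. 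This yields outcome (i).

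\emph{Main obstacle.} The delicate step is obtaining a decomposition whose pieces are simultaneously sign-conformal and have linearly independent supports. Conformality is what protects $\L_2$, where we have no $\hat\delta$-slack; independent supports are what make the $\hat\delta$-test meaningful. The decomposition must also stay within the $O(k^3n)$ budget. I would implement it in the style of the proof of Lemma~\ref{lem:conic-Caratheodory}. At each step, find a support-minimal kernel vector inside the support of the current remainder, made sign-compatible by taking an elementary vector of the oriented matroid, for instance via the standard ``circuit from a conformal direction'' elimination. Subtract the largest multiple that keeps the remainder conformal, which kills at least one coordinate. This gives at most $k+1$ steps of cost $O(k^2n)$ each.
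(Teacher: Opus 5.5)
Your proof is correct, but it takes a different route from the paper.

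\textbf{The paper's route.} It sets $z=y'-y$, $g=H^\top z$ and $\ell_i=-y_i$ on $\mathcal{L}_2$, and calls the auxiliary Carath\'eodory-type Lemma~\ref{lem:constructive-prox}. That lemma returns two vectors with the same image $g$:
\begin{itemize}
\item $r$, which respects $r_i\ge -y_i$ on $\mathcal{L}_2$;
\item $v$, which has linearly independent support and satisfies $\max_i|r_i|\|h_i\|\le\max_i|v_i|\|h_i\|$.
\end{itemize}
A single $\hat\delta$-test on $v$ then either gives outcome (ii), or bounds every $|r_i|\|h_i\|$ by $\|g\|/\hat\delta$, in which case $q=y+r$ works.

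\textbf{Your route.} You decompose $(y'-y,1)$ conformally into circuits of the augmented map $(z,t)\mapsto H^\top z-tw$.
\begin{itemize}
\item Conformality gives the $\mathcal{L}_2$ bounds for free, since each $q_i$ lies between $y_i$ and $y'_i$.
\item Support-minimality gives independent supports for the circuits through the last coordinate.
\item The weights $\alpha_\ell$ sum to $1$, so one $\hat\delta$-test per circuit adds up to exactly the slack $y_i\|h_i\|$ on $\mathcal{L}_1$.
\end{itemize}
This is essentially the proximity-via-conformal-decomposition argument of \cite{DadushNV20}, which the paper mentions as an alternative source for Lemma~\ref{lem:constructive-prox}.

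\textbf{What each buys.} Your argument is conceptually more transparent than the paper's $r$/$v$ bookkeeping and its Case~I/Case~II analysis: the lower bounds on $\mathcal{L}_2$ need no special handling. The paper's route instead yields a single candidate failure vector $v$ whose image is the whole residual $g$. Your failure vector $g^\ell$ carries only the fraction $\alpha_\ell w$ of it, which is equally valid for outcome (ii).

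\textbf{Running time.} Extracting one conformal circuit is not a single Gaussian elimination. It is a sequence of up to $k$ support-shrinking steps: repeatedly take a kernel vector supported inside the current vector's support and not parallel to it, and subtract a multiple that zeroes a coordinate while staying conformal. To stay within $O(k^3n)$, do one fresh elimination per extracted circuit and then update the echelon form in $O(nk)$ per dropped column, as in the proof of Lemma~\ref{lem:conic-Caratheodory}. Recomputing kernels from scratch at every shrinking step would cost $O(k^4n)$.
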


The overall algorithm  initializes $\hat\delta\coloneqq1/n$, and calls \textsc{Recursive-Conic-Validity}$(K,c,\emptyset, \hat\delta)$. If outcomes \eqref{outcome:farkas-c} or \eqref{outcome:feasible-c} occur, then we terminate with the desired solution. If outcome \eqref{outcome:fail-c} occurs, then update $\hat\delta\coloneqq\min\{\hat\delta^2,\varphi\}$ and restart.

We now describe $\textsc{Recursive-Conic-Validity}(K,c,F,\hat\delta)$. If $c^{F}=0$, we return the trivial solution $y =0$ to
the system $(M^F)^\top y=c^F$, $y\ge 0$ as outcome (a). If $c^F\neq 0$ but $\rk(M^F)=0$, then we return $\bar x=-c^F\in K^F$ as outcome (b) (since $M\bar x=-M\Pi^F c=0$, while $c^\top \bar x=-c^\top \Pi^F c=-\|c^F\|^2<0$). 
If $c^F \neq 0$ and $\rk(M^F)>0$, we run the subroutine in Lemma~\ref{lem:conic-max-recurse}
for $K^F$, $c^F$ and $\hat \delta$, and perform one of the following
actions according to the outcome:

\begin{enumerate}[(i)]
\item Lemma~\ref{lem:conic-max-recurse} returns $x\in K^F$ with $(c^F)^\top x<0$; we return $\bar x\coloneqq\Pi^F x$ as outcome (b) and the algorithm terminates.
\item Lemma~\ref{lem:conic-max-recurse} returns a nonzero $\lambda\in \Rp^{T_F}$ that is a support minimal solution to $(M^F)^\top \lambda=0$. Let $J\coloneqq\supp(\lambda)$; $\lambda$ provides a proof that $(K^F)_{c^F}\subseteq K^F \subseteq \{x\st M^F_J x=0\}$. We set $F'\coloneqq F\cup J$. Note that $|J|\le \rk(M_J)+1$, by the minimality of $\lambda$, hence $|F'|\le 2\rk(M_{F'})$. We call \textsc{Recursive-Conic-Validity}$(K,c,F',\hat \delta)$.
If this recursive call outputs $\bar x\in K$ with $c^\top \bar x<0$ (outcome \eqref{outcome:feasible-c}), we return $\bar x$. If the recursive call outputs a failure  (outcome \eqref{outcome:fail-c}), that is, a certificate $\lambda'\in\R^{T_{F'}}$ that $\hat\delta>\delta_{M^{F'}}$, we can recover a certificate $\tilde\lambda\in\R^{T_{F}}$ that $\hat\delta>\delta_{M^{F}}$. This can be done similarly to the proof of Lemma~\ref{lem:projected-condition-numbers}, since we can find $\theta\in\R^J$ such that, if we define $\tilde{\lambda}\in\R^{T_F}$ by $\tilde{\lambda}_j=\lambda'_j$ for $j\in T_{F'}$, $\tilde{\lambda}_j=\theta_j$ for $j\in J$, and $\tilde{\lambda}_j=0$ otherwise, we then have $(M^{F'})^\top \lambda' =(M^F)^\top \tilde\lambda$. By construction the rows of $M^F$ indexed by the support of $\tilde\lambda$ are linearly independent, and furthermore
\[\hat \delta \max_{j\in T_{F}} \tilde\lambda_j\|m^{F}_j\|\ge \hat \delta \max_{j\in T_{F'}} \lambda'_j\|m^{F'}_j\|>\|(M^{F'})^\top \lambda'\|=\|(M^F)^\top \tilde\lambda\|,\] 
where the first inequality follows from $\|m_j^F\|\ge\|m^{F'}_j\|$ for $j\in T^{F'}$ and $T_{F'}\subseteq T_F$, while the second follows because $\lambda'$ is a failure for $\hat\delta$ for the matrix  $M^{F'}$.
We then return the certificate $\tilde\lambda$ that $\hat\delta>\delta_{M^{F}}$.

Finally, assume that the recursive call outputs $y' \in \Rp^{T_{F'}}$ such that $(M^{F'})^\top  y' = c^{F'}$ (outcome \eqref{outcome:farkas-c}) and $|\supp(y')| \leq 2\rk(M^{F'})$. By Lemma~\ref{lem:easy-extension-y}, we can compute a vector $y \in \R^{T_F}$ with $\supp(y)\in T_{F'}\cup J$, $|\supp(y)| \leq 2\rk(M^F)$, such that $(M^{F})^\top y =c^{F}$, and $y_i= y'_i$ for $i \in T_{F'}$. However, $y_i<0$ is possible for $i\in J$. Since $(M^F)^\top\lambda=0$ and $\lambda_J>0$, for sufficiently large $\alpha>0$ we obtain $\bar y= y+\alpha\lambda \ge 0$ such that  $(M^F)^\top \bar y=c^F$; we return the vector $\bar y$ as outcome \eqref{outcome:farkas-c}.

 \item  Lemma~\ref{lem:conic-max-recurse} returns $\lambda\in \Rp^{T_F}$ such that $\{m_i^F \st i\in\supp(\lambda)\}$ are linearly independent, along with a nonempty $J\subseteq \supp(\lambda)$ such that $\lambda_j\|m^F_j\|>\|(M^F)^\top \lambda - c^F\|/\hat\delta$ for all $j\in J$. If $(M^F)^\top \lambda=c^F$, we can output this vector $\lambda$ as outcome \eqref{outcome:farkas-c}. For the rest, assume $(M^F)^\top \lambda \neq c^F$. %

     By Lemma~\ref{lem:strong-validity-correct-delta}, if $\hat\delta\le\delta_M\le\delta_{M^F}$, then {$(K^F)_c \neq \emptyset \Rightarrow (K^F)_{c^F} \cap \{x\st M^F_J x=0\} \neq \emptyset$}. Therefore, we set
 $F'=F\cup J$ and call \textsc{Recursive-Conic-Validity}$(K,c,F',\hat\delta)$. We have $|F'|\le 2\rk(M_{F'})$ by the linear independence assumption on $\lambda$.

 Note that, unlike in case $(ii)$ above, we do not have a proof that $M_Jx\ge 0$ can be set at equality; indeed, if $\hat \delta>\delta_{M^F}$, then we may have set at equality an incorrect set of inequalities. As we will now explain, the algorithm will either return a correct solution, or detect a failure, in which case we will restart from $F=\emptyset$ and an updated value of $\hat\delta$.

As in the previous case, if the output of \textsc{Recursive-Conic-Validity}$(K,c,F',\hat\delta)$ is $\bar x\in K$ with $c^\top \bar x<0$ (outcome \eqref{outcome:feasible-c}), we return $\bar x$, whereas if the output is a failure (outcome \eqref{outcome:fail-c}), we return the corresponding $\varphi$ and $\lambda$.

Assume the output is $\bar{y} \in \Rp^{T_{F'}}$ with $|\supp(\bar y)| \leq 2 \rk(M^{F'})$, such that $(M^{F'})^\top \bar  y=c^{F'}$. By Lemma~\ref{lem:easy-extension-y}, we can compute a vector $y' \in \R^{T_F}$ with $\supp(y')\in T_{F'}\cup J$, $|\supp(y')| \leq 2\rk(M^F)$ such that $(M^{F})^\top y'=c^{F}$, and $y'_i=\bar  y_i$ for $i\in T_{F'}$. If $y'\ge 0$, we can return it as outcome \eqref{outcome:farkas-c}. 

In the case that $y'_i<0$ for some $i\in J$ we invoke Lemma~\ref{lem:combine-vectors}.
To remove the negative components of $y'_{J}$, we apply the algorithm in
Lemma~\ref{lem:combine-vectors} with the choice  $H=M^F$,  $y=\lambda$,
$\L_1=J$ and $\L_2=T_F\setminus J$. Observe that $H^\top(y'-y)=c^F-(M^F)^\top\lambda$, hence $y$, $y'$ satisfy the assumptions of
Lemma~\ref{lem:combine-vectors}. If outcome $(ii)$ of
Lemma~\ref{lem:combine-vectors} occurs, then we detected a fail since
$\hat\delta>\delta_{M^F}\ge \delta_M$, and output the corresponding bound
$\varphi$ and combination $\lambda$. Otherwise, outcome $(i)$ of Lemma~\ref{lem:combine-vectors}
occurs: we obtain $q\in \Rp^{T_F}$ with $(M^{F})^\top q=c^F$, $|\supp(q)|
\leq \rk(M^F)$, and we return $q$ as outcome~\eqref{outcome:farkas-c}.

\item Lemma~\ref{lem:conic-max-recurse} returns a failure for $\hat \delta$, in which case we return outcome~\eqref{outcome:fail-c}, along with the corresponding bound $\varphi$ and combination $\lambda$.
\end{enumerate}

\paragraph{Correctness} It is clear that, if the procedure terminates, it
terminates with a correct output, so we only need to argue termination. Note
that, for every value of $\hat\delta$, each call to
\textsc{Recursive-Conic-Validity}$(K,c,\emptyset, \hat\delta)$ will make at
most $\rk(M)\le n$ recursive calls of the form
\textsc{Recursive-Conic-Validity}$(K,c,F, \hat\delta)$. To see this, note
that $\rk(M^F)$ decreases by at least one at every successive
recursive call. Once $\rk(M^F) = 0$, the algorithm terminates with outcome (a) in case $c^F=0$, and with outcome (b) in case $c^F\neq 0$.

Lastly, if $\hat \delta\le\delta_M$, then \textsc{Recursive-Conic-Validity}$(K,c,\emptyset, \hat\delta)$ will not detect any failure, and so it will terminate with one of the two desired outcomes.

\paragraph{Running time analysis }

For each value of $\hat \delta$ set by the algorithm, we have at most $n$ recursive calls to \textsc{Recursive-Conic-Validity}. Recall that each time we update $\hat\delta$ to a value which is less than or equal to $\hat\delta^2$, and we terminate with $\hat \delta\ge \delta_M^2$, for a maximum of $O(\log\log(\delta_M))$ updates. In each recursive call to \textsc{Recursive-Conic-Validity} we  call to the algorithm in Lemma~\ref{lem:conic-max-recurse}, which requires $\mathcal{T}_o(n,\hat \delta^2/(8n^2))$ oracle calls and $\mathcal{T}_a(n,\hat \delta^2/(8n^2))+O(n^2\tau(n))$ arithmetic operations. By Lemma~\ref{lem:projected-oracle}, each oracle call for $K^F$ requires $O(n^2)$ arithmetic operations. By Lemma~\ref{lem:delta-log}, it follows that the total time required by the calls to  Lemma~\ref{lem:conic-max-recurse} is dominated by the time for the last value of $\hat\delta$, hence it requires $O(n\mathcal{T}_o(n,\delta_M^2/O(n)))$ oracle calls and $O(n^3\mathcal{T}_o(n,\delta_M^2/O(n))+n\mathcal{T}_a(n\delta_M^2/O(n)))+O(n^3\tau(n)\log\log(1/\delta_M))$ arithmetic operations.

At each recursive call, we need to compute the projection matrix $\Pi^F$, which requires $O(n^\omega)$ arithmetic operations. In case (ii) of the recursion, the running time is dominated by the application of Lemma~\ref{lem:easy-extension-y}, which requires $O(n^\omega)$ operations (since $|F|\le 2n$). In case (iii) of the recursion, the running time is dominated by the application of Lemma~\ref{lem:combine-vectors}, which requires $O(n^4)$ operations (observe that this is because, when we apply the lemma to $H=M^F$, we can  limit ourselves to the rows of $H$ corresponding to $\supp(y)\cup \supp(y')$, and by construction $|\supp(y)|,|\supp(y')|=O(n)$). Since we have $n$ recursive call per value of $\hat\delta$, and $\hat\delta$ is updated at most $\log\log(\delta_M)$ times, it follows that the running time of all these operations is bounded by $O(n^5\log\log(1/\delta_M))$.

\subsubsection{The proof of Lemma~\ref{lem:combine-vectors}}\label{sec:prox}
The proof of Lemma~\ref{lem:combine-vectors} will use the following Carath\'eodory-type result. We note that this can also be derived from \cite[Lemma 4.3 in the arXiv version]{DadushNV20}; we give here a simpler statement with a more direct proof.

\begin{lemma}\label{lem:constructive-prox}
Let $ H \in \R^{k\times n}$ and $g\in\R^n$. Let $[k]=\L_1 \dot\cup \L_2$ and $\ell\in\R^{\L_2}$, $\ell\le 0$. Given $z\in\R^k$ such that $H^\top z=g$, $z_i\ge \ell_i$ for all $i\in \L_2$, 
in time $O(k^3 n)$ we can find $r,v\in\R^k$, such that
\begin{itemize}
\item $H^\top r=H^\top v=g$, $r_i\ge \ell_i$ for all $i\in \L_2$,
\item $\max_{i\in [k]} |r_i|\cdot \|h_i\|  \leq \max_{i\in [k]} |v_i|\cdot \|h_i\|$,
\item the vectors in $\{h_i\st i\in \supp(v)\}$ are linearly independent.
\end{itemize}
\end{lemma}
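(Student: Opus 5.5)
The plan is an iterative support-reduction in the spirit of Lemma~\ref{lem:conic-Caratheodory}, run on $z$ while tracking both the lower bounds $\ell$ and the weighted sup-norm $\rho(x)\coloneqq\max_{i\in[k]}|x_i|\cdot\|h_i\|$. I expect it to be a short variant of that lemma. I also expect the inequality $\rho(r)\le\rho(v)$ to be the only real difficulty, and I flag below where I am unsure it closes.

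\emph{Producing $r$ (the routine part).} Start with $r\coloneqq z$, which satisfies $H^\top r=g$ and $r_{\L_2}\ge\ell$. Call $i\in\L_2$ \emph{tight} if $r_i=\ell_i<0$. While $\{h_i\st i\in\supp(r)\}$ is dependent, take a support-minimal $\mu$ with $H^\top\mu=0$ and $\supp(\mu)\subseteq\supp(r)$, computed by Gaussian elimination as in Lemma~\ref{lem:conic-Caratheodory}. Choose the sign of $\mu$ so that moving along $\mu$ decreases $\rho$, or at least does not increase it. Move $r\leftarrow r+\alpha\mu$ with the largest $\alpha>0$ such that:
\begin{itemize}
\item every coordinate of $r$ stays on its current side of $0$;
\item every coordinate $i\in\L_2$ keeps $r_i\ge\ell_i$.
\end{itemize}
Each step either zeroes a coordinate or makes a new index tight, and each event can happen at most $k$ times. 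With the same echelon-form bookkeeping as in Lemma~\ref{lem:conic-Caratheodory}, this gives $O(k)$ steps of $O(k^2n)$ each, hence $O(k^3n)$ in total. Every iterate satisfies $H^\top r=g$ and $r_{\L_2}\ge\ell$, and the move is sign-conformal, so $|r_i|$ never increases past its starting value in a bad direction.

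\emph{Producing $v$.} At termination, if $\supp(r)$ is independent I simply take $v\coloneqq r$, and then all three bullets hold trivially. Otherwise the loop stopped because every remaining circuit is blocked by tight indices. In that case I drop the bounds $\ell$ and continue the same circuit moves on a copy of $r$ until the support is independent. This is plain Carath\'eodory reduction (Corollary~\ref{cor:carath}, applied with signed coefficients), and its output is $v$. The copy still satisfies $H^\top v=g$ and has linearly independent support.

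\emph{The main obstacle} is proving $\rho(r)\le\rho(v)$. The key observation I would try to exploit is this: once the bounds are dropped, every circuit move $\mu$ that is blocked at a tight index $i$ must push $r_i$ below $\ell_i$. So along that move the coordinate $i$ can only grow in absolute value, since $\ell_i\le0$. I would aim to show, by tracking one index $i^*$ that attains the maximum in $\rho(r)$, that $|v_{i^*}|\ge|r_{i^*}|$ or that some other coordinate of $v$ dominates $|r_{i^*}|\,\|h_{i^*}\|$. This is exactly the step that I am unsure closes with the sign choices as stated.

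If tracking a single index fails, I would change strategy. First take $v$ to be a basic solution of $H^\top v=g$. Then decompose $z-v$ conformally into circuits $c_1,\ldots,c_s$. Finally set $r\coloneqq v+\sum_t\alpha_t c_t$ with $\alpha_t\in[0,1]$ minimal so that $r_i\ge\ell_i$ holds for the indices $i$ where $v_i<\ell_i$. Conformality makes feasibility automatic. The remaining work is then to bound $\rho(r)$ by $\rho(v)$ using $|r_i-v_i|\le\ell_i-v_i\le|v_i|$ on those coordinates.
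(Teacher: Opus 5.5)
\paragraph{The gap is exactly the step you flag.}
It is not merely unproved: as specified, your procedure can output a pair with $\rho(r)>\rho(v)$. Take $h_1=(1,0)$, $h_2=(0,1)$, $h_3=(1,1)$, $\L_2=\{1,3\}$, $\ell_1=\ell_3=-1$ and $z=(-1,5,-1)$, so $g=(-2,4)$. The only circuit $\pm(1,1,-1)$ is blocked in both directions by the tight indices $1$ and $3$. So Phase~1 stops with $r=z$ and $\rho(r)=5$. If Phase~2 keeps your $\rho$-decreasing orientation, it chooses $-(1,1,-1)$ and returns $v=(-2,4,0)$, with $\rho(v)=4$.

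The missing idea goes the other way. Freeze $r$, fix $i^*$ attaining $\rho(r)$, and orient every circuit $\mu$ applied to $v$ so that $\mu_{i^*}v_{i^*}\ge 0$. Then $|v_{i^*}|$ never decreases, and $\rho(v)\ge |v_{i^*}|\,\|h_{i^*}\|\ge\rho(r)$. This only works if that orientation gives a finite step. That requires two things:
\begin{itemize}
\item every such circuit has entries of both signs on $S(r)\coloneqq\{i\in\L_2: r_i<0\}$;
\item the invariant $v_i\le 0$ holds on $S(r)$.
\end{itemize}
Then, whichever orientation you pick, some $v_i<0$ with $\mu_i>0$ moves toward $0$, and you stop the step when it reaches $0$.

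\paragraph{Your fallback fails as well.}
The choice of the basic solution $v$ is the whole content of the lemma. Keep the same $h_i$ and take $g=(-1,-1)$, $\L_2=\{1,2\}$, $\ell_1=\ell_2=-0.1$, $z=(0,0,-1)$. The basic solution $v=(-1,-1,0)$ has $\rho(v)=1$. Yet every feasible $r$ has $r_3\le -0.9$, hence $\rho(r)\ge 0.9\sqrt2>1$. Your estimate $|r_i-v_i|\le|v_i|$ controls only the violated coordinates, while the circuits also move the others (here $r_3$).

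\paragraph{The two-phase structure and its termination argument.}
The two-phase design cannot guarantee the property above, because Phase~1 inspects one circuit at a time. Nothing certifies that \emph{every} circuit inside $\supp(r)$ is blocked.

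The termination count is also wrong. A tight index is released by any later move with $\mu_i>0$, so tightness events are not bounded by $k$. For example, take $h_1=h_2=h_3=1\in\R^1$, $z=(-1,-1,-1)$ (so $g=-3$) and $\ell_i=-1.5$ for all $i$. If the circuit $e_1-e_2$ is recomputed on the unchanged support, the iterates alternate forever between $(-0.5,-1.5,-1)$ and $(-1.5,-0.5,-1)$. Already at this $z$ every orientation raises $\rho$, so your Phase~1 orientation rule is not always satisfiable.

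\paragraph{How the paper does it.}
The paper never uses tightness at $\ell$, and it interleaves the two phases. It maintains three invariants: $\supp(v)\subseteq\supp(r)$, $v\le 0$ on $S(r)$, and $|v_{i(r)}|\ge|r_{i(r)}|$. At each step it takes a circuit $\mu$ inside $\supp(v)$.
\begin{itemize}
\item If $\mu$ vanishes on $S(r)$ or has a single sign there, it moves $r$ itself. This is either a sign-preserving Carath\'eodory step, or a push of the $S(r)$-entries upward until some $\L_2$-entry hits $0$. Then $|\supp(r)|$ drops, and $r_{\L_2}\ge\ell$ holds automatically because negative $\L_2$-entries only increase. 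Finally it resets $v\coloneqq r$.
\item If $\mu$ has both signs on $S(r)$, it moves $v$ alone, in the orientation that does not decrease $|v_{i(r)}|$.
\end{itemize}
This gives $O(k^2)$ moves and the $O(k^3n)$ bound.
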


\begin{proof}
We set $r\coloneqq z$, $v\coloneqq z$, and at every iteration maintain vectors $r$ and $v$ such that 
\begin{enumerate}[(a)]
\item  $H^\top r=H^\top v=g$, $r_i\ge \ell_i$ for all $i\in \L_2$, 
\item  $|r_{i(r)}|\cdot \|h_{i(r)}\|\le |v_{i(r)}|\cdot \|h_{i(r)}\|$, where $i(r)\coloneqq \arg\max_{i\in [k]} |r_i|\cdot \|h_i\|$, 
\item  $\supp(v)\subseteq\supp(r)$, and
\item $v_i\le 0$ for all $i\in S(r)\coloneqq \{i\in\L_2\st r_i<0\}$. 
\end{enumerate}
If at some iteration the vectors in $\{h_i\st i\in \supp(v)\}$ are  linearly independent, we output $r$ and $v$ and stop. At every iteration in which the vectors in $\{h_i\st i\in \supp(v)\}$ are not linearly independent, we will update $r$ and $v$ so that either $|\supp(r)|$ decreases or $|\supp(v)|$ decreases, while $|\supp(r)|$ does not increase. For this, we compute $\mu\in\ker(H^\top)$, $\mu\neq 0$, with $\supp(\mu)\subseteq \supp(v)$.  We consider two cases:

\noindent{\bf Case I:} If $\mu_i= 0$ for all $i\in S(r)$, we choose $i^*\coloneqq \arg\min\{|r_i|/|\mu_i|\st i\in\supp(\mu)\}$ and let $\alpha\coloneqq -r_{i^*}/\mu_{i^*}$,  $r\coloneqq r+\alpha \mu$, and $v\coloneqq r$. Clearly, $|\supp(r)|$ decreased and $r$ and $v$ maintained the conditions {\em(a)--(d)}. 

\noindent{\bf Case II:} There exists $i\in S(r)$ with $\mu_i\neq 0$. Possibly by taking $-\mu$ instead of $\mu$ we can assume that there exists $i\in S(r)$ such that $\mu_i>0$. If $\mu_i\ge 0$ for all $i\in S(r)$, then we let $\alpha\coloneqq \min\{-r_i/\mu_i\st i\in\supp(\mu)\cap \L_2,\, r_i\mu_i<0\}$ , $r\coloneqq r+\alpha \mu$, and $v\coloneqq r$. Clearly, $|\supp(r)|$ decreased and $r$ and $v$ maintained the conditions {\em (a)--(d)}. The last case to consider is that there exist both $i\in S(r)$ with $\mu_i>0$ and $i\in S(r)$ with $\mu_i<0$.  Possibly by taking $-\mu$ instead of $\mu$, we can assume $\mu_{i(r)} v_{i(r)}\ge 0$ if $v_{i(r)}\ge 0$ and $\mu_{i(r)} v_{i(r)}< 0$ if $v_{i(r)}< 0$. Let  $\alpha\coloneqq \min\{-v_i/\mu_i\st \mu_i >0,\, i\in S(r)\}$, $v\coloneqq v+\alpha\mu$, and $r$ unchanged. Clearly $|\supp(v)|$ decreased, by the choice of $\alpha$ we maintained $v_i\le 0$ for all $i\in S(r)$, and also  $|v_{i(r)}|$ did not decrease. Hence conditions {\em (a)--(d)} are maintained.

Note that the procedure must terminate, since after at most $k$ iterations $|\supp(r)|$ decreases by at least one, because $|\supp(v)|$ decreases by at least one whenever $|\supp(r)|$ does not, and  $|\supp(r)|$ cannot decrease more than $k$ times. Overall we update $r$ at most $k$ times and $v$ at most $k^2$ times. For the running time bound, if we denote by $B$ the column submatrix of $H^\top$ defined by the columns indexed by $\supp(v)$, it takes $O(nk\min\{n,k\})$ arithmetic operations to bring the system $B\mu=0$ in normal echelon form via Gaussian elimination whenever we update $r$ and reset $v\coloneqq  r$. This contributes $O(k^3n)$ overall. At every iteration in which we update $v$ but not $r$, we remove at least one column in the system and it takes $O(nk)$ operations per column removed to bring the system back to echelon normal form,  for a total of $O(nk^3)$ operations.
\end{proof}

\begin{proof}[Proof of Lemma~\ref{lem:combine-vectors}]
Let us define $g:=H^\top(y'-y)$, $z=y'-y$, and $\ell_i=-y_i$ for $i\in \L_2$. Clearly $H^\top z=g$, $z_i\ge \ell_i$ for all $i\in\L_2$. We apply Lemma~\ref{lem:constructive-prox} to obtain $r,v\in\R^k$ as in its statement in time $O(k^3 n)$. If $\|H^\top  v\|<\hat\delta \max_{i\in [k]} | v_i|\cdot \|h_i\|$, then we output $\lambda=v$ and stop with outcome $(ii)$. Assume that $\|H^\top v\|\ge \hat\delta \max_{i\in [k]} |v_i|\cdot \|h_i\|$. Since  $H^\top v=H^\top r$ and $\max_{i\in [k]} |r_i|\cdot \|h_i\|  \leq \max_{i\in [k]} |v_i|\cdot \|h_i\|$, we also have $\|H^\top r\|\ge \hat\delta \max_{i\in [k]} |r_i|\cdot \|h_i\|$.
We claim that the vector $q\coloneqq y+ r$ satisfies $H^\top q=H^\top y'$, $q\ge 0$. Recalling that $H^\top r=H^\top(y'-y)$,  we have $H^\top q=H^\top y'$. For $i\in \L_2$, $q_i= y_i+r_i\ge y_i+\ell_i=0$.  For $i \in \L_1$,
\[q_i=y_i+ r_i\ge y_i-\frac{\left\|H^\top r\right\|}{\hat\delta\|h_i\|}=y_i-\frac{\left\|H^\top(y'-y)\right\|}{\hat\delta\|h_i\|}\ge 0\, ,\]
where the last inequality follows the assumption made in the lemma.

By Corollary~\ref{cor:carath}, in time $O(k^2 n)$ we can turn $q$ into a basic solution of $H^\top q=H^\top y'$, $q\ge 0$, ensuring $|\supp(q)|\le \rk(H)$, where the running time follows. 
\end{proof}

\section{The conic minimum-ratio algorithm}\label{sec:optimization}
Recall the conic minimum-ratio problem: the input is a cone $K\subseteq \R^n$ of the form $K=\{x\in \R^n\st Mx\ge 0\}$ with $M\in \R^{m\times n}$, given via a conic separation oracle, and vectors $c,d\in \R^n$, $y^{(d)}\in \Rp^m$ such that $M^\top y^{(d)}=d$, $I=\supp(y^{(d)})$, $|I|\le n$. The goal is to find the maximum value $\gamma^*\in \R$ such that $(c-\gamma^* d)^\top x\ge 0$ for all $x\in K$, or conclude that the problem is infeasible or unbounded.
We assume polyhedral separation oracles are available for the cones
\[
K_{-d} = \{x \in \R^n: Mx \geq 0, d^\top x > 0\}\quad\mbox{ and }\quad K_I^{=} =
\{x \in \R^n: Mx \geq 0, M_I x =0\}\, ,\]
 Further, we will provide certificates as described in  Section~\ref{sec:our-contr}.
For  an index set $F\subseteq [\km]$,
we will use the same notation $\Pi^F$, $v^F$, $T_F$, $M^F$, and $K^F$ as in
Sections~\ref{sec:conic-feas} and~\ref{sec:conic-validity}.

\paragraph{Detecting infeasibility and unboundedness}
First, let us decide if  problem  \eqref{eq:min-ratio} is infeasible, i.e. if $d^\top x=0$ for all $x\in K$. The combination $y^{(d)}$ already certifies that $d^\top x\ge 0$ for all $x\in K$. Let us run the conic validity algorithm for the cost vector $-d$, using the oracle for $K_{-d}$. Then, we either find
$\bar x\in K$ such that $d^\top \bar x>0$, or a certificate $y\in\Rp^m$ such that $M^\top y=-d$.

Assume next the problem is feasible, i.e., we have found a
$\bar x\in K$ such that $d^\top \bar x>0$.
We now check if  problem \eqref{eq:min-ratio} is unbounded. We need to verify if there exists an $x\in K$ with $c^\top x<0$ and $d^\top x=0$. According to the combination $y^{(d)}\in \Rp^m$, $M^\top y^{(d)}=d$, $I=\supp(y^{(d)})$, we have $d^\top x=0$ if and only if $x\in K_I^=$.
We thus run the conic validity algorithm for $K_I^=$ and the projection $c^I$. (The required oracle for $(K_I^=)_{c^I}$ can be implemented using the oracle assumed for $K_I^=$ and checking the additional inequality $(c^I)^\top x < 0$.)
 If the original problem is unbounded, then we find an $x\in K^=_I$ with $(c^F)^\top x<0$. If we establish that $c^\top x\ge 0$ for all $x\in K_I^=$ %
 then we conclude that \eqref{eq:min-ratio} is bounded.

\paragraph{The optimal set and the main subroutine}
After the above preprocessing, from now on we assume that
\begin{equation}\label{bounded-assumption}
K_{-d}\neq\emptyset\mbox{ and  $c^\top x\ge 0$ for all $x\in K_I^=$. }
 \end{equation}
Hence, a finite optimal $\gamma^*$ exists, in which case $(c-\gamma^* d)^\top x\ge 0$ is valid for $K$ and there exists an $x^*\in K$ with $(c-\gamma^* d)^\top x^*=0$, $d^\top x^*>0$.
Let
\[
K_0=\left\{x\st Mx\ge 0\, ,\, (c-\gamma^* d)^\top x=0\, ,\,  d^\top x>0\right\}
\]
denote the set of optimal solutions. By the above assumptions, $K_0\neq\emptyset$. Note that if $c$ and $d$ are linearly dependent, that is, $c=\gamma d$ for some $\gamma\in\R$, then $c^\top x/d^\top x=\gamma$ for every $x\in K_{-d}$. Hence, $K_0=K_{-d}$ in this case.

The purpose of the next lemma, which is an analogue of
Lemma~\ref{lem:conic-max-recurse} for the set $K_0$, is to identify inequalities of $Mx\ge 0$ that can be set at equality for $K_0$.

\begin{lemma}\label{lem:conic-max-ratio-recurse}
Let $K=\{x\in \R^n\st M x\ge 0\}$ for $M\in\R^{\km\times n}$, and let $c,d\in \R^n$ such that
\eqref{bounded-assumption} holds, and $c$ and $d$
 are linearly independent.
 Furthermore, let $y^{(d)}\in\Rp^\km$ such that $M^\top y^{(d)}=d$. Assume that polyhedral separation oracles are available for the cones $K_{-d}$ and $K^=_{I}$.
Let $\hat \delta\in(0,1)$.
There exists an oracle-polynomial-time algorithm using $\mathcal{T}_o(n,\hat \delta^2/(8n^3))$ oracle calls and $\mathcal{T}_a(n,\hat \delta^2/(8n^3))+O(n^2\tau(n))$ operations that returns one of the following:
\begin{enumerate}[(i)]
\item\label{outcome-max:m-i} a nonzero vector $\lambda\in \Rp^\km$ that is a minimum support solution to $M^\top \lambda=0$;
\item\label{outcome-max:solid} a vector $\lambda\in \Rp^{\km}$ such that $\{m_j\st j\in\supp(\lambda)\}$ are linearly independent, a point $\bar x\in K$ such that $d^\top \bar x>0$, along with a nonempty subset $J\subseteq [\km]$ such that, for $\gamma=c^\top \bar x/d^\top \bar x$,  every $j\in J$ satisfies $\lambda_j\|m_j\|{\hat\delta}> {\|M^\top \lambda -(c-\gamma d)\|}$;
\item\label{outcome-max:fail} a certificate of $\hat \delta>\delta_M$, namely, a vector $\lambda\in \Rp^{\km}$ such that $\{m_j\st j\in\supp(\lambda)\}$ are linearly independent and $\varphi=\|M\lambda\|/\max_{j\in [m]}\lambda_j\|m_j\|<\hat \delta $.
\end{enumerate}
\end{lemma}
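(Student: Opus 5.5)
Following the proof of Lemma~\ref{lem:conic-max-recurse}, I would make one call to \textsc{Approx-Conic-Dual} with $\varepsilon=\hat\delta^2/(8n^3)$, but on a cone whose separator set mixes the rows of $M$ with the cost vector. The natural choice is $K_{-d}$: by assumption \eqref{bounded-assumption} it is nonempty, so the call can only return a feasible point. That call alone does not yield dual information about $c$, so I would instead work with a cone cut out by the ratio objective. Concretely, I would first obtain some $\bar x\in K$ with $d^\top\bar x>0$ (from a call on $K_{-d}$, or reused from preprocessing) and set $\gamma=c^\top\bar x/d^\top\bar x\ge\gamma^*$. I would then run \textsc{Approx-Conic-Dual} on
\[
K'=\{x\st Mx\ge 0,\ d^\top x>0,\ (c-\gamma d)^\top x<0\}.
\]
Its separation oracle is built from the oracle for $K_{-d}$, with the extra inequality $(c-\gamma d)^\top x<0$ checked explicitly.

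If the call returns a point $x\in K'$, then $c^\top x/d^\top x<\gamma$. I would replace $\bar x$ by $x$, lowering $\gamma$, and repeat. The danger is that this loop has no oracle bound independent of $c,d$, so I would avoid looping. Instead I would choose $\bar x$ by one call on a single cone that encodes both $d^\top x>0$ and the objective. A cleaner formulation is the homogenized cone in $(x,s)$ given by $Mx\ge0$, $d^\top x>0$ and $c^\top x-s\,d^\top x<0$, but that is not polyhedral in the required sense.

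So the plan is: call on $K'$. If the returned certificate does not involve the separator $-(c-\gamma d)$, or involves $d$ only with coefficient accounted for by $y^{(d)}$, then the combination is an approximate kernel element of $M^\top$. I would substitute $d=M^\top y^{(d)}$ to convert any $d$-term into rows of $M$. The small-multiplier branch then proceeds as before, using Lemma~\ref{lem:conic-Caratheodory} to get outcome \eqref{outcome-max:m-i} or \eqref{outcome-max:fail}.

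In the large-multiplier branch, the multiplier $\tau$ on $c-\gamma d$ satisfies $\tau\|c-\gamma d\|>\hat\delta/(4n^2)$. I would divide by $\tau$, rewrite the $d$-coefficient using $y^{(d)}$, and apply Corollary~\ref{cor:carath} to get a linearly independent $\lambda$ with $\|M^\top\lambda-(c-\gamma d)\|\le\|c-\gamma d\|\hat\delta/(2n)$. The triangle-inequality argument from Lemma~\ref{lem:conic-max-recurse} then shows that the index $j$ maximizing $\lambda_j\|m_j\|$ lies in $J$, which gives outcome \eqref{outcome-max:solid} with this $\bar x$.

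The main obstacle is the $d$-separator. The $d$-coefficient $\sigma$ enters with sign $+d$ (from $d^\top x>0$), whereas $c-\gamma d$ enters with sign $-$. Substituting $\sigma d=\sigma M^\top y^{(d)}$ keeps nonnegativity, so this works, but it inflates the support to size $\tau(n)+n$ and affects the $\ell_1$-normalization. I would handle the normalization by noting that $\sigma\|d\|_1\le\sigma\sqrt n\|M^\top y^{(d)}\|$ can be absorbed, adjusting the constant in $\varepsilon$. A second subtlety is that $c-\gamma d\ne 0$, which follows from the linear independence of $c$ and $d$.
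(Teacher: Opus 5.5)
There is a genuine gap: the proposal never handles the case in which the call on $K'$ returns a point. Whenever your initial $\bar x$ is not optimal, i.e.\ $\gamma>\gamma^*$, the cone $K'$ is nonempty, and \textsc{Approx-Conic-Dual} may return some $x\in K'$. That is none of outcomes (i)--(iii).

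You observe yourself that replacing $\bar x$ by $x$ and repeating has no bound depending only on $n$ and $\hat\delta$. With real data it need not even terminate, since returned points need not be extreme rays and the ratio can decrease by arbitrarily small amounts. You then discard the homogenized alternative, but your final plan (``call on $K'$'') leaves this case open. The lemma's budget is a single call with $\varepsilon=\hat\delta^2/(8n^3)$, so a cut with a fixed $\gamma$ cannot work.

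The missing idea is to let the objective cut depend on the query point, so that the implicitly defined cone is empty.
\begin{itemize}
\item Use the oracle for $K_{-d}$. If a query $\bar x$ lies in $K_{-d}$, set $\bar\gamma=c^\top\bar x/d^\top\bar x$ and return the separator $-(c-\bar\gamma d)$. This vector is nonzero because $c$ and $d$ are linearly independent, and $\bar\gamma\ge\gamma^*$.
\item Now every point is separated. For an empty cone the validity requirement of a conic separation oracle is vacuous, so one call to \textsc{Approx-Conic-Dual} must return an $\varepsilon$-certificate $(\bar\lambda,\bar\tau,\bar\mu)$. It mixes rows of $M$, $d$, and cuts $-(c-\gamma_t d)$, $t\in S$, for several values $\gamma_t$.
\item Put $\gamma=\min_{t\in S}\gamma_t$ and write $-(c-\gamma_t d)=-(c-\gamma d)+(\gamma_t-\gamma)d$, where $\gamma_t-\gamma\ge 0$. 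Setting $\lambda=\bar\lambda+\bigl(\bar\mu+\sum_t\bar\tau_t(\gamma_t-\gamma)\bigr)y^{(d)}$ and $\tau=\sum_t\bar\tau_t$ gives $M^\top\lambda-\tau(c-\gamma d)=M^\top\bar\lambda-\sum_t\bar\tau_t(c-\gamma_t d)+\bar\mu d$.
\item The $\ell_1$ lower bound is preserved exactly, using $\|c-\gamma_t d\|_1\le\|c-\gamma d\|_1+(\gamma_t-\gamma)\|d\|_1$ and $\|d\|_1\le\sum_j y^{(d)}_j\|m_j\|_1$. So no adjustment of $\varepsilon$ is needed, and your $\sqrt n$ detour is unnecessary.
\end{itemize}
From here your two branches go through verbatim with $c-\gamma d$ in place of $c$. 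The $\bar x$ in outcome (ii) is the query point that produced the minimizing $\gamma_t$, so no preliminary feasible point is required.
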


\begin{proof}
Let $\hat K$ be the cone implicitly defined by the following conic separation oracle, based
 on the one provided for $K_{-d}$. For every $\bar x\in \R^n$, the oracle returns one of the following:
\begin{enumerate}[(i)]
	\item If $\bar x\notin K_{-d}$, then return a violated inequality $m_i^\top x\ge 0$ or $d^\top x>0$.
	\item\label{case:d-pos} If $\bar x\in K_{-d}$, then set $\bar \gamma\coloneqq c^\top \bar x/d^\top \bar x$,
    and return the inequality $(c-\bar\gamma d)^\top x< 0$.
\end{enumerate}

Observe that, by definition of the oracle, $\hat K=\emptyset$, since every point in $\R^n$ is separated. %
By the assumption that $c$ and $d$ are linearly independent, $c-\bar\gamma d\neq 0$.
For every inequality returned in case~\eqref{case:d-pos}, we have $\bar\gamma \ge \gamma^*$, since $(c-\bar\gamma d)^\top \bar x=0\le (c-\gamma^* d)^\top \bar x$, and $d^\top \bar x>0$.

We call the subroutine \textsc{Approx-Conic-Dual}  with $\varepsilon=\hat\delta^2/(8n^3)$ using the above separation oracle. Since $\hat K=\emptyset$, \textsc{Approx-Conic-Dual} always terminates with an $\varepsilon$-certificate consisting of inequalities returned by the oracle.
The combination may include vectors corresponding to the inequalities of $Mx\ge 0$, $d^\top x\ge 0$, and inequalities $-(c-\gamma_t d)^\top x\ge 0$, $t\in S$, for different values of $\gamma_t$. The certificate is a vector of the form $(\bar \lambda,\bar \tau, \bar \mu)\in\Rp^\km\times \Rp^S \times \Rp\ $ such that
\[
\left\|M^\top \bar \lambda-\sum_{t\in S}\bar \tau_t (c-\gamma_t d)+\bar\mu d\right\|\le \varepsilon\, ,\quad  \sum_{j=1}^\km \bar \lambda_j\|m_j\|_1+\sum_{t\in S}\bar \tau_t\|c-\gamma_t d\|_1+\bar \mu \|d\|_1\ge 1\, .
\]
Recall that $d=M^\top y^{(d)}$ for a given vector $y^{(d)}\in\Rp^k$.
Let us define $(\lambda,\tau)\in\Rp^\km\times\Rp$ as follows.
 If $S=\emptyset$, then we let $\lambda\coloneqq \bar\lambda+\bar\mu y^{(d)}$ and $\tau\coloneqq 0$.
If $S\neq\emptyset$, let $\gamma\coloneqq \min_{t\in S}\gamma_t$, and
define
\[
\lambda\coloneqq \bar\lambda+\left(\bar \mu+\sum_{t\in S}\bar\tau_t (\gamma_t-\gamma)\right) y^{(d)}\, ,\quad \mbox{and} \quad \tau\coloneqq \sum_{t\in S} \bar\tau_t\, .
\]
In both cases, $M^\top  \lambda- \tau (c-\gamma d) =M^\top \bar \lambda-\sum_{t\in S}\bar \tau_t (c-\gamma_t d)+\bar{\mu} d$, and consequently, $\|M^\top  \lambda- \tau (c-\gamma d)\|\le \varepsilon$. Furthermore,
\[
\begin{aligned}
&\sum_{j=1}^\km  \lambda_j\|m_j\|_1+\tau\|c-\gamma d\|_1\\
=&  \sum_{j=1}^\km  \bar \lambda_j\|m_j\|_1+\sum_{t\in S} \bar\tau_t\left(\|c-\gamma d\|_1+(\gamma_t-\gamma)\sum_{j=1}^{\km}  y^{(d)}_j\|m_j\|_1\right)+\bar\mu \sum_{j=1}^\km y^{(d)}_j\|m_j\|_1\\
\ge& \sum_{j=1}^\km \bar \lambda_j\|m_j\|_1+\sum_{t\in S}\bar \tau_t\|c-\gamma_t d\|_1+\bar \mu \|d\|_1\ge 1
\end{aligned}
\]
by the triangle inequality. Note that at a certain iteration of the algorithm, the inequality $-(c-\gamma d)^\top x\ge 0$ was returned to separate a point $\bar x\in K$ such that $d^\top \bar x>0$ and $\gamma=c^\top \bar x/(d^\top \bar x)$.
The rest of the proof is now identical to the proof of Lemma~\ref{lem:conic-max-recurse}, applied for the vector $c-\gamma d$ in place of $c$.
\end{proof}

\begin{lemma}\label{lem:max-correct-delta} In Lemma~\ref{lem:conic-max-ratio-recurse}, if outcome \eqref{outcome-max:solid} occurs and $\hat \delta\le \delta_M$, then  \[\emptyset\neq K_0\subseteq \{x\in\R^n\st m_j^\top x= 0,\, j\in J\}.\]
\end{lemma}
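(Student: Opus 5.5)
The plan is to follow the proof of Lemma~\ref{lem:strong-validity-correct-delta}, with $c-\gamma d$ in place of $c$. The extra ingredient is a comparison between $\gamma$ and $\gamma^*$.

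Nonemptiness of $K_0$ is already known. Under \eqref{bounded-assumption} a finite optimum $\gamma^*$ exists and is attained by some $x^*\in K$ with $d^\top x^*>0$, as noted right after \eqref{bounded-assumption}. So it remains to show the containment.

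\textbf{Step 1: $\gamma\ge\gamma^*$.} We have $\bar x\in K$ with $d^\top\bar x>0$ and $\gamma=c^\top\bar x/d^\top\bar x$. Since $(c-\gamma^*d)^\top\bar x\ge 0$ and $(c-\gamma d)^\top \bar x=0$, dividing by $d^\top\bar x>0$ gives $\gamma\ge\gamma^*$. This is the same observation as in the proof of Lemma~\ref{lem:conic-max-ratio-recurse}.

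\textbf{Step 2: use the structure of $K$.}
\begin{itemize}
\item The inequality $(c-\gamma^*d)^\top x\ge 0$ is valid for $K$. By Farkas, $c-\gamma^* d\in\cone(m_1,\dots,m_\km)\subseteq\spn(M^\top)$.
\item Also $d=M^\top y^{(d)}\in\spn(M^\top)$.
\item Hence both $c-\gamma^* d$ and $d$ vanish on $\ker(M)$.
\item By Minkowski--Weyl, $K=\ker(M)+\cone(v_1,\dots,v_k)$, where the $v_\ell$ are the extreme rays of $K\cap\spn(M^\top)$.
\end{itemize}
Take any $x\in K_0$ and write $x=w+\sum_\ell\alpha_\ell v_\ell$ with $w\in\ker(M)$ and $\alpha\ge0$. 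Since $(c-\gamma^*d)^\top w=0$ and each term $(c-\gamma^*d)^\top v_\ell$ is nonnegative, the equation $(c-\gamma^*d)^\top x=0$ forces $(c-\gamma^* d)^\top v_\ell=0$ whenever $\alpha_\ell>0$.

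For such $\ell$ we then get
\[(c-\gamma d)^\top v_\ell=(\gamma^*-\gamma)\,d^\top v_\ell\le 0,\]
using Step 1 and $d^\top v_\ell\ge 0$, which holds because $d$ is a nonnegative combination of rows of $M$.

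\textbf{Step 3: Cauchy--Schwarz estimate.} This mirrors \eqref{eq:kc-ip-ub}. For $\ell$ with $\alpha_\ell>0$,
\[
0\le -(c-\gamma d)^\top v_\ell+\sum_{i\in[\km]}\lambda_i m_i^\top v_\ell \le \|M^\top\lambda-(c-\gamma d)\|\cdot\|v_\ell\|.
\]
Every term on the left is nonnegative. So for each $j\in J$, the bound in outcome~\eqref{outcome-max:solid} gives
\[0\le m_j^\top v_\ell< \hat\delta\|m_j\|\cdot\|v_\ell\|\le\delta_M\|m_j\|\cdot\|v_\ell\|.\]
Lemma~\ref{lem:cone-angle-delta} then forces $m_j^\top v_\ell=0$.

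\textbf{Step 4: conclude.} We have $m_j^\top w=0$ since $w\in\ker(M)$, and $m_j^\top v_\ell=0$ for every $\ell$ with $\alpha_\ell>0$. Therefore $m_j^\top x=0$ for all $j\in J$, which is the claimed containment.

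\textbf{Main obstacle.} The delicate point is Step 2. Unlike Lemma~\ref{lem:strong-validity-correct-delta}, we cannot simply pick one extreme ray. Every extreme ray appearing in the decomposition of an optimal $x$ must itself be optimal, so that the sign of $(c-\gamma d)^\top v_\ell$ can be controlled. This requires knowing that $c-\gamma^*d\in\spn(M^\top)$, which comes from the validity of $(c-\gamma^*d)^\top x\ge 0$ on $K$ at the optimum.
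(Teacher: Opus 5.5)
Your proof is correct and follows essentially the same route as the paper: you show $\gamma\ge\gamma^*$, reduce to extreme rays of $K\cap\spn(M^\top)$ that lie in the optimal face $\{x\in K\st (c-\gamma^*d)^\top x=0\}$, and combine the Cauchy--Schwarz bound with Lemma~\ref{lem:cone-angle-delta}. The only cosmetic difference is that the paper applies Minkowski--Weyl directly to that face (whose extreme rays are extreme rays of $K\cap\spn(M^\top)$), whereas you decompose a point of $K_0$ over all extreme rays and show the ones with positive coefficient are optimal; your use of $d^\top v_\ell\ge 0$ is also slightly more careful than the paper's ``$d^\top z>0$'' for arbitrary $z$ in the face.
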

\begin{proof}
Let $\bar x\in K$, $\gamma$, $J$ and $\lambda\in\Rp^\km$ as in Lemma~\ref{lem:conic-max-ratio-recurse}\eqref{outcome-max:solid}. Note that, by definition, $\gamma\ge \gamma^*$.
Recall that $K_0\neq\emptyset$ by assumption \eqref{bounded-assumption}.
Let $K'$ be the face of $K$ defined by the valid inequality $(c-\gamma^*
d)^\top x\ge 0$, so that $K_0=\{x\in K'\st d^\top x>0\}$. We will show that
$K' \subseteq \{x \in \R^n \st m_j^\top x =0,\, j \in J\}$, which clearly
suffices. By the Minkowski--Weyl theorem, $K' = \ker(M) + {\rm cone}(v_1,\dots,v_q)$,
where $v_1,\dots,v_q$ are the extreme rays of $K' \cap \spn(M^\top)$. To
verify the desired containment, it suffices to show that $m_j^\top
v_\ell = 0$ for all  $j \in J$ and $i \in [q]$. Since $K' \cap \spn(M^\top)$ is a face
of $K \cap \spn(M^\top)$, we note that $v_1,\dots,v_q$ are also extreme rays
of $K \cap \spn(M^\top)$. Therefore, by Lemma~\ref{lem:cone-angle-delta}, for $\ell \in [q]$ and
$j \in J$, either $m_j^\top v_\ell =0$ or $m_j^\top v_\ell \geq
\delta_M \|m_j\| \|v_\ell\|$.

On the other hand, for any $z\in K'$, by the Cauchy--Schwartz  inequality we get that
\[
 -(c-\gamma d)^\top z+ \sum_{k\in [\km]}\lambda_k m_k^\top z\le \|M^\top \lambda- (c-\gamma d)\|\cdot\|z\|\, .
\]
Note that all terms on the left-hand-side are nonnegative, since $(c-\gamma d)^\top z\le (c-\gamma^* d)^\top z=0$ (because $d^\top z>0$ and $\gamma\ge \gamma^*$) and  $Mz\ge 0$. 
It follows that, for every $\ell\in [q]$ and $j\in J$. we get
\[
0\le \lambda_j m_j^\top v_\ell\le \|M^\top \lambda- (c-\gamma d)\|\cdot\|v_\ell\|<\hat\delta \lambda_j \|m_j\|\cdot\|v_\ell\|\le \delta_M \lambda_j \|m_j\|\cdot\|v_\ell\|\, ,
\]
implying  $m_j^\top v_\ell =0$ as above.
\end{proof}

\paragraph{The recursive algorithm}
The algorithm follows the same lines as those in Section~\ref{sec:recurse-feas} for strong conic feasibility and in Section~\ref{sec:recurse-max} for conic validity. We assume that \eqref{bounded-assumption} holds.
For $F\subseteq [\km]$ we use the same notation $M^F$, $\Pi^F$, $T_F$, $K^F$ and $c^F$; further, let  $d^F=\Pi^F d$.

As in the previous algorithms, we will maintain a set of inequalities $F\subseteq [\km]$, $|F|\le 2\rk(M_F)$ and an estimate $\hat \delta\in(0,1)$ of $\delta_M$. We will defined an  algorithm \textsc{Recursive-Min-Ratio}$(K,c,d,F,\hat \delta)$ which outputs either of the following:
\begin{enumerate}[(a)]
\item \label{outcome-max:optimum} A point $x^*\in K$ such that  $d^\top x^*>0$  along with the value $\gamma^*\coloneqq c^\top x^*/d^\top x^*$ and a certificate
$y\in \Rp^{T^F}$ with $|\supp(y)|=O(n)$ such that $(M^F)^\top y=c^F-\gamma^*  d^F$.
\item \label{outcome-max:fail-ratio} Claim a failure for $\hat \delta$.
\end{enumerate}

Our algorithm is the following:
 initialize $\hat\delta\coloneqq 1/n$ and call \textsc{Recursive-Min-Ratio}$(K,c,d,\emptyset,\hat\delta)$. If outcome \eqref{outcome-max:optimum} occurs, then $\gamma^*$ is the optimal value, $x^*$ and optimal solution, and $y\in\R^\km$ a dual certificate of optimality. %
If outcome \eqref{outcome-max:fail-ratio} occurs, then update $\hat\delta\coloneqq \hat\delta^2$ and repeat. (For the sake of simplicity, in the description we give below we do not compute an explicit combination of linearly independent rows of $M$ certifying a failure, as we had done in the previous sections. Hence, we do not have an explicit upper bound $\delta_M\le\varphi<\hat\delta$. This does not make a difference in term of the worst-case running time. See also the discussion in Remark~\ref{rem:could-certify} on how explicit certificates could be constructed.)

We now describe \textsc{Recursive-Min-Ratio}$(K,c,d,F,\hat\delta)$. If $d^F=0$, then  we declare a failure, i.e., outcome~\eqref{outcome-max:fail-ratio}. This is because, by \eqref{bounded-assumption}, $K_0\neq 0$; however, for any  $x\in K_0$, we have $d^\top x>0$ whereas 
$(d^F)^\top x=0$, implying that $x\notin \ker(M)$. By Lemma~\ref{lem:max-correct-delta}, it follows that $\hat \delta> \delta_M$.

If $d^F\neq 0$, but $c^F=\gamma d^F$ for some $\gamma\in\R$, then we run the conic validity problem for $K^F$ and $-d^F$ (Section~\ref{sec:conic-validity}). If this returns a feasible solution $\bar x$, then we return outcome \eqref{outcome-max:optimum} with $\bar x$, $\gamma$, and the trivial dual certificate $y=0$. If this returns infeasibility, then we again declare the failure outcome~\eqref{outcome-max:fail-ratio} as above.

Finally, if $d^F\neq 0$ and $c^F$ and $d^F$ are linearly independent, then
  we call the subroutine in
Lemma~\ref{lem:conic-max-ratio-recurse} for the lower dimensional problem
$K^F$, $c^F$, and $d^F$, and consider the possible outcomes:
\begin{enumerate}[(i)]
\item From Lemma~\ref{lem:conic-max-ratio-recurse} we obtain a nonzero $\lambda\in \Rp^{T_F}$ that is a minimal support solution to $(M^F)^\top \lambda=0$. Let $J=\supp(\lambda)$ and $F'=F\cup J$, and call \textsc{Recursive-Min-Ratio}$(K,c,d,F',\hat\delta)$.
    If this  call returns  $x^*\in K$ such that $d^\top x^*>0$, $\gamma^*=c^\top x^*/d^\top x^*$, and  $y\in \Rp^{T_{F'}}$ such that $(M^{F'})^\top y= c^{F'}-\gamma^* d^{F'}$ (outcome~\eqref{outcome-max:optimum}), then we compute $y'\in\R^{T_F}$ as in Lemma~\ref{lem:easy-extension-y}, and return $x^*$ and $\bar y\coloneqq y'+\alpha\lambda\ge 0$, for some $\alpha$ large enough, satisfying
    $(M^{F})^\top \bar y=c^F-\gamma^* d^{F}$.
     If the recursive call  returns a failure (outcome \eqref{outcome-max:fail-ratio}), then we return a failure.

 \item  From Lemma~\ref{lem:conic-max-ratio-recurse} we obtain  $\lambda\in \Rp^{T_F}$ such that $\{m_j\st j\in\supp(\lambda)\}$ are linearly independent, $\bar x\in K^F$ with $(d^F)^\top \bar x>0$, $\gamma=(c^F)^\top\bar x/(d^F)^\top \bar x$, and a nonempty $J\subseteq \supp(\lambda)$ such that  $\lambda_j\|m^F_j\|>\|(M^F)^\top \lambda - (c^F-\gamma d^F)\|/\hat\delta$ for all $j\in J$.  If $(M^F)^\top \lambda=c^F-\gamma d^F$, then return outcome \eqref{outcome-max:optimum}, along $x^*=\Pi^F\bar x\in K$, $\gamma^*=\gamma$, and $y=\lambda$.
     Otherwise, we set $F'=F\cup J$, and call \textsc{Recursive-Min-Ratio}$(K,c,d,F',\hat\delta)$. Indeed, recall that, if $\hat\delta\le\delta_M$, then by Lemma~\ref{lem:max-correct-delta} $\emptyset\neq K_0\subset \ker(M_{F'})$.
     It this call returns a failure (outcome \eqref{outcome-max:fail-ratio}), then we return a failure.

     Consider the case where \textsc{Recursive-Min-Ratio}$(K,c,d,F',\hat\delta)$ returns  $x^*\in K$ such that $d^\top x^*>0$, $\gamma^*=-c^\top x^*/d^\top x^*$, and $\bar y\in \Rp^{T_{F'}}$ with $|\supp(\bar y)|=O(n)$ such that $(M^{F'})^\top \bar y= c^{F'}-\gamma^* d^{F'}$ (outcome~\eqref{outcome-max:optimum}). If $\gamma^*>\gamma$, then we return a failure, because $\gamma=(c^F)^\top\bar x/(d^F)^\top \bar x$ and the optimal value $\gamma^*$ should satisfy $(c^F)^\top\bar x/(d^F)^\top \bar x\ge \gamma^*$.
     
     Assume $\gamma^*\le \gamma$. By Lemma~\ref{lem:easy-extension-y}, we can compute $y'\in \Rp^{T_F}$ such that
$(M^{F})^\top y'= c^{F}-\gamma^* d^{F}$;
      $\supp(y')\subseteq T_{F'}\cup J$ and $y'_i=\bar y_i$ for all $i\in T_{F'}$, where possibly $y_i<0$ for some $i\in J$.

      Recall that the input included a vector $y^{(d)}\in \Rp^k$ such that $M^\top y^{(d)}=d$; observe that the restriction $\tilde y=y^{(d)}_{T^F}$ satisfies $( M^F)^\top \tilde y=d^F$. Define $y=\lambda+(\gamma-\gamma^*) \tilde y$. Observe that, by construction, $(M^F)^\top y - (c^F-\gamma^* d^F)=(M^F)^\top \lambda - (c^F-\gamma d^F)$ and $y\ge \lambda$. It follows that we can apply Lemma~\ref{lem:combine-vectors} to $H=M^F$, $y$ and $y'$. The outcome will be either a failure for $\hat\delta$, in which case we return a failure, or a vector $q\in\Rp^{T_F}$ such that $(M^F)^\top q=c^F-\gamma^* d^F$. We output $x^*$, $\gamma^*$, and the certificate $q\in\Rp^{T^F}$.

 \item  From Lemma~\ref{lem:conic-max-ratio-recurse} we obtain a failure for $\hat\delta$, in which case we return a failure.
\end{enumerate}

\paragraph{Correctness.} It is clear  that the procedure may only ever terminate with a correct output, so we only need to argue termination. Note that, for every value of $\hat\delta$, each call to \textsc{Recursive-Min-Ratio}$(K,c,d,\emptyset,\hat\delta)$ will make at most $\rk(M)\le n$ recursive calls of the form  \textsc{Recursive-Min-Ratio}$(K,c,d,F,\hat\delta)$. Furthermore, if $\hat \delta\le\delta_M$, then \textsc{Recursive-Min-Ratio}$(K,c,d,\emptyset,\hat\delta)$ will not detect any failure, and so it will terminate with one of the two desired outcomes.%

\paragraph{Running time analysis} The running time analysis is identical to the one in Section~\ref{sec:recurse-max}, so we omit it. %
\begin{remark}\label{rem:could-certify}\em
The above recursive algorithm can be modified to explicitly compute a ``certified failure'' for the current $\hat\delta$, instead of simply declaring that $\hat \delta>\delta_M$. There are three places in the above recursive procedure where a failure is detected. One is when $d^F=0$, the second when $d^F$ and $c^F$ are linearly independent, and the third when $\gamma<\gamma^*$ in outcome (ii). In the first two cases, we proceed as in the conic-validity algorithm to repeatedly apply Lemma~\ref{lem:combine-vectors} to pull-back certificates of the form $(M^F)^\top \lambda=d^F$, $\lambda\ge 0$. Since we cannot recover a certificate $M^\top \lambda=d$, $\lambda\ge 0$, at some point Lemma~\ref{lem:combine-vectors} must compute a failure. If $\gamma<\gamma^*$ in outcome (ii), then we proceed as in the above algorithm to repeatedly apply Lemma~\ref{lem:combine-vectors} to pull-back the certificate $(M^{F'})^\top \bar y= c^{F'}-\gamma^* d^{F'}$ to a certificate $M^\top y=c-\gamma^* d$, $y\ge 0$. Since such a certificate does not exist, because $\gamma^*$ is not optimal, it follows that  at some point Lemma~\ref{lem:combine-vectors} must compute a failure.
\end{remark}

\section{Computing approximate dual certificates}
\label{sec:dual certificates}
Our goal in this section is to exhibit a general technique for implementing the \textsc{Approx-Conic-Dual} oracle, which can be adapted to various cutting-planes methods in the literature. We start by defining a more general notion of approximate Farkas certificates applicable also for the non-conic setting. First, we note that a simple application of the KKT conditions  provides the following variant of Farkas' lemma. For completeness, we provide a proof in Appendix~\ref{sec:impossibility}.
\begin{restatable}{lemma}{farkassphere}\label{lem:farkas-sphere}
Given $A\in\R^{m\times n}$, $u\in \R^m$, $r>0$, the system $Ax\le u$ has no solution in $\bb{B}^n(r)$ if and
only if there exists $\lambda\in\Rp^m$ such that
\begin{equation}\label{eq:farkas-sphere}
r\|A^\top \lambda\|<-\lambda^\top u\, .
\end{equation}
Further, let $v\in\R^n$ and $\nu\in\R$. If there exists $\lambda\in\Rp^m$ such that
\begin{equation}\label{eq:optimality}
r\|A^\top \lambda+v\|+\nu\le-\lambda^\top u,
\end{equation}
then the inequality $v^\top x\ge
\nu$ is valid for  $\{x\in\R^n\st Ax\le u\}\cap \bb{B}^n(r)$. Conversely, if $v^\top x\ge
\nu$ is valid for  $\{x\in\R^n\st Ax\le u\}\cap \bb{B}^n(r)$ and $Ax\le u$, $\|x\|< r$ has a solution, then  there exists $\lambda\in\Rp^m$ satisfying \eqref{eq:optimality}.
\end{restatable}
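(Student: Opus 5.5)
The plan is to derive all three claims from Lagrangian duality for the convex program $\min\{v^\top x \st Ax\le u,\ \|x\|\le r\}$, using the explicit formula $\min_{\|x\|\le r} w^\top x = -r\|w\|$.

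\emph{Sufficiency in the second claim.} Suppose $\lambda\ge 0$ satisfies \eqref{eq:optimality}, and take any $x$ with $Ax\le u$ and $\|x\|\le r$. Then
\[
v^\top x \ge v^\top x+\lambda^\top(Ax-u) = (A^\top\lambda+v)^\top x-\lambda^\top u \ge -r\|A^\top\lambda+v\|-\lambda^\top u\ge \nu .
\]
The first step uses $\lambda\ge 0$ and $Ax\le u$, and the middle step is Cauchy--Schwarz.

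\emph{Sufficiency in the first claim.} Apply the same computation with $v=0$. For any $x$ in the ball satisfying $Ax\le u$,
\[
0\ge \lambda^\top(Ax-u)\ge -r\|A^\top\lambda\|-\lambda^\top u>0,
\]
which is a contradiction, so no such $x$ exists.

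\emph{Converse of the first claim.} Assume $\{x\st Ax\le u\}\cap\B^n(r)=\emptyset$. Both sets are closed and convex and the ball is compact, so they have positive distance. I would handle this by a Farkas-type argument.
\begin{itemize}
\item Consider $\min\{\max_i (a_i^\top x-u_i)\st \|x\|\le r\}$. Its optimal value $t^*$ is attained by compactness, and infeasibility of the system in the ball means $t^*>0$.
\item This is the convex program $\min\{t \st Ax-u\le t\bm 1,\ \|x\|^2\le r^2\}$, and Slater's condition clearly holds for it.
\item Strong duality therefore gives $\lambda\ge 0$ with $\bm1^\top\lambda=1$ and
\[
t^*=\min_{\|x\|\le r}\bigl(\lambda^\top(Ax-u)\bigr)=-r\|A^\top\lambda\|-\lambda^\top u .
\]
The inner minimum is computed in closed form, and strong duality can equally be justified by the minimax theorem, since the ball and the simplex are both compact and convex.
\item From $t^*>0$ we obtain \eqref{eq:farkas-sphere}.
\end{itemize}

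\emph{Converse of the second claim.} Assume $v^\top x\ge\nu$ is valid on $P_r\coloneqq\{Ax\le u\}\cap\B^n(r)$ and that a point $x_0$ exists with $Ax_0\le u$ and $\|x_0\|<r$. I would show this is exactly a Slater-type condition for the program
\[
\min\{v^\top x\st Ax\le u,\ \|x\|^2\le r^2\}.
\]
Since the constraints $Ax\le u$ are affine, the refined Slater condition only needs strict feasibility of the nonlinear constraint, which $x_0$ provides. The optimum $p^*\ge\nu$ is attained by compactness. Strong duality with attained multipliers for the linear constraints then gives $\lambda\ge 0$ with
\[
p^*=\min_{\|x\|\le r}\bigl(v^\top x+\lambda^\top(Ax-u)\bigr)=-r\|A^\top\lambda+v\|-\lambda^\top u .
\]
Here I dualize only the linear constraints and keep the ball as the domain. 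Because $p^*\ge\nu$, this rearranges to \eqref{eq:optimality}.

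The main obstacle is this last step: obtaining dual attainment when only the linear constraints are dualized. This needs the refined Slater or KKT theorem, namely that affine constraints need not be strictly satisfied, or equivalently a separation argument in $\R^{m+1}$. If pressed, I would argue it directly by separating the convex set $\{(Ax-u+s,\ v^\top x-p^*+\sigma)\st \|x\|\le r,\ s\ge 0,\ \sigma>0\}$ from $0$. The strictly feasible point $x_0$ ensures that the last coordinate of the separating normal is nonzero, and it can then be normalized to $1$ to read off $\lambda$.
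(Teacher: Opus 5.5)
Your proof is correct. It uses the same tool as the paper: Lagrangian duality under the refined Slater condition, in which affine constraints need not hold strictly. The paper invokes this implicitly when it says ``Slater's conditions are satisfied'' for $\min\{v^\top x\st Ax\le u,\ \|x\|^2/2\le r^2/2\}$. You organize the argument differently, though.

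For the converse of the first claim, the paper splits into two cases. If $Ax\le u$ is infeasible, it applies the ordinary Farkas lemma. Otherwise it takes the minimum-norm point $x$ of $\{x\st Ax\le u\}$, which satisfies $\|x\|>r$, and reads $\lambda$ off the KKT conditions $x=-A^\top\lambda$, $\lambda^\top Ax=\lambda^\top u$; this gives $r\|A^\top\lambda\|<\|A^\top\lambda\|^2=-\lambda^\top u$. Your min--max over $\B^n(r)\times\{\lambda\ge 0\st \mathbf{1}^\top\lambda=1\}$ handles both cases at once and needs only a standard minimax theorem for a bilinear function on two compact convex sets. The paper's version has the advantage of exhibiting a concrete certificate, namely the multiplier of the projection of $0$ onto the polyhedron.

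For the converse of the second claim, the paper also dualizes the ball constraint, obtaining a multiplier $\lambda_0$. It then treats $\lambda_0=0$ (where $A^\top\lambda=-v$) and $\lambda_0>0$ (where $\|x\|=r$ and $x=-(A^\top\lambda+v)/\lambda_0$) separately. By keeping the ball as the domain, you get the closed form $-r\|A^\top\lambda+v\|-\lambda^\top u$ directly and avoid that case split.

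One caution about your fallback for the last step. Separating $0$ from $C=\{(Ax-u+s,\ v^\top x-p^*+\sigma)\}$ does not by itself force a nonzero last coordinate in the normal when $Ax\le u$ has implicit equalities. Take $n=1$, constraints $x\le 0$ and $-x\le 0$, $v=1$, $r=1$, $x_0=0$. Then $(1,1,0)$ is a valid separating normal, since it gives $s_1+s_2\ge 0$ on $C$, even though $C$ is full-dimensional. When $\lambda_0=0$, the point $x_0$ only yields $A^\top\lambda=0$ and $u^\top\lambda=0$, which is not a contradiction. Choosing a good normal is exactly the polyhedral content of the refined Slater theorem.

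So either cite that theorem, as your main argument does, or argue directly at a minimizer $x^*$:
\begin{itemize}
\item If $\|x^*\|<r$, LP duality gives $\lambda\ge 0$ with $A^\top\lambda=-v$ and $\lambda^\top u=-v^\top x^*$.
\item If $\|x^*\|=r$, no $d$ satisfies $v^\top d<0$, $(x^*)^\top d<0$ and $a_i^\top d\le 0$ for all active $i$. Motzkin's transposition theorem then gives $y_0v+y_1x^*+A^\top\lambda=0$ with $(y_0,y_1)\ge 0$ nonzero and $\lambda\ge 0$ supported on the active rows.
\item Testing this identity against $d=x_0-x^*$, which has $(x^*)^\top d<0$ and $a_i^\top d\le 0$ on active rows, rules out $y_0=0$.
\end{itemize}
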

In this paper, we will only need the ``easy'' direction of the statement, that is, the existence of $\lambda$ as in \eqref{eq:farkas-sphere} and \eqref{eq:optimality} imply infeasibility and validity. Nontheless, for completeness, we provide a proof in Appendix~\ref{sec:impossibility}. Lemma~\ref{lem:farkas-sphere} motivates the following approximate certificate concept.
\begin{definition}
Given a convex set $K\subseteq \R^n$ and $r,\varepsilon>0$,  an \emph{$\varepsilon$-approximate Farkas certificate} for $K\cap \bb{B}^n(r)$ is given by a system $Ax\le u$ of valid inequalities for $K$, $A\in\R^{m\times n}$, $u\in \R^m$, and multipliers $\lambda\in \Rpp^m$
\[
\displaystyle{\lambda^\top u+r\left\|A^\top \lambda\right\|<\varepsilon},\qquad \sum_{i=1}^m \lambda_i \|a_i\|_1\ge 1\, .\]
\end{definition}
For some intuition behind this definition, note that any point $x \in K\cap \bb{B}^n(r)$ satisfies $0\le \lambda^\top u-\lambda^\top A x\le \lambda^\top u+\|A^\top \lambda\| \|x\|\le \lambda^\top u+r\left\|A^\top \lambda\right\|<\varepsilon$, implying $\lambda^\top u-\varepsilon\le\lambda^\top A x\le \lambda^\top u$, whereas the second condition ensures that $\lambda$ is sufficiently large, implying that $K\cap \bb{B}^n(r)$ is thin in the direction orthogonal to the vector $A^\top \lambda$.

If $K\subseteq\R^n$ is a cone given by a conic separation oracle, then we can assume $u_i=0$ for all oracle inequalities $\separ_i^\top x\le 0$. In particular, in the case that $K$ is a cone, setting $r=1$, an \emph{$\varepsilon$-approximate Farkas certificate} for $K\cap \bb{B}^n(1)$ using oracle inequalities coincides with the notion of an
 \emph{$\varepsilon$-approximate conic Farkas certificate} for $K$ as required in \textsc{Approx-Conic-Dual}.

Currently, the fastest algorithm to compute an approximate Farkas certificate is based on the algorithm of Jiang et al.~\cite{Jiang2020} (JLSW algorithm). Their algorithm is based on Vaidya's algorithm~\cite{vaidya96}. We will discuss in Section~\ref{sec:Vaidya} how Vaidya's algorithm and the JLSW algorithm can be modified to provide approximate Farkas certificates. The theorem below immediately implies Theorem~\ref{thm:approx-conic-oracle}. 

\begin{theorem}\label{thm:JLSW}
Let $K$ be a  convex set given by a strong
separation oracle, $r>0$, and $\varepsilon\in (0,2 r)$.  Then, in expected $O(n\log(nr/\varepsilon))$ calls to the separation oracle, and expected $O(n^3\log(nr/\varepsilon))$ arithmetic operations, the JLSW algorithm either returns a point $x\in K$, or  an $\varepsilon$-approximate Farkas certificate for $K\cap\bb{B}^n(r)$ comprising only oracle inequalities.
\end{theorem}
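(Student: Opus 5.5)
The plan is to run the JLSW cutting plane method on the convex set $K\cap\bb{B}^n(r)$ and to extract a certificate from its final state. Its oracle for $K\cap\bb{B}^n(r)$ combines the strong separation oracle for $K$ with the explicit ball constraint, and the algorithm is a Vaidya-type volumetric method. By the guarantee of \cite{Jiang2020}, after $O(n\log(nr/\varepsilon'))$ oracle calls and $O(n^3\log(nr/\varepsilon'))$ arithmetic operations it either finds a point of $K$, or it terminates with a polytope $P_t=\{x\st a_i^\top x\le u_i,\ i\in S\}$ that contains $K\cap\bb{B}^n(r)$ and whose volumetric barrier certifies smallness. Here $\varepsilon'=\varepsilon/\mathrm{poly}(n)$ is chosen so that the $\log$ factor only changes by a constant. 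The constraints of $P_t$ are oracle inequalities (a constant number of initial box or ball constraints may also appear), and there are $O(n)$ of them.

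The key step is to convert this final state into a certified concave quadratic form in the sense of Burrell and Todd. The Hessian of the volumetric barrier at the current centre $z$ has the form $H=\sum_{i\in S}\sigma_i a_ia_i^\top/(u_i-a_i^\top z)^2$ with leverage-type weights $\sigma_i\ge 0$. From this I would build
\[
q(x)=\sum_{i\in S} w_i (u_i-a_i^\top x)(a_i^\top x-\ell_i)+\beta\,(r^2-\|x\|^2),
\]
with $w_i,\beta\ge 0$ and $\ell_i$ chosen so that $q\ge 0$ on $P_t\cap\bb{B}^n(r)$. This is the same mechanism as in the ellipsoid case: the ellipsoid $\{q\ge0\}$ contains the feasible region, and the containment is witnessed by the products of linear constraints. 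Termination of JLSW means the inscribed Dikin-type ellipsoid $\{x\st\|x-z\|_H\le 1\}$ is tiny in volume, so $H$ has a large eigenvalue. Hence there is a direction $a$ (taken to be one of the $a_i$, or a combination of them) along which the ellipsoid $\{q\ge 0\}$ has width at most $\varepsilon'$.

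Next I would apply the ``thin direction implies approximate Farkas certificate'' argument from Section~\ref{sec:thin}. If $\max\{v^\top x\st q(x)\ge0\}-\min\{v^\top x\st q(x)\ge 0\}$ is small, Lagrangian duality for the quadratic form expresses both bounds $v^\top x\le\alpha$ and $-v^\top x\le -\alpha+\varepsilon'$ as nonnegative combinations of the linear inequalities $a_i^\top x\le u_i$ together with the ball term. Summing these two certificates cancels $v$ and yields $\lambda\ge0$ with $\lambda^\top u+r\|A^\top\lambda\|<\varepsilon$, which is the easy direction of Lemma~\ref{lem:farkas-sphere}. The normalisation $\sum\lambda_i\|a_i\|_1\ge1$ follows because $v$ itself is a combination of the $a_i$ with a coefficient bounded below, after rescaling.

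Any contribution of the artificial ball or box constraints has to be removed or absorbed into the $r\|A^\top\lambda\|$ term; the conic setting is easier here since $u_i=0$. The main obstacle I expect is the quantitative conversion of the JLSW termination criterion, which is stated via a volume or potential bound, into an explicit thin direction of width $\le\varepsilon$ with a polynomial loss. This requires opening up the potential analysis to bound $\lambda_{\max}(H)$ and relating $\{q\ge0\}$ to the Dikin ellipsoid. Expected running times are inherited from JLSW, and the extra linear algebra costs $O(n^3)$, which stays within the bound.
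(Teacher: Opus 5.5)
Your outline has the same architecture as the paper's proof. You build a certified concave quadratic form a posteriori from the final state of the volumetric method, find a thin direction, invoke Theorem~\ref{thm:approximate-farkas}, and take the arithmetic bound from JLSW's amortized leverage-score maintenance. However, the step that carries the proof is missing, in two places.

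\textbf{The lower bounds are not certified.} Choosing the $\ell_i$ merely ``so that $q\ge 0$ on $P_t\cap\bb{B}^n(r)$'' does not make $q$ certified. The Burrell--Todd extraction (Lemmas~\ref{lemma:gamma-infeasibility} and~\ref{lemma:gamma-optimality}) produces multipliers of both signs. Each positive part $\lambda_i^+$ must be routed through an explicit nonnegative certificate $\mu^{(i)}$ of $a_i^\top x\ge\ell_i$; otherwise your combination uses the lower-bound inequalities, which are not oracle inequalities. In the ellipsoid method these certificates are inherited inductively from the previous certified ellipsoid (Corollary~\ref{cor:ellipsoids}). A volumetric method drops constraints and offers no such induction, so the certificates must be built in one shot at termination.

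\textbf{The outer ellipsoid is not controlled.} A small volume of the inner Dikin-type ellipsoid says nothing about the width of the outer ellipsoid $\{q\ge 0\}$. You need to prove that $\{q\ge0\}$ lies in a $\mathrm{poly}(n)$ dilation of it, and this depends on how far below $a_i^\top z$ the $\ell_i$ are placed. This is exactly the point you leave open as the ``main obstacle''.

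The paper resolves both at once with Lemma~\ref{lemma:sandwich}. Take $\gamma_i=\sigma_i(z)$, $R=Q(z)$, $\bar a_i=a_i/(u_i-a_i^\top z)$, $w=\nabla F(z)=\sum_i\gamma_i\bar a_i$, $\varphi=\max_i\|\bar a_i\|_{R^{-1}}$ and $\rho=\|w\|_{R^{-1}}$. The paper sets $\ell_i=a_i^\top z-3\varphi^2\|\gamma\|_1(u_i-a_i^\top z)$ and writes down closed-form multipliers $\bar\mu^{(k)}_i=\gamma_i\bigl(2\varphi^2(1-\bar a_i^\top R^{-1}w)-\bar a_i^\top R^{-1}\bar a_k\bigr)$. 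These satisfy $\sum_i\bar\mu^{(k)}_i\bar a_i=-\bar a_k$ exactly and are nonnegative as soon as $\varphi\rho\le 1/2$. The same choice of $\ell$ gives $\max q\le(3\varphi\|\gamma\|_1)^2$, so $\{q\ge 0\}$ is contained in the dilation of $E(Q(z),p)$ about $p$ by $3n\varphi$.

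The hypothesis $\varphi\rho\le1/2$ is where the volumetric analysis enters:
\begin{itemize}
\item $\rho$ is tiny because $z$ satisfies \eqref{eq:Vaidya-proximity} (Vaidya's Lemma~10).
\item $\varphi\le c^{-1/4}$ by Vaidya's Claim~4, which needs $\sigma_i(z)\ge c$ for every $i$. The paper enforces this by continuing to discard low-leverage constraints before extracting.
\item That same enforcement yields $\det Q(z)\ge c^n\det\nabla^2 f(z)$, so \eqref{eq:Vaidya-progress} and Lemma~\ref{lem:c-Q-bound} give an oracle direction $a_k$ of width at most $\varepsilon/3$.
\end{itemize}

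The weights cannot be arbitrary: one needs $\sum_i\gamma_i\bar a_i$ small in the $R^{-1}$-norm. That is why the leverage scores are used, since for them this sum is $\nabla F(z)$. Unit weights, for example, would require $z$ to be near the analytic center, which the method does not provide.

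Two minor points:
\begin{itemize}
\item Your $H$ is $Q(z)$, not the Hessian of $F$.
\item The artificial constraints are the $2n$ box constraints, or possibly many ball-tangent cuts if you separate the ball, not a constant number. Either way they are absorbed into the $r\|\cdot\|$ term by the triangle inequality, as in Theorem~\ref{thm:approximate-farkas}.
\end{itemize}
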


\medskip

The rest of this section is dedicated to showing that $\varepsilon$-Farkas certificates can be recovered from a broad class of algorithms, including seemingly `primal-only' methods such as the ellipsoid method.
For any $R\in \Snpp$ and $p\in \R^n$, we define the ellipsoid
\[
E(R,p) \eqdef\{z\in \R^d\st \|z-p\|_R\le 1\}.
\]
Given a compact set $K\subseteq \R^n$ and a vector $v\in \R^n$, we define the {\em width of $K$ along $v$} as
\begin{equation}\label{eq:set width}\width_K(v)\eqdef \max  \{v^\top
  z\st z\in K\}- \min \{v^\top x\st z\in K\}.\end{equation}
We say that $v$ is an \emph{$\varepsilon$-thin direction for $K$} if
$\width_K(v)\le \varepsilon$.
The width of an ellipsoid can be characterized as follows.
Recall that for every $v\in \R^n$, $\min\{v^\top x\st x\in E(R,p)\} =v^\top p-\|v\|_{R^{-1}}$, achieved by $x^*=p-R^{-1}v/\|v\|_{R^{-1}}$.

\begin{lemma}\label{lemma:ellipsoid width}
Given $R\in\Snpp$, and $p\in \R^n$, let $E\coloneqq E(R,p)$. For any  $v\in\R^d$,  $\width_E(v)=2\|v\|_{R^{-1}}$. In particular, for $K=E(R,p)$, $v$ is an $\varepsilon$-thin direction if and only if  $\|v\|_{R^{-1}}\le\varepsilon/2$.
\end{lemma}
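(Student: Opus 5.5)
The plan is to reduce the width computation to the minimum formula recalled just before the statement. By definition, $\width_E(v)=\max\{v^\top z\st z\in E\}-\min\{v^\top z\st z\in E\}$. The maximum of $v^\top z$ over $E(R,p)$ equals $-\min\{(-v)^\top z\st z\in E(R,p)\}$. Applying the recalled formula to $-v$ gives $-\bigl((-v)^\top p-\|{-v}\|_{R^{-1}}\bigr)=v^\top p+\|v\|_{R^{-1}}$, since $\|\cdot\|_{R^{-1}}$ is a norm and hence symmetric. Subtracting $v^\top p-\|v\|_{R^{-1}}$ then yields $\width_E(v)=2\|v\|_{R^{-1}}$.

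To keep the argument self-contained, I would also verify the recalled minimum formula. Substitute $z=p+R^{-1/2}w$; then $\|z-p\|_R=\|w\|$, so $E(R,p)$ is the image of the unit ball under this affine map. This gives $v^\top z=v^\top p+(R^{-1/2}v)^\top w$. By Cauchy--Schwarz, the minimum over $\|w\|\le 1$ is $v^\top p-\|R^{-1/2}v\|=v^\top p-\|v\|_{R^{-1}}$. It is attained at $w=-R^{-1/2}v/\|v\|_{R^{-1}}$, which corresponds to $x^*=p-R^{-1}v/\|v\|_{R^{-1}}$. The case $v=0$ is trivial, since then both sides are $0$.

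The ``in particular'' clause then follows immediately from the definition of an $\varepsilon$-thin direction: $\width_E(v)\le\varepsilon$ holds if and only if $2\|v\|_{R^{-1}}\le\varepsilon$. There is no real obstacle here. The only point needing care is handling the maximum by symmetry, which is where the norm property of $\|\cdot\|_{R^{-1}}$ for $R^{-1}\in\Snpp$ is used.
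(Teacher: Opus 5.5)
Your proof is correct and is the same argument the paper relies on: apply the recalled formula $\min\{v^\top z\st z\in E(R,p)\}=v^\top p-\|v\|_{R^{-1}}$ to both $v$ and $-v$, then subtract. Your additional verification of that formula via the substitution $z=p+R^{-1/2}w$ and Cauchy--Schwarz is also sound; the paper simply states the formula as known.
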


Consider the feasibility problem for a polyhedron $P\subseteq \R^n$ given by a strong separation oracle. The algorithms discussed in this section---the ellipsoid method, Vaidya's cutting plane methods~\cite{vaidya96}, as well as the geometric rescaling algorithms~\cite{DVZ,rothvoss}, proceed by maintaining a containing ellipsoid $E(R,p)\supseteq P\cap \bb{B}^n(r)$. In every iteration, they either terminate with a feasible solution, or modify the containing ellipsoid; the main progress measure is the decrease in the volume of the ellipsoid.

In particular, all these methods maintain the matrix $R$ in the form
$R=\gamma_0 I_n +\sum_{i=1}^m \gamma_i \separ_i \separ_i^\top$ for coefficients $\gamma\in\R^{m+1}$ and vectors $\separ_i$ returned by the oracle calls. The next lemma shows that if the volume of $E(R,p)$ is small, or equivalently the determinant of $R$ is large, then $P$ must be thin in one of the directions $a_i$ returned by the oracle. We include the simple proof for completeness.

\begin{lemma}[{\cite[Lemma 4.11]{DVZ}}]\label{lem:c-Q-bound}
Let $R\in\Snpp$ be defined by
\[R=\gamma_0 I_n +\sum_{i=1}^m \gamma_i \separ_i \separ_i^\top,\]
where $a_1,\ldots,a_m\in\R^n$, and $\gamma_0,\ldots,\gamma_m\geq 0$, with $\gamma_0\le 1$.
Then,
for every $i\in [m]$, $\gamma_i \|\separ_i\|^2_{R^{-1}}< 1$ holds, and $\sum_{i=1}^t \gamma_i \|\separ_i\|^2_{R^{-1}}\le n$.
Further, if $\det(R)>1$, then there exists $k\in [m]$ such that
\[\|\separ_k\|_{R^{-1}}\le \frac{\|a_k\|_2}{\sqrt{\det(R)^{1/n}-1}}\, .\]
\end{lemma}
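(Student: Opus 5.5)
The plan is to reduce everything to traces and determinants via the matrix determinant lemma and the identity $\sum_i \gamma_i a_i a_i^\top \preceq R$. First, for each fixed $i$, I would write $R = R_{-i} + \gamma_i a_i a_i^\top$, where $R_{-i} \coloneqq R-\gamma_i a_i a_i^\top$. If $\gamma_0>0$ this is positive definite; in general I would argue with a pseudo-inverse or a limiting argument, perturbing $\gamma_0$ upward slightly. By Sherman--Morrison,
\[
\gamma_i a_i^\top R^{-1} a_i=\frac{\gamma_i\|a_i\|^2_{R_{-i}^{-1}}}{1+\gamma_i\|a_i\|^2_{R_{-i}^{-1}}}<1,
\]
which gives the first claim. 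For the sum, I would use linearity of trace:
\[
\sum_{i} \gamma_i \|a_i\|^2_{R^{-1}} = \tr\Bigl(R^{-1}\sum_i \gamma_i a_ia_i^\top\Bigr) = \tr\bigl(R^{-1}(R-\gamma_0 I_n)\bigr) = n-\gamma_0\tr(R^{-1})\le n .
\]

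For the last claim I would argue by contradiction. Suppose $\|a_k\|^2_{R^{-1}} > \|a_k\|^2/(\det(R)^{1/n}-1)$ for every $k$ with $a_k\neq 0$; zero vectors contribute nothing to $R$ and can be dropped. Combining this with the trace identity gives
\[
\sum_k \gamma_k\|a_k\|^2 < (\det(R)^{1/n}-1)\sum_k\gamma_k\|a_k\|^2_{R^{-1}} \le (\det(R)^{1/n}-1)\,n ,
\]
unless all $\gamma_k a_k$ vanish, in which case $R=\gamma_0 I_n$ and $\det R\le 1$, contradicting $\det(R)>1$. Next I would bound the determinant from above by AM--GM applied to the eigenvalues:
\[
\det(R)^{1/n}\le \frac{\tr(R)}{n} = \gamma_0+\frac1n\sum_k\gamma_k\|a_k\|^2 \le 1+\frac1n\sum_k\gamma_k\|a_k\|^2 .
\]
Hence $\sum_k \gamma_k\|a_k\|^2\ge n(\det(R)^{1/n}-1)$, which contradicts the strict inequality above.

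The step needing the most care is this final combination: the strict inequality must survive, which requires at least one $\gamma_k a_k\ne0$, and this is guaranteed by $\det R>1\ge\gamma_0^n$. The singular case $\gamma_0=0$ in the Sherman--Morrison step also needs the perturbation remark. Both are routine, so I do not expect a real obstacle.
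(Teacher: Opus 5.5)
Your proofs of the trace bound and of the thin direction $a_k$ are the paper's. The paper also writes $\sum_i\gamma_i\|a_i\|^2_{R^{-1}}=\tr\bigl(R^{-1}(R-\gamma_0 I_n)\bigr)=n-\gamma_0\tr(R^{-1})$. It then uses $\det(R)^{1/n}\le\tr(R)/n$ together with $\gamma_0\le 1$ to get $\sum_i\gamma_i\|a_i\|_2^2\ge n(\det(R)^{1/n}-1)$. Finally it takes $k$ minimizing $\|a_k\|_{R^{-1}}/\|a_k\|_2$ and bounds this minimum by the $\gamma$-weighted average of the ratios, which is your contradiction argument stated directly.

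Only the first claim takes a different route. Instead of Sherman--Morrison, the paper expands $\|a_i\|^2_{R^{-1}}=a_i^\top R^{-1}RR^{-1}a_i=\gamma_0\|R^{-1}a_i\|^2+\sum_j\gamma_j(a_j^\top R^{-1}a_i)^2\ge\gamma_i\|a_i\|^4_{R^{-1}}$ and divides by $\|a_i\|^2_{R^{-1}}$. This needs no inverse of $R-\gamma_ia_ia_i^\top$. It therefore gives $\gamma_i\|a_i\|^2_{R^{-1}}\le 1$ for every $\gamma_0\ge 0$ without any perturbation, and strictness when $\gamma_0>0$. Your formula gives the same conclusion when $\gamma_0>0$, since then $R-\gamma_ia_ia_i^\top\succeq\gamma_0 I_n$ is invertible.

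The remark you call routine does not work. Perturbing $\gamma_0$ upward and letting it tend to $0$ in a strict inequality only yields $\gamma_i\|a_i\|^2_{R^{-1}}\le 1$. Nothing better is possible, because the strict inequality is false when $\gamma_0=0$. Take $n=m=1$, $\gamma_0=0$, $\gamma_1=1$, $a_1=1$: then $R=1$ and $\gamma_1\|a_1\|^2_{R^{-1}}=1$. In general, equality holds exactly when $a_i\notin\im(R-\gamma_ia_ia_i^\top)$, which your pseudo-inverse variant would uncover. The paper's strict inequality in its expansion is likewise only justified for $\gamma_0>0$.

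So the first claim should read $<1$ for $\gamma_0>0$ and $\le 1$ in general. Nothing downstream is affected. The paper invokes the lemma with $\gamma_0=0$ only for Vaidya's method, and there only the third claim is used. Your proof of that claim, including your care in keeping the final inequality strict, is valid for all $\gamma_0\in[0,1]$.
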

\begin{proof}
Let $Q=R^{-1}$. The bound $\gamma_i \|\separ_i\|^2_{Q}< 1$ follows by
\[
\|\separ_i\|^2_{Q}=\separ_i^\top Q R Q \separ_i=\separ_i^\top Q\left(\gamma_0 I_n+\sum_{j=1}^m \gamma_{j}\separ_j\separ_j^\top \right)Q\separ_i> \gamma_i \|\separ_i\|^4_{Q}\, .
\]
For $\sum_{i=1}^m \gamma_i \|\separ_i\|^2_{Q}<n$, we see that
\begin{align}\label{eq:a_k bound}
\sum_{i=1}^m \gamma_i \|\separ_i\|^2_Q&=\sum_{i=1}^m\gamma_i
\left(\separ_i^\T Q \separ_i\right)
= \tr\left(Q \sum_{i=1}^m\gamma_i
   \separ_i\separ_i^\T\right)\\
&= \tr(Q (R-\gamma_0 I_n)) =\tr(I_n)-\gamma_0\tr(Q)< n\, . \nonumber
\end{align}
In the final inequality we used that $\tr(Q)>0$, since $Q$ is positive definite.

For the third claim, we see that $\tr(R)=\gamma_0 n+\sum_{i=1}^m \gamma_i\|\separ_i\|^2_2$.  Noting that $\gamma_0\le 1$,
$\sum_{i=1}^m\gamma_i\|\separ_i\|^2_2\geq \tr(R)-n\ge n(\det(R)^{1/n}-1)$, using the
well-known inequality
$\det(R)^{1/n}\le \tr(R)/n$ for positive semidefinite matrices.
Let $k=\arg\min_{i\in [m]}(\|\separ_i\|_Q/\|a_i\|_2)$. Using the bound $\sum_{i=1}^m \gamma_i \|\separ_i\|^2_{Q}<n$, we see that
\[
\frac{\|\separ_k\|^2_Q}{\|a_k\|^2_2} \le \frac{\sum_{i=1}^m\gamma_i \|a_i\|^2_Q}{\sum_{i=1}^m\gamma_i \|a_i\|^2_2}<
\frac{n}{\sum_{i=1}^m\gamma_i \|a_i\|^2_2}<
\frac{1}{\det(R)^{1/n}-1}\, .
\]
\end{proof}

Thus, we can identify a thin direction $a_k$. Our goal in this section is to provide a dual certificate of thinness of $P\cap \B^n(r)$, using the oracle inequalities $a_i^\top x\le u_i$ and the initial ball constraint $\|x\|_2\le r$. For a conic set $P$, this will imply $\varepsilon$-approximate conic Farkas certificate from the algorithms mentioned.

\medskip

Our certification scheme builds on the work of
 Burrell and Todd \cite{burrell-todd} on the ellipsoid method. The key idea is to use an alternative representation of the strictly concave quadratic form $q(x)=-(x-p)^\top R(x-p)$ corresponding to the ellipsoid $E(R,p)$.
In Section~\ref{sec:quadratic-certify}, we introduce \emph{certified concave quadratic forms}, and show how this representation can be used to construct dual certificates for valid inequalities. These ingredients are combined to derive the Farkas certificate in Section~\ref{sec:thin}.
The remaining three subsections demonstrate the use of certified concave quadratic forms for the ellipsoid method (Section~\ref{sec:ellipsoid}),   for geometric rescaling methods (Section~\ref{sec:rescale}), and for volumetric cutting plane methods (Section~\ref{sec:Vaidya}).

\subsection{Dual certificates from certified quadratic forms}\label{sec:quadratic-certify}

Consider a polyhedron $P=\{x\in\R^n\st Ax\le u\}$, where $A\in\R^{n\times m}$ and $u\in \R^m$. Let $a_i$, $i\in[m]$ be the rows of $A$. In accordance with Lemma~\ref{lem:farkas-sphere}, we will refer to $\lambda\in\Rp^m$ satisfying \eqref{eq:optimality} as a {\em dual certificate of validity} of $v^\top x\ge \nu$ for $P\cap \bb{B}^n(r)$.

\begin{definition}\label{def:q-def}
Let $P=\{x\st Ax\le u\}$ for $A\in\R^{m\times n}$, $u\in\R^m$, and $r>0$.
Let $q: \R^n\rightarrow \R$ be a concave quadratic form given as
\item  \begin{equation}\label{eq:q-def}
q(x)\coloneqq \gamma_0(r^2-\|x\|^2)+\sum_{i=1}^m \gamma_i(u_i-{\separ_i^\T
  x})({\separ_i^\T x}-\ell_i)+\da^\top x-\beta
\end{equation}
for $\gamma\in \Rp^{m+1}$, $\ell\in\R^m$, $\da\in\R^n$, $\beta\in\R$.
We say that $q$ is {\em a certified concave quadratic form} for $P\cap \bb{B}^n(r)$ if we are also given
$\ld^{(i)}\in\Rp^m$, $i\in[m]$, $\vartheta\in\Rp^m$ such that
\begin{itemize}
  \item  $r\|A^\top \ld^{(i)}+a_i\|+\ell_i\le-u^\top \ld^{(i)}$ (certifying $a_i^\top x\ge \ell_i$ is valid for $P\cap \bb{B}^n(r)$)
\item $r\|A^\top \vartheta+\da\|+\beta\le-\vartheta^\top u$ (certifying $\da^\top x\ge \beta$ is valid for $P\cap \bb{B}^n(r)$)
\end{itemize}
We will also say that the quadratic form \eqref{eq:q-def}  is {\em certified} by $\ld^{(i)}$, $i\in[m]$, and $\vartheta$.
\end{definition}

The certificates  $\ld^{(i)}$, $i\in[m]$, and $\vartheta$ guarantee that $P\cap \bb{B}^n(r)\subseteq \{x\st q(x)\ge 0\}$, because for every $x\in P$ and $i\in[m]$ we have that $\ell_i\le \separ_i^\T x\le u_i$, implying $(u_i-{\separ_i^\T
  x})({\separ_i^\T x}-\ell_i)\ge 0$. We will show in Lemma~\ref{lemma:gamma-optimality} that, if $q(x)$ is certified for $P$, then any inequality $v^\top x\ge \nu$ that is valid for $\{x\st q(x)\ge 0\}$ admits a closed-form dual certificate of its validity for $P\cap \bb{B}^n(r)$ that can be derived from the representation of $q(x)$.
As the first step, we need the following technical lemma.

\begin{lemma}
\label{lemma:gamma-infeasibility}
Let $q$ be a strictly concave quadratic form $q$ for $A,u,r,\ell,\gamma,\da,\beta$ as in  Definition~\ref{def:q-def}. In particular, assume we are also given $\vartheta\in \Rp^m$ certifying $\da^\top x\ge \beta$ for $P\cap \B^n(r)$ by $r\|A^\top \vartheta+\da\|+\beta\le-\vartheta^\top u$. Let $\centre\in\R^n$ be the maximizer of $q(x)$.  Define $\lambda\in\R^m$ by $\lambda_i=\gamma_i(\ell_i+u_i-2{\separ_i^\T \centre})$, $i\in [m]$.
Then
\[r\|A^\top (\lambda-\vartheta)\|\le q(p)+\ell^\top \lambda^+-u^\top \lambda^--u^\top \vartheta.\]
\end{lemma}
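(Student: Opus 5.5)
The approach is to use the first-order optimality condition at $\centre$, rewrite $q(\centre)$ via completing the square around $\centre$, and then bound the term $r\|A^\top(\lambda-\vartheta)\|$ using the certificate inequality for $\vartheta$ together with a bound on $\gamma_0$ times the norm of $\centre$.

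\emph{Stationarity.} The gradient of $q$ is
\[
\nabla q(x)=-2\gamma_0 x+\sum_i\gamma_i(u_i+\ell_i-2\separ_i^\top x)\separ_i+\da .
\]
Setting it to zero at $\centre$ gives $A^\top\lambda+\da=2\gamma_0\centre$, with $\lambda$ exactly as in the statement. Hence
\[
A^\top(\lambda-\vartheta)=2\gamma_0\centre-(A^\top\vartheta+\da),
\]
and by the triangle inequality
\[
r\|A^\top(\lambda-\vartheta)\|\le 2\gamma_0 r\|\centre\|+r\|A^\top\vartheta+\da\|\le 2\gamma_0 r\|\centre\|-\vartheta^\top u-\beta .
\]
So it suffices to show
\[
2\gamma_0 r\|\centre\|-\beta\le q(\centre)+\ell^\top\lambda^+-u^\top\lambda^- .
\]

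\emph{Expanding $q(\centre)$.} Write $s_i=\separ_i^\top\centre$. Then
\[
(u_i-s_i)(s_i-\ell_i)=-s_i^2+(u_i+\ell_i)s_i-u_i\ell_i .
\]
From stationarity, $\da^\top\centre=2\gamma_0\|\centre\|^2-\lambda^\top A\centre$. Substituting,
\[
q(\centre)-2\gamma_0 r\|\centre\|+\beta=\gamma_0(r-\|\centre\|)^2+\sum_i\gamma_i\bigl[s_i^2-u_i\ell_i\bigr]-\sum_i\gamma_i(u_i+\ell_i)s_i+\sum_i\gamma_i(u_i+\ell_i)s_i
\]
after collecting terms. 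I expect this to simplify to
\[
\gamma_0(r-\|\centre\|)^2+\sum_i\gamma_i(s_i^2-u_i\ell_i).
\]
On the other side, with $\lambda_i=\gamma_i(u_i+\ell_i-2s_i)$ we need, coordinatewise,
\[
\gamma_i(s_i^2-u_i\ell_i)+\ell_i\lambda_i^+-u_i\lambda_i^-\ge 0 ,
\]
that is, after moving the $\lambda$ terms, the required inequality is exactly $\sum_i$ of these plus $\gamma_0(r-\|\centre\|)^2\ge0$.

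\emph{Per-coordinate check.} If $\lambda_i\ge0$, the term is $\gamma_i(s_i^2-u_i\ell_i+\ell_i u_i+\ell_i^2-2\ell_i s_i)=\gamma_i(s_i-\ell_i)^2\ge0$. If $\lambda_i<0$, it is $\gamma_i(s_i^2-u_i\ell_i+u_i^2+u_i\ell_i-2u_is_i)=\gamma_i(s_i-u_i)^2\ge0$. This is a sign-split completing the square.

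\emph{Main obstacle.} The step requiring care is the bookkeeping in the expansion of $q(\centre)$: the $\da^\top\centre$ term must be eliminated via stationarity so that the linear terms in $s_i$ cancel exactly. I would verify this carefully, since the sign conventions for $\lambda^\pm$ against $\ell,u$ must match. Strict concavity is used only to guarantee that $\centre$ exists and is the unique stationary point.
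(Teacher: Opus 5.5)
Your proof is correct and is essentially the paper's argument. Both use stationarity $A^\top\lambda+\da=2\gamma_0\centre$ to eliminate $\da^\top\centre$, discard the sign-dependent squares $\gamma_i(\separ_i^\top\centre-\ell_i)^2$ (when $\lambda_i\ge 0$) and $\gamma_i(u_i-\separ_i^\top\centre)^2$ (when $\lambda_i<0$) together with $\gamma_0(r-\|\centre\|)^2$, and conclude with the $\vartheta$-certificate and the triangle inequality. The only difference is presentation: you write the slack as an exact sum of these squares, whereas the paper drops them one by one within a single chain of inequalities.
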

\begin{proof} Since $q$ is a strictly concave quadratic form, it achieves a unique maximum, which we denoted $p$. The maximum must satisfy  $\nabla q(\centre)=0$. We use the notation
$\bar\ell\coloneqq A\centre-\ell$ and $\bar
u\coloneqq u-A\centre$.
The following equation states that $\nabla q(\centre)=0$, and the next one  computes the value of $q(\centre)$, expressed with $\bar u_i$ and $\bar \ell_i$:
\begin{eqnarray}
\sum_{i=1}^m \gamma_i(\bar u_i-\bar\ell_i)\separ_i +\da &=& 2\gamma_0 p \label{eq:gradient=0}\\
\gamma_0(r^2-\|p\|^2)+\sum_{i=1}^m \gamma_i\bar u_i \bar \ell_i+\da^\top p-\beta   &=&q(\centre)\, . \label{eq:less than zero}
\end{eqnarray}
Note that $\lambda_i=\gamma_i(\bar u_i-\bar \ell_i)$ for all $i\in [m]$. Hence, \eqref{eq:gradient=0} can be written as $A^\top \lambda+\da=2\gamma_0p$.
Let ${I^+}\coloneqq \{i\in [m] \st \lambda_i\geq 0\}$ and ${I^-}\coloneqq [m]\sm {I^+}$.
The proof is completed by
\begin{eqnarray*}
\ell^\top \lambda^+-u^\top \lambda^- -u^\top \vartheta&=& - \sum_{i\in {I^+}} \lambda_i \bar\ell_i+\sum_{i\in {I^-}}\lambda_i\bar u_i+\sum_{i=1}^m\lambda_i{\separ_i^\T \centre}-u^\top \vartheta\\
(\mbox{by \eqref{eq:gradient=0}})&=& -\sum_{i\in {I^+}} \gamma_i (\bar u_i- \bar \ell_i) \bar \ell_i +\sum_{i\in {I^-}}\gamma_i(\bar u_i-\bar \ell_i)\bar u_i+2\gamma_0\|p\|^2-\da^\top p-u^\top \vartheta\\
      &=& -\sum_{i=1}^m \gamma_i\bar u_i \bar \ell_i +\sum_{i\in {I^+}} \gamma_i \bar \ell^2_i+ \sum_{i\in {I^-}}\gamma_i\bar u^2_i+2\gamma_0\|p\|^2-\da^\top p-u^\top \vartheta\\
(\mbox{by \eqref{eq:less than zero}}) &\ge& -q(\centre)+\gamma_0(r^2+\|p\|^2)-\beta -u^\top \vartheta\\
(\mbox{by the definition of $\vartheta$)} &\ge& -q(\centre)+2r\gamma_0 \|p\|+r\|A^\top \vartheta+\da\| \\
(\mbox{by \eqref{eq:gradient=0}, taking the norm}) &=&-q(\centre)+r\left\|A^\top  \lambda+\da\right\|+r\|A^\top \vartheta+\da\|\\
&\ge& -q(\centre)+r\left\|A^\top  (\lambda- \vartheta)\right\|\, .
\end{eqnarray*}
\end{proof}

The next lemma shows that, if $P$ is a polyhedron and $q$ is a strictly concave certified  quadratic form for $P\cap \bb{B}_n(r)$, then, for any $v\in\R^n$, we can compute in closed form a dual certificate for $P\cap \bb{B}_n(r)$ for the inequality $v^\top x\ge \nu$ where $\nu=\min\{v^\top x\st q(x)\ge 0\}$. This is a variant of \cite[Proposition 3.1 and Theorem 3.2]{burrell-todd}.
Recall that if $q$ is a strictly concave quadratic form, then there exists $R\in\Snpp$ such that  $q(x)=-(x-p)^\top R(x-p)+q(p)$, where  $p$ is the unique maximizer of $q$. In particular, if $q(p)>0$ then $\{x\in\R^n\st q(x)\ge 0\}=\sqrt{q(p)} E(R,p)$.

\begin{lemma}\label{lemma:gamma-optimality} Let $P=\{x\in\R^n\st Ax\le u\}$, where $A\in\R^{m\times n}$ and $u\in \R^m$. Let $r>0$, and let $q: \R^n\rightarrow \R$ be a strictly concave quadratic form as in~\eqref{eq:q-def},
certified for $P\cap \bb{B}^n(r)$ by $\ld^{(i)}\in\Rp^m$, $i\in[m]$ and $\vartheta\in\Rp^m$. Assume that $\max_{x\in \R^n} q(x)>0$,  and let $p\coloneqq \arg\max q(x)$, $\alpha\coloneqq \sqrt{q(p)}$. Define $R=\gamma_0 I_n+\sum_{i=1}^m \gamma_i a_i a_i^\top$.

Given $v\in\R^n$, the optimal solution and optimum value to $\min\{v^\top x\st q(x)\ge 0\}$ are
\[ x^*=p-\alpha R^{-1}v/\|v\|_{R^{-1}}\,\quad\mbox{and}\quad \nu\coloneqq v^\top p-\alpha\|v\|_{R^{-1}}\, ,\]
respectively.
 Define $\lambda, \tilde \lambda\in\R^m$ by
$$\lambda_i=\frac{\|v\|_{R^{-1}}}{2\alpha} \gamma_i(\ell_i+u_i-2{\separ_i^\T x^*})
\quad \forall i\in [m]\,,\qquad \tilde \lambda\coloneqq \sum_{i=1}^m \lambda_i^+\ld^{(i)}+\lambda^-+\frac{\|v\|_{R^{-1}}}{2\alpha}\vartheta.$$
Then, $\tilde \lambda$ is a dual certificate for ${v^\T x}\geq \nu$  for $P\cap \bb{B}^n(r)$, that is, $r\|A^\top  \tilde\lambda+v\|+\nu<-\tilde \lambda^\top u$. Furthermore, $\tilde \lambda$ can be computed in time $O(n^2 m+n^{\omega})$.
\end{lemma}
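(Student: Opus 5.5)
The plan is to reduce the claim to Lemma~\ref{lemma:gamma-infeasibility}, applied to a modified quadratic form whose maximizer is $x^*$, and then to clean up the negative coefficients using the certificates $\ld^{(i)}$.

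First, the formulas for $x^*$ and $\nu$. Since $q(x)=-(x-p)^\top R(x-p)+q(p)$, the set $\{x\st q(x)\ge 0\}$ equals $\alpha E(R,p)$ translated so that its centre is $p$. The minimum of a linear function over this ellipsoid is then the standard one already recalled before Lemma~\ref{lemma:ellipsoid width}.

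Next, let $s\coloneqq\|v\|_{R^{-1}}/(2\alpha)>0$ (if $v=0$ everything is trivial) and consider the shifted form
\[
\tilde q(x)\coloneqq s\,q(x)-v^\top x+\nu .
\]
It has the shape \eqref{eq:q-def} with coefficients $s\gamma$, the same $\ell,u$, linear term $s\da-v$ and constant $s\beta-\nu$. Its gradient at $x^*$ is $-2sR(x^*-p)-v=2s\alpha R^{-1}v\cdot R/\|v\|_{R^{-1}}-v=0$, so $x^*$ is its maximizer, and $\tilde q(x^*)=s\,q(x^*)=0$ because $q(x^*)=0$ on the boundary.

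The linear term $s\da-v$ is not certified, so I will not apply Lemma~\ref{lemma:gamma-infeasibility} verbatim. Instead I will rerun its chain of inequalities with $\vartheta$ replaced by $s\vartheta$, which certifies $s\da^\top x\ge s\beta$, and carry $v$ and $\nu$ along as extra terms. The multipliers that come out are exactly $\lambda_i=s\gamma_i(\ell_i+u_i-2a_i^\top x^*)$, and the gradient identity becomes $A^\top\lambda+s\da-v=2s\gamma_0x^*$. Following the same steps, with $q$ replaced by $\tilde q$ and with the $v^\top x^*-\nu$ terms cancelling, should give
\[
r\|A^\top(\lambda-s\vartheta)-v\|\le \tilde q(x^*)+\ell^\top\lambda^+-u^\top\lambda^--s\,u^\top\vartheta-\nu .
\]
I need to be careful with the sign conventions; possibly $\lambda$ appears with the opposite sign, in which case the roles of $\ell$ and $u$ swap accordingly. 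Since $\tilde q(x^*)=0$, this gives a relation in which $v$ is expressed through $A^\top\lambda^+$, $A^\top\lambda^-$ and $A^\top\vartheta$, up to a norm error bounded by the right-hand side.

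Finally, I remove the terms $\ell_i\lambda_i^+$. Each certificate $\ld^{(i)}$ satisfies $r\|A^\top\ld^{(i)}+a_i\|+\ell_i\le -u^\top\ld^{(i)}$. Substituting $\lambda_i^+\ld^{(i)}$ in place of the $-a_i$ contribution, and using the triangle inequality for $\|\cdot\|$, turns the $\ell^\top\lambda^+$ term into $-u^\top\sum_i\lambda_i^+\ld^{(i)}$. This yields $r\|A^\top\tilde\lambda+v\|+\nu\le-\tilde\lambda^\top u$ with $\tilde\lambda$ as defined in the statement. The strict inequality claimed should come from the strict positivity of $\gamma_0$ in the step that bounds $\gamma_0(r^2+\|p\|^2)$ by $2r\gamma_0\|p\|$, or I may settle for a non-strict version.

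I expect this sign and norm bookkeeping to be the main obstacle, that is, verifying that the triangle-inequality steps go in the right direction. For the running time, the cost is dominated by inverting $R$ in $O(n^\omega)$ time, computing $x^*$ and $Ax^*$ in $O(mn)$ time, and forming $\sum_i\lambda_i^+\ld^{(i)}$, which takes $O(n^2m)$ time after reducing each $\ld^{(i)}$ to support of size $O(n)$ via Corollary~\ref{cor:carath}.
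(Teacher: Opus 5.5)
Your proposal is correct and is essentially the paper's proof. The paper applies Lemma~\ref{lemma:gamma-infeasibility} to the same form $\tilde q=s\,q-v^\top x+\nu$. It certifies the linear term $s\da-v$ by adjoining $v^\top x\le\nu$ to the system with multiplier $1$, which is equivalent to your rerun of the chain with $s\vartheta$. It then arrives at exactly your intermediate inequality and finishes with the $\ld^{(i)}$ and the triangle inequality.

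Do settle for the non-strict inequality, which is what the paper's proof actually establishes and the only version that holds. For example, take $q(x)=r^2-\|x\|^2$, i.e.\ $\gamma_0=1$ and all other $\gamma_i$, $\da$, $\beta$, $\vartheta$ equal to zero. Then $\tilde\lambda=0$ and $r\|v\|+\nu=0$, so $\gamma_0>0$ yields no strictness.

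Separately, if you sparsify the $\ld^{(i)}$, apply Corollary~\ref{cor:carath} to the lifted vectors $(a_j\mid u_j)$. Applied to the rows $a_j$ alone, it need not preserve $u^\top\ld^{(i)}$, so the reduced vector may no longer certify $a_i^\top x\ge\ell_i$.
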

\begin{proof} By the KKT conditions, it follows easily that $x^*=\arg\min\{{v^\T x}\st x\in E(R,p)\}$ and $v^\top x^*=\nu$. Define
$\sigma\coloneqq \frac{\|v\|_{R^{-1}}}{2\alpha}$ and $\tilde\gamma_i\coloneqq \sigma\gamma_i$ for $i=0,\ldots,m$.
Consider the polyhedron $\tilde P=\{x\st Ax\le u,\, v^\top x\le \nu\}$, and the quadratic form defined by $\tilde q(x)=\sigma q(x)-v^\top x+\nu$. Observe that
$$\tilde q(x)\coloneqq \tilde \gamma_0(r^2-\|x\|^2)+\sum_{i=1}^m \tilde\gamma_i(u_i-{\separ_i^\T x})({\separ_i^\T x}-\ell_i)+(\sigma \da-v)^\top  x -(\sigma\beta-\nu)\, ,$$
hence $\tilde q$ is certified for $\tilde P\cap B^n(r)$ by $\lambda^{(i)}$, $i\in [m]$, and by $\|A^\top (\sigma\vartheta)+v+(\sigma \da-v)\|+\sigma\beta-\nu\le -\sigma\vartheta^\top u-\nu$.

Since $\tilde q(x)=\sigma(\alpha^2-(x-\centre)^\T R(x-\centre))+\nu-{v^\T x}$, we have that $\tilde q$ is strictly concave and $\nabla \tilde q(x)=-2\sigma R(x-\centre)-v$. This implies that $x^*$ is the unique maximizer of $\tilde q$ since $\nabla\tilde q(x^*)=0$. Furthermore, one can compute that  $\tilde q(x^*)=0$.
Applying Lemma~\ref{lemma:gamma-infeasibility} to $\tilde q$, and
observing that $\lambda_i=\tilde{\gamma}_i(u_i+\ell_i-2{\separ_i^\T
  x^*})$, $i\in [m]$,
we obtain
\begin{equation}\label{eq:intermediate-certificate}\ell^\top\lambda^+-u^\top \lambda^- -u^\top (\sigma\vartheta)  -\nu\geq \tilde{q}(x^*)+r\left\|A^\top (\lambda-\sigma\vartheta) -v\right\|=r\left\|A^\top (\lambda-\sigma\vartheta) -v\right\|.\end{equation}

We need to show that $r\|A^\top  \tilde\lambda+v\|+\nu\le-\tilde \lambda^\top u$. From \eqref{eq:intermediate-certificate}, we have
\begin{eqnarray*}
\nu&\le &\ell^\top\lambda^+-u^\top (\lambda^- +\sigma\vartheta) - r\left\|A^\top (\lambda-\sigma\vartheta)-v\right\|\\
\mbox{(by definition of $\mu^{(i)}$)}&\le&\sum_{i=1}^m \lambda^+_i \left(-u^\top \ld^{(i)}-r\left\|A^\top \ld^{(i)}+a_i\right\|\right)-u^\top (\lambda^- +\sigma\vartheta) - r\left\|A^\top (\lambda-\sigma\vartheta)-v\right\|\\
\mbox{(triangle inequality)}&\le&-u^\top\left(\sum_{i=1}^m \lambda^+_i  \ld^{(i)}+\lambda^-+\sigma\vartheta\right)-r\left\|\sum_{i=1}^m \lambda^+_i \left(A^\top \ld^{(i)}+a_i\right)-A^\top (\lambda-\sigma\vartheta)+v\right\|\\
&=&-u^\top\left(\sum_{i=1}^m \lambda^+_i  \ld^{(i)}+\lambda^-+\sigma\vartheta\right)-r\left\|A^\top \left(\sum_{i=1}^m \lambda^+_i \ld^{(i)}+\lambda^- + \sigma\vartheta\right)+v\right\|\\
&=& -\tilde \lambda^\top u-r\|A^\top  \tilde\lambda+v\|.
\end{eqnarray*}
To compute $\tilde \lambda$, we need to compute $R$, which can be done in time $O(n^2m)$, as well as $x^*$ and $p$. The time to compute  these two points is dominated by the computation of $R^{-1}$, which can be performed in time $O(n^\omega)$. Computing $\lambda$ and $\tilde \lambda$ requires time $O(nm)$.
\end{proof}

The above lemma will be used to compute $\varepsilon$-approximate Farkas certificates from the ellipsoid method (Section~\ref{sec:ellipsoid}), from the geometric rescaling algorithms~\cite{DVZ,rothvoss} (Section~\ref{sec:rescale}), and from volumetric cutting plane methods~\cite{Jiang2020,Lee2015,vaidya96} (Section~\ref{sec:Vaidya}).
For all three, we show how one can find an appropriate certified quadratic form for the polyhedron defined by the current set of oracle inequalities. For the ellipsoid method and the geometric rescaling algorithms, such quadratic form will need to be maintained explicitly at every iteration. For volumetric cutting plane algorithms, we will instead show that, once the algorithm has achieved the required level of accuracy, we can a-posteriori compute a suitable certified quadratic form directly from the information that is maintained by the algorithm.

\subsection{Finding approximate Farkas certificates}\label{sec:thin}

The following theorem is the main technical tool for finding an $\varepsilon$-approximate Farkas certificate. The theorem shows that, given a convex set $K$, if we have a strictly concave certified quadratic form $q(x)$ for $K\cap \bb{B}^n(r)$, and if we have a direction $v$, $\|v\|\ge 1$, such that the ellipsoid $\{x\st q(x)\ge 0\}$ has small width in the direction of $v$, then we can compute an $\varepsilon$-approximate Farkas certificate for $K\cap \bb{B}^n(r)$. All methods we consider start from some initial simple set containing $K\cap \bb{B}^n(r)$. For the ellipsoid method and the geometric rescaling algorithm, the initial relaxation is simply $\bb{B}^n(r)$, whereas for the volumetric cutting plane algorithms, the initial relaxation is $[-r,r]^n$. In particular, for the ellipsoid method and the geometric rescaling algorithms,  the certified quadratic form~\eqref{eq:q-def} has $\gamma_0>0$ (this corresponds to the initial quadratic form $r^2-\|x\|^2\ge 0$), whereas for Vaidya's algorithm we will always have $\gamma_0=0$, but some of the inequalities $a_i^\top x\le u_i$ may be the initial box-constraints $x_j\le r$ or $-x_j\le r$. Note that, in both cases, the $\varepsilon$-approximate Farkas certificate computed using the theorem below will always be purely in terms of the oracle inequalities for $K$.

\begin{theorem}\label{thm:approximate-farkas}
Let $K\subseteq \R^n$ be a convex set, $r>0$, and  $\varepsilon\in(0,4r)$. Assume we are given inequalities $Ax\le u$ valid for $K$, $A\in\R^{m\times n}$, $u\in \R^m$, and let $\bar A x\le \bar u$, $\bar A\in\R^{k\times n}$, $\bar u\in \R^k$, $k\ge m$,  be a system comprising all inequalities in $Ax\le u$ and some of the ``box constraints'' $x_j\le r$ or $-x_j\le r$, $j\in [n]$ (in particular, we assume that the first $m$ inequalities of  $\bar A x\le \bar u$ define the system $Ax\le u$, and for $i=m+1,\ldots,k$ the inequalities $a_i^\top x\le u_i$ denote box constraints). Assume we are also given $\ell\in\R^k$ and $\ld^{(i)}\in\Rp^k$, $i\in [k]$ such that $r\|\bar A^\top \lambda^{(i)}+a_i\| +\ell_i\le -\bar u^\top \ld^{(i)}$, and $\da\in\R^n$, $\beta\in\R$, $\vartheta\in\Rp^k$ such that $\|\bar A\vartheta+\da\|+\beta\le -\bar u^\top \vartheta$. Let $\gamma\in \Rp^{k+1}$, and assume that the quadratic form
\[q(x)=\gamma_0 (r^2-\|x\|^2)+\sum_{i=1}^k \gamma_i (u_i-a_i^\top x)(a_i^\top x-\ell_i)+\da^\top x-\beta.\]
is strictly concave. Let $E=\{x\st q(x)\ge 0\}$. If we are given $v\in\R^n$ such that $\|v\|\ge 1$ and $\width_E(v)\le\varepsilon/3$, then in time $O(n^2m+n^\omega)$ we can compute an $\varepsilon$-approximate Farkas certificate for $K\cap \bb{B}^n(r)$ in terms of the inequalities $Ax\le u$, that is, $\lambda\in\Rp^m$ such that $\lambda^\top u+r\left\|A^\top \lambda\right\|<\varepsilon$, $\sum_{i=1}^m \lambda_i \|a_i\|\ge 1$.
\end{theorem}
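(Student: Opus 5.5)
Set $E=\{x\st q(x)\ge 0\}$ and let $p$ be the maximizer of $q$. The width hypothesis forces $E$ to be nonempty; in the degenerate case $q(p)=0$ we perturb slightly, or argue by limit. The plan is to apply Lemma~\ref{lemma:gamma-optimality} twice to the system $\bar A x\le\bar u$, which is valid for $K\cap\B^n(r)$. We may treat the box constraints as valid inequalities, since they hold on $\B^n(r)$. We use the directions $v$ and $-v$. Lemma~\ref{lemma:gamma-optimality} gives $\nu_1=\min_E v^\top x$ and $\nu_2=\min_E(-v)^\top x$, together with certificates $\tilde\lambda^{(1)},\tilde\lambda^{(2)}\in\Rp^k$ such that
\[
r\|\bar A^\top\tilde\lambda^{(1)}+v\|+\nu_1\le-\bar u^\top\tilde\lambda^{(1)},\qquad
r\|\bar A^\top\tilde\lambda^{(2)}-v\|+\nu_2\le-\bar u^\top\tilde\lambda^{(2)}.
\]
The width condition reads $\nu_1+\nu_2=-\width_E(v)\ge-\varepsilon/3$. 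Put $\hat\lambda=\tilde\lambda^{(1)}+\tilde\lambda^{(2)}$. Adding the two inequalities and using the triangle inequality, $\|\bar A^\top\hat\lambda\|\le\|\bar A^\top\tilde\lambda^{(1)}+v\|+\|\bar A^\top\tilde\lambda^{(2)}-v\|$, we get
\[
\bar u^\top\hat\lambda+r\|\bar A^\top\hat\lambda\|\le\varepsilon/3 .
\]
Each step costs $O(n^2k+n^\omega)$. Since $k\le m+2n$, this is within the claimed bound.

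Next we remove the box constraints. Split $\hat\lambda=(\lambda,\lambda')$, where $\lambda'$ collects the box multipliers. Each box row satisfies $u_i=r$ and $\|a_i\|=1$. Hence
\[
\bar u^\top\hat\lambda=u^\top\lambda+r\sum\lambda'_i
\quad\text{and}\quad
\|\bar A^\top\hat\lambda\|\ge\|A^\top\lambda\|-\sum\lambda'_i ,
\]
so $u^\top\lambda+r\|A^\top\lambda\|\le\bar u^\top\hat\lambda+r\|\bar A^\top\hat\lambda\|\le\varepsilon/3$. The box terms therefore drop out for free.

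The main obstacle is the normalization $\sum_i\lambda_i\|a_i\|\ge 1$. We get leverage from $\|v\|\ge1$. From the first certificate and the fact that $E\subseteq\B^n(r)$-type bounds hold, with any point $x\in K\cap\B^n(r)$ giving $\bar u^\top\tilde\lambda\ge\bar A\ldots$, we have $r\|v\|\le r\|\bar A^\top\tilde\lambda^{(1)}\|+r\|\bar A^\top\tilde\lambda^{(1)}+v\|$.

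To bound the latter term, we first show that $\nu_1\ge -r\|v\|$-ish is not enough directly. Instead we combine the two inequalities: $\bar u^\top\tilde\lambda^{(1)}+r\|\bar A^\top\tilde\lambda^{(1)}+v\|\le-\nu_1$. A lower bound like $\bar u^\top\tilde\lambda^{(1)}\ge -r\|\bar A^\top\tilde\lambda^{(1)}\|$ would follow if $\bar Ax\le\bar u$ had a point in $\B^n(r)$; if it does not, Lemma~\ref{lem:farkas-sphere} already yields an exact certificate, which we rescale. Together with $|\nu_1|\lesssim r\|v\|-\ldots$ this gives $\sum\hat\lambda_i\|\bar a_i\|\ge\|\bar A^\top\hat\lambda\|$ of order $\|v\|\ge1$.

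More cleanly, I would aim to prove $\sum_i\hat\lambda_i\|\bar a_i\|\ge c>0$ with an explicit constant. If the box part $\sum\lambda'_i$ is large, the slack it contributes, namely $r\sum\lambda'_i$ on the $u$ side offset by at most $\sum\lambda'_i$ in the norm, is absorbed by the assumption $\varepsilon<4r$. Finally, scale $\lambda$ by a factor of at most $3$ so that $\sum\lambda_i\|a_i\|\ge1$ while $u^\top\lambda+r\|A^\top\lambda\|<\varepsilon$. The precise constants, $\varepsilon/3$ versus the $\ell_1$ versus $\ell_2$ norms of $a_i$, are exactly where I expect the fiddly work, and where the hypothesis $\varepsilon<4r$ enters.
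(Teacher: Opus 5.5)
Your first two steps match the paper. Applying Lemma~\ref{lemma:gamma-optimality} to $\pm v$, using $\nu_1+\nu_2=-\width_E(v)\ge-\varepsilon/3$, and cancelling box multipliers via $\bar u_i=r$, $\|\bar a_i\|=1$ are all correct.

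The gap is the normalization $\sum_i\lambda_i\|a_i\|\ge 1$. This is the real content of the theorem, and your proposal does not establish it. The route you sketch is to split on whether $\{x\st\bar Ax\le\bar u\}\cap\B^n(r)$ is empty and, if so, take the certificate from Lemma~\ref{lem:farkas-sphere}. That is not an algorithm. You cannot decide which case holds within $O(n^2m+n^\omega)$ operations, since that is itself a convex feasibility question. Moreover, Lemma~\ref{lem:farkas-sphere} only asserts that a certificate exists, via the KKT conditions of a quadratic program; it does not compute one.

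There is also a secondary problem. You remove box multipliers only from the summed vector $\hat\lambda$. This throws away the individual terms $\|\bar A^\top\tilde\lambda^{(1)}+v\|$ and $\|\bar A^\top\tilde\lambda^{(2)}-v\|$ that the normalization argument needs. A lower bound on $\sum_{i\le k}\hat\lambda_i\|\bar a_i\|$ may come entirely from box rows, and the cancellation gives no extra slack. So the claim that a large box part is ``absorbed by $\varepsilon<4r$'' is unsupported.

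The missing idea is to make the dichotomy on the vector you have already computed.

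First, strip box multipliers from each certificate separately, using the same cancellation. This gives $\lambda^{(1)},\lambda^{(2)}\in\Rp^m$ with $rD_1+\nu_1\le-u^\top\lambda^{(1)}$ and $rD_2+\nu_2\le-u^\top\lambda^{(2)}$, where $D_1=\|A^\top\lambda^{(1)}+v\|$ and $D_2=\|A^\top\lambda^{(2)}-v\|$.

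Set $\hat\lambda=\lambda^{(1)}+\lambda^{(2)}$ and $s=\sum_i\hat\lambda_i\|a_i\|$. Then $u^\top\hat\lambda+r(D_1+D_2)\le\varepsilon/3$. The triangle inequality together with $\|v\|\ge 1$ gives $2\le s+D_1+D_2$. Now there are two cases.

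\begin{itemize}
\item If $u^\top\hat\lambda+r\|A^\top\hat\lambda\|<0$, then $\hat\lambda/s$ is already a certificate. Note $\hat\lambda\neq0$, since $\hat\lambda=0$ would force $2r\le\varepsilon/3$.
\item Otherwise $-u^\top\hat\lambda\le r\|A^\top\hat\lambda\|\le rs$. Hence $2r\le rs+r(D_1+D_2)\le 2rs+\varepsilon/3$, so $s\ge 1-\varepsilon/(6r)>1/3$ by $\varepsilon<4r$. Then $\hat\lambda/s$ works, because $u^\top\hat\lambda+r\|A^\top\hat\lambda\|\le u^\top\hat\lambda+r(D_1+D_2)\le\varepsilon/3<s\varepsilon$.
\end{itemize}

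This is the paper's proof. The paper phrases the same case split through the triangle-inequality slack $D_1+D_2-\|A^\top\hat\lambda\|$, according to whether it lies above or below $\varepsilon/(3r)$.
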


\begin{proof}
Recall that $E$ is an ellipsoid centered at $p\coloneqq \arg\max_{x\in\R^n} q(x)$. Since $\width_E(v)\le\varepsilon/3$, it follows that $E\subseteq \{x\in\R^n\st -\varepsilon/6\le v^\top x-v^\top p\le \varepsilon/6\}$.

It will be convenient to assume $\|a_i\|_1=1$ for $i\in [m]$, which is without loss of generality. By Lemma~\ref{lemma:gamma-optimality}, in time $O(n^2m+n^\omega)$ we can compute dual certificates for $\{x\st \bar Ax\le \bar u\}\cap \bb{B}^n(r)$ of both inequalities $-v^\top x\ge -v^\top p-\varepsilon/6$ and $v^\top x\ge v^\top p-\varepsilon/6$. To distinguish the roles of the constraints in $Ax\le u$ from the box constraints, we express these certificates by two vectors $(\lambda',\mu'), (\lambda'',\mu'')\in\Rp^m\times \R^n$, where $(\mu')^+, (\mu'')^+$ define the multipliers for the inequalities $x_j\le r$, and $(\mu')^-, (\mu'')^-$ define the multipliers for the inequalities $-x_j\le r$ (where the multiplier will have 0 value if the corresponding box constraint is not part of the system $\bar A x\le \bar u$). Hence $(\lambda',\mu')$ and $(\lambda'',\mu'')$ satisfy
\[r\|A^\top \lambda'+\mu'-v\|-v^\top p-\varepsilon/6\le -u^\top \lambda'-r\|\mu'\|_1,\quad r\|A^\top \lambda''+\mu''+v\|+v^\top p-\varepsilon/6\le -u^\top \lambda''-r\|\mu''\|_1.\]
Note that, since $\|\mu'\|\le \|\mu'\|_1$ and $\|\mu''\|\le \|\mu''\|_1$, the triangle inequality implies
\begin{equation}\label{eq:two-side-cert}r\|A^\top \lambda'-v\|-v^\top p-\varepsilon/6\le -u^\top \lambda',\quad r\|A^\top \lambda''+v\|+v^\top p-\varepsilon/6\le -u^\top \lambda''.\end{equation}

In what follows, we show that $\lambda=(\lambda'+\lambda'')/\|\lambda'+\lambda''\|_1$ is an $\varepsilon$-approximate Farkas certificate. By definition, $\|\lambda\|_1=1$, hence $\sum_{i=1}^m \lambda_i \|a_i\|_1\ge 1$ by our assumption that $\|a_i\|_1=1$.

Adding up the two inequalities in \eqref{eq:two-side-cert} we obtain
\begin{equation}\label{eq:combined-cert}
\frac{\varepsilon}{3}\ge u^\top (\lambda'+\lambda'')+ r\|A^\top \lambda'-v\|+r\|A^\top \lambda''+v\|
\end{equation}

From the triangle inequality, we have
\[
\alpha\coloneqq \|A^\top \lambda'-v\|+\|A^\top \lambda''+v\|-\|A^\top (\lambda'+\lambda'')\|\ge 0\, .
\]

If $\alpha>\frac{\varepsilon}{3r}$, then the two equations above give $u^\top (\lambda'+\lambda'')+ r\|A^\top (\lambda'+\lambda'')\|<0$, proving $u^\top \lambda+ \|A^\top \lambda\|<0$, and we are done.

Assume therefore that $\alpha\le\frac{\varepsilon}{3r}$. From \eqref{eq:combined-cert}, we have 
\[
\frac{\varepsilon}{3\|\lambda'+\lambda''\|_1}\ge u^\top \lambda+ r\|A^\top \lambda\|\,,
\]
hence it suffices to show that $\|\lambda'+\lambda''\|_1=\|\lambda'\|_1+\|\lambda''\|_1\ge 1/3$.

From the triangle inequality, using that $\|a_i\|\le \|a_i\|_1=1$ for all $i\in[m]$, we obtain
\[
1\le \|v\|\le \left\|A^\top \lambda'\right\|+ \left\|A^\top \lambda'- v\right\|\le \sum_{i=1}^m \lambda'_i\|a_i\| + \left\|A^\top \lambda'- v\right\| \le \|\lambda'\|_1 + \left\|A^\top \lambda'- v\right\|\, ,
\]
and a similar inequality holds for $\lambda''$.
Adding up the two bounds and using the definition of $\alpha$, we get
\[
\alpha+\left\|A^\top(\lambda'+\lambda'') \right\|=\left\|A^\top \lambda'- v\right\|+\left\|A^\top \lambda''- v\right\|\ge 2-(\|\lambda'\|_1+\|\lambda''\|_1)\, .
\]
Using the assumption $\alpha\le \varepsilon/(3r)$ and the upper bounds $\|a_i\|\le \|a_i\|_1= 1$,
\[
\frac{\varepsilon}{3r}+\|\lambda'\|_1+\|\lambda''\|_1\ge \alpha+\sum_{i=1}^m (\lambda'_i+\lambda''_i)\|a_i\|\ge 2- (\|\lambda'\|_1+\|\lambda''\|_1).
\]
Since $\varepsilon\le 4r$, the above implies $\|\lambda'\|_1+\|\lambda''\|_1\ge \frac{1}{3}$ as required.
\end{proof}

\subsection{Approximate Farkas certificates from the ellipsoid method}\label{sec:ellipsoid}
In this section we describe an algorithm, which we will call the {\em Certified Ellipsoid Method} and prove the following.

\begin{theorem}\label{thm:certified-ellipsoid}
Let $K$ be a  convex set given by a strong
separation oracle, $r>0$, and $\varepsilon\in (0,2 r)$. Then, the Certified Ellipsoid Method runs in oracle-polynomial time and, by making $O(n^2 \log(nr/\varepsilon))$ calls to the strong separation oracle, either returns a point $x\in K$, or  an $\varepsilon$-approximate Farkas certificate for $K\cap\bb{B}^n(r)$ comprising only oracle inequalities.
\end{theorem}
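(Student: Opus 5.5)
We run the standard central-cut ellipsoid method, but we keep the current ellipsoid in Burrell--Todd form as a certified concave quadratic form (Definition~\ref{def:q-def}) for $K\cap\bb{B}^n(r)$. We stop once some oracle direction is $\varepsilon/3$-thin for the current ellipsoid, and then Theorem~\ref{thm:approximate-farkas} converts the form into the required certificate.

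\emph{Initialization.} Take $q_0(x)=r^2-\|x\|^2$, that is, $\gamma_0=1$, all other $\gamma_i=0$, $\da=0$ and $\beta=0$. This form is certified trivially, with $\vartheta=0$, and $\{x\st q_0(x)\ge 0\}=\bb{B}^n(r)$.

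\emph{Iteration.} Let the current ellipsoid be $E_t=\{x\st q_t(x)\ge 0\}$, with center $p_t$ and $q_t(x)=-(x-p_t)^\top R_t(x-p_t)+q_t(p_t)$. Query the oracle at $p_t$.
\begin{enumerate}[(a)]
\item If $p_t\in K$, return $p_t$.
\item Otherwise we receive $a^\top x\le u$, valid for $K$, with $a^\top p_t\ge u$. Apply Lemma~\ref{lemma:gamma-optimality} to $q_t$ and $v=a$ to get $\ell=\min\{a^\top x\st q_t(x)\ge 0\}$ together with a dual certificate $\ld$ for $a^\top x\ge\ell$. This uses only the current inequalities and the ball, so it is a legitimate certificate $\ld^{(i)}$ for the new index $i$.
\item Form $q_{t+1}(x)=\sigma q_t(x)+\gamma_i(u-a^\top x)(a^\top x-\ell)$ for suitable $\sigma,\gamma_i>0$. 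Since $a^\top p_t\ge u$, the slab $\{\ell\le a^\top x\le u\}$ lies inside the half-ellipsoid, so the usual Löwner--John update has exactly this form. Scaling $q_t$ by $\sigma$ keeps it certified: rescale $\gamma$, $\da$, $\beta$, $\vartheta$ by $\sigma$, while the $\ld^{(j)}$ are unchanged. The sum of a certified form and a product term certified by $\ld^{(i)}$ is again certified.
\end{enumerate}
Each update costs $O(n^2)$ operations plus the computation from Lemma~\ref{lemma:gamma-optimality}. That computation can be done incrementally, because $R_{t+1}$ is a rank-one update of $\sigma R_t$.

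\emph{Termination.} The standard volume bound gives $\mathrm{vol}(E_t)\le e^{-t/(2(n+1))}\mathrm{vol}(\bb{B}^n(r))$. Normalize $E_t=\sqrt{q_t(p_t)}\,E(R,p)$ so that the matrix is written as in Lemma~\ref{lem:c-Q-bound}. To apply that lemma we need the coefficient of the identity to be at most one, so we rescale by $r^2$, which is harmless because the lemma is scale-invariant in this sense. The lemma then produces an oracle direction $a_k$ whose width $2\sqrt{q(p)}\|a_k\|_{R^{-1}}$ is at most $\|a_k\|\cdot O(r)\cdot \mathrm{vol}(E_t)^{1/n}/\mathrm{vol}(\bb{B}^n(r))^{1/n}$, up to a $\sqrt n$ factor. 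Choosing $t=O(n^2\log(nr/\varepsilon))$ makes this width at most $\varepsilon\|a_k\|/3$. Set $v=a_k/\|a_k\|$, or normalize so that $\|v\|\ge 1$ with the width scaled accordingly. Theorem~\ref{thm:approximate-farkas}, with no box constraints, then yields the $\varepsilon$-approximate Farkas certificate in terms of oracle inequalities only; the condition $\varepsilon<2r<4r$ needed there holds.

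\emph{Main obstacle.} The step I expect to be hardest is (c). We must check that the classical central-cut ellipsoid update can be written exactly as $\sigma q_t+\gamma_i(u-a^\top x)(a^\top x-\ell)$, with $\ell$ the minimum of $a^\top x$ over $E_t$, and with $\sigma,\gamma_i\ge 0$ giving the standard volume shrink factor. My first attempt will be to work in coordinates where $E_t$ is the unit ball and $a=e_1$, so that the cut is $x_1\le 0$ (deeper cuts only help) and $\ell=-1$. There the standard update becomes $\frac{n^2-1}{n^2}\bigl(1-\|x\|^2\bigr)+c\,(-x_1)(x_1+1)$ for an explicit $c>0$. I will verify this identity by matching the coefficients of $x_1^2$, $x_1$ and $\|x\|^2$, and then transform back to the original coordinates.
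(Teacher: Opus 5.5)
Your proposal is correct and follows the paper's proof. You keep the ellipsoid as a Burrell--Todd certified quadratic form, certify each new lower bound $\ell$ via Lemma~\ref{lemma:gamma-optimality}, and write the central-cut update exactly as $\sigma q_t+\gamma_i(u-a^\top x)(a^\top x-\ell)$, which is the paper's Lemma~\ref{lemma:quadratic form}; your coefficient match goes through with $\sigma=(n^2-1)/n^2$ and $c=(2n+2)/n^2$, the paper's $\alpha,\beta$ at $\gamma=1/(n+1)$. Termination then comes from Lemma~\ref{lem:c-Q-bound} plus Theorem~\ref{thm:approximate-farkas}, and the only differences are cosmetic: you keep a general $r$ instead of reducing to $r=1$ via Remark~\ref{rmk:r=1}, and you measure progress by volume rather than by $\det(R_t)\ge \ee^{1/n}\det(R_{t-1})$ (no rescaling of $R$ is needed, since its identity coefficient is $\sigma^t\le 1$).
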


As noted at the beginning of Section~\ref{sec:dual certificates}, this yields a subroutine \textsc{Approx-Conic-Dual} with
$\mathcal{T}(n,\varepsilon)=O(n^2\log(1/\varepsilon))$ oracle calls.

\begin{remark}\label{rmk:r=1} In the remainder of the section, we will assume $r=1$. Indeed, if we define $K'=K/r$, and $\varepsilon'=\varepsilon/r$, observe that an inequality $a^\top x\le \beta$ is valid for $K$ if and only if $ra^\top x\le \beta$ is valid for $K'$. Hence, given a system $rAx\le u$ of $m$ valid inequalities for $K'$, an $\varepsilon'$-approximate Farkas certificate for $K'\cap \bb{B}^n(1)$ is of the form $\lambda'\in\Rp^m$ such that $\|rA^\top \lambda'\|+u^\top \lambda'\le \varepsilon'$, $\sum_{i=1}^m \lambda'_i r\|a_i\|\ge 1$, hence $\lambda=r\lambda'$ is an $\varepsilon$-approximate Farkas certificate for $K\cap \bb{B}^n(r)$.
\end{remark}

We give a self-contained exposition of the ellipsoid method and the certification procedure that may provide new insights into the classical algorithm. Section~\ref{sec:basic-ellipsoid} describes the Basic Ellipsoid Method in a slightly stronger form. Section~\ref{sec:quadratic} introduces the Certified Ellipsoid Method, where we modify the original framework to show how the containing ellipsoid can always be maintained in terms of certified quadratic forms, which will imply Theorem~\ref{thm:certified-ellipsoid}.

\subsubsection{The Basic Ellipsoid Method}\label{sec:basic-ellipsoid}

\begin{theorem}\label{thm:basic-ellipsoid}
Let $K$ be a convex set given by a strong
separation oracle, and $\varepsilon>0$. Then, there exists an oracle-polynomial
time
algorithm that, by making $O(n^2 \log(n/\varepsilon))$ calls to the strong separation oracle, either returns a point $x\in K$, or  returns a
vector $\separ\in\R^n$ with $\|\separ\|_2\in [1,2]$ such that $\width_{K\cap\bb{B}^n(1)}(\separ)\le \varepsilon$. Moreover, the vector
$\separ$ will be one of the vectors returned by the separation oracle during the algorithm.
\end{theorem}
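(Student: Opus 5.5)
We will run the standard central-cut ellipsoid method started from the unit ball, keeping each ellipsoid in the form $E_t=E(R_t,p_t)$ with $E_0=\bb{B}^n(1)$, i.e.\ $R_0=I_n$ and $p_0=0$. Throughout we keep the invariant $K\cap\bb{B}^n(1)\subseteq E_t$ and the structural form $R_t=\gamma_0 I_n+\sum_{i}\gamma_i \separ_i\separ_i^\top$ with $\gamma_0\le 1$ and $\gamma_i\ge 0$, where the $\separ_i$ are the oracle answers normalized to $\|\separ_i\|_2\in[1,2]$.

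\textbf{Iteration.} At step $t$ we query the oracle at the center $p_t$.
\begin{itemize}
\item If $p_t\in K$, we return it.
\item If $\|p_t\|>1$, we cut with the ball. Either we use the separator $p_t/\|p_t\|$ of $\bb{B}^n(1)$, which is not an oracle vector and so does not fit the required rank-one form, or, more simply, we first ask the oracle at the projection $p_t/\|p_t\|$. If that point lies in $K$ we return it. Otherwise we get an oracle vector $\separ$ with $\separ^\top x<\separ^\top(p_t/\|p_t\|)$ on $K$. If $\separ^\top p_t\ge\separ^\top p_t/\|p_t\|$, this is also a valid central or deep cut through $p_t$. If not, we use the ball cut.
\item If $p_t\notin K$, the oracle returns $\separ$ with $\separ^\top x\le \separ^\top p_t$ on $K$. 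We rescale $\separ$ to have norm in $[1,2]$, and the Löwner--John update of the half-ellipsoid gives $R_{t+1}=\frac{n^2-1}{n^2}\bigl(R_t+\frac{2}{n-1}\frac{\separ\separ^\top}{\|\separ\|^2_{R_t^{-1}}}\bigr)$, with the standard center update.
\end{itemize}
This preserves the invariant and the rank-one structure, because every $\gamma$ is multiplied by $(n^2-1)/n^2<1$, so $\gamma_0\le1$ stays true. It also gives $\det(R_{t+1})\ge e^{1/(n+1)}\det(R_t)$.

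\textbf{Ball cuts.} The main obstacle is the steps that cut with the ball rather than with an oracle vector, since those would add non-oracle directions to $R$. To avoid this, we note that if $\|p_t\|>1$, then $E_t\cap\bb{B}^n(1)$ lies in the half-ellipsoid $\{x: p_t^\top x\le \|p_t\|\}$. That is a shallow cut, so we can just not shrink and instead argue directly. The cleanest option, which we would try first, is to treat the ball implicitly. We keep $\gamma_0$ as the coefficient of $I_n$ and, at a ball step, cut with the vector $p_t$ and absorb the rank-one term $\separ\separ^\top$ with $\separ=p_t/\|p_t\|$ into the analysis. Lemma~\ref{lem:c-Q-bound} only needs thin directions among the $\separ_i$ that carry positive weight, so we can alternatively bound the ball-step contributions separately. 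If this fails, we fall back on querying the oracle at projected points as described above, which always yields oracle vectors. We also need $\separ$ to be a genuine oracle vector whenever it contributes to the final thin direction.

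\textbf{Stopping and the thin direction.} After $T=O(n^2\log(n/\varepsilon))$ steps we have $\det(R_T)\ge e^{T/(n+1)}$, so $\det(R_T)^{1/n}-1\ge 16/\varepsilon^2$. Lemma~\ref{lem:c-Q-bound} then gives an index $k$ with $\|\separ_k\|_{R_T^{-1}}\le \|\separ_k\|_2\,\varepsilon/4\le\varepsilon/2$. By Lemma~\ref{lemma:ellipsoid width}, $\width_{E_T}(\separ_k)\le\varepsilon$, and since $K\cap\bb{B}^n(1)\subseteq E_T$, the same width bound holds for $K\cap\bb{B}^n(1)$. The vector $\separ_k$ has norm in $[1,2]$ and is an oracle output, which is the claimed conclusion. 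The number of oracle calls is $O(1)$ per iteration, which gives the stated $O(n^2\log(n/\varepsilon))$ bound.
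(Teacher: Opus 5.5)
Your core argument is the same as the paper's: central cuts starting from $E_0=\bb{B}^n(1)$, the form $R_t=\gamma_0 I_n+\sum_i\gamma_i\separ_i\separ_i^\top$ with $\gamma_0\le 1$, determinant growth, and then Lemmas~\ref{lem:c-Q-bound} and~\ref{lemma:ellipsoid width}. As written, however, the proof has a gap, and you created it yourself with the ball cuts.

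Whenever you cut with $p_t/\|p_t\|$, a non-oracle rank-one term enters $R_T$. Lemma~\ref{lem:c-Q-bound} only finds a thin direction among all rank-one terms of $R_T$, so the index $k$ it returns may be exactly such a ball direction. In that case your closing claim that $\separ_k$ is an oracle output is unjustified. None of your remedies closes this:
\begin{itemize}
\item ``Absorbing'' the term, or ``bounding the ball-step contributions separately'', is never carried out. Excluding the ball terms from the argmin would require controlling their contribution to $\tr(R_T)$, and you give no such bound.
\item Your fallback still resorts to a ball cut whenever $\separ^\top p_t<\separ^\top p_t/\|p_t\|$.
\end{itemize}
Incidentally, the cut $p_t^\top x\le\|p_t\|$ is deep, not shallow, since $\|p_t\|^2>\|p_t\|$.

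The missing observation is that ball cuts are never needed. The invariant you must maintain is $K\cap\bb{B}^n(1)\subseteq E_t$. If $p_t\notin K$, the oracle inequality $\separ^\top x<\separ^\top p_t$ is valid on all of $K$, hence on $K\cap\bb{B}^n(1)$, however large $\|p_t\|$ is. So the central-cut update preserves both the invariant and the determinant increase. If $p_t\in K$, you simply return it: the theorem asks only for a point of $K$, not of $K\cap\bb{B}^n(1)$. The unit ball enters only through $E_0$, that is, through the $\gamma_0 I_n$ term.

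Delete the second bullet of your iteration, and every rank-one term in $R_T$ is a rescaled oracle vector. The rest of your argument then goes through verbatim and coincides with the paper's proof (Lemmas~\ref{lem:ellipsoid-progress}, \ref{lemma:R-form}, and~\ref{lem:basic-thin}).
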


The statement differs from the usual form in the case when no
feasible solution is found. For this case, the usual
outcome as in \cite[Theorem 3.2.1]{gls} is a small volume
ellipsoid containing $K$. Given such an ellipsoid $E$, one can show
that $E$ (and thus
$K$) is thin in the direction in one of the principal axes
of $E$ (see e.g. the proof of \cite[Lemma 6.4.2]{gls}). Instead, using Lemma~\ref{lem:c-Q-bound}, we observe
that a small volume ellipsoid is thin in one of the directions
returned by the oracle.

\medskip

At the $t$-th iteration, the algorithm maintains positive definite matrices
$R_t,Q_t\in \Snpp$ such that $Q_t=R_t^{-1}$, and a vector $p_t\in
\R^n$. We let $E_t=E(R_t,p_t)$. Throughout,  we maintain
\begin{equation}\label{eq:p-contain}
K\cap\bb{B}^n(1)\subseteq E_t\, .
\end{equation}
We initialize $E_0=\bb{B}^n(1)=E(I_n,0)$.
At iteration $t=1,2,\ldots$, the strong separation oracle is called to check if $p_{t-1}\in
K$. If the answer is yes, the algorithm terminates. Otherwise, the
oracle returns a direction $\separ_t$ such that $\separ_t^\top p_{t-1}>\separ_t^\top z$ for any $z\in K$.
In this case,
the matrices are updated as follows:
\begin{equation}\label{eq:e-update-1}
  R_{t}=\frac{n^2-1}{n^2}\cdot  R_{t-1}+\frac{2n+2}{n^2}\cdot
  \frac{\separ_t \separ_t^\T}{\|\separ_t\|_{Q_{t-1}}^2}\, ,\quad Q_{t}=R_{t}^{-1}\, ,\quad p_{t}=p_{t-1}-
  \frac{Q_t \separ_t}{(n+1)\|\separ_t\|_{Q_t}}\, .\end{equation}
We note that computing $Q_{t}$ does not require a matrix inversion,
but can be computed from $Q_{t-1}$ via a simple rank-1 update using the Sherman–Morrison formula.

The following lemma is the key in showing the progress in the
volumetric potential. In Section~\ref{sec:quadratic}, we present a new proof
using quadratic forms.
\begin{lemma}\label{lem:ellipsoid-progress}
If $K\cap \bb{B}^n(1)\subseteq E_{t-1}$, then $K\cap\bb{B}^n(1)\subseteq
E_{t}$ also holds, and $\det(R_{t})>\ee^{1/n} \det(R_{t-1})$.
\end{lemma}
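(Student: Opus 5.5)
We will prove the two claims separately: the determinant bound by the matrix determinant lemma, and the containment by exhibiting $1-\|x-p_t\|^2_{R_t}$ as a nonnegative combination of two quadratics that are nonnegative on $K\cap\bb{B}^n(1)$, in the spirit of the certified quadratic forms of Section~\ref{sec:quadratic-certify}. Throughout we assume $n\ge 2$, as needed for the update \eqref{eq:e-update-1}, and write $a=\separ_t$, $Q=Q_{t-1}$, $R=R_{t-1}$, $p=p_{t-1}$.

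\textbf{Determinant.} By the matrix determinant lemma applied to \eqref{eq:e-update-1},
\[
\det(R_t)=\Big(\tfrac{n^2-1}{n^2}\Big)^n\det(R)\Big(1+\tfrac{2n+2}{n^2}\cdot\tfrac{n^2}{n^2-1}\cdot\tfrac{a^\top Qa}{\|a\|_Q^2}\Big)=\Big(1-\tfrac1{n^2}\Big)^n\tfrac{n+1}{n-1}\det(R).
\]
We then expand the logarithm of the factor as a power series. We have $n\log(1-n^{-2})=-\sum_{k\ge1}\frac{1}{k\,n^{2k-1}}$ and $\log\frac{1+1/n}{1-1/n}=\sum_{k\ge1}\frac{2}{(2k-1)n^{2k-1}}$. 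Hence the logarithm of the factor equals
\[
\sum_{k\ge1}\Big(\tfrac{2}{2k-1}-\tfrac1k\Big)n^{-(2k-1)}=\sum_{k\ge1}\tfrac{1}{k(2k-1)}\,n^{-(2k-1)} .
\]
Its $k=1$ term is exactly $1/n$, and every other term is strictly positive. This gives $\det(R_t)>\ee^{1/n}\det(R)$.

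\textbf{Containment.} Every $x\in K\cap\bb{B}^n(1)$ lies in $E_{t-1}$ by hypothesis, and $a^\top x< a^\top p$ by the separation property. Put $y=x-p$ and $z=a^\top y/\|a\|_Q$. Then $\|y\|_R\le1$ and $z\le 0$. Moreover $|z|\le\|y\|_R\le1$ by Cauchy--Schwarz in the $R$-inner product, so $z\in[-1,0]$ and $-z(1+z)\ge0$. It therefore suffices to prove the identity
\[
1-\|x-p_t\|_{R_t}^2=\tfrac{n^2-1}{n^2}\big(1-\|y\|_R^2\big)+\tfrac{2n+2}{n^2}\,(-z)(1+z),
\]
since its right-hand side is nonnegative; this is the quadratic-form proof of the containment.

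To verify the identity, we write $p_t=p-\rho\,Qa/\|a\|_Q$ and compute $Q_ta$ by Sherman--Morrison: $Q_ta=\frac{n^2}{(n+1)^2}Qa$ and $\|a\|_{Q_t}=\frac{n}{n+1}\|a\|_Q$. Expanding $\|y+\rho Qa/\|a\|_Q\|_{R_t}^2$ with $R_t$ from \eqref{eq:e-update-1} produces terms in $\|y\|_R^2$, $z$ and $z^2$. The $\|y\|_R^2$ and $z^2$ coefficients match the right-hand side automatically. The linear and constant terms match precisely when the center shift is $\rho=\frac1{n+1}$ measured in the old metric, i.e.\ $p_t=p-Q_{t-1}a/((n+1)\|a\|_{Q_{t-1}})$, which is the classical center update. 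The formula in \eqref{eq:e-update-1} must be read this way, or equivalently with $Q_t$ and the corresponding rescaling $Q_ta/\|a\|_{Q_t}=\frac{n}{n+1}Qa/\|a\|_Q$ absorbed into the constant.

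\textbf{Main obstacle.} The main work is this bookkeeping of the center shift and its normalization. If the linear terms fail to cancel, we will adjust $\rho$ to match the constant $1$ on both sides. This leaves the determinant computation untouched, since that computation involves only $R_t$.
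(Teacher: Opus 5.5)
Your proposal is correct and is essentially the paper's own argument, namely Lemma~\ref{lemma:quadratic form} with $\gamma=1/(n+1)$: your identity is exactly $q_t=\alpha q_{t-1}+\frac{\beta}{\|\separ_t\|^2_{Q_{t-1}}}(u_t-\separ_t^\top x)(\separ_t^\top x-\ell_t)$ with $\alpha=\frac{n^2-1}{n^2}$ and $\beta=\frac{2n+2}{n^2}$, and your determinant factor equals the paper's $\alpha^{n-1}(\alpha+\beta)$. The hedge in your last paragraph is unnecessary: the $z$-coefficient forces $\rho=\beta/(2(\alpha+\beta))=1/(n+1)$, and the constants then agree since $1-\rho^2(\alpha+\beta)=\alpha$; your reading of the center update with $Q_{t-1}$ is the correct one and matches Lemma~\ref{lemma:quadratic form}, and your power-series argument supplies the strict inequality that the paper only asserts.
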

The next lemma is immediate from the construction sequence.

\begin{lemma}\label{lemma:R-form}
At  iteration $t\ge 1$ of the Basic Ellipsoid Method, there exist coefficients
$0<\gamma_0^{(t)}<1$,
and $\gamma^{(t)}_i>0 $ for $i\in [t]$ such that
\begin{equation}\label{eq:R-form}
R_t=\gamma_0^{(t)} I_n +\sum_{i=1}^t \gamma^{(t)}_i \separ_i \separ_i^\top.
\end{equation}
\end{lemma}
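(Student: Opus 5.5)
The plan is induction on $t$, unrolling the recursion \eqref{eq:e-update-1}; the explicit coefficient formula then gives both the strict positivity and the bound $\gamma_0^{(t)}<1$.

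\emph{Base case.} At $t=0$ we have $R_0=I_n$, so the representation holds with $\gamma_0^{(0)}=1$ and no other terms.

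\emph{Inductive step.} Suppose that after iteration $t-1$ we have
\[
R_{t-1}=\gamma_0^{(t-1)}I_n+\sum_{i=1}^{t-1}\gamma_i^{(t-1)}\separ_i\separ_i^\top,
\]
where all coefficients are positive. Substitute this into \eqref{eq:e-update-1}. This gives \eqref{eq:R-form} for $R_t$ with
\[
\gamma_0^{(t)}=\frac{n^2-1}{n^2}\gamma_0^{(t-1)},\qquad \gamma_i^{(t)}=\frac{n^2-1}{n^2}\gamma_i^{(t-1)}\ (i<t),\qquad \gamma_t^{(t)}=\frac{2n+2}{n^2\|\separ_t\|_{Q_{t-1}}^2}.
\]

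\emph{Signs and bounds.} Positivity needs $n\ge 2$, so that $(n^2-1)/n^2>0$; in dimension $1$ the update degenerates, and we treat $n\ge2$ as the standing assumption. The new coefficient $\gamma_t^{(t)}$ is well defined and positive: $Q_{t-1}$ is positive definite, and $\separ_t\neq 0$ because the separator is nonzero. Unrolling the first relation gives $\gamma_0^{(t)}=\left((n^2-1)/n^2\right)^t$, which lies in $(0,1)$ for every $t\ge1$.

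\emph{Main obstacle.} The only delicate point is that $R_t$ remains positive definite, so that $Q_t=R_t^{-1}$ exists. This follows directly: $R_t$ is a positive multiple of the identity plus positive semidefinite rank-one terms. There is no real obstacle beyond this bookkeeping.
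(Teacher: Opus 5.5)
Your proof is correct and is the same argument the paper has in mind when it calls the lemma immediate from the update rule \eqref{eq:e-update-1}. Unrolling gives $\gamma_0^{(t)}=\alpha^t$ and $\gamma_i^{(t)}=\beta\alpha^{t-i}/\|\separ_i\|^2_{Q_{i-1}}$ with $\alpha=(n^2-1)/n^2$ and $\beta=(2n+2)/n^2$, matching Corollary~\ref{cor:ellipsoids}; your caveat that strict positivity needs $n\ge 2$, since $\alpha=0$ when $n=1$, is a legitimate point the paper leaves implicit.
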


The next lemma completes the proof of Theorem~\ref{thm:basic-ellipsoid}.
\begin{lemma}\label{lem:basic-thin}
If the Basic Ellipsoid Method does not terminate in $T=O(n^2\log
({n}/\varepsilon))$ iterations, then $\width_{K\cap\bb{B}^n(1)}(\separ_t)\le \varepsilon$ for  some $t\in [T]$.
\end{lemma}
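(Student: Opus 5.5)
The plan is to combine the volume growth of Lemma~\ref{lem:ellipsoid-progress} with the thin-direction bound of Lemma~\ref{lem:c-Q-bound}. We assume the method has not terminated after $T$ iterations, so every query point $p_{t-1}$ was separated and every update \eqref{eq:e-update-1} was performed. Starting from $R_0=I_n$, so $\det(R_0)=1$, and applying Lemma~\ref{lem:ellipsoid-progress} inductively, we get both the containment $K\cap\bb{B}^n(1)\subseteq E_T$ and the determinant bound $\det(R_T)>\ee^{T/n}$. Hence $\det(R_T)^{1/n}>\ee^{T/n^2}$.

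Next we use the representation of Lemma~\ref{lemma:R-form}: $R_T=\gamma_0^{(T)}I_n+\sum_{i=1}^T\gamma_i^{(T)}\separ_i\separ_i^\top$ with $0<\gamma_0^{(T)}<1$ and all $\gamma_i^{(T)}>0$. This is exactly the hypothesis of Lemma~\ref{lem:c-Q-bound}. Since $\det(R_T)>1$, that lemma yields an index $t\in[T]$ with
\[\|\separ_t\|_{Q_T}\le \frac{\|\separ_t\|}{\sqrt{\det(R_T)^{1/n}-1}}<\frac{\|\separ_t\|}{\sqrt{\ee^{T/n^2}-1}}.\]
By Lemma~\ref{lemma:ellipsoid width}, $\width_{E_T}(\separ_t)=2\|\separ_t\|_{Q_T}$. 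Because width is monotone under inclusion, this gives
\[\width_{K\cap\bb{B}^n(1)}(\separ_t)\le \frac{2\|\separ_t\|}{\sqrt{\ee^{T/n^2}-1}}.\]

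To obtain the normalization $\|\separ\|\in[1,2]$ required by Theorem~\ref{thm:basic-ellipsoid}, we rescale every oracle output to unit norm. This is harmless: it neither changes the separating half-space nor the ellipsoid update \eqref{eq:e-update-1}, since $\separ_t\separ_t^\top/\|\separ_t\|_{Q_{t-1}}^2$ and $Q_t\separ_t/\|\separ_t\|_{Q_t}$ are scale invariant. With $\|\separ_t\|=1$, the width is at most $2/\sqrt{\ee^{T/n^2}-1}$. Choosing $T=\lceil n^2\ln(1+4/\varepsilon^2)\rceil=O(n^2\log(n/\varepsilon))$ makes this at most $\varepsilon$.

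The argument is essentially routine. The only step needing care is that Lemma~\ref{lem:ellipsoid-progress} is stated for one step, so we must check its hypothesis at each step. This follows by induction from \eqref{eq:p-contain} together with $E_0=\bb{B}^n(1)$. We also confirm that $\det(R_T)>1$ holds for $T\ge1$, so Lemma~\ref{lem:c-Q-bound} applies.
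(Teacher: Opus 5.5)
Your proof is correct and follows the paper's argument. You get $\det(R_T)^{1/n}>\ee^{T/n^2}$ from Lemma~\ref{lem:ellipsoid-progress}, obtain a thin oracle direction from Lemma~\ref{lem:c-Q-bound} (via the representation of Lemma~\ref{lemma:R-form}), and convert this to a width bound with Lemma~\ref{lemma:ellipsoid width}. Your unit normalization of the oracle outputs and your threshold $\det(R_T)^{1/n}\ge 1+4/\varepsilon^2$ simply make explicit the constant factors that the paper's threshold $1+1/\varepsilon^2$ leaves to the $O(\cdot)$.
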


\begin{proof}
Initially, $\det(R_0)=1$. Using Lemma~\ref{lem:ellipsoid-progress}, we
see that after $t=O( n^2\log(n/\varepsilon))$ iterations,
$\det(R_t)^{1/{n}}> 1/\varepsilon^2 +1$. The claim follows using
Lemmas~\ref{lemma:ellipsoid width} and \ref{lem:c-Q-bound}.
\end{proof}

\subsubsection{The certified ellipsoid method}\label{sec:quadratic}

Let us denote by $q_t(x)$ the strictly concave quadratic form defined as
\begin{equation}\label{eq:quadratic-form}
q_t(x)\coloneqq 1-(x-p_t)^\T R_t (x-p_t)\, ,
\end{equation}
so that $E_t=\{x\in\R^n\st q_t(x)\geq 0\}$.
Note that initially $q_0(x)=1-\|x\|^2$.
The Certified Ellipsoid Method will maintain $q_t$ as a certified quadratic form as in Definition~\ref{def:q-def}. Theorem~\ref{thm:certified-ellipsoid} follows by combining Theorems~\ref{thm:approximate-farkas} and~\ref{thm:basic-ellipsoid}.

Assume that, at the current iteration $t$, the separation
oracle returned $\separ_t$ such that $\separ_t^\top p_{t-1}> \separ_t^\top
z$ for all $z\in K$. If we let
\[u_t\coloneqq \separ_t^\top p_{t-1}\,\]
then the inequality $a_t^\top x\le u_t$ is the inequality returned by the oracle, and it is valid for $K$. Further, if we let
\[\ell_t\coloneqq \separ_t^\top p_{t-1}-\|\separ_t\|_{Q_{t-1}}\,,\]
then the fact that $K\cap\bb{B}^n(1)\subseteq E_{t-1}$ ensures that $\separ_t^\top x\ge \ell_t$ is also valid for $K\cap\bb{B}^n(1)$.

The following lemma shows that the ellipsoid update \eqref{eq:e-update-1} corresponds to the following update of the quadratic form, for some values $\alpha,\beta>0$.
\begin{equation}\label{eq:q-t}
\begin{aligned}
q_{t}(x)&=\alpha q_{t-1}(x)+\frac{\beta}{\|\separ_t\|^2_{Q_{t-1}}} (u_t-\separ_t^\top x) (\separ_t^\top x-\ell_t)\, ,
\end{aligned}
\end{equation}

\begin{lemma}
\label{lemma:quadratic form}
For some $\alpha,\beta>0$, consider the expression $q_{t}(x)$ as in
\eqref{eq:q-t}  with $u_t=\separ_t^\top p_{t-1}$ and $\ell_t=\separ_t^\top p_{t-1}-\|\separ_t\|_{Q_{t-1}}$, and let $\centre_{t}$ be the unique maximizer of $q_{t}(x)$. There exists  $R_{t}\in\Snpp$ such that $q_{t}(x)=1-(x-\centre_{t})^\T R_{t} (x-\centre_{t})$ if and only if
\begin{equation}\label{eq:alpha and beta}
\alpha=\frac{1-2\gamma}{(1-\gamma)^2}\, ,\quad \beta=\frac{2\gamma}{(1-\gamma)^2}\, ,\quad\mbox{for some } 0<\gamma<\frac 12\, .
\end{equation}
Furthermore, for any such $\alpha$, $\beta$, and $\gamma$,
\begin{equation}\label{eq:new R ellipsoid} R_{t}=\alpha R_{t-1}+\beta \frac{\separ_t\separ_t^\T}{\|\separ_t\|_{Q_{t-1}}^2}\, ,\quad \centre_{t}=\centre_{t-1}+\gamma \frac{Q_{t-1}\separ_t}{\|\separ_t\|_{Q_{t-1}}}\, .\end{equation}
Finally, the choice of $\gamma$ that minimizes the volume of $E_{t}$ (or, equivalently, maximizes $\det(R_{t})$) is $\gamma^*\coloneqq 1/(n+1)$, for which choice $\det(R_{t})/\det(R_{t-1})\geq \ee^{1/n}$.
\end{lemma}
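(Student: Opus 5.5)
The plan is to expand $q_t$ in \eqref{eq:q-t} around $p_{t-1}$ and match coefficients with the target form. Write $y=x-p_{t-1}$, $w=Q_{t-1}a_t/\|a_t\|_{Q_{t-1}}$ and $s=a_t^\top y/\|a_t\|_{Q_{t-1}}$, so that $\|w\|_{R_{t-1}}=1$ and $s=w^\top R_{t-1}y$. Then $u_t-a_t^\top x=-\|a_t\|_{Q_{t-1}}s$ and $a_t^\top x-\ell_t=\|a_t\|_{Q_{t-1}}(s+1)$. Substituting, and using $q_{t-1}(x)=1-y^\top R_{t-1}y$ (we take $p_{t-1}$ to be the current centre),
\[
q_t(x)=\alpha-\alpha\, y^\top R_{t-1}y-\beta s^2-\beta s .
\]

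Next I complete the square. The quadratic part is $-y^\top R_t y$ with $R_t=\alpha R_{t-1}+\beta a_ta_t^\top/\|a_t\|^2_{Q_{t-1}}$, which is exactly the $R_t$ in \eqref{eq:new R ellipsoid}. The linear part is $-\beta R_{t-1}w$ in the $R_{t-1}$-inner product. Since $R_t w=(\alpha+\beta)R_{t-1}w$, the maximizer is
\[
y^*=-\frac{\beta}{2(\alpha+\beta)}\,w .
\]
This gives $p_t=p_{t-1}+\gamma\,Q_{t-1}a_t/\|a_t\|_{Q_{t-1}}$ with $\gamma:=-\beta/(2(\alpha+\beta))$. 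The sign of $\gamma$ depends on the orientation of $w$, and I would fix it to agree with the convention in \eqref{eq:new R ellipsoid}. The maximum value is
\[
q_t(p_t)=\alpha+\frac{\beta^2}{4(\alpha+\beta)} .
\]
Therefore $q_t=1-(x-p_t)^\top R_t(x-p_t)$ holds iff this maximum equals $1$ and $R_t\succ0$.

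I then solve the resulting equations. Set $|\gamma|=\beta/(2(\alpha+\beta))$ and impose $\alpha+\beta^2/(4(\alpha+\beta))=1$. Writing $\beta=2|\gamma|(\alpha+\beta)$ gives $\alpha+\beta=\alpha/(1-2|\gamma|)$. Substituting back yields $\alpha(1-2|\gamma|+|\gamma|^2)=1-2|\gamma|$, i.e. $\alpha=(1-2\gamma)/(1-\gamma)^2$ and $\beta=2\gamma/(1-\gamma)^2$. The conditions $\alpha,\beta>0$ force $0<\gamma<1/2$. Conversely, any such $\gamma$ produces positive $\alpha,\beta$, hence $R_t\succ0$, and the computation reverses. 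This proves the equivalence \eqref{eq:alpha and beta} together with the formulas \eqref{eq:new R ellipsoid}.

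For the determinant I use the matrix determinant lemma:
\[
\det R_t=\alpha^n\det R_{t-1}\bigl(1+\tfrac{\beta}{\alpha}\,a_t^\top Q_{t-1}a_t/\|a_t\|^2_{Q_{t-1}}\bigr)=\alpha^{n-1}(\alpha+\beta)\det R_{t-1} .
\]
Since $\alpha+\beta=1/(1-\gamma)^2$, the ratio is $f(\gamma)=(1-2\gamma)^{n-1}/(1-\gamma)^{2n}$. Setting the derivative of $\log f$ to zero gives $-2(n-1)/(1-2\gamma)+2n/(1-\gamma)=0$, so $\gamma^*=1/(n+1)$. At this point $\alpha=(n^2-1)/n^2$ and $\alpha+\beta=(n+1)^2/n^2$, which matches \eqref{eq:e-update-1}. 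Hence
\[
f(\gamma^*)=\Bigl(\frac{n^2-1}{n^2}\Bigr)^{n-1}\frac{(n+1)^2}{n^2}=\Bigl(1+\frac1n\Bigr)^{n+1}\Bigl(1-\frac1n\Bigr)^{n-1}.
\]
The main obstacle is the final inequality $f(\gamma^*)\ge e^{1/n}$. The plan is to take logarithms and expand
\[
(n+1)\log(1+1/n)+(n-1)\log(1-1/n)=\sum_{k\ge1}\frac{1}{k(2k-1)n^{2k-1}}\cdot(\text{positive}) ,
\]
and to check that the leading term is $1/n$ with all further terms nonnegative. Pairing the odd and even terms of the two series makes this explicit: the $k$-th coefficient is $2/(2k(2k-1))$ times $n^{-(2k-1)}$, all positive, with the first equal to $1/n$.
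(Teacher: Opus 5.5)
Your proof is correct and follows essentially the same route as the paper. Your maximizer $y^*=-\frac{\beta}{2(\alpha+\beta)}w$ and maximum value $\alpha+\frac{\beta^2}{4(\alpha+\beta)}$ are exactly the paper's conditions $2\alpha\gamma=\beta(1-2\gamma)$ and $\alpha(1-\gamma^2)+\beta\gamma(1-\gamma)=1$. The determinant step via $\alpha^{n-1}(\alpha+\beta)$ is identical, and your series argument justifies $(1-1/n)^{n-1}(1+1/n)^{n+1}\ge \ee^{1/n}$, which the paper only asserts.

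One correction concerns your sign remark: $w$ has no free orientation. Your computation in fact shows $p_t=p_{t-1}-\gamma\, Q_{t-1}a_t/\|a_t\|_{Q_{t-1}}$ with $\gamma=\beta/(2(\alpha+\beta))\in(0,\tfrac12)$, as in the paper's own derivation $p'=p-\gamma Qa$. So the ``$+$'' in the displayed formula for $p_t$ is a typo in the statement, not a convention you may choose.
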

\begin{proof}
Denote $q_{t}\coloneqq q$, $\separ\coloneqq \separ_t/\|\separ_t\|_{Q_{t-1}}$,
$\centre\coloneqq \centre_{t-1}$, $\centre'\coloneqq \centre_{t}$, $R\coloneqq R_{t-1}$, $Q\coloneqq Q_{t-1}$, and $R'=R_t$.
If there exists
a positive definite matrix $R'$  and $p'$ such that $q(x)\equiv 1-(x-\centre')^\T
R'(x-\centre')$ then
$q(\centre')=1$. Since $\centre'$ is the unique maximizer of $q(x)$,
we have $\nabla q(\centre')=0$. Using that
\begin{equation}\label{eq:gradient ellispoids}
\nabla q(x)=-2\alpha R(x-\centre)-\beta (1+2\separ^\T (x-\centre)) \separ\, ,
\end{equation}
 from $\nabla q(\centre')=0$  we see that  $\centre'=\centre-\gamma Q\separ$, where   $\gamma\in \R$ satisfies
\begin{equation}\label{eq:gradient at new center}
2\alpha\gamma-\beta (1-2\gamma)=0.
\end{equation}
The condition $q(\centre')=1$ implies
\begin{equation}\label{eq:value at c'}
\alpha(1-\gamma^2)+\beta\gamma(1-\gamma)=1.
\end{equation}
Solving the linear system given by \eqref{eq:gradient at new center}, \eqref{eq:value at c'} for $\alpha$ and $\beta$ in terms of the parameter $\gamma$, we obtain \eqref{eq:alpha and beta}, and $\alpha,\beta>0$ if only if $0<\gamma<1/2$.

\medskip

Let us now fix a value of $0<\gamma<1/2$ and compute $\alpha$ and $\beta$ as above.
Let us set $R'=\alpha R+\beta \separ\separ^\top$. We show that
$q(x)\equiv q'(x)$ for $q(x)=q_{t}(x)$ as defined in \eqref{eq:q-t},
and for $q'(x)=1-(x-p')^\top R' (x-p')$. This is a consequence of
the following simple claim.
\begin{claim}
Two strictly concave quadratic functions are identical if and only if
the following are the same for the two functions:
{\em (i)} the quadratic terms; {\em (ii)} the unique maximizers; and
{\em (iii)} the
maximum values.
\end{claim}
The quadratic term in both $q(x)$ and $q'(x)$ is $-x^\top(\alpha
R+\beta \separ\separ^\top)x$. The maximizer of both functions is $p'$,
and the maximum value is 1 in both cases.
This completes the proof of \eqref{eq:new R ellipsoid}.

\medskip

Finally, we need to determine the choice of $\gamma$ in order to minimize the volume of $E_{t}$. This is equivalent to maximizing $\det(R')$. Note that
      \begin{equation}\label{eq:det-update}
\begin{aligned}
        \det(R')&=\alpha^{n}\det\left(R^{1/2}\left(I+\frac{\beta}{\alpha}Q^{1/2}\separ\separ^\T Q^{1/2}\right)R^{1/2}\right)=\det(R)\alpha^{n}\left(1+\frac{\beta}{\alpha}\right)\\
       &=\det(R)\alpha^{n-1}(\alpha+\beta)=\det(R)\frac{(1-2\gamma)^{n-1}}{(1-\gamma)^{2n}}\, .
\end{aligned}
      \end{equation}
This is maximized for $\gamma=1/(n+1)$, for which choice $$\frac{\det(R_{t})}{\det(R_{t-1})}=\left(\frac{n-1}{n}\right)^{n-1}\left(\frac{n+1}{n}\right)^{n+1}\geq \ee^{1/n}\, .$$
\end{proof}

Observe that the above lemma immediately implies Lemma~\ref{lem:ellipsoid-progress}, hence this provides an alternative exposition of the standard volumetric argument for the ellipsoid method.

\begin{corollary}\label{cor:ellipsoids}  At the $t$-th iteration of the Basic Ellipsoid Method, we can maintain $q_t(x)$ in the form
\begin{equation}\label{eq:q-t-def}
q_t(x)=\gamma^{(t)}_0 (1-\|x\|^2)+\sum_{i=1}^t \gamma^{(t)}_i(u_i-{\separ_i^\T
  x})({\separ_i^\T x}-\ell_i),
\end{equation}
where $\gamma^{(t)}_0=\alpha^t$, and for each $i\in [t]$,
 $\separ_i$ is the vector returned by the separation oracle at the $i$-th iteration, $u_i={\separ^\T_i\centre_{i-1}}$, $\ell_i=u_i-\|\separ_i\|_{Q_{i-1}}$, and
 $\gamma^{(t)}_i=\beta \alpha^{t-i}/\|\separ_i\|^2_{Q_i}$, where $\alpha$, $\beta$ are defined as in \eqref{eq:alpha and beta} for $\gamma=1/(n+1)$. Furthermore, for every $k\in[t]$, we can compute a certificate $\ld^{(k)}\in\Rp^t$ for the validity of $a_k^\top x\ell_k$, that is
 \begin{equation}\label{eq:lower-bound-certificate}
 \left\|\sum_{i=1}^t \ld^{(i)} a_i+a_k\right\|+\ell^{(k)}\le -\sum_{i=1}^t \lambda^{(k)}_i u_i.\end{equation}
\end{corollary}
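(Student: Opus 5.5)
The argument is an induction on $t$, unrolling the update rule \eqref{eq:q-t} of Lemma~\ref{lemma:quadratic form}. For $t=0$ we have $q_0(x)=1-\|x\|^2$, which is \eqref{eq:q-t-def} with $\gamma_0^{(0)}=1$ and no other terms. Suppose $q_{t-1}$ has the form \eqref{eq:q-t-def}. By Lemma~\ref{lemma:quadratic form} with $\gamma=1/(n+1)$,
\[
q_t(x)=\alpha q_{t-1}(x)+\frac{\beta}{\|\separ_t\|^2_{Q_{t-1}}}(u_t-\separ_t^\top x)(\separ_t^\top x-\ell_t).
\]
Multiplying each coefficient of $q_{t-1}$ by $\alpha$ gives $\gamma_0^{(t)}=\alpha^t$ and $\gamma_i^{(t)}=\alpha\gamma_i^{(t-1)}$ for $i<t$. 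The new term has coefficient $\beta/\|\separ_t\|^2_{Q_{t-1}}$. This yields the closed form $\beta\alpha^{t-i}/\|\separ_i\|^2_{Q_{i-1}}$; the $Q_i$ in the statement should presumably be read as $Q_{i-1}$. Since $q_t$ has no linear term $\da^\top x-\beta$, we take $\da=0$, $\beta=0$ in Definition~\ref{def:q-def} and $\vartheta=0$ certifies it trivially. The coefficients are positive because $0<\gamma<1/2$, and the maintenance cost is one rank-one update per iteration.

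The substantive part is the certificate $\ld^{(k)}$ for $a_k^\top x\ge \ell_k$ (with $r=1$, using only $a_i^\top x\le u_i$, $i\in[t]$, and the ball). At the time $a_k$ is returned, $q_{k-1}$ is strictly concave and certified by the inductive hypothesis: its lower-bound certificates $\ld^{(i)}$, $i<k$, were built in earlier iterations and are padded with zeros. The bound $\ell_k=\separ_k^\top p_{k-1}-\|\separ_k\|_{Q_{k-1}}$ is exactly $\min\{\separ_k^\top x\st q_{k-1}(x)\ge 0\}$, since $\max q_{k-1}=1$ gives $\alpha=1$ in Lemma~\ref{lemma:gamma-optimality}. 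Applying Lemma~\ref{lemma:gamma-optimality} to $q_{k-1}$ with $v=\separ_k$ therefore produces in closed form
\[
\ld^{(k)}=\sum_{i<k}\lambda_i^+\ld^{(i)}+\lambda^-\in\Rp^{k-1}\subseteq\Rp^t,
\]
which satisfies \eqref{eq:lower-bound-certificate}. Its computation costs $O(n^2k+n^\omega)$, reusing $Q_{k-1}$.

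The main obstacle I expect is checking the induction hypothesis of Lemma~\ref{lemma:gamma-optimality} at step $k$: every $\ell_i$ with $i<k$ must already be certified in terms of oracle inequalities only. The recursion above handles this because each $\ld^{(k)}$ depends only on $\ld^{(i)}$ with $i<k$. The base case $k=1$ uses $q_0=1-\|x\|^2$ with no oracle terms, so $\lambda=0$ and $\ld^{(1)}=0$; this certifies $\separ_1^\top x\ge -\|\separ_1\|$ over $\bb{B}^n(1)$, which is indeed \eqref{eq:lower-bound-certificate} with empty sums.

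One point needs care. The strict inequality in Lemma~\ref{lemma:gamma-optimality} should be used in its non-strict form. Also, recursively substituting the $\ld^{(i)}$ could blow up the representation size, so storing each $\ld^{(k)}$ explicitly is simplest and costs $O(t)$ entries per $k$.
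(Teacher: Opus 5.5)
Your proposal is correct and follows the paper's own proof. The closed form comes from unrolling the update \eqref{eq:q-t} of Lemma~\ref{lemma:quadratic form}, and each $\ld^{(k)}$ comes from applying Lemma~\ref{lemma:gamma-optimality} to the certified form $q_{k-1}$ with $v=\separ_k$, built from the previously computed $\ld^{(i)}$, $i<k$. You are also right to read the coefficient as $\|\separ_i\|^2_{Q_{i-1}}$ rather than $\|\separ_i\|^2_{Q_i}$, and to use Lemma~\ref{lemma:gamma-optimality} in the non-strict form, which is what its proof actually establishes.
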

\begin{proof}
The first statement follows by construction and by Lemma~\ref{lemma:quadratic form}. For the last statement, we observe that $\ld^{(k)}\in\Rp^t$, $k\in[t]$ can be computed throughout the execution of the ellipsoid method. Indeed, suppose that up to iteration $t-1$ we have computed $q_{t-1}$ in the form~\eqref{eq:q-t-def}, along with $\ld^{(1)},\ldots,\ld^{(t-1)}$. Since the inequality $a_t^\top x\ge \ell_t$ is valid for $E_{t-1}=\{x\st q_{t-1}(x)\ge 0\}$, it follows from Lemma~\ref{lemma:gamma-optimality} that we can compute $\ld^{(t)}$ satisfying \eqref{eq:lower-bound-certificate}.
\end{proof}

\paragraph{Maintaining a compact representation}
\label{sec:compact}
So far, we kept adding a new term $\separ_i\separ_i^\top$ to $R_{t}$ in every
iteration, and thus $R_t$ will be the weighted sum of the identity and $t$
rank-1 matrices $\separ_i\separ_i^\top$. This would lead to $O(n^2\log
(n/\varepsilon))$ terms in $R$ when running the  Basic Ellipsoid
Method to obtain a $\varepsilon$-thin direction, according to
Lemma~\ref{lem:basic-thin}.
Thus, the space complexity of the algorithm is large, albeit polynomial. Furthermore, we must maintain certificates $\ld^{(i)}$ for each inequality $a_i^\top x\ge \ell_i$, $i\in[t]$.
In what follows, we observe that one can maintain a subset $I_t$ of only $O(n^2)$ vectors $a_i$ returned from the oracle so that the quadratic form $q_t$  can be expressed only in  terms of the form $(u_i-a_i^\top x)(a_i^\top x-\ell_i)$ for $i\in I_t$,
and that one needs to only keep $O(n^3)$ furthers vectors $\separ_k$, $k\in[t]$ to certify the inequalities $a_i^\top x\ge \ell_i$ for all $i\in I_t$.

Indeed, we can write 
\[q_t(x)=(1,x^\top)
H \begin{pmatrix} 1\\ x\end{pmatrix},\, \mbox{ for } H=\left(\begin{array}{c|c}
1-p_t^\top R p_t& p_t^\top R_t\\
\hline
R_t^\top p_t & -R_t
\end{array}\right)\]
hence, from the expression~\eqref{eq:q-t-def}, the vector $(\gamma_1^{(t)},\ldots,\gamma_t^{(t)})$ is a feasible solution to the following system of  ${n+1\choose 2}$ equations in the nonnegative variables $\gamma_1,\ldots,\gamma_t$
\[\sum_{i=1}^t\gamma_i\left(\begin{array}{c|c}
-\ell_i u_i& a_i^\top (\ell_i+u_i)/2\\
\hline
a_i (\ell_i+u_i)/2 & -a_ia_i^\top
\end{array}\right)=H-\gamma^{(t)}_0 I_{n+1}.\]
If we choose $\gamma\in\Rp^t$ to be a basic solution and let $I_t\subseteq[t]$ be its support, it follows that $|I_t|\le  {n+1\choose 2}$ and \[q_t(x)=\gamma^{(t)}_0 (1-\|x\|^2)+\sum_{i\in I_t} \gamma_i(u_i-{\separ_i^\T
  x})({\separ_i^\T x}-\ell_i).\]
Furthermore, for every $k\in I_t$, given $\ld^{(k)}\in\Rp^t$ satisfying \eqref{eq:lower-bound-certificate}, we can compute a basic solution $\tilde \ld^{(k)}\in\Rp^t$ to the system $\sum_{i=1}^t \tilde\ld^{(i)} a_i=\sum_{i=1}^t \ld^{(i)} a_i$ with $\sum_{i=1}^t \tilde\ld^{(i)} u_i\le \sum_{i=1}^t \ld^{(i)} u_i$, hence obtaining a certificate for $a_k^\top x\ge \ell_k$ of support size at most $n$. If we denote by $C_k$ the support of $\tilde \ld^{(k)}$, then we only need to maintain the vectors $a_i$,  $i\in I^t\cup \bigcup_{i\in I_t} C_i$, along with the certificates $\tilde\ld^{(k)}\in\R^{I_t}_+$.

By induction, we will guarantee that we obtain $\ld^{(t)}$  as in
\eqref{eq:lower-bound-certificate} of support size $O(n^3)$. This can
be reduced to an $O(n)$ size basic solution in time $O(n^5)$. This amounts to a substantial running time overhead:
whereas an update takes $O(n^2)$ time for the Basic Ellipsoid Method,
maintaining a small $I$ amounts to $O(n^4)$ per update, and maintaining
the certificates to $O(n^5)$.\footnote{It is possible to improve the
  complexity of maintaining a small $I$ to $O(n^3)$ by only
  approximately maintaining $q(x)$.
} The increased complexity bound is
however still much lower than the algorithm for finding a dual optimal
solution  in \cite[Lemma 6.5.15]{gls} that would amount to running
the Ellipsoid Method for a second time with
$O(n^2\log (n/\varepsilon))$ variables (for an appropriate
$\varepsilon$ in the context of rational polyhedra).

\subsection{Approximate conic certificates from  a geometric rescaling method}\label{sec:rescale}

We consider a particular variant of the geometric rescaling algorithms
introduced by Dadush et al.~\cite{DVZ,DVZ-submodular} and by Hoberg and Rothvo{\ss}~\cite{rothvoss}. A similar certification should be applicable for other variants as well. The algorithm discussed here
takes as input a cone $K\subseteq \R^n$ defined by a conic separation oracle, along with $\varepsilon>0$, and either returns a point $x\in K$ or determines a ``thin direction'', that is, a vector $v\in\R^n$, $\|v\|=1$, such that $K\subseteq \{x\in\R^n\st |v^\top x|\le \varepsilon\}$. Here we briefly describe the algorithm and the analysis, to show that in the second case we can compute an $\varepsilon$-approximate conic Farkas certificate for $K$.

The algorithm will maintain, at each iteration $t$, a matrix $R_t\in\Snpp$ such that $K\cap\bb{B}^n(1)\subseteq E(R_t,0)$ and such that $\det(R_t)$ increases at each iteration. Throughout we denote $Q_t=R^{-1}_t$. Initially $R_0\coloneqq I_n$.
The algorithm sets a threshold $\thr\coloneqq 1/(11n)$.
At iteration $t+1$, the algorithm applies von Neumann's algorithm (first described by Dantzig in \cite{Dantzig-92}) to compute a set of vectors $\{m_i\st i\in J_t\}$ returned by the conic separation oracle and $\zeta^{(t)}\in\Rp^{J_t}$, $\|\zeta^{(t)}\|_1=1$, such that the vector
\[y\coloneqq \sum_{i\in J_t} \frac{ \zeta^{(t)}_i m_i}{\|m_i\|_{Q_{t}}},\]
either satisfies that $Q_{t} y\in K$ (in which case we stop), or $\|y\|_{Q_{t}}\le \thr$.  In the latter case, we update
  \begin{equation}\label{eq:rescale}
R_{t+1}\coloneqq \frac{R_t+P_t}{1+\thr},\qquad \mbox{ where } P_t\coloneqq \sum_{i\in J_t} \frac{\zeta^{(t)}_i}{\|m_i\|_{Q_{t}}^2}
  m_im_i^\T .
\end{equation}
We have $\det(R_{t+1})\ge (16/9)\det(R_{t})$ \cite[Lemma 11]{DVZ}.

Next we observe that the quadratic form $q_t(x)\coloneqq 1-x^\top R_t x$ can be maintained as a certified quadratic form for $K\cap \bb{B}^n(1)$. Inductively, assume that
\begin{equation}\label{eq:q-t-geometric}q_t(x)=\gamma_0 (1-\|x\|^2)+\sum_{i\in I_t} \gamma_i m_i^\top x (u_i-m_i^\top x)+\nu-v^\top x\, ,\end{equation}
where the oracle inequalities are $m_i^\top x\ge 0$, $i\in I_t$, and we maintain dual certificates of validity over  $K\cap \bb{B}^n(1)$ for the inequalities $m_i^\top x\le u_i$, $i\in I_t$, and for $v^\top x\le \nu$.

Note that, for all $i\in J_t$, by Cauchy-Schwartz the inequality $m_i^\top x\le \|m_i\|_{Q_t}$ is valid for $K\cap \bb{B}^n(1)$, because $\|x\|_{R_t}\le 1$  for all  $x\in K\cap \bb{B}^n(1)\subseteq E(R_t,0)$. Since $E(R_t,0)=\{x\st q_t(x)\ge 0\}$ and $q_t$ is a certified strictly concave quadratic form, by Lemma~\ref{lemma:gamma-optimality} we can compute dual certificates of validity for $m_i^\top x\le \|m_i\|_Q$ over $K\cap \bb{B}^n(1)$. Similarly, we have that $y^\top x\le \|y\|_{Q_t}\le \thr$ is valid for $K\cap \bb{B}^n(1)$, and we can compute a dual certificate of validity for it. Hence the quadratic form
\[s_t(x)=\sum_{i\in J_t} \frac{\zeta^{(t)}}{\|m_i\|^2_{Q_t}} m_i^\top x(\|m_i\|_{Q_t}-m_i^\top x) +\thr-y^\top x\, ,\]
is certified for $K\cap \bb{B}^n(1)$. It follows from the definition of $y$ and $P_t$ that $s_t(x)=\thr-x^\top P_t x$, hence
$q_{t+1}(x)=(1+\thr)^{-1}(q_t(x)+s_t(x))$. Since $q_{t}$ and $s_t$ are both certified, also $q_{t+1}$ is certified, as can be seen by observing that inequality  $(v+y)^\top x\le \nu+\thr$, corresponding to the linear-term in $q_{t+1}(x)$, can be certified as follows. If $\vartheta$ and $\vartheta'$ certify the validity of $v^\top x\le \nu$ and $y^\top x\le \thr$ for $K\cap \bb{B}^n(1)$, then $\vartheta+\vartheta'$ certifies the validity of $(v+y)^\top x\le \nu+\thr$, since $\|\sum_{i\in I_t}(\vartheta_i+\vartheta'_i)m_i +v+y\|\le \|\sum_{i\in I_t}\vartheta_i m_i +v\|+\|\sum_{i\in I_t}\vartheta'_im_i +y\|\le \nu+\thr$.

Note that, exactly as explained in Section~\ref{sec:quadratic}, we can apply Carath\'eodory to maintain the expression $\sum_{i\in I_t} \gamma_i m_i^\top x (u_i-m_i^\top x)$ in \eqref{eq:q-t-geometric} so that $|I_t|\in O(n^2)$.

\begin{theorem}
Let $K\subseteq\R^n$ be a  convex cone given by a conic
separation oracle, and $\varepsilon>0$. Then, in $O(n^3\log(n/\varepsilon))$ calls to the separation oracle, and  $O(n^5\log(n/\varepsilon))$ arithmetic operations, the geometric rescaling algorithm either returns a point $x\in K$, or  an $\varepsilon$-approximate conic Farkas certificate.
\end{theorem}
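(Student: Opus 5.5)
The plan is to combine three ingredients: the determinant growth of the rescaling iterations, Lemma~\ref{lem:c-Q-bound} to extract a thin direction among the oracle vectors, and Theorem~\ref{thm:approximate-farkas} to turn that thin direction into an approximate conic Farkas certificate. By Remark~\ref{rmk:r=1}, $r=1$. The algorithm runs the rescaling iterations described above, with $R_0=I_n$ and threshold $\thr=1/(11n)$. Each iteration invokes von Neumann's algorithm to obtain $\zeta^{(t)}$ and $y$.

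\textbf{Step 1 (one iteration).} The standard analysis of von Neumann's algorithm in the $Q_t$-geometry gives the following. Within $O(1/\thr^2)=O(n^2)$ oracle calls it either produces $Q_ty\in K$, in which case we stop, or it reaches $\|y\|_{Q_t}\le\thr$. Each step is a rank-one-style update costing $O(n^2)$ arithmetic operations. By \cite[Lemma 11]{DVZ}, every rescaling then satisfies $\det(R_{t+1})\ge(16/9)\det(R_t)$.

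\textbf{Step 2 (invariant).} By the inductive discussion preceding the theorem, each $q_t(x)=1-x^\top R_tx$ is a certified strictly concave quadratic form for $K\cap\B^n(1)$ of the shape~\eqref{eq:q-t-geometric}. Moreover, $R_t=\gamma_0I_n+\sum_i\gamma_im_im_i^\top$ with $\gamma_0=(1+\thr)^{-t}\le1$. The certificates of validity used in this invariant come from Lemma~\ref{lemma:gamma-optimality}. We keep $|I_t|=O(n^2)$ by the Carath\'eodory compaction of Section~\ref{sec:quadratic}, and reduce all certificates to basic solutions. These maintenance steps cost $\mathrm{poly}(n)$ per rescaling; I expect a careful accounting to give $O(n^3)$ amortized per oracle call, matching the claimed $O(n^5\log(n/\varepsilon))$ total.

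\textbf{Step 3 (thin direction).} After $T=O(n\log(n/\varepsilon))$ rescalings we have $\det(R_T)^{1/n}\ge(16/9)^{T/n}>1+36n/\varepsilon^2$. Lemma~\ref{lem:c-Q-bound} then yields an oracle vector $m_k$ with $\|m_k\|_{Q_T}\le\|m_k\|\,\varepsilon/6$. Set $v=m_k/\|m_k\|$. Lemma~\ref{lemma:ellipsoid width} gives $\width_E(v)=2\|v\|_{Q_T}\le\varepsilon/3$ for $E=\{x\st q_T(x)\ge0\}=E(R_T,0)$. Applying Theorem~\ref{thm:approximate-farkas} with $q_T$ produces, in $O(n^2m+n^\omega)$ time, an $\varepsilon$-approximate Farkas certificate for $K\cap\B^n(1)$ that uses only oracle inequalities $m_i^\top x\ge0$. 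Since all right-hand sides are $0$, this is exactly an $\varepsilon$-approximate conic Farkas certificate. Theorem~\ref{thm:approximate-farkas} assumes $\varepsilon<4$; larger $\varepsilon$ can be handled trivially by replacing $\varepsilon$ with $\min\{\varepsilon,1\}$. The oracle count is $T\cdot O(n^2)=O(n^3\log(n/\varepsilon))$.

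\textbf{Main obstacle.} The key step is Step 2: checking that the combined linear term of $q_{t+1}$ stays certified through the division by $1+\thr$, and that the certificate sizes do not blow up. The first thing I would try is to use that certificates for $(v+y)^\top x\le\nu+\thr$ add, and that scaling the form by $(1+\thr)^{-1}$ simply scales every $\gamma_i$ and the certificate $\vartheta$.
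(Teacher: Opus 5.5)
Your correctness argument and oracle count are right and follow the paper: $\det(R_T)\ge(16/9)^T$, Lemma~\ref{lem:c-Q-bound} gives an oracle vector $m_k$ with $\|m_k\|_{Q_T}\le\varepsilon\|m_k\|/6$, and the certified form $q_T$ turns this into a certificate.

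Going through Theorem~\ref{thm:approximate-farkas} is valid but more than you need. The paper applies Lemma~\ref{lemma:gamma-optimality} once, to certify $(m_k/\|m_k\|)^\top x\le\varepsilon$, i.e.\ $\|\sum_i\lambda_i m_i+m_k/\|m_k\|\|\le\varepsilon$. It then adds the oracle inequality $m_k^\top x\ge 0$ itself with multiplier $1/\|m_k\|$. Its $\ell_1$-mass $\|m_k\|_1/\|m_k\|\ge 1$ gives the normalization directly. In the conic case one side of the slab is already an oracle inequality, so the two-sided argument is unnecessary.

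The gap is in the arithmetic bound.

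\begin{itemize}
\item You say you expect $O(n^3)$ amortized operations per oracle call, ``matching'' $O(n^5\log(n/\varepsilon))$. With $O(n^3\log(n/\varepsilon))$ oracle calls this gives $O(n^6\log(n/\varepsilon))$.
\item The statement needs $O(n^2)$ per oracle call, i.e.\ $O(n^4)$ per rescaling over $O(n\log(n/\varepsilon))$ rescalings. That is how the paper counts it: the von Neumann calls plus forming $P_t$ and $R_{t+1}$ in $O(n^2|J_t|)=O(n^4)$. Your own von Neumann estimate of $O(n^2)$ per step fits this budget.
\item The Step~2 bookkeeping you add on top is not shown to fit, and done naively it does not. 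You certify $m_i^\top x\le\|m_i\|_{Q_t}$ via Lemma~\ref{lemma:gamma-optimality} for each of the $O(n^2)$ vectors in $J_t$. Each call forms coefficients for all $i\in I_t$ and combines the stored certificates $\mu^{(i)}$. That costs about $O(n|I_t|)=O(n^3)$ per vector even with $Q_t$ reused, hence $O(n^5)$ per rescaling.
\item That count is before the Carath\'eodory compaction of an $O(n^2)$-variable system and the basic-solution reductions you also perform at every rescaling.
\end{itemize}

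So you either need an accounting showing this maintenance costs $O(n^4)$ per rescaling, or a way to organize it so that it does. As written, the $O(n^5\log(n/\varepsilon))$ claim is not established.

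The point you flag as the main obstacle is not the real one. Keeping the linear term certified through the division by $1+\thr$ is already handled in the discussion before the theorem, by adding the certificates $\vartheta+\vartheta'$ and rescaling. The cost accounting is the real obstacle.
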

\begin{proof}
Since $\det(R_{t+1})\ge (16/9)\det(R_{t})$, after $T\in O(n\log(n/\varepsilon))$ iterations, there exists $k\in I_T$ such that $\|m_k\|_{Q_T}\le \varepsilon \|m_k\|_2$. Since the quadratic form $q_T(x)$ is certified, we can compute a certificate of validity $\lambda\in\Rp^{I_T}$ for the inequality $(m_k/\|m_k\|_2)^\top x\le \varepsilon$, that is, $\|\sum_{i\in I_T}\lambda_i m_i+ m_k/\|m_k\|_2\|\le \varepsilon$. This gives a $\varepsilon$-approximate conic Farkas certificate.
For the running time, each von Neumann call requires $\ceil{1/\thr^2}=O(n^2)$ calls to the separation oracle and $O(n^3)$ arithmetic operations \cite[Lemma 8]{DVZ}, so in particular $|J_t|\in O(n^2)$ at each iteration $t$. Hence the total number of oracle calls is $O(n^3\log(n/\varepsilon))$.  Computing the new matrix $R_t$ for $t=1,\ldots, T$ requires time $O(n^2 |J_t|)\in O(n^4)$, which gives $O(n^5\log(n/\varepsilon))$ arithmetic operations overall. \end{proof}

\subsection{Approximate Farkas certificates from volumetric cutting plane methods}\label{sec:Vaidya}

We now show how to derive Farkas certificates from volumetric cutting plane methods. This family of methods was introduced by Vaidya~\cite{vaidya96};
 similar cutting plane methods with improved arithmetic complexity were given by Lee, Sidford, and Wong~\cite{Lee2015} and by Jiang, Lee, Song, and Wong~\cite{Jiang2020}. For simplicity, we present the implementation for Vaidya's original algorithm, but the same framework is applicable for the subsequent variants as well, as we will discuss briefly at the end of this subsection. As previously noted, \cite{Lee2015} also includes an explicit statement on dual certificates.

 In contrast to the ellipsoid method, we do not maintain the certified concave quadratic form during the algorithm, but construct it at termination using Lemma~\ref{lemma:sandwich}.

For a full-dimensional polytope $P=\{x\in\R^n\st Ax\le u\}$, $A\in\R^{m\times n}$, $u\in\R^m$, the log-barrier function is defined over the interior of $P$ by
$f(x)=-\sum_{i=1}^m \log(u_i-a_i^\top x)$,
 and its Hessian can be readily computed to be
\[\nabla^2 f(x)=\sum_{i=1}^m \frac{a_i a_i^\top}{(u_i-a_i^\top x)^2}.\]
Let $F$ be the function defined over the interior of $P$ by
$F(x)=\log(\det(\nabla^2 f(x)))$.

The function $F$ is strictly convex~\cite{vaidya96}, and the unique minimizer $\tilde\omega$ of $F$ is called the {\em volumetric center} of $P$.
The gradient of $F$ is
\[\nabla F(x)=\sum_{i=1}^m \sigma_i(x)\frac{a_i}{u_i-a_i^\top x},\quad \mbox{where }\quad \sigma_i(x)=\frac{a_i^\top (\nabla^2 f(x))^{-1}a_i}{(u_i-a_i^\top x)^2}, \qquad i\in [m].\]
The $\sigma_i(x)$ values are the \emph{leverage scores} of the current system $Ax\le u$, and they can be thought of as a measure of the relative importance of the constraints in the current system. It is easy to show that $\sum_{i=1}^m \sigma_i(x)=n$ and $\sigma_i(x)\le 1$ for $i\in [m]$. Finally, let us define
\[Q(x)=\sum_{i=1}^m \sigma_i(x)\frac{a_i a_i^\top}{(u_i-a_i^\top x)^2}.\]

Since $\sigma_i(x)\le 1$, $i\in[m]$,  $\nabla^2 f(x)\succeq Q(x)$. Let $\mu(x)$ be the largest number $\tilde \mu$ such that $Q(x)\succeq\tilde \mu \nabla^2 f(x)$, and note that $\min_{i\in[m]}\sigma_i(x)\le \mu(x)<1$.

Let $K$ be a convex set defined by a strong separation oracle, let $\varepsilon>0$, and assume we want to find a point in $K$ or an $\varepsilon$-approximate Farkas certificate for $K\cap \bb{B}^n(r)$; as in  Remark~\ref{rmk:r=1}, we can assume $r=1$. Let us set the values 
\[\delta\le 10^{-4}\, ,\quad\mbox{and}\quad c\le 10^{-3}\delta\, .
\]
 At every iteration, Vaidya's algorithm maintains  a set of inequalities $Ax\le u$ defining a polytope, where $a_i^\top x\le u_i$ is either an inequality returned by the oracle, or one of the bound inequalities $x_j\le 1$ or $-x_j\le 1$. The algorithm also maintains a point $z\in\R^n$ satisfying $A z<u$ such that
\begin{equation}\label{eq:Vaidya-proximity}
F(z)- F(\tilde \omega)\le c^4 \mu(\tilde \omega).
\end{equation}
The system is initialized to $-1\le x_j\le 1$, $j\in [n]$, and $z=\tilde\omega=0$. At any given iteration, if $\sigma_i(z)<c$ for some $i\in[m]$, the inequality $a_i^\top x\le u_i$ is removed from the system. If $\min_{i\in [m]}\sigma_i(z)\ge c$, the separation oracle is queried for $z$; if $z\in K$ then the algorithm terminates, else the oracle returns a vector $a_i\in\R^n$ such that $a_i^\top x<a_i^\top z$ for all $x\in K$, and the algorithm adds the inequality $a_i^\top x\le u_i$ to the system, for some appropriately chosen value $u_i>a_i^\top z$ (note that the right-hand-side of the new inequality is chosen to maintain the current point $z$ in the interior of the current polytope). In both cases (whether an inequality has been added or removed), the algorithm performs a fixed number of Newton steps to compute a new point $z$ satisfying \eqref{eq:Vaidya-proximity} with respect to the new system. Each iteration requires  $O(n^\omega)$ arithmetic operations; the algorithms \cite{Jiang2020,Lee2015} improve on the arithmetic complexity.

Let us denote by $\rho^{(k)}$ the value of $F(\tilde \omega)$ at the $k$th iteration. Note that at the beginning $\nabla^2 f(z)=I_n$, hence $\rho^{(0)}=\log(\det(I_n))=0$. Vaidya~\cite{vaidya96} shows
\begin{equation}\label{eq:Vaidya-progress}
\rho^{(k)}\ge \frac{ck} 2.
\end{equation}

\subsubsection{Certifying volumetric cutting plane methods}
Our goal is now to show the following certified version of Vaidya's algorithm.
\begin{theorem}\label{thm:Vaidya}
Let $K$ be a  convex set given by a strong
separation oracle, $r>0$, and $\varepsilon\in (0,2 r)$. Then, in $O(n \log(nr/\varepsilon))$ calls to the strong separation oracle and $O(n^{\omega+1}\log(nr/\varepsilon))$ arithmetic operations, Vaidya's algorithm returns either a point $x\in K$ or  an $\varepsilon$-approximate Farkas certificate for $K\cap\bb{B}^n(r)$ comprising only oracle inequalities. The support of the certificate has cardinality  $O(n)$. 
\end{theorem}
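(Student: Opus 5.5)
By Remark~\ref{rmk:r=1} we may assume $r=1$. The plan is to run Vaidya's algorithm unchanged and, if it has not found a point of $K$ after $T=O(n\log(n/\varepsilon))$ iterations, to build a certified strictly concave quadratic form for $K\cap\bb{B}^n(1)$ from the final system $Ax\le u$ and point $z$. We then invoke Theorem~\ref{thm:approximate-farkas}. This theorem is stated exactly so that some of the inequalities may be box constraints $\pm x_j\le 1$, and it returns a certificate purely in terms of oracle inequalities.

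\textbf{Step 1 (volumetric progress $\Rightarrow$ thin direction).} By \eqref{eq:Vaidya-progress}, after $T$ iterations $\log\det\nabla^2 f(\tilde\omega)\ge cT/2$, and by \eqref{eq:Vaidya-proximity} the same holds up to a constant at $z$. Write $H=\nabla^2 f(z)=\sum_i a_ia_i^\top/s_i^2$ with $s_i=u_i-a_i^\top z$. This has the form required by Lemma~\ref{lem:c-Q-bound} with $\gamma_0=0$, since the box constraints contribute the identity part. That lemma then gives an index $k$ with $\|a_k\|_{H^{-1}}\le\|a_k\|/\sqrt{\det(H)^{1/n}-1}$. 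Because the number of constraints stays $O(n)$ (each has $\sigma_i\ge c$ and $\sum\sigma_i=n$), the determinant is $e^{\Omega(T)}$. Choosing $T$ suitably therefore makes $\|a_k\|_{H^{-1}}\le \varepsilon\|a_k\|/C$ for a large constant $C$.

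\textbf{Step 2 (sandwich lemma, the main obstacle).} This is the referenced Lemma~\ref{lemma:sandwich}, which I would prove here. The goal is an explicit certified form such as
\[
q(x)=\sum_i \frac{\sigma_i(z)}{s_i^2}\,(u_i-a_i^\top x)(a_i^\top x-\ell_i),\qquad \ell_i=a_i^\top z-\kappa s_i ,
\]
for a constant $\kappa=O(1)$, so that the quadratic part is comparable to $Q(z)\succeq\mu(z)H$.

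Validity of $a_i^\top x\ge\ell_i$ over the current polytope follows from the Dikin-ellipsoid containment that Vaidya proves via near-centrality: the polytope lies in $\{x:\|x-z\|_{Q(z)}\le O(1)\}$. We certify it with Lemma~\ref{farkassphere}. The paper's Lemma~\ref{lemma:gamma-optimality} needs an already certified form, so the circularity has to be broken. The first attempt is to derive the first-order optimality of $z$ for $F$ approximately: $\nabla F(z)=\sum_i\sigma_i a_i/s_i\approx 0$. One then writes $-a_k$ as a nonnegative combination of the $a_i$ with controlled right-hand side, using $\sum_i \sigma_i a_ia_i^\top/s_i^2$ and the gradient identity. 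This is exactly the type of computation in Lemma~\ref{lemma:gamma-infeasibility}.

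The linear term $\da^\top x-\beta$ must absorb the error from $\nabla F(z)\neq0$, and it is certified by the same gradient combination. The delicate point is quantitative. The proximity bound \eqref{eq:Vaidya-proximity} and $\mu\ge c$ must make these errors small enough that $q$ stays strictly concave, that $\max q>0$, and that the ellipsoid $\{q\ge0\}$ is within a constant factor of $E(H,z)$.

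\textbf{Step 3 (conclusion).} The width of $\{q\ge0\}$ along $v=a_k/\|a_k\|$ is $O(\|v\|_{H^{-1}})\le\varepsilon/3$ for large $C$, by Lemma~\ref{lemma:ellipsoid width}. Theorem~\ref{thm:approximate-farkas} then outputs an $\varepsilon$-approximate Farkas certificate in oracle inequalities only, in $O(n^2m+n^\omega)=O(n^3)$ time. Applying Carath\'eodory (Corollary~\ref{cor:carath}) keeps the support size at $O(n)$; since $m=O(n)$ throughout, this is automatic anyway.

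The complexity follows from $T=O(n\log(n/\varepsilon))$ iterations, each making one oracle call and costing $O(n^\omega)$ arithmetic operations. This gives $O(n^{\omega+1}\log(n/\varepsilon))$ in total, plus a one-time postprocessing cost.
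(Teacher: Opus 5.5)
Your overall architecture matches the paper's. You run Vaidya's algorithm, build a certified quadratic form after the fact from the final $z$ with weights $\gamma_i=\sigma_i(z)$, get a thin direction from Lemma~\ref{lem:c-Q-bound}, and finish with Theorem~\ref{thm:approximate-farkas}. The gap is Step~2, which is the whole substance of the theorem (Lemma~\ref{lemma:sandwich}): it is not carried out, and neither mechanism you suggest works as described.

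\begin{itemize}
\item \emph{Containment plus Lemma~\ref{lem:farkas-sphere}.} This only shows that nonnegative multipliers certifying $a_k^\top x\ge\ell_k$ \emph{exist}. Computing them is itself an LP over the current system, and nothing in your arithmetic budget does that.
\item \emph{The containment itself is false.} For the standard simplex, the volumetric center is the barycenter, and the vertices lie at $Q(z)$-distance exactly $n$. So $\kappa$ must be at least $n$, not $O(1)$. The paper gets $\kappa=3\varphi^2\|\gamma\|_1\le 3nc^{-1/2}$ and an ellipsoid scaled by $3\varphi\|\gamma\|_1\le 3nc^{-1/4}$. This only adds a $\log n$ to the iteration count, so it is harmless, but it shows the quantitative part has not been worked out.
\item \emph{Approximate first-order optimality.} As written, this route produces $-\bar a_k+t\,w$ rather than $-\bar a_k$. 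A linear term $d^\top x-\beta$ in $q$ cannot absorb that error, because Definition~\ref{def:q-def} needs a separate certificate for each lower bound $a_i^\top x\ge\ell_i$.
\end{itemize}

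The missing idea is that there is no error to absorb. Set $\bar a_i=a_i/s_i$, $R=\sum_i\gamma_i\bar a_i\bar a_i^\top$ and $w=\sum_i\gamma_i\bar a_i=\nabla F(z)$. Then the following identity is exact: $\sum_i\gamma_i(1-\bar a_i^\top R^{-1}w)\bar a_i=w-RR^{-1}w=0$. This is a null combination whose coefficients are at least $\gamma_i/2>0$ once $\rho\varphi\le 1/2$, where $\rho=\|w\|_{R^{-1}}$ and $\varphi=\max_i\|\bar a_i\|_{R^{-1}}$.

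Add $2\varphi^2$ times this null combination to $-\bar a_k=-\sum_i\gamma_i(\bar a_i^\top R^{-1}\bar a_k)\bar a_i$. This gives multipliers $\bar\mu^{(k)}\ge 0$ with:
\begin{itemize}
\item $\sum_i\bar\mu^{(k)}_i\bar a_i=-\bar a_k$ exactly;
\item $\sum_i\bar\mu^{(k)}_i\le 2\varphi^2\|\gamma\|_1+\varphi\rho\le 3\varphi^2\|\gamma\|_1$, using $\rho\le\varphi\|\gamma\|_1$.
\end{itemize}
These certify $\ell_k=a_k^\top z-3\varphi^2\|\gamma\|_1 s_k$ in $O(n^\omega)$ time, and $q$ then needs no linear term at all.

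The smallness of $\nabla F(z)$ enters only through $\rho\varphi\le1/2$. That condition comes from Vaidya's Lemma~10 ($\rho^2\le c^4\mu(\tilde\omega)/0.14$) and Claim~4 ($\varphi\le c^{-1/4}$). Claim~4 needs $\mu(z)\ge c$, which is why the paper keeps dropping constraints until $\min_i\sigma_i(z)\ge c$ before building $q$. Your Step~1 tacitly needs the same thing when passing from $H$ to $Q(z)$.
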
  

We show a general technique to extract a certified quadratic form, given a strictly feasible point. We will apply this lemma with $z$ satisfying \eqref{eq:Vaidya-proximity}, and with $\gamma_i=\sigma_i(z)$; these will satisfy the technical requirements.

\begin{lemma}
\label{lemma:sandwich}
Let $P=\{x\in\R^n\st Ax\le u\}$ be a polytope, where $A\in\R^{m\times n}$ and $u\in\R^m$. Let $z$ be a point in the interior of $P$, and $\gamma\in\Rpp^m$.
Define
\begin{equation}
\label{eq:weighted hessian}
R\coloneqq \sum_{i=1}^m \frac{\gamma_i a_ia_i^\top}{(u_i-a_i^\top z)^2},\qquad w=\sum_{i=1}^m \frac{\gamma_i a_i}{u_i-a_i^\top z}
\end{equation}

Let $\rho \coloneqq \|w\|_{R^{-1}}$ and $\varphi\coloneqq \max_{i\in[m]} \|a_i\|_{ R^{-1}}/(u_i-a_i^\top z)$. Assume that $\varphi\rho\le 1/2$. Let the vector $\ell\in\R^m$ and the quadratic form $q$  be defined by
\begin{equation}\label{eq:certified-quadratic-form}
\ell_i\coloneqq a_i^\top z-3\varphi^2 \|\gamma\|_1(u_i-a_i^\top z),\qquad q(x)=\sum_{i=1}^m \gamma_i \frac{(u_i-a_i^\top x)(a_i^\top x-\ell_i)}{(u_i-a_i^\top z)^2}.
\end{equation}

The following hold.
\begin{enumerate}[$(i)$]
\item For any $k\in [m]$, in $O(n^\omega)$ time we can compute coefficients $\ld^{(k)}\in\Rp^m$, $k\in[n]$,  such that $A^\top \ld^{(k)}=-a_k$, $u^\top\ld^{(k)}\le -\ell_k$, i.e. a dual certificate of validity of
$a_k^\top x\ge \ell_k$ for $P$.
\item $q$ is strictly concave, $\max_{x\in\R^n}q(x)\le (3\varphi \|\gamma\|_1)^2$, proving that $P\subseteq 3\varphi \|\gamma\|_1 E(R,p)$ for $p=\arg\max q(x)$.
\end{enumerate}
\begin{proof}
Define $\bar a_i =a_i/(u_i-a_i^\top z)$ for $i\in [m]$. With this notation, $R=\sum_{i=1}^m\gamma_i\bar a_i\bar a_i^\top$ and $w=\sum_{i=1}^m \gamma_i \bar a_i$.
 Observe that $P=\{x\in \R^n\st \bar a^\top _i(x-z)\le 1,\; i\in [m]\}$.
For part {\em (i)}, for any $k\in [m]$, let us define $\bar\ld^{(k)}$ by
\begin{equation}\label{eq:weighted-lower-bounds}\bar \ld^{(k)}_i \coloneqq \gamma_i\left(2\varphi^2(1-\bar a_i^\top R^{-1}w)-\bar a_i^\top R^{-1}\bar a_k\right),\qquad   i=1,\ldots,m.\end{equation}

Observe that $\bar\ld^{(k)}\ge 0$ because $|\bar a_i R^{-1}w|\le\|\bar a_i\|_{R^{-1}}\|w\|_{R^{-1}}\le\rho\varphi\le  1/2$ and $|\bar a_i R^{-1}\bar a_k|\le \|\bar a_i\|_{R^{-1}} \|\bar a_k\|_{R^{-1}}\le \varphi^2$, by definition of $\varphi$ and $\rho$. We also show that
\begin{equation}\label{eq:rho-phi}
\rho \le \varphi\|\gamma\|_1\, .
\end{equation}
This holds because 
\[
  \rho^2=\sum_{i=1}^m \gamma_i \bar a_i^\top R^{-1}w\le \sum_{i=1}^m \gamma_i \varphi \rho=\|\gamma\|_1 \varphi\rho\, ,
\]
by the definition of $w$. 
Next, observe that
\begin{equation}\label{eq:mu-a-k-1}\sum_{i=1}^m \bar\ld^{(k)}_i \bar a_i=2\varphi^2\left(\sum_{i=1}^m \gamma_i \bar a_i-\sum_{i=1}^m \gamma_i \bar a_i\bar a_i^\top R^{-1} w\right)-\sum_{i=1}^m \gamma_i \bar a_i\bar a_i^\top R^{-1}\bar a_k=-\bar a_k\end{equation}
and
\begin{equation}\label{eq:mu-a-k-2}
\begin{aligned}
\sum_{i=1}^m \bar\ld^{(k)}_i &=2\varphi^2(\|\gamma\|_1-\sum_{i=1}^m\gamma_i \bar a_i R^{-1}w)-\left(\sum_{i=1}^m\gamma_i \bar a_i\right) R^{-1}\bar a_k\\
&=2\varphi^2(\|\gamma\|_1-\rho^2)-w^\top R^{-1} \bar a_k\le 2\varphi^2\|\gamma\|_1+\varphi\rho\\
&\le 3\varphi^2\|\gamma\|_1\, ,
\end{aligned}
\end{equation}
where the last inequality follows from \eqref{eq:rho-phi}.

We define the vector $\ld^{(k)}\in\Rp^m$  by
\[\ld^{(k)}_i\coloneqq \frac{u_k-a_k^\top z}{u_i-a_i^\top z}\bar\ld^{(k)}_i,\] and show that it satisfies $A^\top \ld^{(k)}=-a_k$, $u^\top\ld^{(k)}\le -\ell_k$. Indeed
\[A^\top \mu^{(k)}=(u_k-a_k^\top z)\sum_{i=1}^m \bar\ld^{(k)}_i \bar a_i=-(u_k-a_k^\top z)\bar a_k=-a_k\, ,\]
using \eqref{eq:mu-a-k-1}, 
and
\[
\begin{aligned}u^\top\ld^{(k)}&=\sum_{i=1}^m (u_i-a_i^\top z)\ld^{(k)}_i +\sum_{i=1}^m a_i^\top z\ld^{(k)}_i=(u_k-a_k^\top z)\sum_{i=1}^m \bar\ld^{(k)}_i -a_k^\top z\\
&\le (u_k-a_k^\top z)3\varphi^2\|\gamma\|_1-a_k^\top z=-\ell_k\, ,
\end{aligned}
\]
using \eqref{eq:mu-a-k-2}.

\medskip

For part {\em (ii)}, note that $q$ is strictly concave because $P$ is a polytope, hence $\rk(A)=n$. Consider any $x\in \R^n$ and let $y=x-z$. We have
\begin{eqnarray*}
q(x)&=&q(y+z)=\sum_{i=1}^m \gamma_i (1-\bar a_i^\top y)(\bar a_i^\top y+3\varphi^2\|\gamma\|_1)\\
&=&
-y^\top \left(\sum_{i=1}^m \gamma_i \bar a_i \bar a_i^\top\right)y  +\left(1-3\varphi^2\|\gamma\|_1\right)\sum_{i=1}^m \gamma_i \bar a_i^\top y + 3\varphi^2\|\gamma\|_1\sum_{i=1}^m \gamma_i \\
&=&-y^\top R y+\left(1-3\varphi^2\|\gamma\|_1\right)w^\top y+3\varphi^2\|\gamma\|_1^2\, .
\end{eqnarray*}

Setting the gradient of the last expression to zero, i.e. $-2R y+\left(1-3\varphi^2\|\gamma\|_1\right)w=0$, we obtain that the expression is maximised by $\bar y=\frac 12(1-3\varphi^2\|\gamma\|_1)R^{-1}w$. It follows that the point $p=z+\bar y$ is the unique maximizer of $q(x)$, and
\begin{eqnarray*}
\max_{x\in\R^n}q(x)&=&  -\bar y^\top R \bar y+\left(1-3\varphi^2\|\gamma\|_1\right)w^\top \bar y+3\varphi^2\|\gamma\|_1^2\\
&=& 3\varphi^2\|\gamma\|_1^2+\frac 14 (1-3\varphi^2\|\gamma\|_1)^2\rho^2 \le (3\varphi \|\gamma\|_1)^2\, ,
\end{eqnarray*}
where the last inequality follows from  $|1-3\varphi^2\|\gamma\|_1|\rho\le \max\{\rho, 3\varphi^2\|\gamma\|_1\rho\}\le 3\varphi\|\gamma\|_1$, since $\rho\le \varphi\|\gamma\|_1$ by \eqref{eq:rho-phi} and 
$3\varphi^2\|\gamma\|_1\rho\le\frac 32 \varphi\|\gamma\|_1$ since $\varphi\rho\le 1/2$ by assumption.
\end{proof}
\end{lemma}

\begin{proof}[Proof of Theorem~\ref{thm:Vaidya}]
As previously noted, we may assume $r=1$. Observe that, since $\sum_{i=1}^m \sigma_i(z)=n$, at every iteration $m\le n/c$ holds, since otherwise $\min_{i\in[m]}\sigma_i(z)<c$, and an inequality is removed from the system. This shows that the number of inequalities in the system is $O(n)$. By \eqref{eq:Vaidya-progress}, after  $3(n/c)\log(1/c)+4n\log(20n/\varepsilon)= O(n\log(n/\varepsilon))$ iterations, we have \[\det(\nabla^2 f(z))=2^{F(z)}\ge 2^{F(\tilde{\omega})}\ge c^{-1.5 n}(20n/\varepsilon)^{2n}.\] Furthermore, continuing Vaidya's algorithm by removing constraints as long as  $\min_{i\in [m]}\sigma_i(z)<c$, which requires at most $m\le n/c$ further iterations, we can also guarantee that $\sigma_i(z)\ge c$ for all $i\in [m]$. In particular, it follows that $\mu(z)\ge c$ and  $\det (Q(z))\ge c^n \det(\nabla^2 f(z))\ge c^{-n/2}(20n/\varepsilon)^{2n}$.

Vaidya (Lemma 10 in~\cite{vaidya96}) shows that, if $F(z)-F(\tilde \omega)\le \delta\sqrt{\mu(\tilde\omega)}$, then $\nabla F(z)^\top Q(z)^{-1}\nabla F(z)\le (F(z)-F(\tilde\omega))/0.14$. Since we maintain a point $z$ satisfying~\eqref{eq:Vaidya-proximity}, and since $c^4\mu(\tilde \omega)< \delta\sqrt{\mu(\tilde\omega)}$ (since $c<\delta$ and $\mu(\tilde\omega)<1$), it follows that 
\begin{equation}\label{eq:norm-w}
\nabla F(z)^\top Q(z)^{-1}\nabla F(z)\le c^4\mu(\tilde\omega)/0.14.
\end{equation}

Vaidya (Claim 4 in~\cite{vaidya96}) also shows that \begin{equation}\label{eq:norm-a_i}
a_i^\top Q(z)^{-1}a_i/(u_i-a_i^\top z)^2\le 1/\sqrt{\mu(z)}\le c^{-1/2},\quad i\in[m].
\end{equation}

Let us now define $\gamma\in\Rpp^m$ by $\gamma_i\coloneqq \sigma_i(x)$, $i\in[m]$, $R\coloneqq Q(z)$, $w=\nabla F(z)$. Observe that $\gamma$, $z$, $R$, and $w$ satisfy \eqref{eq:weighted hessian}. By \eqref{eq:norm-w} and \eqref{eq:norm-a_i}, we have $\rho\coloneqq \|w\|_{R^{-1}}\le \sqrt{c^4\mu(\tilde\omega)/0.14}$ and $\varphi\coloneqq \max_{i\in[m]} \|a_i\|_{ R^{-1}}/(u_i-a_i^\top z)\le c^{-1/4}$. Note that $\rho\varphi\le 1/2$, hence $\gamma$ satisfies the assumptions of Lemma~\ref{lemma:sandwich}, which shows that the quadratic form $q(x)$ defined in \eqref{eq:certified-quadratic-form} can be certified for $Ax\le u$ in time  $O(n^\omega)$, and that the ellipsoid $E=\{x\in\R^n \st q(x)\ge 0\}$ satisfies $E=\alpha E(R,p)$, $p$ being the center of $E$ and $\alpha\le 3 \varphi\|\gamma\|_1\le 3n c^{-1/4}$ (because $\|\gamma\|_1=\sum_{i=1}^m \sigma_i(x)=n$). It now follows from Lemmas~\ref{lem:c-Q-bound} that there exists $k\in [m]$ such that $\|a_k\|_{R^{-1}}/\|a_k\|_2\le (\det(R)^{1/n}-1)^{-1/2}$, and from Lemma~\ref{lemma:ellipsoid width} we have that, for $v=a_k/\|a_k\|_2$,
\[\width_E(v)=2\alpha\|v\|_{R^{-1}}\le 6n c^{-1/4} (\det(R)^{1/n}-1)^{-1/2}\le  6n c^{-1/4} (c^{-1/2}(20n/\varepsilon)^{2}-1)^{-1/2}\le \varepsilon/3.\]
It follows from Theorem~\ref{thm:approximate-farkas} that we can compute an $\varepsilon$-approximate Farkas certificate for $K\cap\bb{B}^n(1)$ comprising only oracle inequalities.
\end{proof}

\paragraph{Improving the number of arithmetic operations}
The bottleneck in terms of the number of arithmetic operations in Vaidya's algorithm is the computation of the leverage scores $\sigma_i(x)$ in every step.
 Jiang et al.~\cite{Jiang2020} provide a sophisticated data structure to maintain the leverage scores, which guarantees an amortized cost of $O(n^2)$ arithmetic operations per iteration, as opposed to Vaidya's $O(n^\omega)$ operations per iteration. This implies that the $O(n^{\omega+1}\log(nr/\varepsilon))$ arithmetic operations required in the statement of Theorem~\ref{thm:Vaidya} can be reduced to $O(n^{3}\log(nr/\varepsilon))$. This justifies the running time stated in Theorem~\ref{thm:JLSW}.

\section{Rational polyhedra}\label{sec:rational}

In this section, we consider rational polyhedra in the bit complexity model as in
 \cite{gls}, and show some implications of our algorithms. Let $\size{\alpha}$ denote the binary encoding length of
a rational number $\alpha$, and $\size{D}$ denote the encoding length
of a matrix $D$, defined as follows: for an integer $n\neq 0$, $\size{n}\coloneqq \lceil \log_2 |n|+1\rceil$; for a rational number $\alpha=p/q$ given as the ratio of co-prime integers with $q>0$, $\size{\alpha}\coloneqq \size{p}+\size{q}$; and for a matrix $D\in \Q^{n\times n}$, $\size{D}$ is the sum of the encoding length of all entries.
Let us recall the definitions of facet and
vertex complexity; see \cite[Definition (6.2.2)]{gls}.

\begin{definition}
Let $P\subseteq \R^n$ be a polyhedron.
\begin{enumerate}[(i)]
\item We say that $P$ has
\emph{facet-complexity at most $\varphi$}, if $P$ can be defined by a
system of linear inequalities with rational coefficients such that each inequality has encoding
length at most $\varphi$. If $P=\R^n$, we require  $\varphi\ge n+1$. The triple $(P; n,\varphi)$ is
called a \emph{well-described polyhedron}.
\item We say that $P$  has \emph{vertex-complexity at most $\nu$}, if
there exist finite sets of vectors
  $V_P,D_P\in \Q^n$
  such that $P=\conv(V_P)+\cone(D_P)$, and all vectors in
  $V_P\cup D_P$ have encoding length at most at most $\nu$. If
  $P=\emptyset$, then we require $\nu\ge n$.
\end{enumerate}
\end{definition}
Let us also note the following bounds relating facet- and vertex-complexity.

\begin{lemma}[{\cite[(6.2.4)]{gls}}]  \label{lem:vertex-complexity}
Let $P\subseteq\R^n$ be a polyhedron.
\begin{enumerate}[(i)]
\item If $P$ has facet-complexity at most $\varphi$, then $P$ has
  vertex-complexity at most $4n^2\varphi$.
\item If $P$ has vertex-complexity at most $\nu$, then $P$ has
  facet-complexity at most $3n^2\nu$.
\end{enumerate}
\end{lemma}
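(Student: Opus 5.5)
We would prove both parts by the same mechanism. Each relevant vertex, ray, or facet normal is the unique solution, up to scaling, of a square nonsingular linear system. The rows of that system come either from the given description or from unit vectors. Cramer's rule then expresses every entry as a ratio of subdeterminants. The only number-theoretic input is the standard bound $\size{\det D}\le 2\size{D}$ for a rational square matrix $D$ (\cite[(1.3.3)]{gls}), together with $\size{\alpha/\beta}\le\size{\alpha}+\size{\beta}$.

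For part (i), let $P=\{x\st Ax\le b\}$, where each row $(a_i^\top,b_i)$ has encoding length at most $\varphi$.
\begin{enumerate}
\item If $P=\emptyset$, take $V_P=D_P=\emptyset$; the requirement $\nu\ge n$ holds trivially since $4n^2\varphi\ge n$. Otherwise, let $L=\ker(A)$ be the lineality space.
\item Every minimal face $F$ of $P$ is an affine subspace $\{x\st A_Sx=b_S\}$ with $A_S$ of rank $\rk(A)$. Add equations $x_j=0$ for a suitable set of coordinates $j$, i.e.\ unit-vector rows of encoding length $O(n)$, to pin down a single point of $F$. That point is the unique solution of an $n\times n$ system whose rows have encoding length at most $\varphi$ each.
\item The generators of the recession cone are of two kinds. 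The first is the extreme rays of $\rec(P)\cap L^\perp$; each is defined by $n-1$ independent tight homogeneous rows, and we fix its scale by one extra equation $x_j=\pm1$. The second is a basis of $L$, taken together with its negatives, where each basis vector is obtained by setting some coordinates to $0$ and one to $1$.
\item In every case the coefficient matrix $D$ satisfies $\size{D}\le n\varphi$, using $\varphi\ge n+1$ to absorb the unit-vector rows. By Cramer's rule each coordinate is $\det D_j/\det D$, where $D_j$ is $D$ with one column replaced by the right-hand side; note $\size{D_j}\le n\varphi$ as well. Each determinant therefore has encoding length at most $2n\varphi$, so each coordinate has length at most $4n\varphi$, and the whole vector has length at most $4n^2\varphi$.
\end{enumerate}

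For part (ii), we argue dually, starting from $P=\conv(V_P)+\cone(D_P)$ with all generators of length at most $\nu$.
\begin{enumerate}
\item If $P=\R^n$, the claimed bound $3n^2\nu\ge n+1$ holds trivially. Otherwise, $P$ is described by its affine hull equations together with one inequality per facet.
\item A facet-defining inequality $c^\top x\le\gamma$ is determined up to scaling by linearly independent tight generators. These give homogeneous equations $(v^\top,-1)(c\mid\gamma)=0$ for $v\in V_P$ and $(d^\top,0)(c\mid\gamma)=0$ for $d\in D_P$, completed by unit-vector normalizations. The same completion yields each affine hull equation.
\item The resulting matrix in the $n+1$ unknowns $(c\mid\gamma)$ has encoding length $O(n\nu)$. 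Cramer's rule then bounds each coefficient by roughly $2n\nu$ plus lower-order terms, so the full inequality, with its $n+1$ coefficients, stays within $3n^2\nu$ after the careful constant accounting of \cite{gls}. Scaling to integers is unnecessary, since rational entries are allowed.
\end{enumerate}

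We expect the main obstacle to be bookkeeping rather than ideas. The degenerate cases (nontrivial lineality space, $\dim P<n$, and the conventions $\nu\ge n$ and $\varphi\ge n+1$) must each yield a square nonsingular system of small encoding. The exact constants $4$ and $3$ also require charging the extra unit-vector rows and the $-1$/$0$ columns against $\varphi$ or $\nu$ without loss. We would follow the proof of \cite[(6.2.4)]{gls} for this accounting.
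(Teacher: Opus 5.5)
The paper gives no proof of this lemma and simply quotes \cite[(6.2.4)]{gls}. Your Cramer's-rule strategy, which writes entries as ratios of subdeterminants and uses $\size{\det D}\le 2\size{D}$, is the standard argument behind that result and does yield the constant $4$ in (i). One step, however, fails as written.

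In step~3 of (i) you take the extreme rays of $\rec(P)\cap L^\perp$ and claim each is cut out by $n-1$ tight rows drawn from $A$ and unit vectors. When $L=\ker(A)\neq\{0\}$ this is false. The cone lives in the $\rk(A)$-dimensional space $L^\perp$, so an extreme ray has only $\rk(A)-1$ independent tight rows of $A$. The remaining $n-\rk(A)$ equations are the ones defining $L^\perp$, i.e., inner products with a basis of $\ker(A)$. These are neither rows of $A$ nor unit vectors, and their encoding length is controlled only by bounds like your $4n^2\varphi$ for the lineality basis. So the estimate $\size{D}\le n\varphi$ of step~4 is lost. Already for $P=\{x\in\R^2\st x_1+x_2\le 0\}$, the ray $(-1,-1)$ of $\rec(P)\cap L^\perp$ is orthogonal to none of $(1,1)$, $e_1$, $e_2$.

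The repair is the device you already use for minimal faces and for the lineality basis:
\begin{itemize}
\item Take $J\subseteq[n]$ with $|J|=n-\rk(A)$ and $\ker(A)\cap\{x\st x_J=0\}=\{0\}$; the non-pivot columns of a row basis of $A$ work.
\item Write $\rec(P)=L+\bigl(\rec(P)\cap\{x\st x_J=0\}\bigr)$. The second cone is pointed.
\item Each extreme ray of that cone is determined by $n-1$ independent equations chosen among the tight rows of $A$ and the rows $e_j^\top$, $j\in J$, plus a normalization $x_k=\pm1$.
\end{itemize}
With this change, step~4 applies verbatim.

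In (ii) the constant is asserted rather than derived, and your phrasing ("unit-vector normalizations" followed by Cramer's rule) matters. If you normalize a coordinate and take Cramer quotients as in (i), each coefficient costs two determinants. The resulting estimate is roughly $(n+1)\cdot 4n\nu$, which does not give $3n^2\nu$. To get a bound of the form $3n^2\nu$, take $(c\mid\gamma)$ to be the vector of signed $n\times n$ minors of the $n\times(n+1)$ homogeneous system. Use unit rows only in the homogeneous form $c_j=0$, to kill the affine-hull component. Then each coefficient is a single determinant of encoding length about $2n\nu$.

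Finally, the case $P=\emptyset$ is missing from (ii). It is covered by the single inequality $\bO^\top x\le -1$.
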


\paragraph{Dual solutions in the oracle model}
The results in \cite{gls}
 appear to be the only known methods in the literature for
obtaining dual certificates of infeasibility and optimality in the oracle model.
In this context, it is important to clarify which inequalities
can be involved in the dual solution. Two different concepts are considered in  \cite{gls}:  
{\em optimal standard dual solutions} and {\em optimal dual solutions with oracle inequalities}.

The former concept means that the dual solution assigns non-zero multipliers only to
facet-defining inequalities for $P$, in some standard representation of $P$. Since there is typically no guarantee that the separation oracle returns facet-defining
inequalities, it is not conceivable to be able to derive such certificates directly from the
execution of the method, and indeed getting an optimal standard dual solution requires
repeated applications of the ellipsoid method and the use of polarity, see \cite[Theorem~6.5.14]{gls}.

The second concept, instead, requires that the nonzero entries of the dual solution correspond
to inequalities that have been output by the oracle during the application of the method.
Gr\"otschel, Lov\'asz, and Schrijver~\cite{gls} point out that this is not possible in general.
Assumption \eqref{eq:bounded-facet-assumption} on bounded encoding length of the oracle inequalities plays a key role in this context:
under this assumption, one can obtain a dual
certificate as follows (cf. Lemma 6.5.15 in \cite{gls}). We first run
the (primal) ellipsoid method, obtaining a set $\cal F$ of oracle inequalities.
Next, we ``tighten'' each inequality in $\cal F$, by another run of the
ellipsoid per inequality. Finally, we apply the ellipsoid method to
the dual LP, with the variable set corresponding to $\cal F$.
Note that  the dimension of this latter
problem will be very large: 
while it is still polynomially bounded, it depends on a higher power of $\varphi$, whereas the running time of the primal algorithm only depends linearly on $\varphi$.

It has to be noted that, while assumption \eqref{eq:bounded-facet-assumption} is natural and applies widely in combinatorial optimization, it is not without loss of generality. A notable exception to assumption \eqref{eq:bounded-facet-assumption}  is optimizing over the $\vartheta$-body \cite[Chapter 9]{gls}. 
In Section~\ref{sec:without-bounded}, we sketch how our method can be adapted to settings that do not satisfy assumption \eqref{eq:bounded-facet-assumption}.

\paragraph{Bounding $\delta$ by $\varphi$}
The next lemma shows bounds on our condition number $\delta$ in terms of the bit-complexity.
\begin{lemma}\label{lem:delta-phi}
If every row of a matrix $M\in\R^{m\times n}$ has bit-complexity at most $\varphi$, then $\delta_M\ge 1/2^{O(n^3\varphi)}$.
\end{lemma}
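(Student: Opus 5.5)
We plan to work with the distance characterization of Lemma~\ref{lem:delta-lem}. Since $\delta_M$ is invariant under positive rescaling of rows, we first replace each row $m_i$ by an integer vector: clearing denominators multiplies the row by the product of its denominators, which has encoding length at most $\varphi$. The result is an integer row with entries of absolute value at most $2^{\varphi}$, and bit-complexity $O(n\varphi)$ at worst. So we may assume $M$ is integral with $\|m_i\|\le \sqrt n\,2^{O(\varphi)}$ and every entry of bit size $O(\varphi)$.

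By Lemma~\ref{lem:delta-lem}, it suffices to bound from below, for every linearly independent set $\{m_j\st j\in W\}$ and every row $m_i\notin\spn(m_j\st j\in W)$, the ratio $\mathrm{dist}(m_i,\spn(W))/\|m_i\|$. We take $B$ to be the matrix whose columns are $m_j$, $j\in W$, with $|W|=k\le n-1$. We then use the Gram-determinant formula
\[
\mathrm{dist}(m_i,\spn(W))^2=\frac{\det(G_{W\cup\{i\}})}{\det(G_W)},
\]
where $G_S$ denotes the Gram matrix of the vectors indexed by $S$. Both Gram matrices are integer matrices. Since $W\cup\{i\}$ is linearly independent, $\det(G_{W\cup\{i\}})$ is a positive integer and hence at least $1$.

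For the denominator, Hadamard's inequality gives $\det(G_W)\le\prod_{j\in W}\|m_j\|^2\le (n\,2^{O(\varphi)})^{n}$. This yields $\mathrm{dist}(m_i,\spn(W))\ge 2^{-O(n\varphi+n\log n)}$. Dividing by $\|m_i\|\le\sqrt n\,2^{O(\varphi)}$, we obtain $\delta_M\ge 2^{-O(n\varphi)}$ under the integrality normalization.

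The main obstacle is doing the denominator-clearing bookkeeping honestly. Clearing the denominators of a row of encoding length $\varphi$ gives integer entries of size up to $2^{\varphi}$ per entry, but the row's common denominator could raise the entry size to roughly $2^{O(\varphi)}$. Even in a pessimistic accounting, where each entry becomes $2^{O(n\varphi)}$, the Hadamard bound only contributes an extra factor of $n$ in the exponent, for $2^{-O(n^2\varphi)}$. This is comfortably within the claimed $2^{-O(n^3\varphi)}$. If preferred, one can instead route the argument through the $1/(n\Delta_M^2)$ bound of \cite{brunsch2013} together with the standard fact that subdeterminants have encoding size $O(n^2\varphi)$ (as in \cite{gls}), which yields exactly the $n^3\varphi$ exponent stated.
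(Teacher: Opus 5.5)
Your proof is correct, and it takes a genuinely different route from the paper.

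\textbf{The paper's route.} It works directly with Definition~\ref{def:delta}. It completes the independent rows $\{m_i\st i\in J\}$ with unit vectors to a nonsingular matrix $B$, so that the $\lambda_i$ appear as coordinates of $B^{-1}v$. It then bounds the row norms of $B^{-1}$ through the generic encoding-length estimate $\size{B^{-1}}\le 4n^2\size{B}=O(n^3\varphi)$, which is where the $n^3$ in the exponent comes from.

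\textbf{Your route.} You pass to the distance characterization of Lemma~\ref{lem:delta-lem}. You use scale invariance to make each row integral with entries of absolute value at most $2^{\varphi}$. The Gram-determinant identity then gives an integer numerator $\det(G_{W\cup\{i\}})\ge 1$ and a denominator bounded by Hadamard's inequality. This is essentially the determinant/integrality argument behind the $1/(n\Delta_M^2)$ bound of \cite{brunsch2013}.

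\textbf{Comparison.} Your argument gives a sharper estimate, $\delta_M\ge 2^{-O(n\varphi)}$. Absorbing the $n\log n$ term into $n\varphi$ uses $\varphi\ge n$, which holds because a row has $n$ entries of encoding length at least one; you should say this explicitly. The paper's proof is shorter because it delegates everything to a black-box bit-complexity bound on matrix inverses, at the cost of the weaker exponent.

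\textbf{Minor slip in your aside.} Your closing remark about routing through $\Delta_M$ does not give ``exactly'' $n^3\varphi$. With integral rows, Hadamard gives $\Delta_M\le(\sqrt n\,2^{\varphi})^n$, which again yields $2^{-O(n\varphi)}$. Even the pessimistic encoding-size count you describe gives only $2^{-O(n^2\varphi)}$. This does not affect your main argument.
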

\begin{proof} 
Let $m_i^\top$ be the $i$-th row of $M$. Consider a set $\{m_i\st i\in J\}$ of linearly independent vectors and nonzero coefficients $\lambda\in \R^J$; let $v=\sum_{i\in J} \lambda_i m_i$.
 Let $B\in \Q^{n\times n}$ be a matrix whose first $|J|$ columns are the vectors $m_i$, $i\in J$, with arbitrary further $n-|J|$ unit vectors added such that $B$ is non-singular.  Then, for $x=B^{-1}v$, we have $x_i=\lambda_i$ for $i\in J$ and $x_j=0$ otherwise.
  We have $\size{B}=O(n\varphi)$, and consequently, $\size{B^{-1}}\le 4n^2\size{B}=O(n^3\varphi)$ (see \cite[Exercise 1.3.5(d)]{gls}). In particular, the norm of every row in $B^{-1}$ is bounded by $2^{O(n^3\varphi)}$, and thereby, $|x_i|\le 2^{O(n^3\varphi)}\|v\|$. We also have $\|m_i\|\le 2^{O(\varphi)}$; thus,
  $\max_{i\in J}|\lambda_i|\|m_i\|_2\le 2^{O(n^3\varphi)} \|v\|$, showing the bound on $\delta_M$.
\end{proof}

We can use this bound together with Theorem~\ref{thm:LP-main}, to give a running time bound for the case when a polyhedral separation oracle is available.
\begin{corollary}\label{cor:LP-bit-model}
Let  $P=\{x\in\R^n\st Ax\le b\}$, and assume we are given polyhedral separation oracles for both $P$ as well as the recession cone $\mathrm{rec}(P)=\{x\in\R^n\st Ax\le 0\}$. Further, assume for every inequality $a_i^\top x\le b_i$ in the system,
$(a_i^\top ,b_i)$ has bit-complexity at most $\varphi$. Then, there exists oracle-polynomial algorithms for linear feasibility and linear optimization, also returning dual certificates, using $O(n^5\varphi)$ oracle calls and $O(n^7\varphi)$ arithmetic operations.
\end{corollary}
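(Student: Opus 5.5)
The plan is to combine Theorem~\ref{thm:LP-main} with Lemma~\ref{lem:delta-phi}. The only work is to bound $\log(1/\delta_M)$ and $\log(1/\delta_A)$ for the conic embedding matrix $M$ of \eqref{eq:M-matrix}, and then substitute this bound into the running times.

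First I bound the bit-complexity of the rows of $M$. They are $(\bO^\top, 1)$, which has encoding length $O(n)$, and $(-a_i^\top, b_i)$. Negation does not change encoding length, so the latter rows have bit-complexity at most $\varphi$. Since a nontrivial inequality in $n$ variables needs encoding length at least about $n$, I may assume $\varphi \ge n+1$, which is the same convention as in the definition of facet complexity. Hence every row of $M$ has bit-complexity $O(\varphi)$. Lemma~\ref{lem:delta-phi}, applied in dimension $n+1$, then gives $\delta_M \ge 2^{-O(n^3\varphi)}$, so $\log(1/\delta_M)=O(n^3\varphi)$. By \eqref{eq:deltaA>deltaM}, $\delta_A\ge\delta_M$, so the same bound holds for $\log(1/\delta_A)$. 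The $\log n$ term is absorbed because $\varphi\ge n$.

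Next I substitute into Theorem~\ref{thm:LP-main}. Part (i) gives a primal feasible point or a Farkas certificate. Parts (ii) and (iii) use the oracle for $\mathrm{rec}(P)$ and give, respectively, a dual feasible solution or a certificate of dual infeasibility (a point $x\in\mathrm{rec}(P)$ with $c^\top x>0$, i.e., unboundedness), and primal and dual optimal solutions. Running (i) and then (ii) decides whether the problem is infeasible, unbounded, or has a finite optimum, and in the last case (iii) applies. The oracle counts are $O(n^2\log(n/\delta))=O(n^2\cdot n^3\varphi)=O(n^5\varphi)$. The arithmetic counts are $O(n^4\cdot n^3\varphi)=O(n^7\varphi)$ plus $O(n^5\log\log(1/\delta))=O(n^5\log(n^3\varphi))$, and the latter is dominated by $O(n^7\varphi)$.

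No step here is a real obstacle. The only care needed is the $O(n)$ bit-length of the special row $(\bO^\top,1)$ and the $\log n$ term, both of which are handled by the convention $\varphi\ge n+1$.
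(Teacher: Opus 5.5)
Your proposal is correct and is essentially the paper's own argument: the corollary follows by plugging the bound $\log(1/\delta_M)=O(n^3\varphi)$ from Lemma~\ref{lem:delta-phi}, together with $\delta_A\ge\delta_M$, into the three parts of Theorem~\ref{thm:LP-main}. Your handling of the row $(\bO^\top,1)$ and of the $\log n$ and $\log\log$ terms is fine. The assumption $\varphi\ge n+1$ is in fact automatic, since each of the $n+1$ entries of $(a_i^\top,b_i)$ has encoding length at least one.
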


\begin{remark}\label{rem:ellipsoid-rounding}\em In order to obtain a polynomial algorithm in the bit-complexity model using an implementation of \textsc{Approximate-Conic-Dual} as in Section~\ref{sec:oracles}, one needs to provide an implementation of the subroutine that maintains rational numbers of bounded encoding length. This can be done by suitably rounding the coefficients encountered by the algorithms described in Section~\ref{sec:dual certificates}, but we do not elaborate the details here as the main focus of this paper is on the real model of computation.
\end{remark}

\subsection{Strong separation oracles with bounded bit complexity}\label{sec:continued}
The algorithms in \cite{gls} can return dual certificates with oracle inequalities under assumption
\eqref{eq:bounded-facet-assumption}. Recall that this assumption requires a strong separation oracle returning vectors whose encoding size is polynomially bounded by the facet complexity $\varphi$. However, in Corollary~\ref{cor:LP-bit-model}, we use a seemingly stronger polyhedral separation oracle model, where we require the oracle to return inequalities $a_i^\top x\le b_i$ such that $(a_i,b_i)$ has encoding size polynomially bounded by $\varphi$. Furthermore, Corollary~\ref{cor:LP-bit-model} requires such separation oracle also for  the recession cone.

 In this section, we show that a strong separation oracle satisfying assumption
\eqref{eq:bounded-facet-assumption} can be  turned into  polyhedral separation oracles of bounded bit-complexity.
 Assume that for the well-described polyhedron $(P;n,\varphi)$ with vertex complexity at most $\nu$, and we have a strong separation oracle that for each $\bar x\in \Q^n$, returns a vector $a\in \Q^n$,  such that $\max\{a^\top x:\, x\in P\}<a^\top \bar x$ with $\size{a}=O(\varphi)$. This oracle returns the inequality
 $a^\top x< a^\top \bar x$, that is, the right hand side $a^\top \bar x$ depends on the point $x$ queried, and a priori there could be an infinite number of potential inequalities returned.

Regarding the recession cone, \cite[Lemma (6.4.8)]{gls} shows that the strong separation oracle of $P$ implies a strong separation oracle for $\rec(P)$, returning inequalities $a^\top x\le 0$, where $a^\top x\le a^\top \bar x$ is an inequality that can be returned by the separation oracle for $P$. Hence, we obtain a polyhedral separation oracle for $\mathrm{rec}(P)$, with the inequalities of encoding length $\varphi$.

Next, we show that a polyhedral separation oracle can be implemented for $P$ by rounding the inequalities $a^\top x\le a^\top \bar x$ to $a^\top x\le b$ such that $b$ has bit-complexity $O(\varphi+\nu)$. Thus, there exists a finite description $Ax\le b$ of $P$ such that every inequality has encoding length $O(\varphi+\nu)$. Together with the rounding procedure, we can map the strong separation oracle to a polyhedral separation oracle with respect to this system, and therefore, Corollary~\ref{cor:LP-bit-model} is applicable.

Consider now the well-described polyhedron $(P;n,\varphi)$ with vertex complexity at most $\nu$, and a valid inequality $a^\top x\le u$ such that $\size{\separ}\leq \varphi$.
Let $b$ be the largest rational number $b< u$
such that $\size{b}\leq {\varphi+\nu}$.
Such a value $b$  can be computed in polynomial time
in $\size{u}$ and $\varphi+\nu$ via continued fractions, as
in Lemma~\ref{lem:cont-frac-lower} below.
 We claim that
$\separ^\T x\leq b$ is a valid inequality for $P$.
This follows since the linear function $a^\top x$ is bounded on $P$, and thus takes its maximum value on an extreme solution that has encoding length $\le \nu$. Hence, the value $\max\{a^\top x\st x\in P\}$ has encoding length $\leq \varphi+\nu$.

\paragraph{Rounding using the continued fractions method}
The key tool to find such a $b$ is the {\em continued fractions method}, an efficient algorithm for the following existential result by
Legendre. We do not describe the method but summarize its properties in the next theorem, see  \cite[Section 6.1]{Schrijver}
\begin{theorem}\label{thm:cont-frac}
  For given real number $\alpha\ge 0$ and integer $N>0$, there exists at most one
  pair $(p,q)$ of nonnegative integers such that $\left|\alpha-\frac pq\right|\le \frac{1}{2N^2}$ and $q\le N$.
  There exists an algorithm that, in $O(\log N)$ arithmetic operations,
  either finds such a pair $(p,q)$, or concludes that no such pair exists. Further,
  if $\alpha$ is rational, then the space complexity of the algorithm is  poly$(\size{\alpha},\log N)$.
\end{theorem}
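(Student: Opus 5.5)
The plan is to prove uniqueness by a spacing argument, then obtain the algorithm by running the continued fraction expansion of $\alpha$ and testing a single convergent, using Legendre's theorem to show no candidate is missed.

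\textbf{Uniqueness.} Suppose $p/q\neq p'/q'$ are two fractions with $q,q'\le N$ that both satisfy the bound. Then
\[
\frac{1}{qq'}\le \left|\frac pq-\frac{p'}{q'}\right|\le \frac{1}{N^2},
\]
and $1/(qq')\ge 1/N^2$. So equality holds throughout, which forces $q=q'=N$ and $\alpha$ to lie exactly midway between the two fractions. The fractions then differ by a multiple of $1/N$, so $1/N\le 1/N^2$, which leaves only the degenerate case $N=1$, $\alpha\in\Z+\tfrac12$. I will treat this case by a convention: pairs are identified with reduced fractions, and the boundary tie is broken, or the inequality is read as strict in that case, consistent with how the theorem is used in Lemma~\ref{lem:cont-frac-lower}. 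Apart from this, uniqueness holds up to representation of the same rational.

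\textbf{Algorithm.} I will compute the continued fraction expansion $\alpha=[a_0;a_1,a_2,\dots]$ via $a_k=\lfloor\alpha_k\rfloor$, $\alpha_{k+1}=1/(\alpha_k-a_k)$, together with the convergents $p_k/q_k$ from the standard recurrences $p_k=a_kp_{k-1}+p_{k-2}$ and $q_k=a_kq_{k-1}+q_{k-2}$. I stop at the last index $k$ with $q_k\le N$, or earlier if $\alpha$ is rational and the expansion terminates, and then test whether $|\alpha-p_k/q_k|\le 1/(2N^2)$. If the test succeeds I output $(p_k,q_k)$; otherwise I report that no pair exists.

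\textbf{Correctness.} This rests on Legendre's theorem: if $|\alpha-p/q|<1/(2q^2)$, then $p/q$ is a convergent of $\alpha$. Any valid pair has $q\le N$ and so satisfies $|\alpha-p/q|\le 1/(2N^2)\le 1/(2q^2)$, with equality only in the boundary case $q=N$ handled above. Hence it is some convergent $p_j/q_j$ with $q_j\le N$. Among such convergents, $|\alpha-p_j/q_j|$ is strictly decreasing in $j$ (the best-approximation property of convergents), so if any of them meets the bound, the last one $p_k/q_k$ does as well. Together with uniqueness, testing only $p_k/q_k$ is therefore enough.

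\textbf{Complexity.} The denominators satisfy $q_{k}\ge q_{k-1}+q_{k-2}$, so they grow at least like Fibonacci numbers and only $O(\log N)$ iterations occur, each using $O(1)$ arithmetic operations (including a floor). For the space bound with rational $\alpha=s/t$, the expansion is exactly the Euclidean algorithm on $(s,t)$. All intermediate remainders have size at most $\size{\alpha}$, and $q_k\le N$ together with $p_k\le \alpha q_k+1$ gives convergents of size $\mathrm{poly}(\size{\alpha},\log N)$. The step I expect to need the most care is pinning down the boundary and equality conventions in the uniqueness and Legendre steps. Everything else is classical; see \cite[Section~6.1]{Schrijver}.
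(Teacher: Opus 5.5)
The paper gives no proof of its own; it only cites \cite[Section~6.1]{Schrijver}. Your route is the classical one behind that citation: uniqueness from $|p/q-p'/q'|\ge 1/(qq')$, Legendre's theorem to show every valid fraction is a convergent, then testing only the last convergent with denominator at most $N$. You are right that, with ``$\le$'' and ``pairs'', the statement as written needs a convention. For $N=1$ and $\alpha\in\Z+\tfrac12$ there are two valid integers, and non-reduced pairs such as $(1,1),(2,2)$ for $\alpha=1$, $N=2$ also break uniqueness.

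The gap is in your correctness step. You say that equality in $|\alpha-p/q|\le 1/(2N^2)\le 1/(2q^2)$ occurs ``only in the boundary case $q=N$ handled above''. But what you handled above is only $N=1$ with half-integral $\alpha$. Equality $|\alpha-p/N|=1/(2N^2)$ can occur for every $N$, e.g.\ $\alpha=5/8$, $N=2$, $(p,q)=(1,2)$. Legendre's theorem needs the strict inequality $|\alpha-p/q|<1/(2q^2)$. So in this case your argument gives no reason why $p/N$ is a convergent, and hence no reason why testing $p_k/q_k$ finds it.

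The conclusion is still true, but you need a non-strict Legendre statement for $q\ge 2$ and must prove it. Here is one way:
\begin{itemize}
\item Write $p/q=[a_0;a_1,\dots,a_n]$ as produced by the floor algorithm. Then $n\ge 1$, $a_n\ge 2$, and the previous convergent $p'/q'$ has $1\le q'<q$.
\item Taking $\alpha=[a_0;\dots,a_n,\omega]$ and $\alpha=[a_0;\dots,a_n-1,1,\omega]$ with $\omega>1$ shows that $p/q$ is a convergent of every $\alpha$ strictly between $(p+p')/(q+q')$ and $(2p-p')/(2q-q')$.
\item These endpoints lie on opposite sides of $p/q$, at distances $1/(q(q+q'))$ and $1/(q(2q-q'))$. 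Both are strictly larger than $1/(2q^2)$ because $0<q'<q$.
\item For $q=1$ the only failure is $\alpha=p-\tfrac12$. That is exactly your degenerate case, and there the algorithm returns the other valid integer $p-1$.
\end{itemize}
Alternatively, read the inequality as strict throughout, as in the classical statement. Then your uniqueness and Legendre steps go through verbatim and no convention is needed.

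The rest is fine: the monotone decrease of $|\alpha-p_j/q_j|$, the Fibonacci growth of $q_k$ giving $O(\log N)$ iterations (with a floor counted as one operation, as the statement implicitly assumes), and the Euclidean-algorithm size bound for rational $\alpha$.
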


\begin{lemma}\label{lem:cont-frac-lower} For given rational number $\alpha\ge 0$ and integer $\sigma>0$, in $O(\sigma)$ iterations
the continued fraction method finds
the largest rational number $q$ such that $q\le\alpha$ and $\size{q}\leq \sigma$.
If $\alpha$ is rational, then the space complexity of the algorithm is  poly$(\size{\alpha},\sigma)$.
\end{lemma}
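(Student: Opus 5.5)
The plan is to reduce the problem to Theorem~\ref{thm:cont-frac} together with the structure of best rational approximations. The candidate set $S_\sigma\coloneqq\{q\in\Q\st \size{q}\le\sigma\}$ is finite. Every element of $S_\sigma$ has denominator at most $N\coloneqq 2^{\sigma}$ and numerator bounded similarly. If $\alpha\in S_\sigma$ we output $\alpha$ itself; this can be checked directly in time polynomial in $\size{\alpha}$. Otherwise we look for the largest element of $S_\sigma$ strictly below $\alpha$.

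The main tool will be the classical fact, see \cite[Section 6.1]{Schrijver}, that the largest fraction $p/q\le\alpha$ with $q\le N$ is either a convergent or an intermediate fraction (semiconvergent) of the continued fraction expansion of $\alpha$. We compute the expansion $\alpha=[a_0;a_1,a_2,\ldots]$ step by step. We maintain the convergents $p_k/q_k$ through the standard recurrences $p_k=a_kp_{k-1}+p_{k-2}$ and $q_k=a_kq_{k-1}+q_{k-2}$, and we stop at the first index $k$ with $q_{k+1}>N$. Because $q_{k+2}\ge 2q_k$, denominators at least double every two steps, so this takes $O(\log N)=O(\sigma)$ iterations.

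Next we take the last even-indexed convergent $p_{k'}/q_{k'}\le\alpha$, where $k'\in\{k,k-1\}$. We then form the intermediate fractions
\[
(p_{k'}+t\,p_{k'+1})/(q_{k'}+t\,q_{k'+1}),
\]
which lie below $\alpha$ and increase with $t$. We choose the largest $t\ge 0$ whose denominator stays at most $N$; this value is a single floor computation. By the best-approximation property, no fraction with denominator at most $N$ lies strictly between this lower intermediate fraction and $\alpha$. So it is the largest element of $\{p/q\le\alpha\st q\le N\}$.

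The main obstacle is that the constraint is really $\size{q}\le\sigma$, which couples the sizes of numerator and denominator, not just $q\le N$. I would handle this by scanning the lower convergents and intermediate fractions in decreasing order. The binary encoding length is monotone along each family of intermediate fractions, since both numerator and denominator increase with $t$. So one more floor or binary-search step picks the largest admissible parameter $t$ with total encoding size at most $\sigma$, and then we take the maximum over the $O(\sigma)$ candidate families. To justify that the optimum lies among these candidates, I would argue as follows. Suppose $p/q\le\alpha$ is the optimum, and consider the Farey-type interval bounded by consecutive lower and upper approximations that contains it. Any fraction in that interval has numerator and denominator at least as large as those of some intermediate fraction bounding the interval, so that intermediate fraction is also admissible and is at least as large. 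Finally, all intermediate numbers are bounded by the sizes of $\alpha$ and of $2^{\sigma}$, which gives the space bound $\mathrm{poly}(\size{\alpha},\sigma)$.
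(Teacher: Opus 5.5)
Your proposal is correct, and on the key point it is more careful than the paper's own proof.

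The paper uses only the convergents $\beta_k=g_k/h_k$. Because $h_k$ grows exponentially and $\beta_{2k}\le\alpha\le\beta_{2k+1}$, only $O(\sigma)$ of them have size at most $\sigma$, and the paper asserts that the largest such $\beta_{2k}$ is the answer. That assertion overlooks intermediate fractions and is false in general. For $\alpha=49/100=[0;2,24,2]$ and $\sigma=8$, the even convergents are $0/1$ and $24/49$ (of size $13$). The optimum, however, is the lower intermediate fraction $4/9=(p_0+4p_1)/(q_0+4q_1)$, of size $3+5=8$.

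Your route is what the statement really needs: generate convergents until the denominator exceeds $2^\sigma$, then search the families of lower intermediate fractions (including the integer family $t/1$), using that $\size{\cdot}$ is nondecreasing in $t$. This also matters for the paper's later use of the lemma, where a rounded right-hand side $b$ that is too small would make $a^\top x\le b$ invalid. Numerators and denominators are nondecreasing along the whole increasing sequence of lower approximations. So a single binary search, in the family where the size first exceeds $\sigma$, suffices and keeps the count at $O(\sigma)$.

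Two steps need tightening.

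First, your optimality argument does not work as phrased. In an interval bounded by a lower approximation $L$ and an upper approximation $U$, the endpoint $L$ is no larger than the optimum and $U>\alpha$ is infeasible, so neither endpoint dominates. Use two consecutive \emph{lower} approximations $a/b<c/d$ instead. They satisfy $bc-ad=1$, so any $x/y$ strictly between them can be written as $x=sa+tc$, $y=sb+td$ with integers $s,t\ge 1$. Hence $x\ge c$ and $y\ge d$, so $c/d$ is admissible and larger, and the optimum $r^*<\alpha$ must itself be a lower approximation. For rational $\alpha=p_n/q_n$, the lower approximations either end at $\alpha$ or accumulate at it through $(p_{n-1}+tp_n)/(q_{n-1}+tq_n)$, $t\ge 0$. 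When $\alpha$ itself is inadmissible, the members with $t\ge 1$ have size at least $\size{\alpha}>\sigma$ and are never needed.

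Second, monotonicity of $\size{\cdot}$ along a family uses that intermediate fractions are automatically in lowest terms. This follows from the same identity $|p_kq_{k+1}-q_kp_{k+1}|=1$, and should be stated, since $\size{\cdot}$ is defined via the reduced representation.
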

\begin{proof}
This follows as in the proof of \cite[Theorem (5.1.9)]{gls}. The continued fractions method constructs a sequence of iterates $\beta_k=g_k/h_k$ such that $h_k$ grows exponentially. Further, $\beta_{2k}\leq \alpha\leq \beta_{2k+1}$.
Thus, there are $O(\sigma)$ iterates of size at most $\sigma$; the largest such $\beta_{2k}$ provides the required answer.
\end{proof}

\begin{remark}\em
It is not necessary to apply the rounding procedure to every inequality returned by the oracle. We can postpone the roundings and perform them only to the inequalities that participate in the $\varepsilon$-approximate Farkas certificates.
\end{remark}

\subsection{Removing the assumption on the complexity of oracle inequalities}\label{sec:without-bounded}
Let
us now briefly sketch how one can (inefficiently) recover the
result in~\cite{gls} without the simplifying assumption~\eqref{eq:bounded-facet-assumption}
within the polyhedral separation oracle model. In this context, one assumes
that the bit-complexity of the facets of the polyhedron $P$ is at
most $\varphi$, but the complexity of the
output of the separation oracle grows polynomially as a function of the
bit-size of the query point (i.e., it is not bounded by a fixed function of
$\varphi$). To convert any such oracle to a polyhedral one with lower bounded
$\delta$, one may simply post-process each inequality outputted by the oracle
using Diophantine approximation. Using the iterated
Diophantine approximation method of Frank and Tardos~\cite{franktardos}, one can
convert any valid inequality $a^\top x \leq b$ for $P$ to a ``nearby'' valid
inequality $\tilde{a}^\top x \leq \tilde{b}$ for $P$, where
$\tilde{a},\tilde{b}$ have bit-size ${\rm poly}(n,\varphi)$. More precisely,
the closeness of $\tilde{a} x \leq \tilde{b}$ to $a x \leq b$ is formalized
by saying that for any low-complexity point $\bar{x} \in \Q^n$, i.e., with
bit-size ${\rm poly}(n,\varphi)$, satisfies ${\rm sign}(b-a^\top x) = {\rm
sign}(\tilde{b}-\tilde{a}^\top x)$. In particular, if we maintain that we
only query the oracle with inputs of bit-size at most ${\rm
poly}(n,\varphi)$, which is relatively straightforward to achieve in most
settings, the ``post-processed'' oracle behaves analogously to a polyhedral
separation oracle with lower-bounded $\delta$. In particular, $\log 1/\delta$
will be polynomial in $\varphi$ and $n$, using standard bit-complexity
arguments.

One could make the above reduction more efficient by only lazily
post-processing the ``important'' oracle inequalities, which would reduce to
number of Diophantine approximations to $O(n)$, similar to~\cite{gls}. We omit the details of such a reduction.

\section{Relations to the circuit imbalance measure}\label{sec:circuits}

\subsection{Explicit LPs and the circuit imbalance measure}
Let us now discuss the implications of our results on explicitly given LP, and compare the running time achieved by our algorithm with the currently fastest algorithms for this setting. We consider linear programs in the standard equality form
\begin{equation}\label{eq:equality-form}
\min c^\top x\, ,\quad  Ax=b\, ,\,  x\ge 0\, ,
\end{equation}
where $A\in \R^{m\times n}$, $b\in\R^m$, $c\in\R^n$.
The dual can be written as
\begin{equation}\label{eq:equality-dual}
\max b^\top y\, ,\, \quad A^\top y\le c\, .
\end{equation}
One can obtain an $O(mn)$ time polyhedral separation oracle for this problem by computing the vector $A^\top y$.

Using the JLSW algorithm \cite{Jiang2020} to implement the approximate oracle for \eqref{eq:equality-dual}, Theorem~\ref{thm:LP-main} yields a complexity bound
$O\left(nm^3\log(n/\delta_{A^\top})+m^5\log\log(n/\delta_{A^\T})\right)$ for the feasibility of \eqref{eq:equality-form} (that is, the dual of \eqref{eq:equality-dual}), and
$O\left(nm^3\log(n/\delta_{N})+m^5\log\log(n/\delta_{N})\right)$ for optimization, where 
\begin{equation}\label{eq:N-matrix}
N=\begin{pmatrix}\mathbf{0}& 1\\
-A^\top & c
\end{pmatrix}\, .
\end{equation}
We compare this with recent work on explicitly given linear programs \cite{DadushHNV20,DadushNV20} that parametrize the running time by the circuit imbalance measure.
For a linear space $W\subseteq\R^n$, the set of \emph{elementary vectors} ${\cal E}(W)$ is the set of support minimal nonzero vectors in $W$; the supports of elementary vectors are the circuits in the associated linear matroid. The \emph{circuit imbalance measure} $\kappa_W$ is 1 if $W\in \{\{\bO\},\R^n\}$, and otherwise the maximum ratio  $|g_j/g_i|$ over all   $g\in {\cal E}(W)$ and all $i,j\in \supp(g)$. For a matrix $A\in\R^{m\times n}$, we let $\kappa_A$ denote $\kappa_W$ for $W=\ker(A)$.
In particular, $\kappa_A=1$ for totally unimodular matrices.

Dadush et al.~\cite{DadushNV20}, strengthening Tardos's result \cite{Tardos86} on combinatorial linear programs, gave an algorithm with running time poly$(n,m,\log(\kappa_A+n))$ for solving linear programs of the form \eqref{eq:equality-form}. We refer the reader to the survey \cite{ENV22} on the various uses of circuit imbalance measures in linear programming.

The condition number $\kappa_A$ is within a factor $1/n$ from the Dikin--Stewart--Todd condition number $\bar\chi_A$ used in \cite{Vavasis1996}, see \cite{DadushHNV20,DadushNV20}. Hence,  $\log(\kappa_A+n)=\Theta(\log(\bar\chi_A+n))$.

The algorithm in \cite{DadushNV20} is of a black-box nature: for linear optimization, it requires $O(nm)$ calls to an approximate linear programming solver with accuracy $\varepsilon=1/(n\kappa_A)^{O(1)}$.
Combined with the solver of van den Brand \cite{vdb20}, for feasible and bounded instances we get a deterministic algorithm with running time  $O\big(m n^{\omega+1+o(1)}$ $\log(\kappa_A+n)\big)$, and combined with the solver of van den Brand et al.~\cite{vdB2020-tall-dense}, we get a randomized algorithm with running time $O\big(m^2n(n +m^2 ) \log^{O(1)}(n)\log(\kappa_A+n)\big)$.

 For the linear feasibility problem $Ax=b$, $x\ge 0$, $O(m)$ calls suffice. For this problem, we get a deterministic running time $O\big(m n^{\omega+o(1)}\log(\kappa_A+n)+(m^2n+n^{\omega+o(1)}) \log\log(\kappa_A+n)\big)$, and randomized running time $O\big(m^2(n +m^2 ) \log^{O(1)}(n)\log(\kappa_A+n)+(m^2n+n^{\omega+o(1)}) \log\log(\kappa_A+n)\big)$, respectively.

\medskip

The condition numbers $1/\delta$ and $\kappa$ are reconciled in
Section~\ref{sec:circuits-delta}.
In particular, we show that for a matrix of the form $A=(I_m|A')$, $\log(n/\delta_{A^\top})=\Theta(\log(\kappa_A+n))$. Using the conic validity algorithm in Theorem~\ref{thm:conic-main}, and the JLSW algorithm \cite{Jiang2020} to implement the approximate oracle, we obtain the following. 

\begin{theorem}\label{thm:LP-runtime} Let $A\in\R^{m\times n}$, $b\in \R^m$, and consider the linear feasibility problem $Ax=b$, $x\ge 0$; assume $\rk(A)=m$. There exists an $O(nm^3\log(n+\kappa_A)+m^5\log\log(n+\kappa_A))$ time algorithm that either finds a feasible solution, or a Farkas certificate $A^\top y\ge 0$, $b^\top y<0$.
\end{theorem}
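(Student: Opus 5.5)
The plan is to reduce Theorem~\ref{thm:LP-runtime} to the conic validity problem of Theorem~\ref{thm:conic-main}\eqref{part:validity}, and then to bound $\delta$ in terms of $\kappa_A$. The feasibility of $Ax=b$, $x\ge 0$ is, by Farkas' lemma, exactly the question of whether $b^\top y\ge 0$ is valid for the cone $K=\{y\in\R^m\st A^\top y\ge 0\}$. So I run the conic validity algorithm on $K$ with $M=A^\top$ and cost vector $c:=b$. There are two possible outputs. One is a nonnegative $x$ with $M^\top x=Ax=b$, which is a feasible solution. The other is a $y\in K$ with $b^\top y<0$, which is precisely the Farkas certificate.

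The required polyhedral conic separation oracle for $K_b=\{y\st A^\top y\ge 0,\ -b^\top y>0\}$ is explicit. It computes $A^\top \bar y$ in $O(nm)$ time and checks the single extra inequality. The ambient dimension is now $m$, not $n$. Plugging the JLSW bounds of Theorem~\ref{thm:approx-conic-oracle} into Theorem~\ref{thm:conic-main}\eqref{part:validity} (with $\tau(m)=O(m)$) gives $O(m^2\log(m/\delta_{A^\top}))$ oracle calls, each costing $O(nm)$. It also gives $O(m^4\log(m/\delta_{A^\top})+m^5\log\log(1/\delta_{A^\top}))$ further arithmetic. Altogether this is $O(nm^3\log(n/\delta_{A^\top})+m^5\log\log(1/\delta_{A^\top}))$.

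It remains to replace $\delta_{A^\top}$ by $\kappa_A$, and this step is the main obstacle. The issue is that $\delta_{A^\top}$ is not invariant under row operations on $A$, whereas $\kappa_A$ depends only on $\ker(A)$. Since $\rk(A)=m$, I first pick a basis $B$ of columns and replace $A$ and $b$ by $B^{-1}A=(I_m\mid A')$ (after a column permutation) and $B^{-1}b$. This takes $O(nm^2)$ time and leaves both the feasible set and $\kappa_A$ unchanged. A Farkas certificate $y'$ for the transformed system pulls back as $y=B^{-\top}y'$, since $A^\top y=(B^{-1}A)^\top y'$ and $b^\top y=(B^{-1}b)^\top y'$.

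For $\bar A=(I_m\mid A')$ I then need to show $\log(n/\delta_{\bar A^\top})=O(\log(n+\kappa_A))$, the claim announced for Section~\ref{sec:circuits-delta}. I would use Lemma~\ref{lem:inverse-norm} after normalizing the rows of $\bar A^\top$ (the columns of $\bar A$). Take any nonsingular $m\times m$ submatrix $N$, whose rows are columns $\bar a_j$, $j\in S$, of $\bar A$ normalized. Its inverse has entries given by the coordinates of unit vectors in the basis $\{\bar a_j\}_{j\in S}$.

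Now express $e_i=\bar a_i$ (a column of the identity) as $\sum_{j\in S}\mu_j\bar a_j$. The vector $e_i-\sum_{j}\mu_j e_j$ is an elementary vector of $\ker(\bar A)$ when $i\notin S$, because the $\bar a_j$ for $j\in S$ are a basis. Its entry at $i$ is $1$, so $|\mu_j|\le\kappa_A$. Since every $\bar a_j$ contains a unit entry or has norm at most $\sqrt m\,\kappa_A$, the rescaling changes these bounds only by $\mathrm{poly}(n)\kappa_A$ factors. This gives a column-norm bound on $N^{-1}$ of $\mathrm{poly}(n)\kappa_A^{O(1)}$. The constant in the exponent is what I expect to need care with, since the norm of columns of $A'$ must itself be controlled by $\kappa_A$ through circuits of the form $(-A'_{\cdot j}, e_j)$. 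Taking logarithms yields the stated running time.
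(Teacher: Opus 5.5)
Your proposal is correct, and its skeleton is the paper's. You change basis to $(I_m\mid A')$ in $O(nm^2)$ time, run the conic validity algorithm of Theorem~\ref{thm:conic-main}\eqref{part:validity} on $K=\{y\st A^\top y\ge 0\}$ with cost vector $b$ and an $O(nm)$-time oracle, and then bound $\log(1/\delta_{A^\top})$ by $O(\log(n+\kappa_A))$. Your explicit pullback $y=B^{-\top}y'$, which the paper leaves implicit, is also right.

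You differ from the paper only in how you prove that last bound. The paper uses Corollary~\ref{cor:kappa-rescale}, which works as follows:
\begin{itemize}
\item it normalizes the columns of $A$;
\item it applies the general Lemma~\ref{lem:delta-kappa}, where a row of $A_B^{-1}$ yields an elementary vector of $\im(A^\top)$, controlled via the self-duality $\kappa_W=\kappa_{W^\perp}$ together with $\sigma_{\min}(\hat A^\top)\ge 1$;
\item it pays for the normalization through $\kappa_{\hat A}\le\sqrt m\,\kappa_A^3$.
\end{itemize}
This ends with $1/\delta_{A^\top}\le\sqrt{nm}\,\kappa_A^3$.

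You instead apply Lemma~\ref{lem:inverse-norm} directly. Because the standard basis vectors are themselves columns of $(I_m\mid A')$, row $i$ of $N^{-1}$ holds the coefficients of $e_i$ in the normalized basis indexed by $S$. For $i\notin S$, the unnormalized coefficients are the entries of the fundamental circuit of $i$ with respect to $S$, hence at most $\kappa_A$ in absolute value. For $i\in S$ they are trivially at most $1$; you should state this case. Multiplying by $\|\bar a_j\|\le\sqrt m\,\kappa_A$ and taking column norms gives $\gamma(N^{-1})\le m\kappa_A^2$.

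So your worry about the exponent is unfounded. Your argument gives $1/\delta_{\bar A^\top}\le m\kappa_A^2$, which is slightly sharper than the paper's bound and needs neither the duality lemma nor singular values. The paper's detour, in exchange, yields the two-sided comparison of Lemma~\ref{lem:delta-kappa} for arbitrary unit-column matrices. It reuses that comparison for $\delta^*$ versus $\kappa^*$ in Lemma~\ref{lemma:delta-kappa} and Corollary~\ref{cor:delta-opt}.
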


At a high level, the feasibility algorithm in \cite{DadushNV20} and our conic validity algorithm both use approximate solutions to $Ax\approx b$, $x\ge 0$ to reduce the problem size, and project out variables with high $x_i$ values. The main difference is that \cite{DadushNV20} requires a stronger approximate oracle that enables a more efficient `pullback' of a Farkas certificate in case of infeasibility. Our algorithm has an additional term $O(m^5\log\log(n+\kappa_A))$ compared to $O((nm+n^{\omega+o(1)})\log\log(\kappa_A+n))$ in \cite{DadushNV20}. %

We note that our method  cannot reproduce the main result of \cite{DadushNV20} of a
poly$(n,m,\log(\kappa_A+n))$ algorithm for linear optimization: our running time depends on $\delta_{N}$ for $N$ as in \eqref{eq:N-matrix}, in particular, it also depends on $c$.
On the other hand, \cite{DadushNV20} heavily uses that the system is explicitly given, while our method extends to the oracle setting.
As noted in Proposition~\ref{prop:need-b}, in the oracle model there cannot exist an algorithm with running time dependence only on a condition number of the matrix.

\subsection[Comparing kappa and delta]{Comparing $\kappa$ and $\delta$}\label{sec:circuits-delta}
We now derive bounds between the condition numbers $\kappa_A$ and $\delta_{A^\top}$, and prove Theorem~\ref{thm:LP-runtime}.
We use the following result on self-duality of $\kappa_W$.
\begin{lemma}[{\cite{DadushHNV20}}]\label{lem:kappa-dual}
For every linear subspace $W\subseteq \R^n$ and the orthogonal complement $W^\top$, $\kappa_W=\kappa_{W^\top}$. 
\end{lemma}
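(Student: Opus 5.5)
The plan is the classical argument that circuits of $W$ and of $W^\perp$ are exchanged by a Schur-complement or pivoting duality. (The statement writes $W^\top$; we read it as the orthogonal complement $W^\perp$.)

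\textbf{Trivial cases.} If $W\in\{\{\bO\},\R^n\}$ then $W^\perp$ is the other of the two, and both measures equal $1$. So assume $0<d=\dim W<n$.

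\textbf{Basis form.} By symmetry it suffices to show $\kappa_{W^\perp}\le\kappa_W$. For an elementary vector $g$ of $W$ and $i,j\in\supp(g)$, there is a basis $B$ of the matroid of $W$ with $j\in B$ and $i\notin B$. Take such a $B$ of size $d$ and the basis form $W=\{x: x_N = C x_B\}$ with $N=[n]\sm B$, via a full-rank representation. The elementary vector of $W$ that contains $j\in B$ and is supported in $\{j\}\cup N$ is the fundamental circuit, namely $e_j$ on $B$ and $C e_j$ on $N$. For the complement we use $W^\perp=\{(y_B,y_N): y_B=-C^\top y_N\}$. Its fundamental circuits, one for each $i\in N$, are $e_i$ on $N$ and $-C^\top e_i$ on $B$.

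\textbf{Key observation.} For a fixed basis $B$, both families of fundamental-circuit ratios are entries of the same matrix $C$. The entry $C_{ij}$ equals $g_i/g_j$ for the fundamental circuit of $W$ at $j$, and equals $-h_j/h_i$ for the fundamental circuit of $W^\perp$ at $i$. Hence the maximum of $|C_{ij}|$ over all bases $B$ and all $i,j$ is a quantity common to $W$ and $W^\perp$. Note that the bases of $W^\perp$ are exactly the complements $N$.

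\textbf{Reducing to fundamental circuits.} It remains to show that $\kappa_W$ equals the maximum over bases of $|C_{ij}|$, i.e.\ that every ratio $|g_i/g_j|$ with $i,j\in\supp(g)$ is attained as a fundamental-circuit ratio for a suitable basis. Take a circuit $S=\supp(g)$ of the matroid of $W$. The set $S\sm\{i\}$ is independent, so we extend it to a basis $B\supseteq S\sm\{i\}$ with $i\notin B$. The fundamental circuit of $i$ with respect to $B$ lies in $B\cup\{i\}$. Since $B\cup\{i\}$ contains a unique circuit and $S\subseteq B\cup\{i\}$, that circuit is $S$. It is therefore $g$ up to scaling, which gives the ratio $|g_j/g_i|$ as $|C_{ij}|$ up to inversion. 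The remaining inversion issue is handled by choosing which of the two indices is excluded from $B$, as above. Combining this with the key observation gives $\kappa_W=\max_{B,i,j}|C_{ij}|=\kappa_{W^\perp}$.

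\textbf{Main obstacle.} The step needing care is the bookkeeping that $C$ represents both spaces simultaneously, with bases of $W^\perp$ being the complements. This follows from the explicit basis forms written above.
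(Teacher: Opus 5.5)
The paper gives no proof of this lemma; it is imported from \cite{DadushHNV20}, so there is no in-paper argument to compare with. Your proof is correct and is the standard argument. Every elementary vector of $W$ is a fundamental circuit for a suitable basis. If $W=\{x: x_N=Cx_B\}$, then $W^\perp=\{y: y_B=-C^\top y_N\}$. Each entry $|C_{kl}|$ is either $0$ or a circuit ratio of both spaces, hence at most $\kappa_W$ and $\kappa_{W^\perp}$, and these entries realize all circuit ratios of both spaces.

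One point to tidy up: you use ``basis of the matroid of $W$'' in two complementary senses. In the basis-form paragraph, $B$ has size $d$ and parametrizes $W$. In the reduction paragraph, $B\supseteq S\setminus\{i\}$ is a basis of the matroid whose circuits are the supports of elementary vectors of $W$; this has size $n-d$ and plays the role of $N$. Make it consistent by taking $N\supseteq S\setminus\{i\}$, so that $i$ lies in the parametrizing set. Then the fundamental circuit at $i$ is $g/g_i$, and it gives $|g_j/g_i|=|C_{ji}|$ directly for every ordered pair, so no separate ``inversion'' step is needed.
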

We start with the following general bound. Recall that the minimum singular value of $A^\top$ for a full row rank matrix $A$ equals the minimum nonzero singular value of $A$.
\begin{lemma}\label{lem:delta-kappa}
Let $A\in\R^{m\times n}$ be a matrix with full row rank and $m<n$ with $\|a_i\|=1$ for all columns $i\in[n]$.
Let $\sigma_{\min}(A^\top)$ be the minimum singular value of $A^\top$. Then,
\[
 \frac{\sigma_{\min}(A^\top)}{\sqrt{n}\delta_{A^\top}}\le \kappa_A\le \frac{1}{\delta_{A^\top}} \, .
\]
\end{lemma}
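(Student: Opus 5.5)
We prove the two inequalities separately. Throughout, the rows of $A^\top$ are the columns $a_i$, $i\in[n]$, all of unit norm. By Lemma~\ref{lem:kappa-dual}, $\kappa_A=\kappa_{\ker(A)}=\kappa_{\ker(A)^\perp}$, and $\ker(A)^\perp$ is the row space of $A$, that is, $\{A^\top y\st y\in\R^m\}$. So we may compute $\kappa$ through elementary vectors of the form $g=A^\top y$ (dual side) or $g\in\ker(A)$ (primal side), whichever is more convenient.

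\textbf{Upper bound $\kappa_A\le 1/\delta_{A^\top}$.} We work on the primal side. Let $g\in\ker(A)$ be an elementary vector, so $\sum_{k\in C}g_k a_k=0$ with $C=\supp(g)$ a circuit. Fix $i,j\in C$. Since $C$ is a circuit, $\{a_k\st k\in C\setminus\{i\}\}$ is linearly independent, and
\[
g_i a_i=-\sum_{k\in C\setminus\{i\}}g_k a_k .
\]
Applying Definition~\ref{def:delta} to this independent set gives
\[
|g_i|=\|g_ia_i\|=\Bigl\|\sum_{k\in C\setminus\{i\}}g_ka_k\Bigr\|\ge\delta_{A^\top}\max_{k\ne i}|g_k|\ge \delta_{A^\top}|g_j| .
\]
Hence $|g_j/g_i|\le 1/\delta_{A^\top}$, which is the upper bound. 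If $\ker(A)$ is trivial, then $\kappa_A=1$ and the bound is immediate, since $\delta_{A^\top}\le 1$.

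\textbf{Lower bound $\sigma_{\min}(A^\top)\le\sqrt n\,\kappa_A\delta_{A^\top}$.} By Lemma~\ref{lem:delta-lem}, there exist $W\subseteq[n]$ and $i\notin W$ with $a_i\notin\spn(a_k\st k\in W)$ such that $\mathrm{dist}(a_i,\spn(a_k\st k\in W))=\delta_{A^\top}$ (the infimum over finitely many configurations is attained). We may take $\{a_k\st k\in W\}$ linearly independent. Let $z$ be the component of $a_i$ orthogonal to $\spn(a_k\st k\in W)$, so $\|z\|=\delta_{A^\top}$. Because $z\neq 0$ and $A$ has full row rank, $z=Ay$ for some $y$ (the columns span $\R^m$). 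Consider $h\coloneqq A^\top y\in\ker(A)^\perp$. It satisfies $h_k=a_k^\top y$, and we want $h$ to vanish on $W$ while $h_i\neq0$. So instead we choose $y\in\spn(a_k\st k\in W)^\perp$ directly, namely $y=z/\|z\|$. Then $h_k=0$ for $k\in W$ and $h_i=a_i^\top z/\|z\|=\|z\|=\delta_{A^\top}$. Also $\|h\|=\|A^\top y\|\ge\sigma_{\min}(A^\top)$, since $\|y\|=1$. So some coordinate $j$ has $|h_j|\ge\sigma_{\min}(A^\top)/\sqrt n$.

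It remains to pass from $h$ to an elementary vector of $\ker(A)^\perp$ while keeping a large ratio. The standard conformal decomposition writes $h=\sum_t g^{(t)}$ as a sum of elementary vectors of $\ker(A)^\perp$ that are sign-conformal to $h$ and have supports within $\supp(h)$. Since the decomposition is conformal, $h_i=\sum_t g^{(t)}_i$ with all terms of the same sign, and similarly for coordinate $j$. If every $g^{(t)}$ with $g^{(t)}_j\ne0$ satisfied $|g^{(t)}_j|\le\kappa|g^{(t)}_i|$ (note $g^{(t)}_i=0$ would force the ratio to be infinite), then summing gives $|h_j|\le\kappa_A|h_i|$. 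Hence
\[
\sigma_{\min}(A^\top)/\sqrt n\le |h_j|\le\kappa_A\delta_{A^\top}.
\]

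The main obstacle is the conformal-decomposition step: we must ensure that every elementary vector with $g^{(t)}_j\neq0$ also has $g^{(t)}_i\neq0$, so that the ratio bound applies term by term. If this fails, we instead choose $W$ maximal, so that $\spn(a_k\st k\in W)\oplus\spn(a_i)$ equals the full space spanned. Then the vectors of $\ker(A)^\perp$ vanishing on $W$ form a one-dimensional space, so $h$ is itself elementary and no decomposition is needed. Enlarging $W$ while keeping $a_i$ outside its span only decreases $\|z\|$, so we first pick the maximal $W$ and work with the resulting $\|z\|\ge\delta_{A^\top}$ at minimum configuration.
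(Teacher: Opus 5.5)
Your upper bound is exactly the paper's argument. For the lower bound there are two problems to fix, and after fixing them the argument is sound.

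First, drop the conformal-decomposition step. The term-by-term comparison needs $g^{(t)}_i\neq 0$ for every summand with $g^{(t)}_j\neq 0$. Nothing guarantees this when $h$ is not elementary, so as written that step does not prove anything.

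Second, your fallback works, but its last sentence states the inequality in the wrong direction. Your final chain is $\sigma_{\min}(A^\top)/\sqrt n\le|h_j|\le\kappa_A|h_i|=\kappa_A\|z\|$. It needs $\|z\|\le\delta_{A^\top}$ for the enlarged set $W'$, not $\|z\|\ge\delta_{A^\top}$. This holds because you start from a minimizing pair $(W,i)$, and enlarging $W$ while keeping $a_i\notin\spn(a_k\st k\in W')$ can only shrink the distance. Combined with Lemma~\ref{lem:delta-lem}, this gives $\|z'\|=\delta_{A^\top}$.

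Full row rank means a maximal $W'$ has $|W'|=m-1$. Then $h$ spans the vectors of $\im(A^\top)$ vanishing on $W'$, so $h$ is elementary. In fact $\|z'\|=\|z\|$ forces $z'=z$, because $z'$ is the projection of $z$ onto the smaller subspace $\spn(a_k\st k\in W')^\perp$. So your original $h$ was already elementary, and no decomposition was ever needed.

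With this repair your proof is the paper's in rescaled form. Take the basis $B=W'\cup\{i\}$. Your unit vector $y$ is $\delta_{A^\top}$ times the paper's vector $z$, which satisfies $a_k^\top z=0$ for $k\in W'$ and $a_i^\top z=1$. Your $h$ is $\delta_{A^\top}$ times the paper's elementary vector $A^\top z$. So your bounds $\|h\|\ge\sigma_{\min}(A^\top)$ and $|h_j|\le\kappa_A|h_i|$ are the paper's chain $\sigma_{\min}(A^\top)\|z\|\le\|A^\top z\|\le\sqrt n\|A^\top z\|_\infty\le\sqrt n\,\kappa_A$, rescaled.

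The one genuine difference is how the extremal basis is found. The paper reads it off Lemma~\ref{lem:inverse-norm}, the Brunsch--R\"oglin formula for $1/\delta$ as the largest row norm of some $A_B^{-1}$, which the paper quotes without proof. You obtain it from the distance characterization in Lemma~\ref{lem:delta-lem}, which the paper does prove, plus the extension-to-a-basis step. Your route is slightly longer but self-contained.
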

\begin{proof}
Let us start with the upper bound on $\kappa_A$. Let  $g\in{\cal E}(\ker(A))$ be an elementary vector.
Select an arbitrary $i\in \supp(g)$, and let $J=\supp(g)\setminus \{i\}$. Then, the columns $\{g_j\st j\in J\}$ are linearly independent, and $-g_i a_i=\sum_{j\in J} g_j a_j$. Thus,
\[
|g_i|\cdot\|a_i\|=\left\|\sum_{j\in J} g_j a_j\right\|\ge \delta_{A^\top} \max_{j\in J} |g_j|\cdot\|a_j\|\, ,
\]
and using that all columns have unit norm, we get $|g_j/g_i|\le 1/\delta_{A^\top}$ for all $j\in J$. This shows that $\kappa_{A}\le 1/\delta_{A^\top}$.

We now turn to the lower bound on $\kappa_A$.
According to Lemma~\ref{lem:inverse-norm}, there exists an index set $B\subseteq [n]$, $|B|=m$, and an index $k\in B$, such that  $A_B$ is non-singular, and
$1/\delta_{A^\top}$ equals the norm of the $k$-th row of $A_B^{-1}$. %
This  row of $A_B^{-1}$ is the unique solution $z\in\R^m$ to the system $(A_B)^\top z={e}^m_k$, where ${e}^m_k$ is the $k$-th unit vector in $\R^m$.

Let us now consider the vector $y=A^\top z\in \R^n$; this is a vector in $\im(A^\top)$, and we claim that it is an elementary vector. Indeed, suppose a nonzero vector $y'\in \im(A^\top)$ exists with strictly smaller support. We have $y_i=0$ for all $i\in B\setminus \{k\}$. If $y'_k=0$, then the nonsingularity of $A_B$ implies that $y'=0$. If $y'_k\neq 0$, then we can normalize to $y'_k=1$; but now $y'_B=y_B$, and again by the nonsingularity of $A_B$, we must have $y'=y$.

Let us use duality of $\kappa_W$ (Lemma~\ref{lem:kappa-dual}) for $W=\ker(A)$ and $W^\perp=\im(A^\top)$. Since $y_k=1$, we obtain $\|y\|_\infty\le \kappa_{W^\perp}=\kappa_W$. We thus get
\[
\frac{\sigma_{\min}(A^\top)}{\delta_{A^\top}}=\sigma_{\min}(A^\top)\|z\|\le \|A^\top z\|=\|y\|\le \sqrt{n}\|y\|_\infty\le \sqrt{n}\kappa_{W}\, .
\]
\end{proof}

\begin{corollary}\label{cor:kappa-rescale}
Let $A\in\R^{m\times n}$ be of the form $A=(I_m|A')$. Then,
\[
\frac{1}{\delta_{A^\top}}\le \sqrt{nm}\kappa_A^3\, .
\]
\end{corollary}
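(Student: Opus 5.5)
The plan is to reduce to the lower bound of Lemma~\ref{lem:delta-kappa} by normalizing the columns of $A$, and then to control how this normalization changes $\kappa$ and $\sigma_{\min}$.

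\textbf{Normalization.} Let $a_j$ denote the $j$-th column of $A$, and let $D$ be the diagonal matrix with $D_{jj}=1/\|a_j\|$. Zero columns can be discarded: they affect neither $\delta_{A^\top}$ nor the circuits relevant to $\kappa_A$, since a zero column forms a singleton circuit with ratio $1$. Put $\bar A=AD$. Its columns have unit norm and it has full row rank, because it contains $I_m$ as the columns indexed by $[m]$. Since $\delta_V$ is invariant under rescaling the individual vectors of $V$, we have $\delta_{\bar A^\top}=\delta_{A^\top}$. Moreover $\bar A\bar A^\top\succeq I_m$, because the identity columns contribute $I_m$ and the remaining terms are PSD. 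Hence $\sigma_{\min}(\bar A^\top)\ge 1$. Applying Lemma~\ref{lem:delta-kappa} to $\bar A$ (the case $m=n$ is trivial) gives
\[
\frac{1}{\delta_{A^\top}}\le \sqrt n\,\kappa_{\bar A}.
\]

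\textbf{Comparing $\kappa_{\bar A}$ with $\kappa_A$.} We have $\ker(\bar A)=D^{-1}\ker(A)$, and supports are preserved under this map, so elementary vectors correspond. If $g\in\mathcal E(\ker(A))$, the corresponding elementary vector of $\ker(\bar A)$ is $D^{-1}g$, with entries $g_j\|a_j\|$. Each ratio of entries therefore changes by at most a factor $\max_j\|a_j\|/\min_j\|a_j\|$, which gives
\[
\kappa_{\bar A}\le\kappa_A\cdot\frac{\max_j\|a_j\|}{\min_j\|a_j\|}.
\]

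\textbf{Bounding the column norms.} This is the key step. For a column $j$ of $A'$, the vector $g=(-a'_j\mid e_j)$ lies in $\ker(A)$, and I claim it is elementary. Any nonzero kernel vector with support inside $\supp(g)$ must be nonzero at $j$, since otherwise it would be supported on the identity columns and would vanish. Normalizing that coordinate to $1$ forces the vector to equal $g$, by the nonsingularity of the identity block. Taking ratios against the entry $1$ at position $j$ shows that every nonzero entry $a'_{ij}$ satisfies
\[
1/\kappa_A\le|a'_{ij}|\le\kappa_A .
\]
Consequently every nonzero column satisfies $1/\kappa_A\le\|a_j\|\le\sqrt m\,\kappa_A$, and the identity columns have norm $1$, which lies in this range since $\kappa_A\ge 1$. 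So the norm ratio is at most $\sqrt m\,\kappa_A^2$.

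\textbf{Conclusion.} Combining the three displays yields
\[
\frac{1}{\delta_{A^\top}}\le\sqrt n\,\kappa_A\cdot\sqrt m\,\kappa_A^2=\sqrt{nm}\,\kappa_A^3 .
\]
The only delicate point is verifying the elementarity of $(-a'_j\mid e_j)$ and checking that rescaling preserves supports; the rest is direct substitution.
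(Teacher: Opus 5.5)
Your proof is correct and follows the paper's argument: normalize the columns, use $\sigma_{\min}\ge 1$ (from the identity block) in Lemma~\ref{lem:delta-kappa}, and bound the growth of $\kappa$ under renormalization by the column-norm ratio $\sqrt{m}\,\kappa_A^2$, which comes from the circuits $(-a'_j\mid e_j)$. You also treat zero columns of $A'$ and the case $m=n$ explicitly, which the paper leaves implicit.
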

\begin{proof}
For any column $a_i$ of $A$, $i\in [m+1,n]$, we have a circuit $g\in\ker(A)$ defined as $g_j=-a_{ij}$ for $j\in [m]$, $g_i=1$, and $g_j=0$ otherwise. Hence, all nonzero entries of $A$ are between $1/\kappa_A$ and $\kappa_A$, and therefore all column norms are between $1/\kappa_A$ and $\sqrt{m}\kappa_A$.

Let $\hat A$ be the matrix arising by normalizing all columns of $A$ so that all columns have norm 1, and let us apply Lemma~\ref{lem:delta-kappa} to $A$. Note that, since $A$ contains an identity matrix, $\sigma_{\min}(\hat A^\top)\ge 1$. Hence,
${1}/{\delta_{\hat A^\top}}\le \sqrt{n}\kappa_{\hat A}$.
Note that $\delta_{\hat A^\top}=\delta_{A^\top}$ because the definition of $\delta_M$ is invariant under row rescalings. Further, $\kappa_{\hat A}\le \sqrt{m}\kappa_A^3$ because
Renormalization may increase the ratio between two entries of an elementary vector by at most $\sqrt{m}\kappa_A^2$. Thus, the claim follows.
\end{proof}

We are ready to prove Theorem~\ref{thm:LP-runtime}
\begin{proof}[Proof of Theorem~\ref{thm:LP-runtime}]
First, we can use Gaussian elimination to  bring $A$ to the basic form $A_B^{-1}A$ in $O(nm^2)$ time. To simplify the notation, we henceforth replace $A$ by $A_B^{-1}A$ and $b$ by $A_B^{-1}b$.
Consider the conic validity problem for the vector $b$ and the cone $K=\{y\in\R^m:\, A^\top y\ge 0\}$. The output is either a primal feasible solution or a Farkas certificate to our problem.

We obtain the claimed running time from  Theorem~\ref{thm:conic-main} with the oracle as in Theorem~\ref{thm:approx-conic-oracle} implementing the JLSW algorithm \cite{Jiang2020}, and noting that every oracle call takes $nm$ time by checking each inequality $a_i^\top y$. Finally, note that by Corollary~\ref{cor:kappa-rescale}, $\log(n/\delta_{A})=O(\log(n+\kappa_A))$.
\end{proof}

\begin{remark}\label{rmk:remark-inapproximable}\em Lemma~\ref{lem:delta-kappa} also shows that even for an explicitly given matrix $A$, computing or even approximating $\delta_A$ is hard. This is because it is NP-hard to approximate $\kappa_A$ within a  $2^{\mathrm{poly}(m)}$-factor. See \cite{DadushHNV20}; this follows by the bounds between $\bar\chi_A$ and $\kappa_A$, and Tun{\c{c}}el's inapproximability result on  $\bar\chi_A$ \cite{Tuncel1999}.
\end{remark}

\subsection[Optimizing the measure delta]{Optimizing the measure $\delta$}\label{sec:circuit-rescale}
Consider a full-row rank matrix $A\in\R^{m\times n}$. As we have already observed, the measures $\kappa_A$ and $\delta_{A^\top}$ are invariant under different rescalings: as noted above, for any positive diagonal matrix $D\in\R^{n\times n}$, $\delta_{(AD)^\top}=\delta_{A^\top}$, but $\kappa_{AD}$ might be different from $\kappa_A$. On the other hand, for any nonsingular $T\in \R^{m\times m}$, the opposite holds: $\kappa_{TA}=\kappa_A$, but $\delta_{(TA)^\top}$ and $\delta_{A^\top}$ may be different.
One can naturally ask for the following quantities.
\begin{eqnarray*}
\kappa^*_A&\coloneqq &\inf\left\{\kappa_{AD}\st D\in\R^{n\times n} \mbox{ positive diagonal}\right\}\\
\delta^*_{A^\top}&\coloneqq &\sup\left\{\delta_{(TA)^\top}\st T\in\R^{m\times m} \mbox{ nonsingular}\right\}\\
\end{eqnarray*}
The quantity $\kappa^*_A$ was studied in \cite{DadushHNV20}; they gave a min-max characterization, as well as the following algorithmic result:
\begin{theorem}[{\cite{DadushHNV20}}]\label{thm:kappa-star}
Given a matrix $A\in\R^{m\times n}$, in $O(n^2m^2+n^3)$ time one can find a positive diagonal matrix $D$ such that $\kappa_{AD}\le (\kappa^*_A)^3$.
\end{theorem}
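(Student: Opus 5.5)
The statement is Theorem~\ref{thm:kappa-star}, cited from \cite{DadushHNV20}. My plan is to reconstruct its proof through a graph-theoretic characterization of $\kappa^*_A$.

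\emph{Step 1: normalize and collect ratios.} First bring $A$ to basic form $A_B^{-1}A=(I_m\mid A')$ by Gaussian elimination, in $O(nm^2)$ time. This leaves $\ker(A)$ and hence all $\kappa$ values unchanged. Each column $j\notin B$ gives the fundamental circuit $g^{(j)}$ with $g^{(j)}_j=1$ and $g^{(j)}_i=-a'_{ij}$ for $i\in B$. Form the directed graph $G$ on $[n]$ with arcs $(i,j)$ and $(j,i)$ whenever $a'_{ij}\neq 0$, carrying weights $|a'_{ij}|$ and $1/|a'_{ij}|$. These weights are entry ratios of elementary vectors, so each is at most $\kappa_A$. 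Under a scaling $D$, the weight of arc $(i,j)$ becomes $w_{ij}d_j/d_i$.

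\emph{Step 2: lower bound via cycles.} A cycle's product of weights is invariant under the scaling $D$. So for every cycle $C$ of length $k$, some arc has weight at least $(\prod_C w)^{1/k}$ in every scaling. Using the circuit-ratio lemma from \cite{DadushHNV20}, this arc weight is at most $\kappa_{AD}$, giving $\kappa^*_A\ge \max_C(\prod_C w)^{1/|C|}$. A cleaner route is to prove that for every pair $i,j$ in a common circuit, $\kappa_{ij}\kappa_{ji}\le(\kappa^*_A)^2$, where $\kappa_{ij}$ is the maximum of $|g_j/g_i|$ over elementary vectors. This holds because the product $\kappa_{ij}\kappa_{ji}$ is scaling-invariant.

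\emph{Step 3: construct $D$.} Choose $d$ so that every arc weight satisfies $w_{ij}d_j/d_i\le \kappa^*_A$. Concretely, take $\log d$ as a potential for a shortest-path problem with arc lengths $\log w_{ij}-\log\hat\kappa$. Run Bellman–Ford (or Floyd–Warshall, $O(n^3)$) on the $O(nm)$ arcs, setting $\hat\kappa=\max_{\text{cycles}}$ of the geometric mean. That quantity can be found by Karp's minimum mean cycle algorithm, applied to negated log weights in $O(n\cdot nm)$ time, which is within the $O(n^2m^2+n^3)$ budget. Feasibility of the potentials follows from the definition of $\hat\kappa$, and $\hat\kappa\le\kappa^*_A$ by Step~2.

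\emph{Step 4: from arc bounds to circuit bounds.} This is the main obstacle: bounding an arbitrary elementary-vector ratio $|g_j/g_i|$ of $AD$ by the cube of the maximum arc ratio. My plan is to use the fact that every circuit ratio is a ratio of two $m\times m$ subdeterminants, together with Lemma~\ref{lem:kappa-dual} applied to $\mathrm{im}(A^\top)$. I would then bound $\kappa_{ij}$ by products along a path of length at most two through the basis $B$, so the ratio is at most $\kappa_{ij}\le \kappa_{ik}\kappa_{kj}$. Combined with the pair inequality $\kappa_{ij}\kappa_{ji}\le(\kappa^*)^2$ from Step~2, a chain of two steps plus one reversed step yields $(\kappa^*_A)^3$.

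Honestly, this last combinatorial inequality is where I would have to re-derive the lemma of \cite{DadushHNV20}. If the path argument fails, I would fall back on citing their triangle-type inequality $\kappa_{ij}\le\kappa_{ik}\kappa_{kj}$ for $k$ in a common circuit.
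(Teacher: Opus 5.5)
\paragraph{Where the proof breaks.} Steps 1--3 are fine, but Step 4 carries the whole content of the theorem, and the route you sketch for it does not work. The target as you state it is false. Bounding all circuit ratios of $AD$ by a power of the maximum fundamental arc ratio of one basis is impossible. Take $A=(I_2\mid A')$ with $A'=\begin{pmatrix}1&1\\1&1+\varepsilon\end{pmatrix}$. Every fundamental ratio lies in $[(1+\varepsilon)^{-1},1+\varepsilon]$, yet $g=(0,\varepsilon,1,-1)^\top$ is an elementary vector of $\ker(A)$ with $|g_3/g_2|=1/\varepsilon$.

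So $\kappa^*_A$ must enter through $\kappa_{ij}\le(\kappa^*_A)^2/\kappa_{ji}$; in the example $\kappa_{32}\ge 1$ forces $\kappa^*_A\ge\varepsilon^{-1/2}$. Here $\kappa_{ij}$ is the maximum of $|g_j/g_i|$ over elementary vectors. What you need is therefore a \emph{lower} bound on $\kappa_{ji}$, coming from an explicit circuit through $i$ and $j$.

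\paragraph{Why your chain does not close.} The inequality $\kappa_{ij}\le\kappa_{ik}\kappa_{kj}$ goes the wrong way. It bounds one true imbalance by two others, and Step 3 controls none of them; you only know that fundamental ratios are lower bounds. In the example, $\kappa_{23}=1/\varepsilon$ although the fundamental ratio for $(2,3)$ is $1$. Moreover, two elements of a common circuit need not be within distance two in the fundamental graph. For instance, $i\in B$ and $j\in N$ with $a'_{ij}=0$ are at distance at least three. Along a path of length $\ell$, your balanced weights only certify $\kappa_{ji}\ge\hat\mu^{-\ell}$, hence $\kappa_{ij}\le(\kappa^*_A)^2\hat\mu^{\ell}$, and this exponent grows with $\ell$.

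\paragraph{The missing idea.} This is essentially the argument of \cite{DadushHNV20}: run your Steps 2--3 on the whole circuit ratio digraph, not just on the $O(nm)$ fundamental arcs.
\begin{itemize}
\item For every pair $i\neq j$ lying on a common circuit, compute one elementary vector $g^{(ij)}$ with $i,j\in\supp(g^{(ij)})$.
\item Set $\hat\kappa_{ij}=|g^{(ij)}_j/g^{(ij)}_i|$ and $\hat\kappa_{ji}=1/\hat\kappa_{ij}$.
\item Since $\hat\kappa_{ji}\le\kappa_{ji}$, your pair inequality gives, for every arc,
\[
\kappa_{ij}=\kappa_{ij}\,\hat\kappa_{ji}\,\hat\kappa_{ij}\le\kappa_{ij}\,\kappa_{ji}\,\hat\kappa_{ij}\le(\kappa^*_A)^2\,\hat\kappa_{ij}.
\]
This estimate is scaling-invariant, because under $A\mapsto AD$ both sides are multiplied by the same factor $d_i/d_j$.
\item By your Step 2, the maximum geometric mean cycle $\hat\mu$ of these weights satisfies $\hat\mu\le\kappa^*_A$.
\item Potentials then give $D$ with every rescaled $\hat\kappa_{ij}$ at most $\hat\mu$. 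Hence $\kappa_{AD}\le(\kappa^*_A)^2\hat\mu\le(\kappa^*_A)^3$.
\end{itemize}
No triangle inequality or determinant argument is needed. The cost fits the $O(n^2m^2+n^3)$ budget: one circuit per pair for $O(n^2)$ pairs, computed from the basis form, plus Karp's algorithm on an $n$-node digraph with $O(n^2)$ arcs.
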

As a consequence, all results with running time dependence on $\log(\kappa_A+n)$ can be strengthened to dependence on $\log(\kappa^*_A+n)$.
Note that the approximability result is in surprising contrast with  Tun{\c{c}}el's above mentioned result~\cite{Tuncel1999}. 

We now present analogous bounds on $\delta^*_{A^\top}$.
\begin{lemma}\label{lemma:delta-kappa}
Given a matrix $A\in\R^{m\times n}$, $\delta^*_{A^\top}\le 1/\kappa^*_A$.
\end{lemma}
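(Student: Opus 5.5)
Fix an arbitrary nonsingular $T\in\R^{m\times m}$. It suffices to show $\delta_{(TA)^\top}\le 1/\kappa^*_A$, and then take the supremum over $T$. The plan is to exhibit, for every such $T$, a positive diagonal $D$ with $\kappa_{AD}\le 1/\delta_{(TA)^\top}$. This gives $\kappa^*_A\le\kappa_{AD}\le 1/\delta_{(TA)^\top}$, which is exactly what we need.

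The natural choice of $D$ normalizes the columns of $TA$. Write $A'=TA$ and let $a'_i$ be its columns, which we may assume are nonzero; zero columns give $\delta=0$ or are trivially ignorable. Set $D=\mathrm{diag}(1/\|a'_i\|)$, so that $TAD$ has unit-norm columns. We then use two invariances noted just before the statement. First, $\kappa$ is invariant under left multiplication by a nonsingular matrix, since $\ker(TAD)=\ker(AD)$; hence $\kappa_{AD}=\kappa_{TAD}$. Second, $\delta$ is invariant under rescaling the vectors, so $\delta_{(TAD)^\top}=\delta_{(TA)^\top}$.

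Now apply the upper bound half of Lemma~\ref{lem:delta-kappa} to the unit-column matrix $TAD$. That lemma assumes full row rank and $m<n$. In the argument for the upper bound, however, only unit columns are used: an elementary vector $g$ of $\ker(TAD)$ gives $-g_ia_i=\sum_{j\in J}g_ja_j$ with $\{a_j\st j\in J\}$ linearly independent, and hence $|g_j/g_i|\le 1/\delta$. That argument goes through verbatim here. It yields $\kappa_{TAD}\le 1/\delta_{(TAD)^\top}=1/\delta_{(TA)^\top}$.

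The remaining work is to handle degenerate cases. If $\ker(AD)$ is trivial or everything, then $\kappa=1$, and we need $\delta\le1$, which holds because a single vector gives equality. If $A$ is not full row rank, then $T$ nonsingular still gives a valid matrix, and the upper-bound argument did not use the rank. The only mild obstacle I expect is making sure the supremum and infimum interplay is correct and that zero columns are handled. No genuine difficulty is anticipated.
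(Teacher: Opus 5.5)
Your proof is correct and follows the paper's argument: $\delta$ is invariant under rescaling the columns, $\kappa$ is invariant under left multiplication by a nonsingular $T$, and the upper bound of Lemma~\ref{lem:delta-kappa} is applied to a unit-column matrix. Your choice of $D$ normalizing the columns of $TA$ (so $D$ depends on $T$) is actually the right one---the paper normalizes the columns of $A$, in which case $TAD$ need not have unit columns as that lemma requires. One small slip: zero columns do not force $\delta=0$; they are ignored by $\delta$ and contribute to $\kappa$ only elementary vectors with singleton support, so you can simply set $D_{ii}=1$ there.
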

\begin{proof}
Let $D$ be the diagonal matrix with $i$th diagonal entry equal to $1/\|a_i\|$, so that all columns of $AD$ have unit norm. For any nonsingular $T\in\R^{m\times m}$, Lemma~\ref{lem:delta-kappa} implies
\[
\delta_{{(TA)}^\top} =\delta_{{(TAD)}^\top} \le \frac{1}{\kappa_{TAD}}=
\frac{1}{\kappa_{AD}}\le  \frac{1}{\kappa^*_A}\, .
\]
\end{proof}

Combined with Corollary~\ref{cor:kappa-rescale}, we obtain the following result on $\delta^*_{A^\top}$.
\begin{corollary}\label{cor:delta-opt}
For a matrix $A\in \R^{m\times n}$, in $O(n^2 m^2+m^3)$ time we can find a nonsingular $T\in\R^{m\times m}$ such that
\[
\frac{(\delta^*_{A^\top})^9}{\sqrt{nm}}\le \delta_{{(TA)}^\top}\, .
\]
\end{corollary}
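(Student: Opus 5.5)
We would assemble Corollary~\ref{cor:delta-opt} from three earlier results: Theorem~\ref{thm:kappa-star}, Corollary~\ref{cor:kappa-rescale}, and Lemma~\ref{lemma:delta-kappa}. The idea is to find a column scaling $D$ that nearly minimizes $\kappa$, and then a row transformation that puts $AD$ into the basic form $(I_m\mid A')$. For a matrix of that form, $1/\delta$ of the transpose is controlled by $\kappa$, and $\kappa$ is in turn bounded via $\delta^*$.

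\textbf{Algorithm.} We may assume $A$ has full row rank $m$ and $m<n$; otherwise we first restrict to a row basis. We proceed in three steps.
\begin{enumerate}
\item Apply Theorem~\ref{thm:kappa-star} to obtain a positive diagonal $D$ with $\kappa_{AD}\le(\kappa^*_A)^3$. This costs $O(n^2m^2+n^3)$ time.
\item By Gaussian elimination on $AD$, find a basis $B$ with $(AD)_B$ nonsingular, and set $T\coloneqq (AD)_B^{-1}=(A_B D_B)^{-1}$. This costs $O(nm^2)$ operations.
\item After permuting columns, $TAD$ has the form $(I_m\mid A')$.
\end{enumerate}
Note that the theorem's time bound carries an $n^3$ term, whereas the corollary states $m^3$. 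We would simply quote the cost of Theorem~\ref{thm:kappa-star} and flag this discrepancy rather than try to remove it.

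\textbf{Bound.} We use two invariances. First, $\kappa$ is invariant under left multiplication by a nonsingular matrix, since $\ker(TAD)=\ker(AD)$, so $\kappa_{TAD}=\kappa_{AD}$. Second, $\delta_{M}$ is invariant under rescaling the rows of $M$, which gives $\delta_{(TA)^\top}=\delta_{(TAD)^\top}$. Column permutations leave both quantities unchanged. Corollary~\ref{cor:kappa-rescale} then yields
\[
\frac{1}{\delta_{(TA)^\top}}=\frac{1}{\delta_{(TAD)^\top}}\le \sqrt{nm}\,\kappa_{TAD}^3=\sqrt{nm}\,\kappa_{AD}^3\le \sqrt{nm}\,(\kappa^*_A)^9 .
\]
Finally, Lemma~\ref{lemma:delta-kappa} gives $\kappa^*_A\le 1/\delta^*_{A^\top}$. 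Substituting this, we obtain $\delta_{(TA)^\top}\ge (\delta^*_{A^\top})^9/\sqrt{nm}$, which is the claim.

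\textbf{Main obstacle.} The chain of inequalities itself is routine. The step needing most care is the bookkeeping that connects the two normalizations: the column scaling $D$ changes $\kappa$ but not $\delta$ of the transpose, while the row transformation $T$ changes $\delta$ but not $\kappa$. We must also check that $\kappa^*$ is the right quantity to feed into Lemma~\ref{lemma:delta-kappa}. Beyond that, the only other point to verify is the running time of the last step, the $n^3$ versus $m^3$ issue noted above.
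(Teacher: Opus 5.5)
Your proposal is correct and follows the paper's proof essentially step for step. The paper takes $D$ from Theorem~\ref{thm:kappa-star}, sets $T$ to the inverse of a nonsingular $m\times m$ submatrix of $AD$, and applies Corollary~\ref{cor:kappa-rescale} using $\delta_{(TA)^\top}=\delta_{(TAD)^\top}$ and $\kappa_{TAD}=\kappa_{AD}$, finishing with Lemma~\ref{lemma:delta-kappa}. The $n^3$ versus $m^3$ mismatch you flag is also present in the paper, whose proof simply invokes Theorem~\ref{thm:kappa-star} without reconciling the two bounds.
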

\begin{proof}
Let us compute the near optimal rescaling $D$ such that $\kappa_{AD}\le (\kappa^*_A)^3$ as in Theorem~\ref{thm:kappa-star}. Compute any non-singular $m\times m$ matrix $B$ of $AD$, and return $T=B^{-1}$. We show that $T$ satisfies the statement. Indeed,  $TAD$ contains an $m\times m$ identity matrix, therefore, by Corollary~\ref{cor:kappa-rescale}, we get
\[
\delta_{{(TA)}^\top}=\delta_{{(TAD)}^\top}\ge \frac{1}{\sqrt{nm}(\kappa_{TAD})^3}= \frac{1}{\sqrt{nm}(\kappa_{AD})^3}\ge \frac{1}{\sqrt{nm}(\kappa^*_{A})^9}\ge \frac{(\delta^*_{A^\top})^9}{\sqrt{nm}}\, ,
\]
where the last inequality follows from Lemma~\ref{lemma:delta-kappa}.
\end{proof}

Note that we are only able to use the above renormalization for an explicitly given matrix $A$, but not in the oracle model.

\subsection*{Acknowledgement}
The authors would like to thank Bento Natura for discussions on minimum ratio problems and circuit imbalances.

\appendix

\section{Missing proofs}\label{sec:impossibility}

\begin{proposition}\label{prop:A-M}
For $A\in\R^{m\times n}$, and for the matrix $M$ defined in \eqref{eq:M-matrix}, it holds that $ \delta_A\ge \delta_{M}$.
\end{proposition}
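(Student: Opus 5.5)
We work directly from Definition~\ref{def:delta}. The rows of $M$ are $m_0=(\bO\mid 1)$ and $m_i=(-a_i\mid b_i)$ for $i\in[m]$. The rows of $A$ are obtained from the rows $m_i$, $i\in[m]$, by dropping the last coordinate and negating. Negation does not affect $\delta$, since the definition allows arbitrary real $\lambda$. So it suffices to show $\delta_{\bar A}\ge \delta_M$, where $\bar a_i:=-a_i$ is the projection of $m_i$ onto the first $n$ coordinates, i.e., onto $e_{n+1}^\perp$. The key observation is that this projection is exactly what one obtains by adding a suitable multiple of the row $m_0=e_{n+1}$. This is the same mechanism as in Lemma~\ref{lem:projected-condition-numbers}, with $F=\{0\}$.

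Concretely, take a linearly independent set $\{a_i : i\in I\}$ and coefficients $\lambda\in\R^I$. First, $\{m_i : i\in I\}\cup\{m_0\}$ is linearly independent in $\R^{n+1}$. Indeed, suppose $\sum_{i\in I}\mu_i m_i+\mu_0 m_0=0$. The first $n$ coordinates give $\sum_{i\in I}\mu_i a_i=0$, so $\mu=0$, and then $\mu_0=0$.

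Next, set $\theta:=-\sum_{i\in I}\lambda_i b_i$. Then
\[
\sum_{i\in I}\lambda_i m_i+\theta m_0=\Bigl(-\sum_{i\in I}\lambda_i a_i \Bigm| 0\Bigr),
\]
whose norm equals $\|\sum_{i\in I}\lambda_i a_i\|$. Applying the definition of $\delta_M$ to this independent set gives
\[
\Bigl\|\sum_{i\in I}\lambda_i a_i\Bigr\|\ge \delta_M\max\Bigl\{|\theta|,\ \max_{i\in I}|\lambda_i|\,\|m_i\|\Bigr\}\ge \delta_M\max_{i\in I}|\lambda_i|\,\|a_i\|,
\]
where the last inequality uses $\|m_i\|=\sqrt{\|a_i\|^2+b_i^2}\ge\|a_i\|$. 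Since $I$ and $\lambda$ were arbitrary, $\delta_A\ge\delta_M$.

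A few technicalities need care. The definition of $\delta$ quantifies over sets of vectors, so repeated rows, or rows of $A$ that coincide while the corresponding rows of $M$ differ, must be handled. Linear independence of $\{a_i\}$ forces the indices to be distinct, and the chosen rows of $M$ are then distinct and independent as shown, so this causes no trouble. If $\theta=0$, we simply omit $m_0$ from the set; the inequality still holds with $\max_{i\in I}|\lambda_i|\,\|m_i\|$. The only step with any content is the independence check for the augmented set, and I expect no real obstacle there.
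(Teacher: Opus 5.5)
Your proof is correct and is essentially the paper's argument: you augment the independent rows with $(\bO\mid 1)$, cancel the last coordinate using the multiplier $-\sum_{i\in I}\lambda_i b_i$, apply the definition of $\delta_M$, and finish with $\|m_i\|\ge\|a_i\|$. Your treatment of the sign is in fact slightly more careful than the paper's: the rows of $M$ are $(-a_i\mid b_i)$, and the paper writes $(a_i\mid b_i)$ directly.
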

\begin{proof}
Indeed, given $I\subseteq [m]$ such that  $\{a_i\st i\in I\}$ are linearly independent, it follows that $\{(a_i\mid b_i)\st i\in I\}\cup \{(\mathbf{0}\mid 1)\}$ are linearly independent, hence, for every $\lambda\in\R^I$,
\begin{eqnarray*}
 \left\|\sum_{i\in I}\lambda_i a_i\right\|&=&\left\|\sum_{i\in I}\lambda_i (a_i\mid b_i)-\left(\sum_{i\in I}\lambda_i b_i\right)(\mathbf{0}\mid 1)\right\|\\
 &\ge & \delta_{M} \max\left\{\max_{i\in I}|\lambda_i| \left\|(a_i\mid b_i)\right\|,\left|\sum_{i\in I}\lambda_i b_i\right|\right\}\ge \delta_{M} \max_{i\in I}|\lambda_i| \|a_i\|.
\end{eqnarray*}
\end{proof}

\nostrongly*
\begin{proof}
For $m\in\N$, let $\mathcal{P}_m$ denote the set of polytopes $P\subseteq \R^{2}$ of the form
\begin{equation}\label{eq:P-M}
P=\left\{(x_1,x_2)\in \R^2:\, \frac{x_1}{p_i}+\frac{x_2}{q_i}\le 1, i\in [m], (x_1,x_2)\ge 0\right\}\, ,
\end{equation}
for some numbers $p_1>p_2>\ldots>p_m>0$, $0<q_1<q_2<\ldots<q_m$.
For any such choice of parameters, these define a full-dimensional polytope with $m+2$ facets. Let $\mathcal{P}_m$ denote the class of all polytopes of this form.

Consider the problem $\max~x_1$, $x\in P$ for such a polytope $P\in \mathcal{P}_m$. Clearly, the optimal solution is $(p_m,0)$.
We claim that an adversary can answer oracle queries such that, for any prescribed $m^*\in \mathbb{N}$, any algorithm will require at least $m^*$ queries to compute an optimal solution. This proves the claim, since $m^*$ can be chosen independently from $n=2$.

The adversary strategy is as follows. At each oracle query  $t=1,2,\ldots$, the adversary maintains a parameter $m\le t$, and  two polytopes
$P_m\in \mathcal{P}_m$ and $Q_m\subseteq P_m$. These are initialized as $m=0$, $P_0=\Rp^2$, $Q_0=\{0\}$. For $m\ge 1$, the polyhedron $P_m$ is given in the  form \eqref{eq:P-M} with $p_1>p_2>\ldots>p_m>0$ and $0<q_1<q_2<\ldots<q_m$, and $Q_m=P_m\cap \{(x_1,x_2)\in \R^2:\, x_1\le z_1\}$, where $z=(z_1,z_2)\in \R^2$ is defined as follows.
For $m=1$, we let $z=(0,q_1)$, and for $m>1$, we let
$z=(z_1,z_2)$ be the point corresponding to the intersection of the last two separators $x_1/p_{m-1}+x_2/q_{m-1}=1$ and $x_1/p_{m}+x_2/q_{m}=1$. 

Let $x\in\R^2$ be $t$-th oracle query of the algorithm. If $x\in Q_m$, the oracle responds that the point is feasible. If $x\notin P_m$, then the oracle returns a facet defining inequalities of $P_m$ that separates $x$. In both these cases, the value of $m$ and the polyhedra $P_m$ and $Q_m$ remain the same.
 Finally, if $x\in P_m\setminus Q_m$, the oracle returns a new separating inequality $x_1/p_{m+1}+x_2/q_{m+1}\le 1$ with $0<p_{m+1}<p_{m}$ and $q_{m+1}>q_{m}$; it is easy to see that such an inequality can always be added. 
We obtain  $P_{m+1}$ from $P_{m}$ by adding this new inequality,  update $Q_{m+1}$, and increase $m$ by one. Once we reach $m=m^*$, we set $Q_m=P_m$.

Note that this oracle strategy maintains $P_{m+1}\subset P_m$, $Q_{m+1}\supset Q_m$, and all facets of $P_m$ are also facets of $P_{m+1}$. Thus, all previous separators returned by the oracle are valid to $P_{m+1}$. Moreover, $m^*=m\le t$, where $t$ is the number of oracle queries made.
\end{proof}

\needb*
\begin{proof}
The proof is similar to the previous one, but even simpler, in 1-dimension.
Consider $b\in \R^m$ with values $b_1>b_2>\ldots>b_m\in \R$. Let $A\in \R^{m\times 1}$ be a matrix with all entries 1. Let $P=\{x_1\in \R\st Ax_1\le b\}$; thus, $P=(-\infty,b_m]$.
Consider the problem of $\max~x_1$ for $x_1\in P$; the optimal solution is $x_1^*=b_m$. However, $m$ oracle queries are necessary: Similarly to the previous proof, the adversary answering the oracle queries can always maintain $P$ in this form such that the first $k$ oracle queries are  $x_1\le b_i$, $i\in [k]$.

Note that all rows of the matrix $A$ are identical, hence, any condition number $\theta_A$ unchanged by duplicating copies must be the same as for the $1\times 1$ matrix $A'=(1)$. Therefore, $f(n,\theta_A)$ is the same for all instances in this form, showing that no algorithm may terminate in $f(n,\theta_A)$ queries.
\end{proof}

\farkassphere*

\begin{proof}%
For the ``if'' direction in the first part of the statement, assume that $\lambda\ge 0$ satisfying \eqref{eq:farkas-sphere} exists and suppose by contradiction that there exists  $x\in\bb{B}^n(r)$ satisfying $Ax\le u$. Then $-\lambda^\top u> r\|A^\top \lambda\| \ge \|x\|\cdot \|A^\top \lambda\| \ge -\lambda^\top A x\ge  -\lambda^\top u $, a contradiction. For the ``only if'' part, suppose that $Ax\le b$ has no solution in $\bb{B}^n(r)$. If $Ax\le u$ is infeasible, then by Farkas lemma there exists $\lambda\ge 0$ such that $A^\top \lambda=0$ and $u^\top \lambda<0$, hence $r\|A^\top \lambda\|<-u^\top \lambda$ as required. If $Ax\le u$ is feasible, then  $r^2/2<\min\{\|x\|^2/2\st Ax\le u\}$. The latter convex quadratic problem has a primal optimum (because, for a given feasible solution $\bar x$, we can limit ourselves to the compact set $\{x\st Ax\le u,\,\|x\|\le \|\bar x\|\}$). Furthermore, since all constraints are linear, by Slater's theorem strong duality holds and a dual optimum exists. Consider optimal primal and dual solutions $x $ and $\lambda $. They must satisfy the KKT conditions, hence $x +A^\top \lambda =0$ (gradient of the Lagrangian equal to zero) and $\lambda^\top A x =\lambda^\top u$ (complementary slackness). We have $r\|A^\top \lambda\|<\|x \|\cdot \|A^\top \lambda\|=\|A^\top \lambda\|^2=-\lambda^\top A x =-\lambda^\top u$.

Consider now $v\in\R^n$ and $\nu\in\R$. If there exists $\lambda\in\Rp^n$ satisfying \eqref{eq:optimality}, then for all $x$ such that $Ax\leq u$, $\|x\|\le r$ we have 
\[
\begin{aligned}
-v^\top x&\le -v^\top x-\lambda^\top Ax+ \lambda^\top u\\
&\le \|x\|\cdot \|A^\top \lambda+v\|+\lambda^\top u\\
&\le r\|A^\top \lambda+v\|+\lambda^\top u\le-\nu\, ,
\end{aligned}
\]
 hence $v^\top x\ge \nu$ is satisfied. Conversely, assume that  $v^\top x\ge \nu$  is valid and $Ax\le u$, $\|x\|<r$ has a solution. It follows that $\nu\le\min\{v^\top x\st Ax\le u,\, \|x\|^2/2\le r^2/2\}$. The latter convex problem has an optimum because the feasible region is compact. Furthermore, strong duality holds and a dual optimum exists, since Slater's conditions are satisfied. Let $x$, $(\lambda|\lambda_0)\ge 0$ be primal and dual optimal solutions. They must satisfy the KKT conditions, hence $v +A^\top \lambda+\lambda_0 x =0$ (gradient of the Lagrangian equal to zero) and $\lambda^\top A x =\lambda^\top u$, $\lambda_0(\|x\|-r)=0$ (complementary slackness). If $\lambda_0=0$, then $A^\top \lambda+v=0$, hence $\|A^\top \lambda+v\|+\nu=\nu\le v^\top x=-\lambda^\top Ax=-\lambda^\top u$.  If $\lambda_0>0$, by complementary slackness $\|x\|=r$. Furthermore, $x=-(A^\top \lambda+v)/\lambda_0$. Hence $r\|A^\top \lambda+v\|=\lambda_0 \|x\|^2=-\lambda^\top Ax-v^\top x\le -\lambda^\top u-\nu$. 
\end{proof}

\bibliographystyle{abbrv}
\bibliography{rescaled}

    \end{document}